\newcommand{\bea}{\begin{eqnarray}}
\newcommand{\eea}{\end{eqnarray}}
\def\beaa{\begin{eqnarray*}}
\def\eeaa{\end{eqnarray*}}
\def\ba{\begin{array}}
\def\ea{\end{array}}
\def\be#1{\begin{equation} \label{#1}}
\def \eeq{\end{equation}}
\def\be{{\beta}}
\def\R{{\mathbb{R}}}
\def\C{{\mathbb{C}}}
\def\N{{\bf N}}
\def\Z{{\mathbb{Z}}}
\def\T{{\mathbb{T}}}
\newtheorem{theorem}{Theorem}[section]
\newtheorem{lemma}[theorem]{Lemma}
\newtheorem{proposition}[theorem]{Proposition}
\newtheorem{corollary}[theorem]{Corollary}
\newtheorem{remark}[theorem]{Remark}
\numberwithin{equation}{section}
\begin{document}

\author{Zaher Hani, Benoit Pausader, Nikolay Tzvetkov and Nicola Visciglia}
\address{Courant Institute of Mathematical Sciences\\ 251 Mercer Street, New York NY 10012}
\email{hani@cims.nyu.edu}
\address{Universit\'e Paris-Nord}
\email{pausader@math.univ-paris13.fr}
\address{Universit\'e Cergy-Pontoise}
\email{nikolay.tzvetkov@u-cergy.fr}
\address{Universita di Pisa}
\email{viscigli@dm.unipi.it}

\thanks{Z.~H. is supported by a Simons Postdoctoral Fellowship and NSF Grant DMS-1301647, B.~P.  is partially supported by NSF Grant DMS-1142293 and ANR project Scheq ANR-12-JS-0005-01, N.~T. is  supported by the ERC grant Dispeq., N. V. is partially supported by project FIRB Dinamiche Dispersive.}

\keywords{Modified Scattering, Nonlinear Schr\"odinger equation, wave guide manifolds, energy cascade, weak turbulence}

\title[Modified Scattering on $\mathbb{R}\times\mathbb{T}^d$]{Modified scattering for the cubic Schr\"odinger equation on product spaces and applications}


\begin{abstract}
We consider the cubic nonlinear Schr\"odinger equation posed on the spatial domain $\R\times \T^d$. We prove modified scattering and construct modified wave operators for small initial and final data respectively ($1\leq d\leq 4)$. The key novelty comes from the fact that the modified asymptotic dynamics are dictated by the \emph{resonant system} of this equation, which sustains interesting dynamics when $d\geq 2$. As a consequence, we obtain global strong solutions 
(for $d\geq 2$) with infinitely growing high Sobolev norms $H^s$.
\end{abstract}
\maketitle


\maketitle

\section{Introduction}

The purpose of this work is to study the asymptotic behavior of the cubic defocusing nonlinear Schr\"odinger equation posed on the wave-guide manifolds $\R\times \T^d$:
\begin{equation}\label{CNLS}
\left(i\partial_t +\Delta\right) U=\vert U\vert^2U
\end{equation}
where $U$ is a complex-valued function on the spatial domain $(x,y)\in \R\times \T^d$.
In particular, we want to understand how this asymptotic behavior is related to a {\it resonant dynamic}, in a case when scattering does not occur.
Our results can be directly extended to the case of a focusing nonlinearity ($-|U|^2U$ in the left hand-side of \eqref{CNLS}) but 
we will however be concerned with small data, so this distinction on the nonlinearity will not be relevant. 
On the other hand the result of Corollary~\ref{Inf cascade cor} providing solutions blowing-up at infinite time is more striking in the defocusing case because in the focusing case one may have blow-up in finite time (via the quite different mechanism of self-focusing).

\subsection{Motivation and background}

The question of the influence of the geometry on the global behavior of solutions to the nonlinear Schr\"odinger equation
\begin{equation}\label{NLS}
\left(i\partial_t+\Delta\right) u=\lambda\vert u\vert^{p-1}u,\quad p>1
\end{equation} dates back at least to \cite{BrGa}. The first natural question is the issue of global existence of solutions, and many works have investigated this problem in different geometric settings \cite{BaCaDu,Bo2,Bo1,BoCrit,Bourgain2013,BuGeTz,BuGeTz3,BuGeTz2,CKSTTcrit,DoExt,Ha,HaPa,He,HeTaTz,HeTaTz2,IoPa,IoPa2,IoPaSt,KeMe,KiVi,KiViZh,PaTzWa,TaTz,Vi}. 
The conclusion that could be derived from these works is that the geometry of the spatial domain turned out to be of importance in the context of the best possible Strichartz inequalities or the sharp local in time well-posedness results (see e.g. \cite{Bo1,BuGeTz-mrl,BuGeTz}). However, the analysis in \cite{HeTaTz,HeTaTz2,IoPa,IoPa2,IoPaSt,PaTzWa} seems to indicate that, at least in the defocusing case\footnote{In the focusing case and for large data, it is likely that existence or nonexistence questions to elliptic problems also plays an important role.}, the only relevant geometric information for the global existence in the energy space is the ``local dimension'', i.e. the dimension of the tangent plane.

The next natural question concerns the asymptotic behavior. There the geometry must play a more important role. This is the question in which we are interested in this paper, focusing on the simpler case of noncompact quotients of $\mathbb{R}^d$.

When the domain is the Euclidean space, $\mathbb{R}^d$, this question is reasonably well-understood at least when the nonlinearity is defocusing and analytic ($p$ odd integer). In this case, global smooth solutions disperse and in many cases even scatter to a linear state 
(possibly after modulation by a real phase when $d=1$, $p=3$) \cite{Cazenave:book,CKSTTcrit,DeZh,Do1,Do,HaNa,KeMe,KiTaVi,Oz,RyVi,Vi,ZS}.

In contrast, much less is known for compact domains. The most studied example is that of the torus $\mathbb{T}^d$. In this case, many different long-time behaviors can be sustained even on arbitrarily small open sets around zero, ranging from KAM tori \cite{BourgainQuasi,EliaKuk,KukPo,Procesi}
to heteroclinic orbits \cite{CKSTTTor,GuKa} and coherent out-of-equilibrium frequency dynamics\footnote{Interestingly, all these long-time results derive from an analysis of resonant interactions that will play a central role in this work as well.} \cite{FGH}. 
One may also mention \cite{JB1,JB2,Bulut,tz_fourier}, where invariant measures for \eqref{CNLS} are constructed, when the problem is posed on  $\T^d$, $d=1,2$ or the $d$ dimensional ball for $d=2,3$ (with radial data). These works establish the existence of a large set of (not necessarily small) recurrent dynamics of \eqref{CNLS}.

In light of the above sharp contrast in behavior between $\R^d$ and its compact quotient $\T^d$, considerable interest has emerged in the past few years to study questions of long-time behavior on ``in between" manifolds, like the ones presented by the non-compact quotients of Euclidean space \cite{HaPa,HeTaTz2,IoPa,TaTz,TeTzVi,TzVi}. 

\medskip

In the generality of non-compact Riemannian $d-$manifolds $M$, it seems plausible that a key role is played by the parameter $\alpha$ for which solutions to the linear NLS equation (\eqref{NLS} with $\lambda=0$) with smooth compactly supported initial data decay like $t^{-\alpha/2}$. In light of the Euclidean theory on $\R^\alpha$, one can draw the following hypothetical heuristics: H1) when $p>1+4/\alpha$, global solutions (sufficiently small in the focusing case) scatter and no further information is needed about the geometry ``at infinity''; H2) if $p=1+4/\alpha$, global solutions scatter, but the geometry ``at infinity'' plays an important role in the analysis of certain sets of solutions (e.g. in the profile decomposition); H3) if $p\le 1+2/\alpha$, no nontrivial solution can scatter and H4) if $p=1+2/\alpha$, global solutions exhibit some ``modified scattering'' characterized by a correction to scattering on a {\it larger time-scale}. We are interested in this latter regime to which \eqref{CNLS} belongs.

In support of the heuristic H1) we cite the results in \cite{BaCaDu,IoPaSt,IoSt,TzVi,TzVi2}. The second heuristic H2) was put to test in \cite{HaPa} where the authors study the quintic NLS equation on $\R\times \T^d$. There, a strong relation is drawn between the large-data scattering theory for the quintic NLS equation and the system obtained from its \emph{resonant periodic frequency interactions}. The relevance of the result in \cite{HaPa} to our work here lies in the following two important messages: The first is that the asymptotic behavior of \eqref{NLS} on $\R^n \times \T^d$ can be understood through: i) the asymptotic dynamics of the same equation on Euclidean spaces, and ii) the asymptotic dynamics of a related resonant system corresponding to the resonant interactions between its periodic frequency modes. The second message from \cite{HaPa} is the insight that the resonant interactions in \eqref{CNLS} will play a vivid and decisive role in dictating the anticipated non-scattering asymptotic dynamics of \eqref{CNLS}. Indeed, as \cite{HaPa,TzVi} show that quintic interactions lead to scattering behavior for small data, and since non-resonant interactions in \eqref{CNLS} can be transformed, at least formally, into quintic interactions via a normal form transformation, it is up to the resonant interactions alone to drive the system away from scattering. This is the content of our main result.

The other interesting feature of the asymptotic dynamics of \eqref{CNLS} as opposed to previous modified scattering results, is that the modification dictated by its resonant system is not simply a phase correction term when $d\geq 2$, but rather a much more vigorous departure from linear dynamics. As we argue below, this will pose a new set of difficulties in comparison to previous modified scattering results in the literature, but, on the plus side, will lead us to several interesting and new types of asymptotic dynamics. 

\subsection{Statement of the results}

Consistent with the heuristics above, we show that the asymptotic dynamic of small solutions to \eqref{CNLS} is related to that of solutions of the resonant system
\begin{equation}\label{RSS}
\begin{split}
i\partial_\tau G(\tau)=&\quad\mathcal R[G(\tau), G(\tau), G(\tau)],\\
\mathcal F_{\R\times \T^d}\, \mathcal R[G, G, G](\xi, p)=&\quad\sum_{\substack{p_1+p_3=p+p_2\\\vert p_1\vert^2+\vert p_3\vert^2=\vert p\vert^2+\vert p_2\vert^2}}\widehat{G}(\xi,p_1)\overline{\widehat{G}(\xi,p_2)}\widehat{G}(\xi,p_3).
\end{split}
\end{equation}
Here $\widehat G(\xi, p)=\mathcal F_{\R\times \T^d} G (\xi, p)$ is the Fourier transform of $G$ at $(\xi, p)\in \R\times \Z^d$. Noting that the dependence on $\xi$ is merely parametric, the above system is none other than the resonant system for the cubic NLS equation on $\mathbb{T}^d$.
The equation \eqref{RSS} is globally well-posed thanks to Lemma~\ref{GlobSol} below.

More precisely, our main results are as follows. Below $N\geq 30$ is an arbitrary integer, and $S$ and $S^+$ denote Banach spaces whose norms are defined in \eqref{DefSNorm} later. They contain all the Schwartz functions.

\begin{theorem}\label{ModScatThm}
Let $1\leq d \leq 4$. There exists $\varepsilon=\varepsilon(N,d)>0$ such that if $U_0\in S^+$ satisfies
\begin{equation*}
\Vert U_0\Vert_{S^+}\le\varepsilon,
\end{equation*}
and if $U(t)$ solves \eqref{CNLS} with initial data $U_0$, then $U\in C((0,+\infty):H^N)$ exists globally and exhibits modified scattering to its resonant dynamics \eqref{RSS} in the following sense: there exists $G_0\in S$ such that if $G(t)$ is the solution of \eqref{RSS} with initial data $G(0)=G_0$, then
\begin{equation*}
\Vert U(t)-e^{it\Delta_{\mathbb{R}\times\mathbb{T}^d}}G(\pi\ln t)\Vert_{H^N(\mathbb{R}\times\mathbb{T}^d)}\to 0\qquad\hbox{ as }t\to+\infty.
\end{equation*}
Moreover
$$
\|U(t)\|_{L^\infty_x H^1_y}\lesssim (1+|t|)^{-\frac{1}{2}}.
$$
\end{theorem}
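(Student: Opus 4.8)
The plan is to implement a standard but delicate modified-scattering scheme in profile coordinates, where the novelty is that the "limiting profile equation" is the nonlinear resonant system \eqref{RSS} rather than a linear phase. First I would pass to the interaction representation: set $f(t) = e^{-it\Delta} U(t)$, so that $f$ solves $i\partial_t f = e^{-it\Delta}(|U|^2 U) = \mathcal N_t[f,f,f]$, where $\mathcal N_t$ is the trilinear form obtained by conjugating the cubic nonlinearity. Taking the partial Fourier transform in $x$ (calling the dual variable $\xi$) and using the Klainerman--Sobolev / stationary-phase heuristic for the $\R$-factor, the $x$-oscillation $e^{it(\xi_1^2 - \xi_2^2 + \xi_3^2 - \xi^2)}$ localizes to $\xi_1 = \xi_2 = \xi_3 = \xi$ on the time-scale $t$, contributing the decay factor $t^{-1/2}$; the surviving $\T^d$-interactions are exactly those with $p_1 + p_3 = p + p_2$ and $|p_1|^2 + |p_3|^2 = |p|^2 + |p_2|^2$, which is the resonance relation in \eqref{RSS}. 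Thus, to leading order, $i\partial_t f \approx \frac{\pi}{t}\,\mathcal R[f,f,f]$, and substituting $\tau = \pi \ln t$ turns this into \eqref{RSS}. The two tasks are then (i) to make this asymptotic reduction rigorous with a quantitative error bound, and (ii) to extract the limiting profile $G_0$ and control the difference.

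The core of the argument is a bootstrap/continuity estimate on $(0,\infty)$ for a norm measuring $f(t)$ in $S$ together with a slowly growing weight, of the type $\|f(t)\|_{S} \lesssim \varepsilon$ and $\|f(t) - G(\pi\ln t)\|_{S} \to 0$, run forward in time. The key structural input is that the nonresonant part of $\mathcal N_t$ can be integrated by parts in time (a normal-form / space-time resonance argument): since the cube of a cubic interaction is quintic, and quintic interactions on $\R\times\T^d$ are known to scatter (as emphasized in the discussion, following \cite{HaPa, TzVi}), the nonresonant contributions are integrable in time and produce only an $O(t^{-\delta})$ correction. The resonant part, being of size $t^{-1}\mathcal R[f,f,f]$, is \emph{not} integrable — this is precisely why scattering fails — but by Lemma~\ref{GlobSol} the associated flow \eqref{RSS} is globally well-posed, and one can compare $f$ to the solution $G$ of \eqref{RSS} by a Gronwall argument in the variable $\tau = \pi\ln t$: the difference solves the resonant system up to the integrable error, and since $\tau\to\infty$ only logarithmically, the error stays summable. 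This yields existence of $G_0 = \lim_{t\to\infty} G(-\pi\ln t \text{-shift of } f)$, or more precisely one first shows $f(t)$ stays close to \emph{some} solution of \eqref{RSS} for all large $t$ and then identifies its data by a fixed-point/limiting argument. Finally, $\|U(t)\|_{L^\infty_x H^1_y} = \|e^{it\Delta} f(t)\|_{L^\infty_x H^1_y} \lesssim t^{-1/2}\|f(t)\|_{S}$ by the dispersive estimate for the one-dimensional Schr\"odinger propagator applied fiberwise over $\T^d$ (the $\T^d$ directions carry no decay, hence the $H^1_y$ rather than $L^\infty_y$), combined with the uniform bound $\|f(t)\|_{S}\lesssim\varepsilon$.

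The main obstacle, and the point where this differs sharply from earlier modified-scattering results (Hayashi--Naumkin, Kato--Pusateri, etc.), is that the limiting dynamics is genuinely nonlinear: one cannot simply divide by an explicit oscillatory factor to renormalize. This forces the comparison norm $S$ to be chosen so that it is simultaneously (a) strong enough to close the cubic energy estimate for \eqref{CNLS} with the sharp $t^{-1/2}$ dispersive decay, (b) propagated by the resonant flow \eqref{RSS} globally (which is where Lemma~\ref{GlobSol} and presumably an $X^{s,b}$-type or $U^p$-$V^p$ bilinear estimate on $\T^d$ enters, valid for $d\leq 4$ by the cubic Strichartz estimates on the torus), and (c) compatible with the time-integration of nonresonant terms, which costs derivatives and weights in $x$ — this is why the spaces $S$ and $S^+$ must be distinct, $S^+$ carrying extra regularity/decay to be spent along the evolution. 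Getting a single norm that balances all three requirements, and in particular verifying that the normal-form transformation and the resulting quintic remainder are controlled in it uniformly for $1\leq d\leq 4$, is the technical heart of the proof; the dispersive bound stated at the end of the theorem is then a relatively soft corollary of the uniform $S$-bound.
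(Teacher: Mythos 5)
Your overall architecture matches the paper's: profile coordinates, stationary phase in the $\mathbb{R}$-frequencies to isolate $\frac{\pi}{t}\mathcal{R}$, normal forms for the time-nonresonant part, a three-tier hierarchy of norms, and a comparison to the resonant flow on dyadic time blocks followed by a Cauchy-sequence argument to extract $G_0$ (the paper does exactly this with $T_n=e^{n/\pi}$, solutions $G_n$ matched to $F(T_n)$, and the stability estimate of Lemma~\ref{gAdmi}). However, there is one genuine gap at the heart of the global a priori estimate: your scheme treats the resonant term perturbatively when closing the bootstrap. The term $\frac{\pi}{t}\mathcal{R}[F,F,F]$ is not integrable in time, so any Gronwall-type argument applied to the controlling norm yields growth like $\exp(C\varepsilon^2\ln t)=t^{C\varepsilon^2}$. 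That is acceptable for the $S$ and $S^+$ norms (which are allowed to grow like $t^\delta$), but it is fatal for the $Z$ norm, which must remain \emph{bounded} uniformly in time: the $Z$ norm is the coefficient of the sharp $t^{-1/2}$ decay in Lemma~\ref{DispLem} (and hence of the claimed bound on $\Vert U(t)\Vert_{L^\infty_xH^1_y}$), and its boundedness is also what makes the errors summable in the Cauchy-sequence step. The paper's resolution is non-perturbative: for each fixed $\xi$ one pairs the equation for $\widehat F_p(\xi)$ with $[1+|p|^2]\widehat F_p(\xi)$ and uses the exact cancellation $\langle i\mathcal{R}[a,a,a],a\rangle_{h^1_p}=0$ (equations \eqref{H1NormPres} and \eqref{H1Cancellation}), i.e.\ the conservation of the $h^1_p$ norm by the resonant system, so that the non-integrable term drops out identically and only the integrable errors $\mathcal{E}_1,\mathcal{E}_2$ of Proposition~\ref{StrucNon} remain. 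Without invoking this conservation law your bootstrap does not close.

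Two smaller points. First, your assertion that the bootstrap closes ``with the sharp $t^{-1/2}$ dispersive decay'' elides the fact that for $d\ge2$ no pointwise $L^\infty_{x,y}$ decay at rate $t^{-1/2}$ is available; the usable decay is the mixed $L^\infty_xH^1_y$ bound of Lemma~\ref{DispLem}, whose proof and whose coefficient ($\Vert F\Vert_Z$) again hinge on the bounded $Z$ norm. Second, the claim that nonresonant terms are integrable ``because quintic interactions scatter'' is only the heuristic from the introduction; the actual estimates require the one-dimensional bilinear Strichartz inequality (Lemma~\ref{1dBE}) for high $x$-frequencies, the space/time resonance splitting $\mathcal{O}_1^t+\mathcal{O}_2^t$, and the discrete $L^4$-Strichartz estimate on $\mathbb{T}^d$ (Lemma~\ref{DiscreteStricTorLem}), which is where the restriction $d\le4$ actually enters.
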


A similar statement holds as $t\to-\infty$, and a more precise one is contained in Theorem \ref{ModScatThm2}. It is worth pointing out that for $d=4$, even the global existence claim in the above theorem is new, due to the energy-supercritical nature of \eqref{CNLS} in this dimension. However, the main novelty is the modified scattering statement to a non-integrable asymptotic dynamics, given by \eqref{RSS}. 

In addition, we construct modified wave operators in the following sense:

\begin{theorem}\label{ExMWO}
Let $1\le d\le 4$. There exists $\varepsilon=\varepsilon(N,d)>0$ such that if $G_0\in S^+$ satisfies
\begin{equation*}
\Vert G_0\Vert_{S^+}\le\varepsilon,
\end{equation*}
and $G(t)$ solves \eqref{RSS} with initial data $G_0$, then there exists $U\in C((0,\infty):H^N)$ a solution of \eqref{CNLS} such that
\begin{equation*}
\begin{split}
\Vert U(t)-e^{it\Delta_{\mathbb{R}\times\mathbb{T}^d}}G(\pi\ln t)\Vert_{H^N(\mathbb{R}\times\mathbb{T}^d)}\to0\,\hbox{ as }\,t\to+\infty.
\end{split}
\end{equation*}
\end{theorem}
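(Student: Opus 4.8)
The plan is to construct the solution $U(t)$ by running the modified scattering analysis backwards from $t=+\infty$: rather than prescribing data at $t=0$ and letting it evolve, we prescribe the asymptotic profile $G(\pi\ln t)$ and solve for $U$ on a half-line $[T,\infty)$, then propagate back to finite time by the (conditional) global well-posedness already available. Concretely, I would set up the fixed-point argument for the \emph{profile} $F(t)=e^{-it\Delta}U(t)$, whose evolution equation, after isolating the resonant part, reads schematically $i\partial_t F = \frac{1}{\pi t}\,\mathcal R[F,F,F] + \text{(non-resonant remainder)}$, where the non-resonant remainder is time-integrable in the relevant norm (this is exactly the structural point inherited from \cite{HaPa,TzVi}: the off-resonant cubic terms behave, after a normal form, like a quintic interaction and hence decay fast enough). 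The candidate solution is defined by the Duhamel formula integrated from $+\infty$,
\begin{equation*}
F(t) = G(\pi\ln t) - i\int_t^\infty \Big(\mathcal{N}[F](s) - \tfrac{1}{\pi s}\mathcal R[F,F,F](s)\Big)\,ds,
\end{equation*}
where $\mathcal N[F]$ is the full nonlinearity written in profile variables and the subtraction removes precisely the resonant piece, which is absorbed into $G(\pi\ln t)$ by the equation \eqref{RSS}.

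The key steps, in order, would be: (1) record the a priori bounds for the solution $G$ of \eqref{RSS} with small $S^+$ data — global existence is Lemma~\ref{GlobSol}, and one needs in addition that smallness in $S^+$ is propagated (at worst with slow, e.g. polynomial-in-$\ln t$, growth) so that $e^{it\Delta}G(\pi\ln t)$ is an admissible approximate solution in the $S$-norm on $[T,\infty)$; (2) set up the contraction in a space of functions $F$ on $[T,\infty)$ with $\|F(t)-G(\pi\ln t)\|_{S}$ small and decaying, using the dispersive decay $\|e^{it\Delta}\cdot\|_{L^\infty_xH^1_y}\lesssim (1+|t|)^{-1/2}$ on $\R$ together with the multilinear estimates on $\T^d$ that control $\mathcal R$ and the non-resonant remainder in $S$; choosing $T=T(\varepsilon)$ large makes the map a contraction; (3) show the resulting $F$ is a genuine solution of the profile equation on $[T,\infty)$ and hence $U(t)=e^{it\Delta}F(t)$ solves \eqref{CNLS} there, with $\|U(t)-e^{it\Delta}G(\pi\ln t)\|_{H^N}\to 0$ by construction (the tail of the Duhamel integral vanishes); (4) extend $U$ backwards from $t=T$ to $t\in(0,T]$ using the global well-posedness theory for \eqref{CNLS} in $H^N$ on $\R\times\T^d$ for $1\le d\le 4$ that underlies Theorem~\ref{ModScatThm} (this is where the $d=4$ energy-supercritical subtlety is handled, but for data that is already small in a strong norm the local theory iterated with the scattering-type bounds suffices).

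The main obstacle, as the authors emphasize in the introduction, is that when $d\ge2$ the asymptotic dynamics is \emph{not} a mere phase correction: $G(\pi\ln t)$ itself moves nontrivially and can grow in high Sobolev norms, so one cannot treat $e^{it\Delta}G(\pi\ln t)$ as essentially frozen. This means the difference equation for $F-G(\pi\ln t)$ has source terms of the form $\mathcal R[G,G,G]$-type cross terms that must be shown to be controlled by the $S$-norm of $G$ uniformly (or with only mild growth) on $[T,\infty)$; establishing the right functional framework — in particular the definition of $S$ and $S^+$ in \eqref{DefSNorm} and the trilinear/quintilinear estimates making the non-resonant part integrable while the resonant part closes against \eqref{RSS} — is the technical heart. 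A secondary difficulty is ensuring consistency of norms: the data $G_0$ is given in $S^+$ but one wants the conclusion in $H^N$, so one must check that the $S$-norm (which is weaker, allowing the $\ln t$-type growth) is exactly the space in which the fixed point closes, while $S^+$ smallness of $G_0$ is what keeps $G(\tau)$ under control for all $\tau=\pi\ln t\in[\pi\ln T,\infty)$. Once these estimates are in place the contraction and the backward propagation are routine.
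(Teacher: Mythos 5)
Your overall strategy --- a backward-from-infinity fixed point for the difference between the profile $e^{-it\Delta_{\mathbb{R}\times\mathbb{T}^d}}U(t)$ and the resonant-system solution $G(\pi\ln t)$, with the non-resonant remainder rendered time-integrable by a structural decomposition of the nonlinearity and with the slow growth of $G$ in $S,S^+$ controlled a priori --- is exactly the paper's approach in Section~\ref{SMWO}. However, two points in your setup are wrong as written. First, your Duhamel formula subtracts $\tfrac{1}{\pi s}\mathcal R[F,F,F]$, the resonant part evaluated at the \emph{unknown}. Differentiating your fixed-point identity and using $i\partial_t\big(G(\pi\ln t)\big)=\tfrac{\pi}{t}\mathcal R[G,G,G]$ (note the coefficient is $\pi/t$, not $1/(\pi t)$, since $\tau=\pi\ln t$), one finds that your $F$ satisfies the profile equation only up to an extra term $\tfrac{\pi}{t}\big(\mathcal R[G,G,G]-\mathcal R[F,F,F]\big)$, which does not vanish; so the fixed point does not produce a solution of \eqref{CNLS}. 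The cancellation you want requires subtracting $\tfrac{\pi}{\sigma}\mathcal R[G(\sigma),G(\sigma),G(\sigma)]$, evaluated at the prescribed asymptotic profile, which turns the identity into the Duhamel formula for the difference $F-G$ driven by $\mathcal E^\sigma$ plus cross terms that are linear and quadratic in $F-G$ --- this is the map $\Phi$ and the decomposition \eqref{Dec} in the paper. Your later prose about the ``difference equation'' suggests you intend this, but the displayed formula is the one that matters.

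Second, the contraction cannot be closed by ``choosing $T=T(\varepsilon)$ large.'' The linearization around $G$ contains the resonant term $\tfrac{\pi}{\sigma}\mathcal R[G,G,\cdot]$, which decays only like $1/\sigma$; integrating it from $T$ to $\infty$ against a difference that decays like $\sigma^{-2\delta}$ in $Z$ contributes a factor of order $\varepsilon^2/\delta$ to the contraction constant \emph{uniformly in $T$}. This is precisely the long-range feature that makes the scattering ``modified'': largeness of $T$ buys nothing against a $1/t$ tail, and the smallness must come from $\varepsilon$ (small relative to $\delta$) combined with the polynomial decay of $F-G$ in the weaker $Z$ norm --- this is how the space $\mathfrak A$ and the estimates \eqref{SuffFPMWO} are organized. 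With these two corrections the remaining steps of your outline (integrability of the non-resonant remainder via Proposition~\ref{StrucNon}, propagation of $S^+$ control of $G$ via Lemma~\ref{gAdmi}, convergence in $S\supset H^N$ from the decay of the difference, and continuation to small times by local theory) line up with the paper's proof.
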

\begin{remark}
It is worth mentioning that a 
slight modification of the proof of Theorems~\ref{ModScatThm} and \ref{ExMWO} shows that similar statements hold if $\T^d$ is replaced by the sphere $S^d, d=2,3$
(with a suitably modified resonant system).
Indeed, the largest part of the analysis is exploiting the $1d$ dispersion.
In the case of $S^d, d=2,3$  
the spectrum of the Laplace-Beltrami operator satisfies the non-resonant condition needed for the normal form analysis, and the  
$H^1$ well-posedness analysis on the sphere of  \cite{BuGeTz, BuGeTz3} provides the needed substitute of Lemma~\ref{DiscreteStricTorLem}.
A similar remark applies to the case of a partial harmonic confinement (cf. \cite{HaTh}). 
On the other hand, the extension of our analysis to an irrational torus is less clear because of the appearance of small denominators in the normal form analysis.
\end{remark}
As a consequence of Theorem \ref{ExMWO}, all the behaviors that can be isolated for solutions of the resonant system \eqref{RSS} have counterparts in the asymptotic behavior of solutions of \eqref{CNLS}. Most notably, given the existence of unbounded Sobolev orbits for \eqref{RSS} as proved in \cite{Hani} for $d\geq 2$ (cf. Theorem \ref{growth on Z^d theo} for an explicit construction with quantitative lower bounds on the growth), we have the following.

\begin{corollary}[Existence of infinite cascade solutions]\label{Inf cascade cor}
Let $d\geq 2$ and $s\in \mathbb{N}$, $s\ge 30$. Then for every $\varepsilon>0$ there exists a global solution $U(t)$ of \eqref{CNLS} such that
\begin{equation}\label{Inf cascade}
\Vert U(0)\Vert_{H^s(\mathbb{R}\times\mathbb{T}^d)}\le\varepsilon,\qquad\limsup_{t\to+\infty}\Vert U(t)\Vert_{H^s(\mathbb{R}\times\mathbb{T}^d)}=+\infty.
\end{equation}
More precisely, there exists a sequence $t_k\to +\infty$ for which 
$$
\Vert U(t_k)\Vert_{H^s(\mathbb{R}\times\mathbb{T}^d)}\gtrsim \exp(c(\log\log t_k)^\frac{1}{2}).
$$
\end{corollary}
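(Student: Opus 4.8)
The plan is to transfer the unbounded Sobolev orbit for the resonant system \eqref{RSS} on $\T^d$ (from \cite{Hani}, with quantitative lower bounds in Theorem~\ref{growth on Z^d theo}) to an actual solution of \eqref{CNLS} via the modified wave operator construction of Theorem~\ref{ExMWO}. First I would fix $\varepsilon>0$ and invoke the construction of \cite{Hani} (quantitatively, Theorem~\ref{growth on Z^d theo}) to produce a solution $g(\tau)$ of the resonant system on $\T^d$ alone whose $H^s(\T^d)$ norm is as small as we wish at $\tau=0$ but grows to infinity along a sequence $\tau_k\to+\infty$, with the explicit rate $\|g(\tau_k)\|_{H^s(\T^d)}\gtrsim\exp(c(\log\tau_k)^{1/2})$ (or whatever the precise statement gives). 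I then build from $g$ a final datum $G_0$ on $\R\times\T^d$ by tensoring with a fixed Schwartz profile in the $\xi$ variable, say $G_0(x,y)=\phi(x)\,g(0)(y)$ with $\|\phi\|$ normalized so that $\|G_0\|_{S^+}\le\varepsilon'$ for a small $\varepsilon'=\varepsilon'(N,d)$ below the threshold of Theorem~\ref{ExMWO}; here one uses that the $\xi$-dependence in \eqref{RSS} is purely parametric, so $G(\tau)(x,y)=\phi(x)g(\tau)(y)$ solves the resonant system on $\R\times\T^d$ with this datum and retains the growth of $g$ in $H^s_y$ (hence in $H^s(\R\times\T^d)$, up to the fixed constant $\|\phi\|_{H^s_x}$).

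Next I apply Theorem~\ref{ExMWO} to $G_0$: it yields a global solution $U(t)$ of \eqref{CNLS} with
\begin{equation*}
\|U(t)-e^{it\Delta_{\R\times\T^d}}G(\pi\ln t)\|_{H^N(\R\times\T^d)}\to0\quad\text{as }t\to+\infty.
\end{equation*}
Since $e^{it\Delta}$ is a unitary group on every $H^s$ with $s\le N$, we have $\|e^{it\Delta}G(\pi\ln t)\|_{H^s}=\|G(\pi\ln t)\|_{H^s}$, and therefore
\begin{equation*}
\big|\,\|U(t)\|_{H^s}-\|G(\pi\ln t)\|_{H^s}\,\big|\le\|U(t)-e^{it\Delta}G(\pi\ln t)\|_{H^s}\le\|U(t)-e^{it\Delta}G(\pi\ln t)\|_{H^N}\to0.
\end{equation*}
Now choose times $t_k$ with $\pi\ln t_k=\tau_k$, i.e.\ $t_k=e^{\tau_k/\pi}\to+\infty$. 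Along this sequence $\|G(\pi\ln t_k)\|_{H^s}=\|\phi\|_{H^s_x}\|g(\tau_k)\|_{H^s_y}\to+\infty$, so by the displayed inequality $\|U(t_k)\|_{H^s}\to+\infty$ as well, giving both $\limsup_{t\to+\infty}\|U(t)\|_{H^s}=+\infty$ and, after substituting $\tau_k=\pi\ln t_k$ into the lower bound from \cite{Hani}, the quantitative rate
\begin{equation*}
\|U(t_k)\|_{H^s}\gtrsim\exp\!\big(c(\log\log t_k)^{1/2}\big).
\end{equation*}
Finally, to control $\|U(0)\|_{H^s}$: the modified scattering solution $U$ constructed in Theorem~\ref{ExMWO} has small $S$-type norm globally (comparable to $\|G_0\|_{S^+}\lesssim\varepsilon'$), hence $\|U(0)\|_{H^s}\lesssim\varepsilon'$, and choosing $\varepsilon'$ small enough in terms of $\varepsilon$ forces $\|U(0)\|_{H^s}\le\varepsilon$.

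The routine points — unitarity of $e^{it\Delta}$, the triangle inequality, and the change of variables $t_k=e^{\tau_k/\pi}$ — are immediate. The one place that needs genuine care, and which I regard as the main obstacle, is verifying that the tensor datum $\phi\otimes g(0)$ actually has finite (indeed small) $S^+$ norm: the space $S^+$ from \eqref{DefSNorm} involves weighted $x$-derivatives and high $y$-regularity, so one must check that a fixed Schwartz $\phi(x)$ times the $\T^d$-profile $g(0)(y)$ lies in $S^+$ with norm controlled by $\|\phi\|_{S^+_x}\|g(0)\|_{H^{N}(\T^d)}$ (this reduces to the fact that the construction in \cite{Hani} can be taken with $g(0)$ as regular as needed, e.g.\ a trigonometric polynomial, which is automatic). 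A secondary subtlety is that the growth in \cite{Hani} is stated for data that is merely \emph{small} in $H^s(\T^d)$ but possibly not small in higher norms; one must make sure the rescaling inherent in that construction does not destroy the smallness of $\|G_0\|_{S^+}$, which again follows by choosing the free small parameter in \cite{Hani}'s construction appropriately and then, if necessary, further rescaling $\phi$. None of this is deep, but it is the step where one actually has to open up the definitions.
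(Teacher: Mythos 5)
Your overall strategy is exactly the intended one: combine the quantitative growth for the resonant system (Proposition~\ref{growth on Z^d theo}, via \cite{Hani,GuKa}) with the modified wave operator of Theorem~\ref{ExMWO}, using unitarity of $e^{it\Delta}$ and the change of variables $\tau=\pi\ln t$ to convert $\exp(c(\log\tau_k)^{1/2})$ into $\exp(c(\log\log t_k)^{1/2})$. The smallness of $\Vert U(0)\Vert_{H^s}$ and the triangle-inequality transfer of growth from $G$ to $U$ are also fine.

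There is, however, one genuinely wrong step: the claim that, because the $\xi$-dependence in \eqref{RSS} is parametric, the tensor ansatz $G(\tau)(x,y)=\phi(x)g(\tau)(y)$ solves \eqref{RSS}. Parametric dependence only means the $\xi$-slices decouple; since the nonlinearity is cubic, the slice with datum $\widehat{\phi}(\xi)g_p(0)$ evolves, by the scaling symmetry of Remark~\ref{RemScal}, as $\widehat{G}_p(\tau,\xi)=\widehat{\phi}(\xi)\,a_p\big(\widehat{\phi}(\xi)^2\tau\big)$ (for real $\widehat{\phi}$), \emph{not} as $\widehat{\phi}(\xi)a_p(\tau)$. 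Indeed $\partial_\tau$ of the tensor ansatz at $\tau=0$ would require $\widehat{\phi}^3=\widehat{\phi}$, i.e.\ $\widehat{\phi}\in\{0,\pm1\}$, which is incompatible with $\phi$ being Schwartz (and an indicator $\mathbf{1}_{[-1,1]}(\xi)$ puts $G_0$ outside $S^+$). This is precisely the point addressed by Remark~\ref{rreemm} and the $\psi_\epsilon$ construction around \eqref{psiepsilon}: one takes $\widehat{G}_p(0,\xi)=c\,\psi_\epsilon(\xi)a_p(0)$ with $\psi_\epsilon$ smooth and \emph{constant} on an interval $I$, so that on $I$ the slice is an exact (time-rescaled) copy of $a_p$ and the $H^s(\R\times\T^d)$ growth is recovered by integrating over $\xi\in I$; off $I$ the solution is merely bounded. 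The error is repairable and does not change the conclusion (the constant time-rescaling only shifts $\log t_k$ by a multiplicative constant, harmless inside $(\log\log t_k)^{1/2}$), but as written your intermediate claim is false and the formula $\Vert G(\pi\ln t_k)\Vert_{H^s}=\Vert\phi\Vert_{H^s_x}\Vert g(\tau_k)\Vert_{H^s_y}$ (itself only a two-sided comparison, not an identity, even for true tensor products) does not describe the actual solution of \eqref{RSS}. You should replace the tensor-product step by the explicit Fourier formula above and run the lower bound on the set $\{\xi\in I\}$ only.
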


\begin{remark}\label{NoCascade}
These infinite cascades \emph{do not occur} when $d=1$ on $\mathbb{R}\times\mathbb{T}$ (nor when $d=0$ on $\mathbb{R}$). At least not for small smooth localized solutions. In fact, (see \eqref{Explicit1d}), the asymptotic dynamic of small solutions to \eqref{CNLS} is fairly similar on $\mathbb{R}$ and on $\mathbb{R}\times\mathbb{T}$, in sharp contrast with the case $d\ge2$.
\end{remark}


Corollary~\ref{Inf cascade cor}  gives a partial solution to a problem posed by Bourgain \cite[page 43-44]{Bo7} concerning the possible long time growth of the $H^s$, $s>1$ norms for the solutions of the cubic nonlinear Schr\"odinger equation. This growth of high Sobolev norms is regarded as a proof of the (direct)\emph{energy cascade phenomenon} in which the energy of the system (here the kinetic energy) moves from low frequencies (large scales) towards arbitrarily high frequencies (small scales). Heuristically, the solution in Corollary \ref{Inf cascade cor} can be viewed as initially oscillating at scales that are $O(1)$, but at later times exhibits oscillations at arbitrarily smaller length-scales. This energy cascade is a main aspect of the out-of-equilibrium dynamics predicted for \eqref{CNLS} by the vast literature of physics and numerics falling under the theory of \emph{weak (wave) turbulence} (cf. \cite{MMT, ZLF}).

The corresponding result on $\T^d$ does not directly follow from Corollary \ref{Inf cascade cor} (nor does it imply it). This is somehow surprising because  one would naturally expect that adding a dispersive direction to $\T^d$ would drive the system closer to nonlinear asymptotic stability, and further from out-of-equilibrium dynamics (this is indeed the case if we study the equation on $\R^n\times \T^d$ for $n\geq 2$ as was shown by the scattering result in \cite{TzVi}). Our construction draws heavily on \cite{CKSTTTor, GuKa, Hani} where unbounded Sobolev orbits are constructed for the resonant system and applied to get finite time amplifications of the Sobolev norms  on $\mathbb{T}^2$.  However, in the case of the torus, nonresonant interactions do not disappear and feed back into the dynamics after a long but finite time. This is precisely where the \emph{more dispersive} setting of $\mathbb{R}\times\mathbb{T}^d$ makes a difference: in this case, nonresonant terms are transformed into quintic terms \emph{which scatter}, and hence, at least heuristically do not modify the long-term dynamics.

Previous results in the spirit of Corollary \ref{Inf cascade cor}  may be found in \cite{BoGro1,BoGro2} for linear Schr\"{o}dinger equations with potential,  \cite{CKSTTTor,GuKa,Ku} for finite time amplifications of the initial $H^s$ norm,  \cite{BoAspects, BoJAnalM, Hani} for NLS with suitably chosen non-local nonlinearities, and \cite{GerGrel,GerGre2,GerGre3,Poc,HaiXu} for the zero-dispersion Szeg\"{o} and half-wave equations. Concerning the opposite question of obtaining upper bounds on the rate of possible growth of the Sobolev norms of solutions of NLS equations we refer to \cite{BoOntheGrowth,B10,CKO,Soh,Staffilani}.

\medskip

One can also use Theorem \ref{ExMWO} to construct other interesting non-scattering dynamics for equation \eqref{CNLS} as is illustrated in the following result.

\begin{corollary}[Forward compact solutions]\label{CorForwComp}
Let $d\geq 2$. For functions $U(t)$ on $\mathbb{R}\times \mathbb{T}^d$ defined for all $t\ge0$, we define the ``limit profile set" as
\begin{equation*}
\omega(U)=\limsup_{t\to+\infty}\{e^{-it\Delta_{\mathbb{R}\times\mathbb{T}^d}}U(t)\}=\cap_{\tau\in(0,\infty)}\overline{\{e^{-it\Delta_{\mathbb{R}\times\mathbb{T}^d}}U(t):\,\,t\ge\tau\}}.
\end{equation*}
Then
\begin{enumerate}
\item (no nontrivial scattering) Assume that $U$ solves \eqref{CNLS} and that $\omega(U)$ is a point. If $U(0)$ is sufficiently small, then $U\equiv 0$.
\item (scattering up to phase correction)
There exists a nontrivial solution $U$ of \eqref{CNLS} and a real function $b:\mathbb{R}\to\mathbb{R}$ such that $\omega(U(t)e^{ib(t)})$ is a point.
\item ((quasi-)periodic frequency dynamics)
There exists a global solution $U(t)$ such that $\omega(U)$ is compact but $\dim\hbox{Span}(\omega(U))\ge 2$. 
\end{enumerate}
\end{corollary}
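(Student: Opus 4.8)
The plan is to deduce Corollary~\ref{CorForwComp} from the modified wave operator construction in Theorem~\ref{ExMWO}, transferring each claimed behavior from the resonant system \eqref{RSS} to \eqref{CNLS}. The underlying principle is that Theorem~\ref{ExMWO} intertwines the dynamics: if $G(t)$ solves \eqref{RSS} and $U(t)$ is the solution of \eqref{CNLS} it produces, then $e^{-it\Delta}U(t) - G(\pi\ln t) \to 0$ in $H^N$ as $t\to+\infty$. Hence the limit profile set $\omega(U)$ is governed by the closure of the trajectory $\{G(\pi\ln t): t\ge\tau\}$, i.e. by $\{G(\sigma):\sigma\ge\sigma_0\}$ in $S\subset H^N$. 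So the strategy is: for each of the three items, exhibit a small solution $G_0$ of the resonant system whose forward trajectory has the desired $\omega$-limit structure, invoke Theorem~\ref{ExMWO} to get the companion solution $U$ of \eqref{CNLS}, and then check that the convergence $e^{-it\Delta}U(t) - G(\pi\ln t)\to 0$ is strong enough to match $\omega$-limit sets.

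For item (3), the cleanest input is the existence of a nonzero \emph{periodic} (or quasi-periodic) orbit of \eqref{RSS} lying in an arbitrarily small ball around $0$ and supported on at least two distinct frequency modes $p\in\Z^d$ — for instance a two-mode "Bohr" solution of the resonant system with $G(\tau)$ rotating on a circle spanned by two Fourier modes, or any of the KAM/periodic solutions alluded to in the introduction, rescaled to be small. For such $G$, the set $\{G(\pi\ln t):t\ge\tau\}$ is precompact (its closure is the orbit, a circle or torus), $\dim\mathrm{Span}$ of that orbit is $\ge 2$, and since $e^{-it\Delta}U(t)$ is within $o(1)$ of it in $H^N$, $\omega(U)$ is compact with $\dim\mathrm{Span}(\omega(U))\ge2$. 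For item (2), I would take $G_0$ to be a \emph{single-mode} solution of \eqref{RSS}: if $\widehat G_0$ is supported on a single $p_0$ with $\widehat G_0(\xi,p_0)=g_0(\xi)$, then the resonance condition forces $\mathcal R[G,G,G]$ to stay on that mode and the equation reduces to the scalar ODE $i\partial_\tau \widehat G(\xi,p_0)=|\widehat G(\xi,p_0)|^2\widehat G(\xi,p_0)$, solved explicitly by $\widehat G(\xi,p_0)(\tau)=e^{-i\tau|g_0(\xi)|^2}g_0(\xi)$ — a pure, $\xi$-dependent phase rotation. Then $G(\pi\ln t)$ equals $e^{-i\pi\ln t\,|g_0(\cdot)|^2}$ applied to $G_0$; this is exactly the $\mathbb{R}$-type modified scattering phase (cf. \eqref{Explicit1d}). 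Setting $b(t)$ to a spatially-constant phase won't quite collapse $\omega$ to a point because the phase depends on $\xi$, so item (2) as literally stated with $b:\mathbb{R}\to\mathbb{R}$ must instead use a solution whose resonant dynamics reduces to a single \emph{genuine phase}; the natural choice is $G_0$ with $\widehat G_0(\xi,p)$ supported where $|g_0(\xi)|^2$ is constant, or one exploits that $\|G_0\|_{L^2}$-type quantities are conserved — I would pick $G_0$ a ground-state-type stationary solution of \eqref{RSS}, so $G(\tau)=e^{-i\lambda\tau}G_0$ and $b(t)=\lambda\pi\ln t$ makes $e^{ib(t)}e^{-it\Delta}U(t)\to G_0$, a point. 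For item (1), the argument is a rigidity statement: if $\omega(U)=\{W\}$ is a single point and $U$ is small, then $e^{-it\Delta}U(t)\to W$ in $H^N$, so (by Theorem~\ref{ExMWO}'s uniqueness/inversion, or by running Theorem~\ref{ModScatThm} which attaches to $U$ a resonant solution $G$ with $G(\pi\ln t) - e^{-it\Delta}U(t)\to0$) the resonant solution $G$ satisfies $G(\sigma)\to W$ as $\sigma\to+\infty$; but $\|G(\sigma)\|_{L^2}$-weighted conserved quantities of \eqref{RSS} are constant while the only small fixed point / $\omega$-limit forced by the gradient-like structure... — more simply, a convergent orbit of the Hamiltonian PDE \eqref{RSS} must converge to an equilibrium, and one shows the only small equilibria that are also $\omega$-limits of nearby orbits are $0$; combined with conservation of mass along \eqref{CNLS} and the dispersive decay $\|U(t)\|_{L^\infty_xH^1_y}\lesssim(1+|t|)^{-1/2}$ from Theorem~\ref{ModScatThm}, one backs out $U\equiv0$.

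Concretely I would organize the proof as: (i) record the intertwining consequence of Theorem~\ref{ExMWO} (and of Theorem~\ref{ModScatThm} for the converse direction needed in (1)), namely that $\omega(U)=\overline{\bigcup_{\sigma\ge\sigma_0}\{G(\sigma)\}}$ taken in the $H^N$ topology, for the matched pair $(U,G)$; (ii) prove the single-mode reduction of \eqref{RSS} and the explicit phase-rotation formula, giving item (2); (iii) cite/construct a small two-mode periodic orbit of \eqref{RSS} for item (3); (iv) prove the rigidity in item (1) using the Hamiltonian structure of \eqref{RSS} together with conservation laws and the decay estimate. The main obstacle I expect is item (1): one must rule out \emph{all} nonzero small solutions of \eqref{CNLS} with one-point $\omega$-limit, which requires knowing that the resonant system has no nonzero small solution converging to an equilibrium other than via being identically that equilibrium, and then excluding nonzero equilibria — this is where the defocusing sign and the precise conserved quantities (mass, and the momentum/"number of particles per frequency" $\sum_p\int|\widehat G(\xi,p)|^2\,d\xi$ type invariants) of \eqref{RSS} enter, and one has to make sure the smallness threshold is consistent with that in Theorem~\ref{ExMWO}. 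The other points are comparatively soft, being essentially a dictionary translation through the modified wave operator.
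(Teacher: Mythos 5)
Your overall strategy --- transferring the three behaviors from the resonant system \eqref{RSS} to \eqref{CNLS} through Theorems \ref{ModScatThm} and \ref{ExMWO} --- is exactly the route the paper intends; the corollary is meant to be assembled from Section \ref{SpeDyn} (the no-scattering lemma, Proposition \ref{periodic RS}, Remark \ref{rreemm} and the $\psi_\epsilon$ construction). However, two of your three items have concrete gaps. For item (1), your ``rigidity'' paragraph is not a proof and points at the wrong mechanism. The paper's argument is a one-line coercivity statement: by \eqref{DefHam} the resonant Hamiltonian satisfies $\mathcal H(a)=\langle R[a,a,a],a\rangle=\langle i\partial_\tau a,a\rangle=\Vert e^{is\Delta_{\mathbb{T}^d}}\mathcal F^{-1}_y a\Vert_{L^4_{y,s}}^4\ge0$ and is conserved; if the orbit converges then $\partial_\tau a\to0$ while $a$ stays bounded, so $\mathcal H\equiv0$ and hence $a\equiv0$. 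No classification of equilibria is needed, no smallness is needed at the level of \eqref{RS}, and the defocusing sign is irrelevant (cf.\ Remark \ref{RemScal}). Smallness enters only to apply Theorem \ref{ModScatThm} (not \ref{ExMWO} --- here $U$ is given and you must attach $G$ to it), after which $G\equiv0$ together with conservation of $\Vert F(t)\Vert_{L^2}=\Vert U_0\Vert_{L^2}$ yields $U\equiv0$.

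For items (2) and (3), you correctly identify the central difficulty --- a single-mode resonant solution rotates with the $\xi$-dependent phase $e^{-i\tau\vert\widehat G_0(\xi,p_0)\vert^2}$, so a time-dependent global phase $b(t)$ cannot collapse the limit set --- but your proposed fix fails for the same reason: if $R[a,a,a]=\lambda a$ then $R[\mu a,\mu a,\mu a]=\vert\mu\vert^2\lambda\,\mu a$, so any ``stationary'' profile still rotates at a rate proportional to $\sum_p\vert\widehat G_0(\xi,p)\vert^2$, which is constant in $\xi$ only if that mass density is constant on its support --- incompatible with $G_0\in S^+$ unless one uses the paper's device of taking $\widehat G_p(0,\xi)=\epsilon^3\psi_\epsilon(\xi)a_p(0)$ with $\psi_\epsilon$ as in \eqref{psiepsilon}, so the dynamics is an exact phase rotation (for (2)), respectively the explicit beating orbit of Proposition \ref{periodic RS} (for (3)), on the plateau $\{\vert\xi\vert\le1-\epsilon\}$. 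Even then one must deal with the transition region $\{1-\epsilon\le\vert\xi\vert\le1\}$, where the $\xi$-dependent time dilation $a_p(\psi_\epsilon(\xi)^2\tau)$ destroys exact (quasi-)periodicity and hence the precompactness of the orbit is no longer automatic; this is where the real work of (2) and (3) lies, and neither your single-mode reduction nor a bare appeal to ``KAM/periodic solutions alluded to in the introduction'' addresses it. For (3) you should cite the explicit rectangle-supported solutions of Proposition \ref{periodic RS} rather than generic quasi-periodic orbits.
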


The proof of part (3) in the above corollary is interesting in its own right. In fact, we construct global solutions to \eqref{CNLS} that asymptotically bounce their energy (and mass) between two disjoint sites in frequency space periodically in time. These correspond to periodic-in-time solutions of \eqref{RSS} that exhibit the following ``beating effect'' (in the nomenclature of \cite{GrPaTh}): there exists two disjoint subsets $\R\times \Lambda_1$ and $\R\times \Lambda_2$ in $\R\times \Z^2$, so that for any $\varepsilon\in (0,1)$ there exists a solution $G(t)$ of \eqref{RSS} that is supported in frequency space on $\R\times \Lambda_1 \cup \R\times \Lambda_2$ in such a way that the fraction of the mass carried by each of the two sets alternates between $\varepsilon$ and $1-\varepsilon$ periodically in time. We refer to Subsection \ref{SpeDyn} for more constructions including asymptotically quasi-periodic dynamics.

\subsubsection{Comments}

It would be interesting to understand what is the optimal topology to obtain our results. It is probably a lot larger than the one we use. Progress in this direction would impact the following:

\begin{itemize}
\item  The results are restricted to small data. In the absence of a ``correct'' topology, the exact meaning of ``large data'' is not well established.
\item We cannot let any $H^s$ norm, $s>1$ grow in Corollary \ref{Inf cascade cor}, partly because we want to cover all the cases $0\le d\le 4$ in a uniform manner, using simple exponents. More careful analysis might address this point (for instance, either lowering the regularity requirement in Theorem \ref{ExMWO} or a more quantified version of the construction in \cite{CKSTTTor} would resolve this). We decided not to pursue this point here because Corollary 1.3 already captures the energy cascade phenomenon.
\item It is possible that a more adapted topology allows to define the scattering operator in a good Banach space.
\end{itemize}

As already mentioned the results of Theorems~\ref{ModScatThm} and \ref{ExMWO}  can be extended to the case of spheres (i.e. $\mathbb{R}\times S^d$, $d=2,3$).  A good understanding of the corresponding resonant system is presently missing. 

Finally, we also mention the situation in \cite{AnCaSi} where the partial periodicity is replaced by adding a (partially) confining potential.


\subsection{Overview of proof} 

\subsubsection{Modified scattering}

In order to describe the asymptotic behavior of a nonlinear dispersive equation like \eqref{CNLS}, it may be relevent to study the limiting behavior of the profile $F(t)=e^{-it\Delta}U(t)$ obtained by conjugating out the linear flow. If $F(t)$ converges to a fixed function $G_\infty$, then the solution scatters. If not, the next best thing is to find the simplest possible dynamical system that describes the asymptotic dynamics of $F(t)$. To find this system, one has to work on proving global a priori energy and decay estimates that allow to decompose the nonlinearity in the $F$ equation in the following way:
\begin{equation}\label{toy scheme}
i\partial_t F= \mathcal N(F)=\mathcal N_{eff}(F)+ \mathcal E(F)
\end{equation}
where $\mathcal E(F)$ is integrable in time. When this is possible, one can hope to prove that the asymptotic dynamics converge to that of the effective system 
\begin{equation}\label{limit sys int}
i\partial_t G=\mathcal N_{eff}(G).
\end{equation}
Proving the global a priori energy and decay estimates can be a daunting task depending on the problem at hand. On the other hand, the process of proving the convergence to the dynamics of \eqref{limit sys int} depends very much on how simple or complicated $\mathcal N_{eff}(G)$ is.

Previous modified scattering results that we are aware of, only concerned equations (or systems) posed on $\mathbb{R}^d$, quasilinear or semilinear \cite{AlDe,Ca,DeZh,Del,HaLiNa,HaNa,HaNaShTo,IoPu1,IoPu2,KaPu,Oz,ZS} and had an integrable asymptotic system for \eqref{limit sys int}. This often allowed for a simple phase conjugation (in physical or Fourier space) to give the modification. In contrast, our limiting system is given by \eqref{RSS} which is not only a non-integrable system, but also allows for the growth of norms of its solutions as we saw in Corollary \ref{Inf cascade cor}. This requires a robust approach to modified scattering, that tolerates the growth of the limiting system $\eqref{limit sys int}$ as long as the decay of $\mathcal E(F)$ in \eqref{toy scheme} is sufficiently fast to trump the divergence effects of the effective part $\mathcal N_{eff}(F)$. 

\subsubsection{Isolating the resonant system: heuristics}

To isolate the effective interactions ($\mathcal{N}_{eff}$ above), we can argue formally by looking at \eqref{CNLS} in Fourier space:
\begin{equation}\label{Duhamel intro}
i\partial _t \widehat F_p(\xi,t) =\sum_{q-r+s=p}e^{i\omega t}\int_{\mathbb{R}^2}e^{2i\eta\kappa t}\widehat{F}_{q}(\xi-\eta,t)\overline{\widehat{F}_{r}}(\xi-\eta-\kappa,t)\widehat{F}_{s}(\xi-\kappa,t)d\kappa d\eta,
\end{equation}
where $\omega=\vert p\vert^2-\vert q\vert^2+\vert r\vert^2-\vert s\vert^2$ and where $\widehat{F}_p(\xi)$ denotes the Fourier transform of $F$ at $(\xi,p)\in\mathbb{R}\times\mathbb{Z}^d$. Roughly speaking, a stationary phase argument in the $(\eta, \kappa)$ integral implies\footnote{see \cite{KaPu} for a previous use of this remark.} that for very large times the equation for $\widehat F_p(\xi,t)$ can be written as
\begin{equation*}
i\partial _t \widehat F_p(\xi,t) =\frac{\pi}{t} \sum_{q-r+s=p}e^{i\omega t}\widehat{F}_{q}(\xi,t)\overline{\widehat{F}_{r}}(\xi,t)\widehat{F}_{s}(\xi,t)+l.o.t.
\end{equation*}
This is essentially an ODE system for each $\xi \in \mathbb{R}$. As is well-known, \emph{resonant interactions} corresponding to  $(p,q,r,s)$ for which $\omega=0$ play a particularly important role in the dynamics of such an ODE, especially given the decay of $\partial_t \widehat F_p$. This suggests that the expression above can be simplified to
\begin{equation*}
i\partial _t \widehat F_p(\xi,t) =\frac{\pi}{t} \sum_{\substack{q-r+s=p\\|q|^2-|r|^2+|s|^2=|p|^2}}\widehat{F}_{q}(\xi,t)\overline{\widehat{F}_{r}}(\xi,t)\widehat{F}_{s}(\xi,t)+l.o.t.
\end{equation*}
As a result, one should expect the asymptotic dynamics of $F$ to be dictated by the ODE system given by the first term on the right-hand side above. The latter system can be seen to be autonomous when written in terms of the slow time scale $\tau=\pi\ln t$ in which it has the form \eqref{RSS}. Note that this system was previously studied and shown to have interesting dynamics \cite{CaFa,CKSTTTor,FGH,Hani}.

The upshot of the above formal calculation is that one should expect a solution $F(t)$ to \eqref{Duhamel intro} to asymptote to some $G(\pi\ln t)$ where $G(\tau)$ solves \eqref{RSS}. This is the content of Theorem \ref{ModScatThm}.

\subsubsection{Norms and the control of the solution} As mentioned above, establishing a priori energy and decay estimates is a precursor to isolating the leading order dynamics. In the scalar case $d=0$ \cite{HaNa, KaPu}, the needed energy estimates follow easily once we guarantee the $t^{-1/2}$ decay for the $L^\infty$ norm. Indeed, schematically speaking, if $E(t)$ is an appropriate energy of the system that controls its strong norms, then one has the relation
$$
\partial_t E(t)\lesssim \|u(t)\|^2_{L^\infty}E(t)\lesssim t^{-1}E(t)
$$
which barely allows to close any polynomial-growth bootstrap for $E(t)$. The $L^\infty$ decay can be bootstrapped by relying on the boundedness of the Fourier transform, which follows from the equation satisfied by $\widehat F(\xi)$. An almost identical energy method argument works in the case $d=1$, but reaches its limit there. Indeed, for $d\geq 2$, we do not have access to the sharp linear decay $t^{-1/2}$ which was crucial to closing the energy bootstrap above. To overcome this difficulty, we need additional estimates coming from the low-regularity theory. We use a hierarchy of three norms.
\begin{itemize}
\item The $Z$-norm is bounded and essentially corresponds to the strongest information that remains a priori bounded uniformly in time.
\item The $S$-norm controls the number of periodic derivatives we want to consider. It grows slowly with time, but the difference with the asymptotic dynamics {\it decays} in this norm.
\item The $S^+$-norm is slightly stronger than the $S$-norm. It is allowed to grow slowly, but still yields better control on objects in the $S$-norm. In particular, it controls the {\it same} number of derivatives in the periodic directions as the $S$-norm.
\end{itemize}
While the choice of the $Z$ norm is dictated by the resonant system, there is considerable flexibility in the choices of the two other norms. Another possible choice might be a variation\footnote{But for the moment, it seems difficult in the proof of the modified wave operator to work with an intermediate norm controlling no weight in $x$.} of  $Z, Z\cap H^N, S$. One of the main problems complicating the situation here is the need for a bounded linearized operator around a solution $G$ of the asymptotic system, which is not trivial in view of the missing $t^{-1/2}$ decay of $\Vert U(t)\Vert_{L^\infty}$.

 The significance of the $Z$ norm stems from the following two key facts: 1) it is conserved for the resonant system\footnote{Ultimately, this leads to the key non-perturbative ingredient, see \eqref{H1NormPres} and \eqref{H1Cancellation}.}, and 2) it is a controlling norm for the existence and growth of its solution\footnote{In particular, this forces the restriction $d\le 4$ in Theorems \ref{ModScatThm} and \ref{ExMWO}.} in view of Lemma \ref{gAdmi}. This, combined with Lemma \ref{DispLem}, provides the extent to which we can get decay for solutions of \eqref{CNLS}. Interestingly, all this global analysis of the resonant system \eqref{RSS} relies heavily on using local in time Strichartz estimates on the torus in order to get global-in-time bounds for the  the $Z$ norm of the nonlinearity (see Lemma \ref{DiscreteStricTorLem}). At this place our view point is quite different form a na\"{i}ve 1d vector valued analysis (as is the case in \cite{TzVi}).

We also note that although our approach is close in spirit to recent developments in global existence for quasilinear equations \cite{GeMaSh,GeMaSh2,IoPaKG,IoPu1,IoPu2}, some of the key estimates really pertain to the low-regularity theory (see Lemma \ref{DiscreteStricTorLem} and Lemma \ref{1dBE}\footnote{This is somewhat parallel to the energy method in the quasilinear results.}).
   
\subsection*{Organization of the paper}

Section \ref{SNot} introduces the notations used in this paper. Section \ref{SStrucN} provides a decomposition of the nonlinearity as in \eqref{toy scheme}. Section \ref{SRS} introduces the resonant system \eqref{RSS} and gives some properties of its solutions. Section \ref{SMWO} shows existence of the modified wave operators and proves Theorem \ref{ExMWO}. Section \ref{SMS} shows the modified scattering statement and proves Theorem \ref{ModScatThm}. Finally, Section \ref{SAE} collects various additional estimates needed in the proofs.

\section{Notations}\label{SNot}

\subsection{Standard notations}
In this paper $\T:=\R/(2\pi\Z)$.
We will often consider functions $f:\mathbb{R}\to\mathbb{C}$ and functions $F:\mathbb{R}\times\mathbb{T}^d\to\mathbb{C}$. To distinguish between them, we use the convention that lower case letters denote functions defined on $\mathbb{R}$, capitalized letters denote functions defined on $\mathbb{R}\times\mathbb{T}^d$, and calligraphic letters denote operators, except for the Littlewood-Paley operators and dyadic numbers which are capitalized most of the time.

We define the Fourier transform on $\mathbb{R}$ by
\begin{equation*}
\widehat{g}(\xi):=\frac{1}{2\pi}\int_{\mathbb{R}}e^{-ix\xi}g(x)dx.
\end{equation*}
Similarly, if $F(x,y)$ depends on $(x,y)\in\mathbb{R}\times\mathbb{T}^d$, $\widehat{F}(\xi,y)$ denotes the partial Fourier transform in $x$. We also consider the Fourier transform of $h:\mathbb{T}^d\to\mathbb{C}$,
\begin{equation*}
h_p:=\frac{1}{(2\pi)^{d}}\int_{\mathbb{T}^d}h(y)e^{-i\langle p,y\rangle}dy,\qquad p\in\mathbb{Z}^d,
\end{equation*}
and this extends to $F(x,y)$. Finally, we also have the full (spatial) Fourier transform
\begin{equation*}
\left(\mathcal{F}F\right)(\xi,p)=\frac{1}{(2\pi)^{d}}\int_{\mathbb{T}^d}\widehat{F}(\xi,y)e^{-i\langle p,y\rangle}dy=\widehat{F}_p(\xi).
\end{equation*}

We will often use Littlewood-Paley projections. For the full frequency space, these are defined as follows:
\begin{equation*}
\begin{split}
\left(\mathcal{F}P_{\le N}F\right)(\xi,p)=\varphi(\frac{\xi}{N})\varphi(\frac{p_1}{N})\dots\varphi(\frac{p_d}{N})\left(\mathcal{F}F\right)(\xi,p),
\end{split}
\end{equation*}
where $\varphi\in C^\infty_c(\mathbb{R})$, $\varphi(x)=1$ when $\vert x\vert\le 1$ and $\varphi(x)=0$ when $\vert x\vert\ge 2$. Next, we define
\begin{equation*}
P_N=P_{\le N}-P_{\le N/2},\quad P_{\ge N}=1-P_{\le N/2}.
\end{equation*}
Many times we will concentrate on the frequency in $x$ only, and we therefore define
\begin{equation*}
\begin{split}
\left(\mathcal{F}Q_{\le N}F\right)(\xi,p)=\varphi(\frac{\xi}{N})\left(\mathcal{F}F\right)(\xi,p),
\end{split}
\end{equation*}
and define $Q_N$ similarly to $P_N$. By a slight abuse of notation, we will consider $Q_N$ indifferently as an operator on functions defined on $\mathbb{R}\times\mathbb{T}^d$ and on $\mathbb{R}$.
We shall use the following commutator estimate
\begin{equation}\label{comm}
\|[Q_N,x]\|_{L^2_x\rightarrow L^2_x}\lesssim N^{-1}\,.
\end{equation}

\medskip

Below, we will need a few parameters. We fix $\delta<10^{-3}$ and\footnote{The exact value of $N$ can be significantly lowered e.g. by allowing more weights in the $S$ norm in \eqref{DefSNorm}.} $N\ge30$.
For $T\gtrsim 1$ a positive number, we let $q_T:\mathbb{R}\to\mathbb{R}$ be an arbitrary function satisfying
\begin{equation*}
0\le q_T(s)\le 1,\qquad q_T(s)=0\quad \hbox{if}\quad\vert s\vert\le T/4\quad\hbox{or}\quad \vert s\vert \ge T,\qquad\hbox{and}\qquad\int_{\mathbb{R}}\vert q_T^\prime(s)\vert ds\le 10.
\end{equation*}
Particular examples are the characteristic functions $q_T(s)=\mathfrak{1}_{[T/2,T]}(s)$, with the natural interpretation of the integral on $\R$ of $|q_T^\prime|$.

We will use the following sets corresponding to momentum and resonance level sets:
\begin{equation}\label{DiscPar}
\begin{split}
\mathcal{M}&:=\{(p,q,r,s)\in\mathbb{Z}^{4d}:\,\,p-q+r-s=0\},\\
\Gamma_\omega&:=\{(p,q,r,s)\in\mathcal{M}:\,\,\vert p\vert^2-\vert q\vert^2+\vert r\vert^2-\vert s\vert^2=\omega\}.\\
\end{split}
\end{equation}
In particular, note that $(p,q,r,s)\in\Gamma_0$ if and only if $\{p,q,r,s\}$ are the vertices of a rectangle.

\subsection{Duhamel formula}
We will prove all our statements for $t\geq 0$. By time-reversal symmetry, one obtains the analogous claims for $t\leq 0$.
In studying solutions to \eqref{CNLS}, it will be convenient to factor out the linear flow and write a solution $U$ of \eqref{CNLS} as
\begin{equation*}
U(x,y,t)=\sum_{p\in\mathbb{Z}^d}e^{i\langle p,y\rangle}e^{-it\vert p\vert^2}(e^{it\partial_{xx}}F_p(t))(x)=
e^{it\Delta_{\mathbb{R}\times\mathbb{T}^d}}(F(t)).
\end{equation*}
We then see that $U$ solves \eqref{CNLS} if and only if $F$ solves
\begin{equation}\label{edno}
i\partial_t F(t) = e^{-it\Delta_{\mathbb{R}\times\mathbb{T}^d}}\Big( e^{it\Delta_{\mathbb{R}\times\mathbb{T}^d}}F(t)\cdot e^{-it\Delta_{\mathbb{R}\times\mathbb{T}^d}}\overline{F(t)}\cdot e^{it\Delta_{\mathbb{R}\times\mathbb{T}^d}}F(t)\Big).
\end{equation}
We will denote the nonlinearity in \eqref{edno} by $\mathcal{N}^t[F(t),F(t),F(t)]$, where the trilinear form $\mathcal{N}^t$ is defined by 
\begin{equation*}
\mathcal{N}^t[F,G,H]:=e^{-it\Delta_{\mathbb{R}\times\mathbb{T}^d}}\Big( e^{it\Delta_{\mathbb{R}\times\mathbb{T}^d}}F\cdot e^{-it\Delta_{\mathbb{R}\times\mathbb{T}^d}}\overline{G}\cdot e^{it\Delta_{\mathbb{R}\times\mathbb{T}^d}}H\Big).
\end{equation*}
Now, we can compute the Fourier transform of the last expression which leads to the identity
\begin{equation}\label{sss}
\mathcal{F}\mathcal{N}^t[F,G,H](\xi,p)=\sum_{(p,q,r,s)\in\mathcal{M}}e^{it\left[\vert p\vert^2-\vert q\vert^2+\vert r\vert^2-\vert s\vert^2\right]}
\widehat{\mathcal{I}^t[F_q,G_r,H_s]}(\xi),
\end{equation}
where
\begin{equation}\label{DefOfI}
\mathcal{I}^t[f,g,h]:=\mathcal{U}(-t)\Big(\mathcal{U}(t)f\, \overline{\mathcal{U}(t)g}\,\mathcal{U}(t)h \Big),\quad \mathcal{U}(t)=\exp(it\partial_x^2).
\end{equation}
One verifies that
$$
\widehat{\mathcal{I}^t[f,g,h]}(\xi)=\int_{\mathbb{R}^2}e^{it2\eta\kappa}\widehat{f}(\xi-\eta)\overline{\widehat{g}}(\xi-\eta-\kappa)\widehat{h}(\xi-\kappa)d\kappa d\eta.
$$
Thus one may also write
\begin{multline*}
\mathcal{F}\mathcal{N}^t[F,G,H](\xi,p)
=
\sum_{(p,q,r,s)\in\mathcal{M}}e^{it\left[\vert p\vert^2-\vert q\vert^2+\vert r\vert^2-\vert s\vert^2\right]}
\\
\int_{\mathbb{R}^2}e^{it2\eta\kappa}\widehat{F}_{q}(\xi-\eta)\overline{\widehat{G}_{r}}(\xi-\eta-\kappa)\widehat{H}_{s}(\xi-\kappa)d\kappa d\eta\,.
\end{multline*}
According to our previous discussion, we now define  
the resonant part of the nonlinearity\footnote{$(\pi/t)\mathcal{R}$ corresponds to $\mathcal{N}_{eff}$ in \eqref{toy scheme}.} as
\begin{equation}\label{DefOfR}
\begin{split}
\mathcal{F}\mathcal{R}[F,G,H](\xi,p)&:=\sum_{(p,q,r,s)\in\Gamma_0}\widehat{F}_q(\xi)\overline{\widehat{G}_r}(\xi)\widehat{H}_s(\xi).
\end{split}
\end{equation}
We have a remarkable Leibniz rule for $\mathcal{I}^t[f,g,h]$, namely
\begin{equation}\label{leib}
Z\mathcal{I}^t[f,g,h]=\mathcal{I}^t[Zf,g,h]+\mathcal{I}^t[f,Zg,h]+\mathcal{I}^t[f,g,Zh],\quad Z\in\{ix,\partial_x\}.
\end{equation}
A similar property holds for the whole nonlinearity $\mathcal{N}^t[F,G,H]$, where $Z$ can also be a derivative in the transverse direction, $Z=\partial_{y_j}$. Property \eqref{leib} will be of importance in order to ensure the hypothesis of  the transfer principle displayed by Lemma~\ref{ControlSS+}.
\subsection{Norms}
We will often consider sequences and we define the following norm on these:
\begin{equation*}
\Vert \{a_p\}\Vert_{h^s_p}^2:=\sum_{p\in\mathbb{Z}^d}\left[1+\vert p\vert^2\right]^s\vert a_p\vert^2.
\end{equation*}
For functions, we will often omit the domain of integration from the description of the norms. However, we will indicate it by a subscript $x$ (for $\mathbb{R}$), $x,y$ (for $\mathbb{R}\times\mathbb{T}^d$) or $p$ (for $\mathbb{Z}^d$). We will use mainly three different norms: a weak norm
\begin{equation*}
\Vert F\Vert_{Z}^2:=\sup_{\xi\in\mathbb{R}}\left[1+\vert \xi\vert^2\right]^2\sum_{p\in\mathbb{Z}^d}\left[1+\vert p\vert\right]^2\vert\widehat{F}_p(\xi)\vert^2=\sup_{\xi\in\mathbb{R}}\left[1+\vert \xi\vert^2\right]^2\Vert \widehat{F}_p(\xi)\Vert_{h^1_p}^2
\end{equation*}
and two strong norms
\begin{equation}\label{DefSNorm}
\begin{split}
\Vert F\Vert_{S}:=&\Vert F\Vert_{H^N_{x,y}}+\Vert xF\Vert_{L^2_{x,y}},\quad
\Vert F\Vert_{S^+}:=\Vert F\Vert_S+\Vert (1-\partial_{xx})^4F\Vert_{S}+\Vert xF\Vert_{S}.
\end{split}
\end{equation}
We have the following hierarchy
\begin{equation}\label{StrongerNorm}
\Vert F\Vert_{H^1_{x,y}}\lesssim \Vert F\Vert_Z\lesssim\Vert F\Vert_S\lesssim \Vert F\Vert_{S^+}.
\end{equation}
To verify the middle inequality, using \eqref{comm} and the elementary inequality
\begin{equation}\label{L11}
\|f\|_{L^1_x(\mathbb{R})}\lesssim \|f\|_{L^2_x(\R)}^{\frac{1}{2}}\| x  f\|_{L^2_x(\R)}^{\frac{1}{2}},
\end{equation}
one might observe that
\begin{equation*}
\begin{split}
\left[1+\vert \xi\vert^2\right]\vert \widehat{F}(\xi,p)\vert&
\lesssim\sum_N N^2\vert \widehat{Q_NF}(\xi,p)\vert\lesssim \sum_NN^2\Vert Q_NF_p\Vert_{L^2_x}^\frac{1}{2}\Vert  x  Q_NF_p\Vert_{L^2_x}^\frac{1}{2}\\
&\lesssim \sum_NN^{-\frac{1}{2}}\Vert (1-\partial_{xx})^\frac{5}{2}F_p\Vert_{L^2_x}^\frac{1}{2}\Vert \langle x\rangle F_p\Vert_{L^2_x}^\frac{1}{2}\lesssim \Vert F_p\Vert_{H^5_x}^\frac{1}{2}\Vert \langle x\rangle F_p\Vert_{L^2_x}^\frac{1}{2}
\end{split}
\end{equation*}
squaring and multiplying by $\langle p\rangle^2$, we find that (using interpolation too)
\begin{equation}\label{ZSNorm}
 \Vert F\Vert_Z\lesssim \Vert F\Vert_{L^2_{x,y}}^\frac{1}{4}\Vert F\Vert_S^\frac{3}{4}.
\end{equation}
We also remark that the operators $Q_{\le N}$, $P_{\le N}$ and the multiplication by $\varphi(\cdot/N)$ are bounded in $Z$, $S$, $S^+$, uniformly in $N$.

The space-time norms we will use are
\begin{equation}\label{XNorm}
\begin{split}
\Vert F\Vert_{X_T}:=&\sup_{0\le t\le T}\big\{\Vert F(t)\Vert_Z+(1+\vert t\vert)^{-\delta}\Vert F(t)\Vert_S +(1+\vert t\vert)^{1-3\delta}\Vert\partial_t F(t)\Vert_S\big\},\\
\Vert F\Vert_{X_T^+}:=&\Vert F\Vert_{X_T}+\sup_{0\le t\le T}\big\{(1+\vert t\vert)^{-5\delta}\Vert F(t)\Vert_{S^+}+(1+\vert t\vert)^{1-7\delta}\Vert\partial_t F(t)\Vert_{S^+}\big\}.
\end{split}
\end{equation}

In most of the cases, in order to sum-up the $1d$ estimates we make use of the following elementary bound
\begin{equation}\label{sum_in_p}
\Big\|
\sum_{(q,r,s)\,:\,(p,q,r,s)\in{\mathcal M}}
c^1_qc^2_rc^3_s\Big\|_{l^2_{p}}
\lesssim
\min_{\sigma\in\mathfrak{S}_3}\|c^{\sigma(1)}\|_{l^2_p}\|c^{\sigma(2)}\|_{l^1_p}\|c^{\sigma(3)}\|_{l^1_p}.\,
\end{equation}
As a warm up, we can prove the following simple estimates which are sufficient for the local theory.
\begin{lemma}\label{warm-up}
The following estimates hold:
\begin{equation}\label{CrudeEst}
\begin{split}
\Vert \mathcal{N}^t[F,G,H]\Vert_S&\lesssim (1+\vert t\vert)^{-1}\Vert F\Vert_S\Vert G\Vert_S\Vert H\Vert_S,\\
\Vert \mathcal{N}^t[F^a,F^b,F^c]\Vert_{S^+}&\lesssim (1+\vert t\vert)^{-1}\max_{\sigma\in\mathfrak{S}_3}\Vert F^{\sigma(a)}\Vert_{S^+}\Vert F^{\sigma(b)}\Vert_{S}\Vert F^{\sigma(c)}\Vert_{S}
\end{split}
\end{equation}
\end{lemma}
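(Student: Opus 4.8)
The estimates in \eqref{CrudeEst} are essentially dispersive decay estimates for the trilinear form, combined with the product/Leibniz structure recorded in \eqref{leib}. The strategy is to pass from $\mathcal{N}^t$ to the kernel representation \eqref{sss}--\eqref{DefOfI}, peel off the periodic (momentum) sum using the Schur-type bound \eqref{sum_in_p}, and reduce everything to a purely one-dimensional statement about $\mathcal{I}^t[f,g,h]=\mathcal{U}(-t)(\mathcal{U}(t)f\cdot\overline{\mathcal{U}(t)g}\cdot\mathcal{U}(t)h)$. For the $L^2_x$ part, the point is simply that $\mathcal{U}(-t)$ is an $L^2$ isometry, so
$$
\|\mathcal{I}^t[f,g,h]\|_{L^2_x}=\|\mathcal{U}(t)f\cdot\overline{\mathcal{U}(t)g}\cdot\mathcal{U}(t)h\|_{L^2_x}\le \|\mathcal{U}(t)f\|_{L^\infty_x}\|\mathcal{U}(t)g\|_{L^\infty_x}\|\mathcal{U}(t)h\|_{L^2_x},
$$
and then one invokes the standard $1d$ linear Schr\"odinger dispersive estimate $\|\mathcal{U}(t)f\|_{L^\infty_x}\lesssim |t|^{-1/2}(\|f\|_{L^1_x}+\|xf\|_{L^1_x})$ — or more robustly $\|\mathcal{U}(t)f\|_{L^\infty_x}\lesssim |t|^{-1/2}\|f\|_{L^2_x}^{1/2}\|xf\|_{L^2_x}^{1/2}$ via \eqref{L11} applied to $\widehat{f}$, which is exactly the type of bound already used to prove \eqref{ZSNorm}. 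This produces the $(1+|t|)^{-1}$ factor (two copies of $|t|^{-1/2}$), with the small-time region $|t|\lesssim 1$ handled trivially by Sobolev embedding $H^1_x\hookrightarrow L^\infty_x$.

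\textbf{Key steps in order.} First I would fix $p$ and write $\mathcal{F}\mathcal{N}^t[F,G,H](\xi,p)=\sum_{(p,q,r,s)\in\mathcal{M}}e^{it(\dots)}\widehat{\mathcal{I}^t[F_q,G_r,H_s]}(\xi)$, noting that the oscillatory phase $e^{it(\dots)}$ is harmless since it has modulus one. Second, for the $H^N_{x,y}$ component: $\|\mathcal{N}^t[F,G,H]\|_{H^N_{x,y}}^2=\sum_p\langle p\rangle^{2N}\|\mathcal{I}^t\text{-sum}\|_{H^N_x}^2$; distribute the $\langle p\rangle^N$ weight and the $x$-derivatives using \eqref{sum_in_p} together with the Leibniz rule \eqref{leib} (for $\partial_x$) and the analogous rule for $\partial_{y_j}$ mentioned just after \eqref{leib}, landing on terms of the form (highest-weight factor in $l^2_p L^2_x$) $\times$ (two factors in $l^1_p L^\infty_x$); then apply the $1d$ decay bound above to the two $L^\infty_x$ factors, convert $l^1_p$ to $l^2_p$ at the cost of finitely many extra $\langle p\rangle$ powers absorbed into $H^N$ ($N\geq 30$ is plenty), and also sum the $\langle x\rangle$ weights needed by the dispersive estimate into the $\|xF\|_{L^2}$ part of the $S$ norm (here one uses that $x$ also obeys the Leibniz rule in \eqref{leib} and commutes well with $Q_N$ via \eqref{comm}). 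Third, the $\|x\mathcal{N}^t[F,G,H]\|_{L^2_{x,y}}$ component: apply $Z=ix$ via the Leibniz rule \eqref{leib} to write $x\mathcal{I}^t=\mathcal{I}^t[xf,g,h]+\mathcal{I}^t[f,xg,h]+\mathcal{I}^t[f,g,xh]$, then repeat the $L^2_x$ estimate above, putting the factor carrying $x$ in $L^2_x$. Summing in $p$ with \eqref{sum_in_p} gives the first line of \eqref{CrudeEst}. Fourth, for the $S^+$ estimate, the norm $\|F\|_{S^+}=\|F\|_S+\|(1-\partial_{xx})^4F\|_S+\|xF\|_S$ just adds four more $\partial_x$'s or one more $x$ to track; distribute them by \eqref{leib} onto a single (the $\sigma(a)$) factor, which then carries the $S^+$ norm, while the other two stay in $S$; the decay mechanism is identical, yielding the second line.

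\textbf{Main obstacle.} There is no deep obstruction — these are the easy ``local theory'' bounds, the paper itself calls them a warm-up — but the one point requiring care is bookkeeping the interplay of the $x$-weight and the $x$-derivatives simultaneously, i.e. making sure that when the dispersive estimate $\|\mathcal{U}(t)f\|_{L^\infty}\lesssim |t|^{-1/2}\|f\|_{L^2}^{1/2}\|xf\|_{L^2}^{1/2}$ is applied to the two low factors, the resulting $\|xf\|_{L^2}$, $\|xg\|_{L^2}$ are genuinely controlled by $\|F\|_S$ (which contains $\|xF\|_{L^2}$) without creating an $x$-weight on a factor that already carries the top $N$ derivatives — this is exactly why one wants the $L^\infty$ factors to be the \emph{lowest-order} ones and the single $L^2$ factor to absorb both the top derivatives and (for the $x\mathcal{N}$ piece) the extra $x$. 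The commutator estimate \eqref{comm} and the Leibniz rule \eqref{leib} are precisely the tools that make this reshuffling legitimate, and the generous value $N\geq 30$ means no sharpness is needed anywhere in the Sobolev exponents.
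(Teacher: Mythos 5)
Your reduction to the one–dimensional bound
\begin{equation*}
\Vert \mathcal{I}^t[f,g,h]\Vert_{L^2_x}\le \Vert \mathcal{U}(t)f\Vert_{L^2_x}\Vert \mathcal{U}(t)g\Vert_{L^\infty_x}\Vert \mathcal{U}(t)h\Vert_{L^\infty_x},
\end{equation*}
followed by the dispersive estimate $\Vert \mathcal{U}(t)g\Vert_{L^\infty_x}\lesssim |t|^{-1/2}\Vert g\Vert_{L^2_x}^{1/2}\Vert xg\Vert_{L^2_x}^{1/2}$, the $\langle p\rangle^{-\alpha}$ summation, and \eqref{sum_in_p}, is exactly how the paper obtains the base estimate \eqref{SuffLem}: one factor in $L^2_{x,y}$, two factors in $S$, with the full $(1+|t|)^{-1}$ decay. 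Up to that point your argument is correct and identical to the paper's.

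The gap is in how you pass from this $L^2$-level bound to the $H^N_{x,y}$ and $x$-weighted components of the $S$ norm. You propose to distribute the $N$ derivatives (in $x$ and $y$) directly by the Leibniz rule \eqref{leib} and then put the "highest" factor in $L^2$ and the other two in $L^\infty$. But for a balanced term such as $\mathcal{I}^t[\partial^{N/2}F_q,\partial^{N/2}G_r,H_s]$, whichever of the two derivative-bearing factors you send to $L^\infty_x$ must then be estimated by $|t|^{-1/2}\Vert \partial^{N/2}G_r\Vert_{L^2_x}^{1/2}\Vert x\,\partial^{N/2}G_r\Vert_{L^2_x}^{1/2}$, and the quantity $\Vert x\,\partial^{k}G\Vert_{L^2}$ for $1\le k\le N$ is \emph{not} controlled by $\Vert G\Vert_S=\Vert G\Vert_{H^N}+\Vert xG\Vert_{L^2}$ (it would require the $S^+$ norm). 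You flag precisely this danger in your "main obstacle" paragraph, but the proposed resolution — arrange that the two $L^\infty$ factors are the lowest-order ones — cannot be enforced by a plain Leibniz expansion, since in balanced terms there is no single factor carrying "the top derivatives". The paper avoids this entirely: it proves only the clean trilinear bound \eqref{SuffLem} (which is your hypothesis \eqref{L2AssFL} with $\mathcal{B}=S$) and then invokes the transfer principle, Lemma~\ref{ControlSS+}, whose Littlewood--Paley paraproduct decomposition into $\Sigma_{\nu,low}$ and $\Sigma_{\nu,high}$ is exactly the device that forces the highest-frequency factor into the $L^2$ slot while the remaining factors are measured in the unmodified $S$ norm, and which handles the commutators of $x$ with the projectors via \eqref{Comm}. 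To repair your write-up you should either cite that lemma or reproduce its dyadic decomposition by hand; the same remark applies verbatim to the $S^+$ estimate, where the extra $(1-\partial_{xx})^4$ and $x$ are absorbed through hypothesis \eqref{L2AssFL2}.
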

However, these estimates fall short of giving a satisfactory global theory.
\begin{proof}
Coming back to \eqref{DefOfI}, we readily obtain
\begin{equation}\label{Pla}
\Vert \mathcal{I}^t[f^a,f^b,f^c]\Vert_{L^2_x}\lesssim \min_{\sigma\in\mathfrak{S}_3}\Vert f^{\sigma(a)}\Vert_{L^2_x}\Vert e^{it\partial_{xx}}f^{\sigma(b)}\Vert_{L^\infty_x}\Vert  e^{it\partial_{xx}}f^{\sigma(c)}\Vert_{L^\infty_x}.
\end{equation}
Assume $\vert t\vert\ge 1$. We use the basic dispersive bound for the $1d$ Schr\"odinger equation and \eqref{L11} to get 
 \begin{equation}\label{basic-bound-corr}
\Vert e^{it\partial_{xx}}f\Vert_{L^\infty_x}
\lesssim\vert t\vert^{-\frac{1}{2}}\|f\|_{L^1_x}
\lesssim\vert t\vert^{-\frac{1}{2}}
\|f\|^{\frac{1}{2}}_{L^2_x}\|x f\|^{\frac{1}{2}}_{L^2_x}\,.
\end{equation}
Estimate \eqref{basic-bound-corr} allows us to write for any $\alpha>d$
\begin{equation*}
\begin{split}
\sum_{p\in\mathbb{Z}^d}\Vert e^{it\partial_{xx}}F_p\Vert_{L^\infty_x}
&\lesssim \vert t\vert^{-\frac{1}{2}}\sum_{p\in\mathbb{Z}^d}\langle p \rangle^{-\alpha}\|\langle p \rangle^{2\alpha}F_p\|^{\frac{1}{2}}_{L^2_x}\| x F_p\|^{\frac{1}{2}}_{L^2_x}\lesssim  \vert t\vert^{-\frac{1}{2}}\Vert F\Vert_S\,.
\end{split}
\end{equation*}
If $\vert t\vert\le 1$, we use Sobolev estimates instead of \eqref{basic-bound-corr} and get
\begin{equation*}
\sum_{p\in\mathbb{Z}^d}\Vert e^{it\partial_{xx}}F_p\Vert_{L^\infty_x}\lesssim \sum_{p\in\mathbb{Z}^d}\Vert F_p\Vert_{H^1_x}\lesssim \Vert F\Vert_S.
\end{equation*}
We now can come back to \eqref{Pla}: recalling \eqref{sss} and using \eqref{sum_in_p} we get the bound
\begin{equation}\label{SuffLem}
\Vert \mathcal{N}^t[F^a,F^b,F^c]\Vert_{L^2_{x,y}}\lesssim (1+\vert t\vert)^{-1}\min_{\sigma\in\mathfrak{S}_3}\Vert F^{\sigma(a)}\Vert_{L^2_{x,y}}\Vert F^{\sigma(b)}\Vert_S\Vert F^{\sigma(c)}\Vert_{S}.
\end{equation}
Now we can use Lemma~\ref{ControlSS+}.
This completes the proof of Lemma~\ref{warm-up}.
\end{proof}
\section{Structure of the nonlinearity}\label{SStrucN}
The purpose of this section is to extract the key effective interactions from the full nonlinearity in \eqref{CNLS}. 
We first decompose the nonlinearity as 
\begin{equation}\label{DecNon0}
\mathcal{N}^t[F,G,H]=\frac{\pi}{t}\mathcal{R}[F, G,H]+\mathcal{E}^t[F,G,H]\\
\end{equation}
where $\mathcal{R}$ is given in \eqref{DefOfR}. 
Our main result is the following
\begin{proposition}\label{StrucNon}
Assume that for $T\geq 1$,  $F$, $G$, $H$: $\mathbb{R}\to S$ satisfy
\begin{equation}\label{leq1}
\Vert F\Vert_{X_{T}}+\Vert G\Vert_{X_{T}}+\Vert H\Vert_{X_{T}}\le 1.
\end{equation}
Then for $t\in [T/4,T]$, we can write
$$
\mathcal{E}^t[F(t),G(t),H(t)]=\mathcal{E}_1^t+\mathcal{E}_2^t\,,
$$
where the following bounds hold uniformly in $T\geq 1$,
\begin{equation}\label{DecNon}
\begin{split}
T^{-\delta}\Vert \int_{\mathbb{R}}q_T(t)\mathcal{E}_i(t)dt\Vert_S\lesssim 1,\quad i=1,2,\\
T^{1+\delta}\sup_{T/4\leq t\le T}\Vert \mathcal{E}_1(t)\Vert_Z\lesssim 1,\\
T^{\frac{1}{10}}\sup_{T/4\leq t\le T}\Vert \mathcal{E}_3(t)\Vert_S\lesssim 1,
\end{split}
\end{equation}
where $\mathcal{E}_2(t)=\partial_t\mathcal{E}_3(t)$.
Assuming in addition
\begin{equation}\label{BA+}
\Vert F\Vert_{X^+_{T}}+\Vert G\Vert_{X^+_{T}}+\Vert H\Vert_{X^+_{T}}\le 1,
\end{equation}
we also have that
\begin{equation}\label{AddDecNon}
T^{-5\delta}\Vert \int_{\mathbb{R}}q_T(t)\mathcal{E}_i(t)dt\Vert_{S^+}\lesssim 1,\qquad  T^{2\delta}\Vert \int_{\mathbb{R}}q_T(t)\mathcal{E}_i(t)dt\Vert_S\lesssim 1,\quad i=1,2.\\
\end{equation}
\end{proposition}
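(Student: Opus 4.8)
The plan is to isolate, inside $\mathcal{E}^t = \mathcal{N}^t - (\pi/t)\mathcal{R}$, the pieces responsible for the failure of integrability and treat everything else as a remainder. First I would split $\mathcal{N}^t$ according to the resonance level $\omega = |p|^2-|q|^2+|r|^2-|s|^2$. The non-resonant part ($\omega\neq 0$) carries an oscillating factor $e^{it\omega}$, so integration by parts in $t$ (a normal form) gains a factor $1/t$ at the price of a $\partial_t F$, which by the $X_T$ norm costs $(1+t)^{-(1-3\delta)}$ — this is the mechanism that produces the good $S$-bound on $\int q_T \mathcal{E}$ in \eqref{DecNon}. The boundary terms from this integration by parts are the pieces I would put into $\mathcal{E}_2 = \partial_t\mathcal{E}_3$, whose $S$-norm decays like $T^{-1/10}$ (actually much better, but $T^{1/10}$ is all that is claimed). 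The resonant part ($\omega=0$, i.e. $(p,q,r,s)\in\Gamma_0$) must be compared with $(\pi/t)\mathcal{R}$: the difference comes from (a) replacing $\widehat{\mathcal{I}^t[F_q,G_r,H_s]}(\xi)$ by $(\pi/t)\widehat{F_q}(\xi)\overline{\widehat{G_r}}(\xi)\widehat{H_s}(\xi)$, which is a stationary-phase estimate in the $(\eta,\kappa)$ integral, and (b) the low/high frequency truncations needed to make the stationary phase quantitative. The stationary-phase error is governed by second derivatives in $(\eta,\kappa)$, i.e. by $\|xF\|$-type quantities controlled by the $S$ (and $S^+$) norms; schematically one gets an error of size $t^{-3/2}$ times $S$-norms, which after the $q_T$ integration gives $T^{-1/2}$, comfortably inside \eqref{DecNon}. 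These genuinely-improved-decay terms go into $\mathcal{E}_1$, which is why one can demand the strong pointwise bound $T^{1+\delta}\|\mathcal{E}_1(t)\|_Z\lesssim 1$: the $Z$-norm only sees one derivative and a weight in $\xi$, so the extra $t^{-1/2}$ from stationary phase is available.

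The key steps, in order, are: (1) write $\mathcal{N}^t = \mathcal{N}^t_{res} + \mathcal{N}^t_{nr}$ using $\Gamma_0$ vs.\ its complement in $\mathcal{M}$, with a further Littlewood–Paley truncation $|\omega|\lesssim$ (frequency)$^2$ to control how large $\omega$ can be; (2) for $\mathcal{N}^t_{nr}$, integrate by parts in time against $q_T$, producing a main term $\propto \frac1{t^2}$ (with an extra $1/\omega$ which one must sum — here one uses that for fixed $p$ the number of $(q,r,s)$ with a given $\omega$ is polynomially bounded, or rather the divisor estimates implicit in \eqref{sum_in_p}) plus boundary terms $\mathcal{E}_3$ supported near $t\in\{T/4,T\}$; both are handled by \eqref{CrudeEst} of Lemma~\ref{warm-up} together with the $\partial_t F$ control in $X_T$; (3) for $\mathcal{N}^t_{res}$, run the stationary-phase analysis: write $\widehat{\mathcal{I}^t[f,g,h]}(\xi) = \int e^{2it\eta\kappa}\widehat f(\xi-\eta)\overline{\widehat g}(\xi-\eta-\kappa)\widehat h(\xi-\kappa)\,d\eta\,d\kappa$, Taylor-expand the non-oscillatory factor around $\eta=\kappa=0$, use $\int e^{2it\eta\kappa}d\eta\,d\kappa = \pi/t$ for the main term, and bound the Taylor remainder by $t^{-1}\cdot t^{-1/2}$ times one extra $x$-weight (via \eqref{leib}), hence $t^{-3/2}$ times $S$-norms; (4) collect: the stationary-phase error and the non-resonant $1/t^2$ term form $\mathcal{E}_1$; the non-resonant boundary term forms $\mathcal{E}_2=\partial_t\mathcal{E}_3$; then verify the three lines of \eqref{DecNon} by the $X_T$-norm bookkeeping, and re-run the same estimates with one more $x$-weight and four more $x$-derivatives, using the $S^+$ control in \eqref{BA+}, to get \eqref{AddDecNon}. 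Throughout, the summation over $p\in\mathbb{Z}^d$ is done via \eqref{sum_in_p}, putting the two lowest-regularity factors in $\ell^1_p$ (affordable since $S$ controls $H^N$ in $y$ with $N\geq 30 > d$).

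I expect the main obstacle to be step (2)–(3) interface: controlling the non-resonant sum $\sum_{\omega\neq 0}\frac1{\omega}(\ldots)$ after integration by parts. One must avoid losing a logarithm (or worse) when summing $1/|\omega|$ over all admissible $(q,r,s)$; the fix is to only integrate by parts once and absorb the surviving $\sum 1/|\omega|$ against the frequency-localization gain — i.e. on a dyadic block where all frequencies are $\lesssim L$, one has $|\omega|\lesssim L^2$ but there are $\lesssim L^{2d}$ resonance values, so the naive bound is too lossy and one instead keeps the $e^{it\omega}$ and exploits that $\sum_\omega |\text{terms}|\lesssim$ (product of $S$-norms) directly via Lemma~\ref{warm-up}, paying the oscillation only through the single $1/t$ gain plus the $\partial_t$ cost. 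A secondary difficulty is making the stationary-phase remainder estimate uniform when the $\xi$-frequencies are large: there one truncates $Q_{\le L}$ in $x$, estimates the tail $Q_{>L}$ crudely by \eqref{CrudeEst} using that $S^+$ controls four extra $x$-derivatives, and optimizes $L$ in $t$. Both are routine once set up, but the bookkeeping of which term lands in $\mathcal{E}_1$ versus $\mathcal{E}_2$ — and checking it respects all of \eqref{DecNon} and \eqref{AddDecNon} simultaneously — is where the care is needed.
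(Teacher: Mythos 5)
Your architecture (resonant vs.\ non-resonant splitting, normal form in time on the non-resonant part, stationary phase on the resonant part) matches the paper's in outline, and your step (3) is essentially the paper's Lemma~\ref{SPL}. But there are two genuine gaps.

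First, the high $x$-frequency regime. You propose to truncate at some $Q_{\le L}$ and dispose of the tail ``crudely by \eqref{CrudeEst} using that $S^+$ controls four extra $x$-derivatives.'' That only works for \eqref{AddDecNon}; the bounds \eqref{DecNon} must hold under the $X_T$ hypothesis \eqref{leq1} alone, where no derivatives beyond $H^N$ are available. The problematic configuration is one factor at $x$-frequency $A\ge T^{1/6}$ and the other two at low frequency: the output then also lives at frequency $\sim A$, so the $N$ derivatives demanded by the $S$-norm of the output must be paid by the single high-frequency input, and there is nothing left to convert into decay. Pointwise in time there is simply no gain, and the crude bound $t^{-1}\prod\|\cdot\|_S\sim t^{-1+3\delta}$ integrated over $[T/4,T]$ gives $T^{3\delta}$, which does not close. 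This is precisely why the paper devotes Lemma~\ref{BilEf} to this piece: it splits $[T/4,T]$ into intervals of length $T^{9/10}$, freezes the profiles on each, and extracts a factor $(\max(A,B,C))^{-1}$ from the \emph{time integration} via the one-dimensional bilinear Strichartz estimate (Lemma~\ref{1dBE}), by duality. Your proposal has no substitute for this mechanism.

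Second, the normal form on the non-resonant part. Integrating $q_T(t)e^{it\omega}\mathcal{I}^t[F_q,G_r,H_s]$ by parts in $t$ also differentiates the dispersive phase: in Fourier variables, $\partial_t e^{2it\eta\kappa}=2i\eta\kappa\, e^{2it\eta\kappa}$, and $\eta\kappa$ is as large as the product of the $x$-frequencies, i.e.\ up to $T^{1/3}$ even after the $T^{1/6}$ truncation. So the ``main term $\propto t^{-2}$'' you expect actually carries an unbounded multiplier and is worse than what you started with. The paper's fix is the splitting \eqref{dec tildeN0}: on $\mathcal{O}_2^t$, the cutoff $\varphi(t^{1/4}\eta\kappa)$ forces $|\eta\kappa|\lesssim t^{-1/4}$ so that $\partial_t$ of the full phase is harmless and the normal form \eqref{TIPP} goes through (producing your boundary terms $\mathcal{E}_3$); on the complementary region $\mathcal{O}_1^t$ one cannot integrate by parts in time at all, and instead integrates by parts in the frequency variables after localizing the profiles to $|x|\lesssim T^{7/12-\delta/10}$ (Lemma~\ref{EstimO}). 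Without this preliminary decomposition your step (2) does not produce an integrable remainder. (By contrast, the $\sum_\omega 1/|\omega|$ issue you flag as the main obstacle is minor: $|\omega|\ge 1$ on the non-resonant set and the $(q,r,s)$-sum is absorbed by \eqref{sum_in_p}.)
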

We will give a proof of Proposition \ref{StrucNon} at the end of this section.
It depends on various lemmas that we prove first. Among these lemmas, Lemma \ref{BilEf},
Lemma \ref{FastOsLem} and the first part of Lemma~\ref{Res} are essentially based on $L^2$ arguments, while Lemma \ref{EstimO} and the second part of Lemma \ref{Res} are based on regularity in Fourier space.
\subsection{The high frequency estimates}
We start with an estimate bounding high frequencies in $x$. It uses essentially 
the bilinear Strichartz estimates on $\mathbb{R}$ (see Lemma~\ref{1dBE} and \cite{CKSTTSIMA}).
\begin{lemma}\label{BilEf}
Assume that $ T\ge 1$. The following estimates hold uniformly in $T$:
\begin{equation*}
\begin{split}
\Vert \sum_{\substack{A,B,C\\\max(A,B,C)\ge T^{\frac{1}{6}}}}\mathcal{N}^t[Q_AF,Q_BG,Q_CH]\Vert_{Z}&\lesssim T^{-\frac{7}{6}}
\Vert F\Vert_{S}\Vert G\Vert_{S}\Vert H\Vert_{S},\quad\forall t\ge T/4,\\
\Vert \sum_{\substack{A,B,C\\\max(A,B,C)\ge T^{\frac{1}{6}}}}\int_{\mathbb{R}}q_T(t)\mathcal{N}^t[Q_AF(t),Q_BG(t),Q_CH(t)]dt\Vert_S&\lesssim T^{-\frac{1}{50}}\Vert F\Vert_{X_T}\Vert G\Vert_{X_T}\Vert H\Vert_{X_T},\\
\Vert \sum_{\substack{A,B,C\\\max(A,B,C)\ge T^{\frac{1}{6}}}}\int_{\mathbb{R}}q_T(t)\mathcal{N}^t[Q_AF(t),Q_BG(t),Q_CH(t)]dt\Vert_{S^+}&\lesssim T^{-\frac{1}{50}}\Vert F\Vert_{X_T^+}\Vert G\Vert_{X_T^+}\Vert H\Vert_{X_T^+}.
\end{split}
\end{equation*}
\end{lemma}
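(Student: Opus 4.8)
### Proof plan for Lemma~\ref{BilEf}

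The plan is to reduce everything to the trilinear form $\mathcal{I}^t[f,g,h]$ on $\R$, decompose dyadically in the $x$-frequency, and exploit the gain coming from the factor $\max(A,B,C)\ge T^{1/6}$ together with the $1d$ bilinear Strichartz estimate of Lemma~\ref{1dBE}. The key algebraic point is that, by the momentum constraint $(p,q,r,s)\in\mathcal M$, summing over $p$ costs essentially nothing (via \eqref{sum_in_p}) and the $y$-sum is harmless; moreover by the Leibniz rule \eqref{leib} and the structure of the $S$ and $S^+$ norms it suffices to treat the case where the $N$-weight $\langle\xi\rangle^2\langle p\rangle$ lands on the factor carrying the highest $x$-frequency, say the one projected by $Q_A$ with $A=\max(A,B,C)$. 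Write $\mathcal{I}^t[Q_A f_q, Q_B g_r, Q_C h_s] = \mathcal{U}(-t)\big( \mathcal{U}(t)Q_A f_q \cdot \overline{\mathcal{U}(t)Q_B g_r}\cdot \mathcal{U}(t)Q_C h_s\big)$, pair the high-frequency factor $\mathcal{U}(t)Q_A f_q$ with the factor of intermediate frequency in an $L^2_{t,x}$ bilinear Strichartz estimate, and put the remaining (lowest-frequency) factor in $L^\infty_{t,x}$ via the dispersive bound \eqref{basic-bound-corr}.

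Concretely: for the first, pointwise-in-$t$ estimate in the $Z$ norm, I would fix $t\ge T/4$ and estimate $\|\mathcal{I}^t[Q_A f, Q_B g, Q_C h]\|_{L^2_x}$ using $L^2_x\times L^\infty_x \times L^\infty_x$ Hölder together with \eqref{basic-bound-corr}; each $L^\infty_x$ factor contributes $|t|^{-1/2}\lesssim T^{-1/2}$, giving $T^{-1}$, and the frequency localization on the factor carrying at least $T^{1/6}$ frequency gives an extra $A^{-1}\lesssim T^{-1/6}$ after using that $\|Q_A F_p\|_{H^1_x}$-type norms in the $S$-norm can absorb the high-frequency weight with room to spare (here I use $N\ge 30$, so there is a large power of $A$ available to trade). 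Summing the dyadic pieces in $A,B,C$ (geometric series, with the constraint that two of the three are comparable, again from $\mathcal M$ and the output frequency being $O(1)$-ish relative to the $Z$-norm weight) and summing in $p$ via \eqref{sum_in_p} yields $T^{-1/6}\cdot T^{-1}=T^{-7/6}$, as claimed.

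For the second and third estimates, which involve the time integral $\int q_T(t)(\cdots)dt$ and the $S$ (resp.\ $S^+$) norm, the point is that we cannot afford to lose the full $\langle t\rangle^{\delta}$ (resp.\ $\langle t\rangle^{5\delta}$) growth of the $S$-norm of each factor three times over while only gaining $T^{-1}$ per time-integration of length $\sim T$. This is exactly where the bilinear Strichartz estimate on $\R$ (Lemma~\ref{1dBE}) enters: instead of Hölder in $x$ for each fixed $t$, I pair the highest- and middle-frequency factors in $L^2_{t,x}([T/4,T]\times\R)$, which gains a power $A^{-1/2}\gtrsim T^{-1/12}$ from the frequency gap, and estimate the third factor in $L^\infty_{t,x}$ by $\sup_t |t|^{-1/2}\|\cdot\|_{L^1_x}\lesssim T^{-1/2}$ times an $S$-norm (or $S^+$-norm) factor. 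The $L^2_{t,x}$ norms of the two bilinear factors are controlled, after integrating the Strichartz estimate in $t$ over a window of length $\sim T$, by $T^{1/2}$ times $S$-norms; combined with the frequency gains and the $T^{-1/2}$ from the third factor, and distributing the mild polynomial growth $(1+t)^{\delta}$ (resp.\ $(1+t)^{5\delta}$) with $\delta<10^{-3}$, the net power of $T$ is negative, of size $-1/50$, after a crude bookkeeping. The replacement of $X_T$ by $X_T^+$ in the third estimate is purely notational: one extra factor is measured in $S^+$ instead of $S$, which only changes the admissible powers of $T$ by $O(\delta)$.

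The main obstacle I anticipate is the bookkeeping of the dyadic sums in $A,B,C$ together with the momentum/output-frequency constraints: one must check that in every configuration of $(A,B,C)$ with $\max\ge T^{1/6}$ there are two comparable frequencies to which the bilinear Strichartz estimate can be applied with a genuine frequency gap relative to the third, and that the weights in the $S$/$S^+$ norms ($H^N$ with $N\ge30$, plus one power of $x$) are always enough to absorb the high-frequency factor with an $\varepsilon$ of room so that the dyadic series converges — all of this while keeping the powers of $T$ uniform. This is routine but must be done carefully to extract the stated exponents $T^{-7/6}$ and $T^{-1/50}$; I would organize it by always applying the Leibniz rule \eqref{leib} to move $N$ derivatives / the $x$-weight onto a single factor, reducing to a single master estimate that is then summed.
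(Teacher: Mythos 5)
Your overall architecture (reduce to $\mathcal I^t$, dyadic decomposition in $x$, exploit $\max(A,B,C)\ge T^{1/6}$, Leibniz rule plus a transfer principle to handle the $S$/$S^+$ weights) matches the paper's, but there is one genuine missing idea in the time-integrated estimates. You propose to pair the two highest-frequency factors in $L^2_{t,x}([T/4,T]\times\mathbb R)$ via Lemma~\ref{1dBE}. That lemma is a statement about \emph{free} solutions $e^{it\partial_{xx}}u_0$ with fixed data, whereas the factors appearing in $\int q_T(t)\mathcal N^t[Q_AF(t),\dots]\,dt$ are $\mathcal U(t)Q_AF_q(t)$ with time-dependent profiles; the bilinear estimate does not apply to them as written. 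The paper's proof inserts exactly the step you are missing: it partitions $[T/4,2T]$ into $\#J\lesssim T^{1/10}$ intervals of length $T^{9/10}$, freezes the profiles at the endpoints $t_j$, applies the bilinear Strichartz estimate only to the frozen (hence genuinely free) evolutions, and controls the replacement error $E_1$ using the $(1+|t|)^{1-3\delta}\Vert\partial_tF\Vert_S$ component of the $X_T$ norm, which gives $\Vert F(t)-F(t_j)\Vert_S\lesssim T^{-1/10+3\delta}$. The fact that your plan never invokes $\Vert\partial_tF\Vert_S$, even though the claimed bound is in terms of $\Vert F\Vert_{X_T}$, is the symptom of this gap. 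Note also that the paper only needs the bilinear estimate in the regime $\operatorname{med}(A,B,C)\le T^{1/6}/16$ (exactly one high input frequency, forcing the output frequency $D\simeq\max(A,B,C)$ to be high, so two bilinear pairings give $D^{-1}$); when two inputs are high it simply trades Sobolev derivatives for $(\operatorname{med})^{-11}$ pointwise in $t$, with no dispersion and no freezing needed.

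A secondary issue is the first inequality. Your H\"older argument $L^2_x\times L^\infty_x\times L^\infty_x$ with \eqref{basic-bound-corr} produces an $L^2_{x,y}$-type bound of size $T^{-1}\cdot T^{-1/6}$, but the $Z$ norm is a weighted $\sup_\xi$ of $h^1_p$ norms and is not dominated by $L^2_{x,y}$. The paper passes through the interpolation \eqref{ZSNorm}, $\Vert F\Vert_Z\lesssim\Vert F\Vert_{L^2_{x,y}}^{1/4}\Vert F\Vert_S^{3/4}$, together with the crude $T^{-1}$ bound of Lemma~\ref{warm-up} in $S$; for this to yield $T^{-7/6}$ the $L^2_{x,y}$ bound must be at least $T^{-5/3}$, not $T^{-7/6}$. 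You do have the room to extract more powers of the high frequency (the paper takes $(ABC)^{-11}$ against $H^{12}_x$ norms, with no dispersion at all), but as written your exponent bookkeeping for the $Z$ estimate does not close.
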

\begin{proof}
We start by proving the first inequality of Lemma~\ref{BilEf}. 
Fixing $t\ge T/4$ and invoking the bound \eqref{ZSNorm} and Lemma~\ref{warm-up}, we obtain that it suffices to prove the bound
\begin{equation}\label{L2}
\Vert \sum_{\substack{A,B,C\\\max(A,B,C)\ge T^{\frac{1}{6}}}}\mathcal{N}^t[Q_AF,Q_BG,Q_CH]\Vert_{L^2_{x,y}}
\lesssim T^{-\frac{5}{3}}\Vert F\Vert_{S}\Vert G\Vert_{S}\Vert H\Vert_{S}\,.
\end{equation}
Coming back to \eqref{sss} and using that $l^1_p\subset l^2_p$, we see that \eqref{L2} follows from
\begin{equation*}
\sum_{(p,q,r,s)\in\mathcal{M}}
\sum_{\substack{A,B,C\\\max(A,B,C)\ge T^{\frac{1}{6}}}}
\Vert
\mathcal{I}^t[Q_{A}F_q,Q_{B}G_r,Q_{C}H_s]\Vert_{L^2_x}\lesssim T^{-\frac{5}{3}}\Vert F\Vert_{S}\Vert G\Vert_{S}\Vert H\Vert_{S}.
\end{equation*}
Using \eqref{DefOfI} and the Sobolev embedding, we see that
\begin{equation*}
\begin{split}
\Vert \mathcal{I}^t[Q_{A}F_q,Q_{B}G_r,Q_{C}H_s]\Vert_{L^2_x}
&\lesssim (ABC)^{-11}\|Q_{A}F_q\|_{H^{12}_x}\|Q_{B}G_r\|_{H^{12}_x}\|Q_{C}H_s\|_{H^{12}_x}\\
&\lesssim (ABC)^{-11}(\langle q\rangle\langle r\rangle\langle s\rangle)^{-d-1}
\|F\|_{H^{13+d}_{x,y}}\|G\|_{H^{13+d}_{x,y}}\|H\|_{H^{13+d}_{x,y}}.\\
\end{split}
\end{equation*}
Summing, we complete the proof of  the first inequality of Lemma~\ref{BilEf}. 

Let us now turn to the proof of the two remaining estimates. We first remark that, for every $t$ and every $F,G,H\in S$ (resp. $S^+$)
\begin{equation}\label{var-lemma-warm}
\begin{split}
\Vert \sum_{\substack{A,B,C\\ \operatorname{med}(A,B,C)\ge T^{\frac{1}{6}}/16}} \mathcal{N}^t[Q_AF,Q_BG,Q_CH]\Vert_S&
\lesssim T^{-\frac{7}{6}}\Vert F\Vert_{S}\Vert G\Vert_{S}\Vert H\Vert_{S},\\
\Vert \sum_{\substack{A,B,C\\ \operatorname{med}(A,B,C)\ge T^{\frac{1}{6}}/16}}  \mathcal{N}^t[Q_AF,Q_BG,Q_CH]\Vert_{S^+}&\lesssim T^{-\frac{7}{6}}\Vert 
F\Vert_{S^+}\Vert G\Vert_{S^+}\Vert H\Vert_{S^+},
\end{split}
\end{equation}
where $\operatorname{med}(A,B,C)$ means the second largest number between $A,B,C$.
The proof of \eqref{var-lemma-warm} is slightly more delicate than the first inequality of Lemma~\ref{BilEf} because 
in aiming to apply Lemma~\ref{ControlSS+}, we are not allowed to lose derivatives on at least one of the $F$, $G$, $H$. Let $K\in L^2_{x,p}$, then we need to bound
\begin{equation}\label{expression-tris}
\begin{split}
I_K&=\langle K, \sum_{\substack{A,B,C\\ \operatorname{med}(A,B,C)\ge T^{\frac{1}{6}}/16}} \mathcal{N}^t[Q_AF,Q_BG,Q_CH]\rangle_{L^2_{x,p}\times L^2_{x,p}}\\
&\le \sum_{(p,q,r,s)\in\mathcal{M}}
\sum_{\substack{A,B,C\\
\operatorname{med}(A,B,C)\ge T^{\frac{1}{6}}/16}
}
\Big|
\int_{\R}\mathcal{U}(t)(Q_{A}F_q)\cdot\overline{\mathcal{U}(t)(Q_{B}G_r)}\cdot\mathcal{U}(t)(Q_C H_{s})\cdot\overline{\mathcal{U}(t)K_p}\Big|.
\end{split}
\end{equation}
We will show that
\begin{equation}\label{BenNov1}
I_K\lesssim T^{-\frac{5}{3}}\Vert F\Vert_{L^2_{x,y}}\Vert K\Vert_{L^2_{x,p}}\Vert G\Vert_S\Vert H\Vert_S.
\end{equation}
Similar estimates hold with $F$ replaced by $G$ or $H$. By duality and Lemma~\ref{ControlSS+}, this is sufficient to prove \eqref{var-lemma-warm}.

By performing a Littlewood-Paley decomposition of $K_p$, and using Sobolev inequality, we see from \eqref{expression-tris} that
\begin{equation}\label{expression-4}
\begin{split}
I_K&\lesssim \sum_{(p,q,r,s)\in\mathcal{M}}\sum_\ast(BC)^{-11}\|Q_{A}F_q\|_{L^2_x}\|Q_{B}G_r\|_{H^{12}_x}\|Q_C H_{s}\|_{H^{12}_x}\|Q_D K_p\|_{L^2_x},\\
\end{split}
\end{equation}
where $\sum_\ast$ denotes the sum over all dyadic integers $A,B,C,D$ such that the two highest are comparable and in addition, $\hbox{med}(A,B,C)\ge T^\frac{1}{6}$. 
Now remark that
\begin{equation*}
\begin{split}
&\sum_\ast(BC)^{-11}\|Q_{A}F_q\|_{L^2_x}\|Q_{B}G_r\|_{H^{12}_x}\|Q_C H_{s}\|_{H^{12}_x}\|Q_D K_p\|_{L^2_x}\\
&\lesssim \sum_\ast(\hbox{med}(A,B,C))^{-11}\|Q_{A}F_q\|_{L^2_x}\|Q_{B}G_r\|_{H^{12}_x}\|Q_C H_{s}\|_{H^{12}_x}\|Q_D K_p\|_{L^2_x}\\
&\lesssim T^{-\frac{5}{3}}\Vert F_q\Vert_{L^2_x}\Vert K_p\Vert_{L^2_x}\Vert G_r\Vert_{H^{12}_x}\Vert H_s\Vert_{H^{12}_x},
\end{split}
\end{equation*}
where in the last inequality, we have crudely summed over the two smallest dyadic numbers and applied the Cauchy-Schwarz inequality on the two highest. Using Cauchy-Schwarz inequality again in $p,q$, we see from \eqref{expression-4} that
\begin{equation*}
I_K\lesssim T^{-\frac{5}{3}}\Vert F\Vert_{L^2_{x,y}}\Vert K\Vert_{L^2_{x,p}}\Big(\sum_r\Vert G_r\Vert_{H^{12}_x}\Big)
\Big(\sum_s\Vert H_s\Vert_{H^{12}_x}\Big)
\end{equation*}
which yields \eqref{BenNov1} and thus \eqref{var-lemma-warm}.

\medskip

It therefore remains to prove that
\begin{equation}\label{edno-pak}
\Vert \sum_{(A,B,C)\in \Lambda}
\int_{\mathbb{R}}q_T(t)\mathcal{N}^t[Q_AF(t),Q_BG(t),Q_CH(t)]dt\Vert_S
\lesssim T^{-\frac{1}{50}}\Vert F\Vert_{X_T}\Vert G\Vert_{X_T}\Vert H\Vert_{X_T},
\end{equation}
and
\begin{equation}\label{dve}
\Vert \sum_{(A,B,C)\in \Lambda}
\int_{\mathbb{R}}q_T(t)\mathcal{N}^t[Q_AF(t),Q_BG(t),Q_CH(t)]dt\Vert_{S^+}
\lesssim T^{-\frac{1}{50}}\Vert F\Vert_{X^{+}_T}\Vert G\Vert_{X^{+}_T}\Vert H\Vert_{X^{+}_T},
\end{equation}
where $(A,B,C)\in \Lambda$ means that the $A,B,C$ summation ranges over $\hbox{med}(A,B,C)\le T^\frac{1}{6}/16$ and $\max(A,B,C)\ge T^{\frac{1}{6}}$. 
We shall only give the proof of \eqref{edno-pak}, the proof of \eqref{dve} being similar.

We consider a decomposition 
\begin{equation}\label{AddedVersion6}
[T/4,2T]=\bigcup_{j\in J} I_j,\quad I_j=[jT^\frac{9}{10},(j+1)T^\frac{9}{10}]=[t_j,t_{j+1}],\quad \#J\lesssim T^{\frac{1}{10}}
\end{equation}
and consider $\chi\in C^\infty_c(\mathbb{R})$, $\chi\geq 0$ such that $\chi(x)=0$ if $\vert x\vert\ge 2$ and
\begin{equation*}
\sum_{k\in\mathbb{Z}}\chi(x-k)\equiv 1.
\end{equation*}
The left hand-side of \eqref{edno-pak} can be estimated by $C(E_1+E_2)$, where
\begin{multline*}
E_1=\Big\| \sum_{j\in J} \sum_{(A,B,C)\in \Lambda}\int_{\mathbb{R}}q_T(t)\chi\big(\frac{t}{T^{\frac{9}{10}}}-j\big)\\
\Big(\mathcal{N}^t[Q_AF(t),Q_BG(t),Q_CH(t)]-\mathcal{N}^t[Q_AF(t_j),Q_BG(t_j),Q_CH(t_j)]\Big)dt\Big\|_S
\end{multline*}
and
\begin{equation*}
E_2= \Big\|\sum_{j\in J} \sum_{(A,B,C)\in \Lambda}
\int_{\mathbb{R}}q_T(t)\chi\big(\frac{t}{T^{\frac{9}{10}}}-j\big)\mathcal{N}^t[Q_AF(t_j),Q_BG(t_j),Q_CH(t_j)]dt\Big\|_S\,.
\end{equation*}

\medskip

Let us now turn to the estimate for $E_1$.
We can write
\begin{equation}\label{E1Dec}
\begin{split}
E_1\leq & \sum_{j\in J} \int_{\mathbb{R}}q_T(t)\chi\big(\frac{t}{T^{\frac{9}{10}}}-j\big)E_{1,j}(t)dt,\\
\end{split}
\end{equation}
where
\begin{equation*}
\begin{split}
E_{1,j}(t):=&\Big\|\sum_{(A,B,C)\in \Lambda}\Big(\mathcal{N}^t[Q_AF(t),Q_BG(t),Q_CH(t)]-\mathcal{N}^t[Q_AF(t_j),Q_BG(t_j),Q_CH(t_j)]\Big)\Big\|_S.
\end{split}
\end{equation*}
At this point, we remark that
\begin{equation*}
\begin{split}
&\sum_{(A,B,C)\in\Lambda}\mathcal{N}^t[Q_AF,Q_BG,Q_CH]\\
&=\mathcal{N}^t[Q_{+}F,Q_{-}G,Q_{-}H]+\mathcal{N}^t[Q_{-}F,Q_{+}G,Q_{-}H]+\mathcal{N}^t[Q_{-}F,Q_{-}G,Q_{+}H]\\
&Q_+:=Q_{\ge T^\frac{1}{6}},\qquad Q_-:=Q_{\le T^\frac{1}{6}/16}.
\end{split}
\end{equation*}
Therefore, using Lemma \ref{warm-up}, and the boundedness of $Q_\pm$ on $S$, we see that
\begin{equation}\label{EstimE1j}
\begin{split}
E_{1,j}(t)&
\le (1+\vert t\vert)^{-1}\Big[\Vert F(t)-F(t_j)\Vert_S\Vert G(t)\Vert_S\Vert H(t)\Vert_S+\Vert F(t_j)\Vert_S\Vert G(t)-G(t_j)\Vert_S\Vert H(t)\Vert_S\\
&\qquad+\Vert F(t_j)\Vert_{S}\Vert G(t_j)\Vert_{S}\Vert H(t)-H(t_j)\Vert_{S}\Big].\\
\end{split}
\end{equation}
Since $\vert t-t_j\vert\le T^{\frac{9}{10}}$, we see by definition \eqref{XNorm} that
\begin{equation*}
\Vert F(t)-F(t_j)\Vert_S\le \int_{t_j}^t\Vert \partial_t F(\sigma)\Vert_{S}d\sigma\lesssim T^{-\frac{1}{10}+3\delta}\|F\|_{X_T}.
\end{equation*}
Similar bounds hold for $G$ and $H$. Therefore, we can bound \eqref{EstimE1j} by
\begin{equation*}
E_{1,j}(t)\lesssim T^{-\frac{11}{10}+5\delta}\Vert F\Vert_{X_T}\Vert G\Vert_{X_T}\Vert H\Vert_{X_T}
\end{equation*}
which in view of \eqref{AddedVersion6} and \eqref{E1Dec} is more than enough to bound the contribution of $E_1$.

\medskip

It therefore only remains to estimate $E_2$. In this case the $A,B,C$ summation will not cause any difficulty since the bilinear Strichartz estimates will provide a decay
in terms of $(\max(A,B,C))^{-1}$. 
We have that
$$
E_2\leq  \sum_{j\in J} \sum_{(A,B,C)\in \Lambda}E_{2,j}^{A,B,C},
$$
where
\begin{equation*}
E_{2,j}^{A,B,C}= \Big\| \int_{\mathbb{R}}q_T(t)\chi\big(\frac{t}{T^{\frac{9}{10}}}-j\big)\mathcal{N}^t[Q_AF(t_j),Q_BG(t_j),Q_CH(t_j)]dt\ \Big\|_S\,.
\end{equation*}
Note that the profiles $F(t_j)$, $G(t_j)$, $H(t_j)$ are fixed. Using Lemma~\ref{ControlSS+}, it suffices to show that
\begin{equation}\label{SuffE2}
\begin{split}
&\Big\Vert \sum_{(p,q,r,s)\in\mathcal{M}}\int_{\mathbb{R}}q_T(t)\chi\big(\frac{t}{T^{\frac{9}{10}}}-j\big)
\mathcal{I}^t[Q_AF_q^a,Q_BF^b_r,Q_CF^c_s]dt\Big\Vert_{L^2_{x,p}}\\
&\lesssim (\max(A,B,C))^{-1}\min_{\sigma\in\mathfrak{S}_3}\Vert F^{\sigma(a)}\Vert_{L^2_{x,y}}\Vert F^{\sigma(b)}\Vert_S\Vert F^{\sigma(c)}\Vert_S.
\end{split}
\end{equation}
Indeed
\begin{equation*}
 \sum_{j\in J} \sum_{(A,B,C)\in \Lambda}(\max(A,B,C))^{-1}\lesssim T^{-\frac{1}{20}},\quad \Vert F(t_j)\Vert_S\Vert G(t_j)\Vert_S\Vert H(t_j)\Vert_S\le 
 T^{3\delta} \Vert F\Vert_{X_T}\Vert G\Vert_{X_T}\Vert H\Vert_{X_T}.
\end{equation*}
We proceed by duality. Let $K\in L^2_{x,p}$, we consider
\begin{equation*}
\begin{split}
I_K&=\langle K,\sum_{(p,q,r,s)\in\mathcal{M}}\int_{\mathbb{R}}q_T(t)\chi\big(\frac{t}{T^{\frac{9}{10}}}-j\big)\mathcal{I}^t[Q_AF_q^a,Q_BF^b_r,Q_CF^c_s]dt\rangle_{L^2_{x,p}\times L^2_{x,p}}\\
&= \sum_{(p,q,r,s)\in\mathcal{M}}\int_{\R^2}q_T(t)\chi\big(\frac{t}{T^{\frac{9}{10}}}-j\big)\mathcal{U}(t)(Q_A F^a_q)\cdot\overline{\mathcal{U}(t)(Q_B F^b_r)}\cdot\mathcal{U}(t)(Q_C F^c_s)\cdot\overline{\mathcal{U}(t)K_p}dxdt
\end{split}
\end{equation*}
where we may assume that $K=Q_DK$, $D\simeq \max(A,B,C)$. Using Lemma~\ref{1dBE}, we can estimate
\begin{equation*}\label{mir-bis}
I_K\le \sum_{(p,q,r,s)\in\mathcal{M}}D^{-1}\|F^a_q\|_{L^2_x}\|F^b_r\|_{L^2_x}\|F^c_s\|_{L^2_x}\Vert K_p\Vert_{L^2_x}\\
\end{equation*}
We can now use \eqref{sum_in_p} to evaluate the sum. By duality, this yields \eqref{SuffE2} and therefore \eqref{edno-pak}. As already mentioned, the proof of \eqref{dve} is similar.
This completes the proof of Lemma~\ref{BilEf}.
\end{proof}

\medskip

At this point, we introduce a first decomposition
\begin{equation}\label{Dec2}
\begin{split}
\mathcal{N}^t[F,G,H]=&\,\,\Pi^t[F,G,H]+\widetilde{\mathcal{N}}^t[F,G,H],\\
\mathcal{F}\Pi^t[F,G,H](\xi,p):=&\sum_{(p,q,r,s)\in\Gamma_0}\widehat{\mathcal{I}^t[F_q,G_r,H_s]}(\xi).
\end{split}
\end{equation}
The contribution of $\widetilde{\mathcal N}$ is treated in Subsection \ref{FastOsSec}, and that of $\Pi^t$ in Subsection \ref{Res Subsec}.
\subsection{The fast oscillations}\label{FastOsSec}
The main purpose of this subsection is to prove the following:
\begin{lemma}\label{FastOsLem}
For $T\geq 1$,  assume that $F$, $G$, $H$: $\mathbb{R} \to S$ satisfy \eqref{leq1} and
\begin{equation*}
F=Q_{\le T^{1/6}}F, \quad G=Q_{\le T^{1/6}}G, \quad H=Q_{\le T^{1/6}}H\,.
\end{equation*}
Then for $t\in [T/4,T]$, we can write
$$
\widetilde{\mathcal{N}}^t[F(t),G(t),H(t)]=\widetilde{\mathcal{E}}_1^t+\mathcal{E}_2^t,
$$
where it holds that, uniformly in $T\geq 1$,
\begin{equation*}
T^{1+2\delta}\sup_{T/4\leq t\le T}\Vert \widetilde{\mathcal{E}_1}(t)\Vert_S\lesssim 1,\quad
T^{1/10}\sup_{T/4\leq t\le T}\Vert \mathcal{E}_3(t)\Vert_S\lesssim 1,
\end{equation*}
where $\mathcal{E}_2(t)=\partial_t\mathcal{E}_3(t)$.
Assuming in addition that \eqref{BA+} holds we have 
\begin{equation*}
T^{1+2\delta}\sup_{T/4\leq t\le T}\Vert \widetilde{\mathcal{E}_1}(t)\Vert_{S^+}\lesssim 1,
\qquad T^{1/10}\sup_{T/4\le t\le T}\Vert \mathcal{E}_3(t)\Vert_{S^+}\lesssim 1.
\end{equation*}
\end{lemma}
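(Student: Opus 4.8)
The plan is to isolate the oscillatory factor $e^{it\omega}$ with $\omega = |p|^2-|q|^2+|r|^2-|s|^2 \neq 0$ (since we have removed the $\Gamma_0$ part in passing from $\mathcal{N}^t$ to $\widetilde{\mathcal{N}}^t$) and integrate by parts in time. Concretely, writing
\[
\widetilde{\mathcal{N}}^t[F,G,H] = \sum_{(p,q,r,s)\in\mathcal{M}\setminus\Gamma_0} e^{it\omega}\,\widehat{\mathcal{I}^t[F_q,G_r,H_s]}(\xi)
\]
and using $e^{it\omega} = \frac{1}{i\omega}\partial_t(e^{it\omega})$, one integrates by parts to produce a boundary-type term (which will be $\mathcal{E}_3$, with $\mathcal{E}_2 = \partial_t\mathcal{E}_3$) plus a term where $\partial_t$ falls on the profiles or on $\mathcal{I}^t$ itself (which, together with other genuinely small pieces, will form $\widetilde{\mathcal{E}}_1$). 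The key gain is the factor $\omega^{-1}$; since all frequencies in $x$ are $\le T^{1/6}$ and the periodic frequencies are bounded by the $S$-norm through the $h^1_p$ (or rather $H^N$) control, the relevant $\omega$ are bounded, but crucially are bounded \emph{below} by a positive constant because they are nonzero integers: $|\omega|\ge 1$. So the frequency-localized resonance factor does not by itself gain smallness, and the decay must come entirely from the $t$-derivatives.

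For the $\mathcal{E}_3$ piece (the boundary term $\frac{1}{i\omega}e^{it\omega}\widehat{\mathcal{I}^t[F_q,G_r,H_s]}$ summed over $(p,q,r,s)$), I would bound it in $S$ directly using Lemma~\ref{warm-up}: it is a single copy of $\mathcal{N}^t$-type expression, so $\|\mathcal{E}_3(t)\|_S \lesssim (1+|t|)^{-1}\|F\|_S\|G\|_S\|H\|_S$. Using $\|F(t)\|_S \lesssim (1+|t|)^\delta$ from \eqref{leq1}/\eqref{XNorm} gives a bound like $t^{-1+3\delta}$, which is much better than the required $T^{-1/10}$ for $t\in[T/4,T]$ once $\delta < 10^{-3}$; the summability of the $(p,q,r,s)$-sum is handled exactly as in Lemma~\ref{warm-up} via \eqref{sum_in_p} and the $N$-regularity. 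The $S^+$ bound is obtained identically using the second line of \eqref{CrudeEst}.

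For $\widetilde{\mathcal{E}}_1$, the terms where $\partial_t$ hits one of the profiles $F,G,H$ are controlled using $\|\partial_t F(t)\|_S \lesssim (1+|t|)^{-1+3\delta}$ from the $X_T$-norm together with Lemma~\ref{warm-up}, producing a gain of roughly $t^{-2+O(\delta)}$, comfortably beating the required $T^{-1-2\delta}$. The term where $\partial_t$ hits $\mathcal{I}^t$ requires noting that $\partial_t \mathcal{I}^t[f,g,h]$, by \eqref{DefOfI}, is itself of the form $\mathcal{I}^t$ applied to $\partial_x^2$-modified arguments (since $\partial_t \mathcal{U}(t) = i\partial_x^2 \mathcal{U}(t)$), so one loses two $x$-derivatives; but all $x$-frequencies are $\le T^{1/6}$, so this loss costs at most $T^{1/3}$, which is absorbed by the $t^{-2}$ decay with room to spare. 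Throughout, the $\omega^{-1}$ factor is harmless since $|\omega|\ge 1$, and the sum over $(p,q,r,s)\in\mathcal{M}\setminus\Gamma_0$ does not introduce any divergence because we still have the high regularity $H^N$ with $N\ge 30$ at our disposal to absorb $\langle q\rangle\langle r\rangle\langle s\rangle$ weights via \eqref{sum_in_p}. I expect the main obstacle to be bookkeeping: one must make sure that after the $x$-frequency truncation the integration by parts in $t$ is legitimate (no boundary contributions from $q_T$ beyond those already accounted for, and the $\chi$-type partition of $[T/4,2T]$ from Lemma~\ref{BilEf} is \emph{not} needed here because we are not integrating in time but estimating pointwise in $t$), and that the definitions of $\widetilde{\mathcal{E}}_1$ and $\mathcal{E}_3$ are compatible with the ones appearing in Proposition~\ref{StrucNon} when this lemma is later combined with Lemma~\ref{BilEf}.
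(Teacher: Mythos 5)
Your normal-form strategy is the right one for \emph{part} of $\widetilde{\mathcal N}^t$, but there is a genuine gap in the term where $\partial_t$ falls on $\mathcal I^t$ itself. Writing $\widehat{\mathcal I^t[f,g,h]}(\xi)=\int e^{2it\eta\kappa}\widehat f(\xi-\eta)\overline{\widehat g}(\xi-\eta-\kappa)\widehat h(\xi-\kappa)\,d\eta\, d\kappa$, the expression $(\partial_t\mathcal I^t)[f,g,h]$ carries the multiplier $2i\eta\kappa$ but gains \emph{no additional time decay}: it is again an $\mathcal I^t$--type trilinear form, whose best available bound (two dispersive $L^\infty$ factors against one $L^2$, as in \eqref{Pla}) is still $O(t^{-1})$. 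Under the localization $Q_{\le T^{1/6}}$ you only know $\vert\eta\kappa\vert\lesssim T^{1/3}$, so this contribution to $\widetilde{\mathcal E}_1$ is of size $T^{1/3}\cdot t^{-1}\approx T^{-2/3}$, far short of the required $T^{-1-2\delta}$. Your phrase ``absorbed by the $t^{-2}$ decay'' conflates this term with the terms where $\partial_t$ hits a profile: those genuinely gain an extra $t^{-1+3\delta}$ from the $X_T$ norm and are fine, but the phase-derivative term has no such gain, and the lower bound $\vert\omega\vert\ge 1$ gives you nothing to trade against it.

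The missing idea is a further splitting of the $(\eta,\kappa)$ integral according to the size of the spatial resonance function $\eta\kappa$, via the cutoff $\varphi(t^{1/4}\eta\kappa)$. On the region $\vert\eta\kappa\vert\lesssim t^{-1/4}$ (time non-resonant) one performs your integration by parts in $t$; there $\partial_t\bigl(e^{2it\eta\kappa}\varphi(t^{1/4}\eta\kappa)\bigr)$ gains a factor $t^{-1/4}$ over the untouched kernel, so the phase-derivative term is $O(t^{-5/4+\delta})$ and acceptable; controlling the multiplier $\varphi(t^{1/4}\eta\kappa)$ in the trilinear $L^2$ estimate requires a Coifman--Meyer type bound (Lemma~\ref{CM}) together with an $L^1$ bound on its inverse Fourier transform, which is not automatic. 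On the complementary region $\vert\eta\kappa\vert\gtrsim t^{-1/4}$ (space non-resonant) one does \emph{not} integrate by parts in time at all; instead, after splitting the profiles into a piece supported in $\vert x\vert\lesssim t^{7/12-\delta/10}$ plus a spatially decaying remainder, repeated integration by parts in $\kappa$ (using that $\vert\eta\vert\gtrsim T^{-5/12}$ on the support) gives an $O(T^{-201/200})$ bound, and this whole region goes directly into $\widetilde{\mathcal E}_1$. Your treatment of $\mathcal E_3$ (bounded factor $\vert\omega\vert^{-1}\le1$, estimate as in Lemma~\ref{warm-up}) and of the profile-derivative terms is essentially correct once they are restricted to the time non-resonant region.
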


\bigskip

To prove this lemma, we start by decomposing $\widetilde{\mathcal N}^t$ along the non-resonant level sets as follows:
\begin{align}
\mathcal{F}\widetilde{\mathcal{N}}^t[F,G,H](\xi,p)&=
\sum_{\omega\ne 0}\sum_{(p,q,r,s)\in\Gamma_\omega}e^{it\omega}\left(\mathcal{O}^t_1[F_q,G_r,H_s](\xi)+\mathcal{O}^t_2[F_q,G_r,H_s](\xi)\right),\label{dec tildeN0}\\
\mathcal{O}^t_1[f,g,h](\xi)&:=\int_{\mathbb{R}^2}e^{2it\eta\kappa}(1-\varphi(t^{\frac{1}{4}}\eta\kappa))\widehat{f}(\xi-\eta)\overline{\widehat{g}}(\xi-\eta-\kappa)\widehat{h}(\xi-\kappa)d\eta d\kappa,
\nonumber\\
\mathcal{O}^t_2[f,g,h](\xi)&:=\int_{\mathbb{R}^2}e^{2it\eta\kappa}\varphi(t^{\frac{1}{4}}\eta\kappa)\widehat{f}(\xi-\eta)\overline{\widehat{g}}(\xi-\eta-\kappa)\widehat{h}(\xi-\kappa)d\eta d\kappa.\nonumber
\end{align}
Essentially, on $\mathcal{O}_1$, we use the fact that the interactions are noncoherent\footnote{In the terminology of Germain-Masmoudi-Shatah \cite{GeMaSh}, $\mathcal{O}_1$ corresponds to space nonresonant interactions and $\mathcal{O}_2$ to time nonresonant interactions.}(cf Lemma \ref{EstimO}), while on $\mathcal{O}_2$, we exploit the fact that they are non resonant and we can use a normal forms transformation. 

Before we go into the proof of Lemma \ref{FastOsLem}, we insert the following remarks.

\begin{remark}\label{Y to S}
Some of our estimates below will concern functions of one real variable. To pass them on to functions on $\mathbb{R}\times\mathbb{T}^d$, we define
\begin{equation*}
\Vert f\Vert_Y:=\Vert\langle x\rangle^{\frac{9}{10}}f\Vert_{L^2_x}+\Vert f\Vert_{H^{\frac{3N}{4}}_x}
\end{equation*}
and use that
\begin{equation}\label{YSNorm}
\sum_{p\in\mathbb{Z}^d}\Vert F_p\Vert_Y\lesssim \Vert F\Vert_S.
\end{equation}
\end{remark}
\begin{remark}\label{The multiplier}
Assume that $T/4\le t\le T$. Under the assumptions of  
Lemma~\ref{FastOsLem}, the multiplier appearing in the definition of $\mathcal{O}^t_2$ in \eqref{dec tildeN0}
can be taken to be
\begin{equation*}
\widetilde{m}(\eta, \kappa):=\varphi(t^{1/4}\eta \kappa) \varphi((10T)^{-1/6}\eta)\varphi((10T)^{-1/6}\kappa).
\end{equation*}
We remark that
$$
\Vert \mathcal{F}_{\eta\kappa}\widetilde{m}\Vert_{L^1(\mathbb{R}^2)}=\Vert I(x_1,x_2)\Vert_{L^1_{x_1,x_2}},
$$
where
$$
I(x_1,x_2)=\int_{\mathbb{R}^2}e^{ix_1\eta}e^{ix_2\kappa}\varphi(S\eta\kappa)\varphi(\eta)\varphi(\kappa)d\eta d\kappa,\quad S\approx T^\frac{7}{12}\,.
$$
Then one may show that
\begin{equation*}
\vert I(x_1,x_2)\vert+\vert x_1I(x_1,x_2)\vert+\vert x_2I(x_1,x_2)\vert \lesssim 1,\quad
\vert x_1x_2I(x_1,x_2)\vert\lesssim \log(1+T)\,.
\end{equation*}
One also has rough polynomial in $T$ bounds for $(x_1^2+x_2^2+x_1^2x_2^2)|I(x_1,x_2)|$. Therefore by interpolation one obtains that
for every $\varepsilon>0$ there exists $\kappa>1$ such that
$$
|I(x_1,x_2)|\lesssim (1+T)^{\varepsilon}(1+x_1^2+x_2^2)^{-\kappa}\,. 
$$
We hence deduce that $\|\mathcal F_{\eta\kappa} \widetilde{m}\|_{L^1(\mathbb{R}^2)}\lesssim t^{\frac{\delta}{100}}$. 
Applying Lemma~\ref{CM}, we arrive at the following conclusion: if
\begin{equation*}
 f^a=Q_{\le T^\frac{1}{6}}f^a,\qquad  f^b=Q_{\le T^\frac{1}{6}}f^b, \qquad  f^c=Q_{\le T^\frac{1}{6}}f^c,
\end{equation*}
$t\ge T/4$, then
\begin{equation}\label{CEC}
\begin{split}
\Vert \mathcal{O}^t_2 [f^a,f^b,f^c]\Vert_{L^2_{\xi}}&=\Vert \mathcal{F}\mathcal{O}^t_2 [f^a,f^b,f^c]\Vert_{L^2_x}\\
&\lesssim(1+|t|)^\frac{\delta}{100}
\min_{\sigma\in\mathfrak{S}_3}\Vert f^{\sigma(a)}\Vert_{L^2_x}\Vert e^{it\partial_{xx}}f^{\sigma(b)}\Vert_{L^{\infty}_x}\Vert e^{it\partial_{xx}}f^{\sigma (c)}\Vert_{L^{\infty}_x}\\
&\lesssim (1+\vert t\vert)^{-1+\frac{\delta}{100}}\min_{\sigma\in\mathfrak{S}_3}\Vert f^{\sigma(a)}\Vert_{L^2_x}\Vert f^{\sigma(b)}\Vert_{Y}\Vert f^{\sigma(c)}\Vert_{Y}.
\end{split}
\end{equation}
A similar bound holds for $\mathcal{O}_1^t$ because $\mathcal{O}_1^t+\mathcal{O}_2^t$ enjoys a bound better than \eqref{CEC}.
\end{remark}
\begin{proof}[Proof of Lemma \ref{FastOsLem}]
In the decomposition of $\widetilde{\mathcal N}$ in \eqref{dec tildeN0}, the first sum involving $\mathcal{O}^t_1$ contributes
to $\widetilde{\mathcal{E}}_1(t)$ and its estimate follows by combining \eqref{YSNorm},  Lemma~\ref{EstimO} below with Lemma~\ref{ControlSS+}. 
Indeed, from \eqref{OSmall}, \eqref{sum_in_p} and Remark \ref{Y to S}, we get that for $t\geq T/4$
\begin{equation*}
\Vert \sum_{\omega\ne 0}\sum_{(p,q,r,s)\in\Gamma_\omega}e^{it\omega}\mathcal{O}^t_1[F^a_q,F^b_r,F^c_s]\Vert_{L^2_{\xi,p}}
\le T^{-\frac{201}{200}}\min_{\sigma\in\mathfrak{S}_3}\Vert F^{\sigma(a)}\Vert_{L^2_{x,y}}\Vert F^{\sigma(b)}\Vert_{S}\Vert F^{\sigma(c)}\Vert_{S}.
\end{equation*}
Lemma \ref{ControlSS+} (with $\theta_1=1$ and $\theta_2=0$, and $K=T^{-1-\frac{1}{200}}\lesssim T^{-1-5\delta}$) then gives the result.

\medskip

We now consider the contribution of the second sum in \eqref{dec tildeN0}. We start with a simple observation. Defining
\begin{equation*}
\widetilde{\mathcal{O}}^t_{2,\omega}[F,G,H](\xi,p)=\sum_{(p,q,r,s)\in\Gamma_\omega}\mathcal{O}^t_2[F_q,G_r,H_s](\xi),
\end{equation*}
it follows from \eqref{CEC} that, for $K\in L^2_{\xi,p}(\mathbb{R}\times \mathbb{Z}^d)$,
\begin{equation}\label{L2IP}
 \begin{split}
  &\langle K,\widetilde{\mathcal{O}}^t_{2,\omega}[F,G,H] \rangle_{L^2_{\xi,p}\times L^2_{\xi,p}}\le\sum_{(p,q,r,s)\in\Gamma_\omega}
  \left\vert\langle K_p,\mathcal{O}^t_2[F_q,G_r,H_s]\rangle_{L^2_\xi\times L^2_\xi}\right\vert\\
  &\lesssim (1+\vert t\vert)^{-1+\delta}\sum_{(p,q,r,s)\in\Gamma_\omega}\Vert K_p\Vert_{L^2_\xi}\times\\
&\quad\min\left\{\Vert F_q\Vert_{L^2_x}\Vert G_r\Vert_{Y}\Vert H_s\Vert_{Y},\Vert F_q\Vert_{Y}\Vert G_r\Vert_{L^2_x}\Vert H_s\Vert_{Y},
  \Vert F_q\Vert_{Y}\Vert G_r\Vert_{Y}\Vert H_s\Vert_{L^2_x}\right\}\\
\end{split}
\end{equation}
and summing over $\omega$, using \eqref{YSNorm} and \eqref{sum_in_p}, we get
\begin{equation}\label{BdOO}
\begin{split} 
&\Vert \sum_\omega e^{it\omega} \widetilde{\mathcal{O}}^t_{2,\omega}[F^a,F^b,F^c]\Vert_{L^2_{\xi,p}}\lesssim (1+\vert t\vert)^{-1+\delta}\min_{\sigma\in\mathfrak{S}_3}\Vert F^{\sigma(a)}\Vert_{L^2_{x,y}}\Vert F^{\sigma(b)}\Vert_S\Vert F^{\sigma(c)}\Vert_S.
\end{split}
\end{equation}

Now observe that
\begin{multline}\label{TIPP}
 e^{it\omega}\mathcal{O}^t_2[f,g,h]=\partial_t\Big( \frac{e^{it\omega}}{i\omega}\mathcal{O}^t_2[f,g,h]\Big)
-e^{it\omega}\left(\partial_t\mathcal{O}^t_2\right)[f,g,h]
\\
-e^{it\omega}\mathcal{O}^t_2[\partial_tf,g,h]
-e^{it\omega}\mathcal{O}^t_2[f,\partial_tg,h]-e^{it\omega}\mathcal{O}^t_2[f,g,\partial_th],
\end{multline}
where
$$
\left(\partial_t\mathcal{O}^t_2\right)[f,g,h](\xi):=\int_{\mathbb{R}}\partial_t\Big(e^{2it\eta\kappa}\varphi(t^{\frac{1}{4}}\eta\kappa)\Big)
\widehat{f}(\xi-\eta)\overline{\widehat{g}}(\xi-\eta-\kappa)\widehat{h}(\xi-\kappa)d\eta d\kappa.
$$
Using \eqref{BdOO}, the definition of the $X_{T}$ norm and Lemma \ref{ControlSS+}, we see that the contribution of the second line in \eqref{TIPP} is acceptable.
Similarly, since $(1+\vert t\vert)^{1/4}(\partial_t\mathcal{O}_2^t)$ satisfies similar estimates as $\mathcal{O}_2^t$, the second term in the right hand-side of 
\eqref{TIPP} is acceptable. It remains to analyze the first one. We define $\mathcal{E}_3$ by
\begin{equation*}
 \mathcal{F}\mathcal{E}_3(\xi,p):=\sum_{\omega\neq 0}\sum_{(p,q,r,s)\in\Gamma_\omega}\frac{e^{it\omega}}{i\omega}\mathcal{O}^t_2[F_q,G_r,H_s](\xi).
\end{equation*}
Using \eqref{L2IP} and \eqref{sum_in_p} we see that, for $K\in L^2_{x,y}(\mathbb{R}\times\mathbb{T}^d)$,
\begin{equation*}
 \begin{split}
\langle K,\mathcal{E}_3\rangle_{L^2_{x,y}\times L^2_{x,y}}
  &\le\sum_{\omega\ne 0}\left\vert\langle \mathcal{F}K,\widetilde{\mathcal{O}}^t_{2,\omega}[F,G,H]\rangle_{L^2\times L^2}\right\vert,\\
&\lesssim (1+\vert t\vert)^{-1+\delta}\sum_{\omega\ne 0}\sum_{(p,q,r,s)\in\Gamma_\omega}\Vert \widehat{K}_p\Vert_{L^2_\xi}\times\\
&\quad\min\left\{\Vert F_q\Vert_{L^2_x}\Vert G_r\Vert_{Y}\Vert H_s\Vert_{Y},\Vert F_q\Vert_{Y}\Vert G_r\Vert_{L^2_x}\Vert H_s\Vert_{Y},
  \Vert F_q\Vert_{Y}\Vert G_r\Vert_{Y}\Vert H_s\Vert_{L^2_x}\right\}\\
  &\lesssim (1+\vert t\vert)^{-1+\delta}\Vert K\Vert_{L^2_{x,y}}\times\\
  &\quad \min\left\{\Vert F\Vert_{L^2_{x,y}}\Vert G\Vert_{S}\Vert H\Vert_{S},\Vert F\Vert_{S}\Vert G\Vert_{L^2_{x,y}}\Vert H\Vert_{S},
  \Vert F\Vert_{S}\Vert G\Vert_{S}\Vert H\Vert_{L^2_{x,y}}\right\}.
 \end{split}
\end{equation*}
Another application of Lemma \ref{ControlSS+} shows that this term 
in the right hand-side of 
\eqref{TIPP} 
gives an acceptable contribution.
\end{proof}

We now give the argument to bound the operator $\mathcal{O}^t_1$.

\begin{lemma}\label{EstimO}
Assume that $t$, $f^a$, $f^b$, $f^c$ satisfy
\begin{equation*}
 t\ge T/4,\qquad f^a=Q_{A}f^a,\quad f^b=Q_{B}f^b,\quad f^c=Q_{C}f^c,\quad \max(A,B,C)\le T^\frac{1}{6}
\end{equation*}
then
\begin{equation}\label{OSmall}
\begin{split}
\Vert \mathcal{O}^t_1[f^a,f^b,f^c]\Vert_{L^2_\xi}&\lesssim  T^{-\frac{201}{200}}\min_{\sigma\in\mathfrak{S}_3}\Vert f^{\sigma(a)}\Vert_{L^2_x}\Vert f^{\sigma(b)}\Vert_{Y}\Vert f^{\sigma(c)}\Vert_{Y},\\
\end{split}
\end{equation}
\end{lemma}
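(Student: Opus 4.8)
The plan is to prove the estimate \eqref{OSmall} by exploiting the fact that, on the support of the symbol $1-\varphi(t^{1/4}\eta\kappa)$, the quantity $|\eta\kappa|\gtrsim t^{-1/4}$, so the oscillatory factor $e^{2it\eta\kappa}$ has a large phase that can be integrated by parts. First I would reduce to an $L^2_\xi$ bound on $\mathcal{O}^t_1[f^a,f^b,f^c]$ in the variables $(\eta,\kappa)$: writing $\widehat{\mathcal{O}^t_1}$ as a double integral and changing variables so that $\xi-\eta$, $\xi-\eta-\kappa$, $\xi-\kappa$ become the frequency arguments of $f^a$, $f^b$, $f^c$, I would aim to place the ``slowest'' factor (the one measured in $L^2_x$) to absorb the $\xi$-integration and bound the other two by their $L^\infty$-type norms, which in turn are controlled by the $Y$-norm via Bernstein/Sobolev on the frequency-localized pieces of size $\le T^{1/6}$.

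The key step is the integration by parts in the oscillatory integral. Since $\partial_\eta e^{2it\eta\kappa}=2it\kappa\, e^{2it\eta\kappa}$ and $\partial_\kappa e^{2it\eta\kappa}=2it\eta\, e^{2it\eta\kappa}$, integrating by parts once in each of $\eta$ and $\kappa$ produces a gain of $(t|\eta\kappa|)^{-1}$ — which on the relevant region is $\lesssim t^{-1}\cdot t^{1/4}=t^{-3/4}$ per integration by parts — at the cost of derivatives falling either on the cutoff $1-\varphi(t^{1/4}\eta\kappa)$ or on the $\widehat{f}$'s. Derivatives on $\widehat{f}$ convert to factors of $x$ in physical space (this is why the $Y$-norm includes the weight $\langle x\rangle^{9/10}$), and one can afford only a fractional power of such weights, so I would integrate by parts a fractional (non-integer) number of times, i.e. use a fractional-integration-by-parts / Littlewood-Paley decomposition of the region $\{|\eta\kappa|\sim 2^{-j}\}$ and sum a geometric series; each dyadic piece with $|\eta\kappa|\sim \mu$, $\mu\gtrsim t^{-1/4}$, contributes something like $(t\mu)^{-\theta}$ for some $\theta\in(0,1)$, while the measure of the region and the derivative loss on the cutoff are controlled by $\mu$ and $T^{1/6}$. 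Choosing $\theta$ close to $1$ and using $\mu\gtrsim t^{-1/4}\gtrsim T^{-1/4}$, together with the crude bound $\max(A,B,C)\le T^{1/6}$ to keep all frequency-derivative losses at most $T^{\text{small}}$, one obtains a total gain of $t^{-1}\cdot t^{-1/200}$ type, i.e. the exponent $-201/200$ claimed (the precise bookkeeping of $1/200$ versus $1/6$ and $\delta<10^{-3}$ is routine once $\theta$ is fixed).

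Concretely I would: (i) decompose $1-\varphi(t^{1/4}\eta\kappa)=\sum_{j}\psi_j(\eta\kappa)$ with $\psi_j$ supported where $|\eta\kappa|\sim 2^j t^{-1/4}$, $j\ge 0$; (ii) on each piece, write $e^{2it\eta\kappa}=(2it\kappa)^{-1}\partial_\eta e^{2it\eta\kappa}=(2it\eta)^{-1}\partial_\kappa e^{2it\eta\kappa}$ and integrate by parts $\lfloor\theta\rfloor$ times plus a fractional remainder, estimating the resulting symbol (its derivatives cost $2^{-j}t^{1/4}$ from the cutoff and $\langle x_a\rangle,\langle x_b\rangle,\langle x_c\rangle$ weights from the $\widehat f$'s, of which at most total power $9/10+9/10$ is used on the two $Y$-factors and none on the $L^2$-factor); (iii) apply Plancherel in $\xi$ and Hölder/Young to factor the integral as $\|f^{\sigma(a)}\|_{L^2_x}$ times two norms of the form $\|\langle x\rangle^{9/10} f\|_{L^2}+\|f\|_{H^{3N/4}}$ after undoing the frequency localization ($\le T^{1/6}$) with Bernstein; (iv) sum over $j\ge 0$ and over the dyadic frequency parameters $A,B,C$. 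The main obstacle is step (ii): balancing the number (fractional) of integrations by parts against the weight-loss budget of the $Y$-norm, since a full integration by parts in both variables would cost a weight $\langle x\rangle^1$ which is not available — this is precisely why one needs the interpolated/fractional argument and why the gain is $t^{-201/200}$ rather than $t^{-3/2}$. Everything else is standard oscillatory-integral and Littlewood-Paley bookkeeping.
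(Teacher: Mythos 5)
Your guiding idea is the same as the paper's: on the support of $1-\varphi(t^{1/4}\eta\kappa)$ the phase is non-stationary, integration by parts costs spatial weights, and the $Y$-norm only supplies $\langle x\rangle^{9/10}$, so a ``full'' gain is unavailable and one must settle for a fractional one. But the step you yourself flag as the main obstacle --- how to actually perform a ``fractional integration by parts'' within this weight budget --- is exactly the content of the lemma, and your outline does not supply it. The dyadic decomposition in $|\eta\kappa|\sim\mu$ does not by itself produce a gain $(t\mu)^{-\theta}$: a single integration by parts in $\kappa$ gains $(t|\eta|)^{-1}$ (not $(t|\eta\kappa|)^{-1}$, and doing one in each variable gains $(t^2|\eta\kappa|)^{-1}$), and on the slab $|\eta\kappa|\sim\mu$ the size of $|\eta|$ still varies over a large range, so the interpolation you would need between ``zero IBPs'' and ``one IBP costing $\langle x\rangle$'' is left entirely unconstructed. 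The paper's substitute for this missing step is elementary and worth knowing: truncate $g$ and $h$ in \emph{physical space} at scale $D=T^{7/12-\delta/10}$. Any term containing a far piece (supported in $|x|>D$) is handled with no integration by parts at all, via $\Vert e^{it\partial_{xx}}g_f\Vert_{L^\infty_x}\lesssim t^{-1/2}\Vert g_f\Vert_{L^1_x}\lesssim t^{-1/2}D^{-2/5}\Vert g\Vert_Y$ (Cauchy--Schwarz against $\langle x\rangle^{-9/10}$ on $|x|>D$), giving a gain $D^{-2/5}=T^{-7/30+}$, far more than the required $T^{-1/200}$. For the all-near term the functions are supported in $|x|\lesssim D$, so each factor of $x$ produced by integration by parts costs only $D$; since $|\kappa|\lesssim T^{1/6}$ forces $|\eta|\gtrsim t^{-1/4}T^{-1/6}\gtrsim T^{-5/12}$ on the support, each genuine IBP in $\kappa$ gains $(t|\eta|)^{-1}D\lesssim T^{-7/12}\cdot T^{7/12-\delta/10}=T^{-\delta/10}$, and iterating a fixed large number of times yields $T^{-20}$. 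No interpolation is needed.

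Two further points of precision. First, to respect the $\min_{\sigma}$ structure (no weight may ever land on the $L^2$-factor), the integration by parts must be performed \emph{only in $\kappa$} when $f$ occupies the first slot, since $\partial_\eta$ can hit $\widehat{f}(\xi-\eta)$ while $\partial_\kappa$ only hits $\widehat{g}(\xi-\eta-\kappa)$ and $\widehat{h}(\xi-\kappa)$; you assert that no weight falls on the $L^2$-factor but propose integrating by parts in both variables, which would violate this. Second, the exponent $-201/200$ is not ``forced'' by the weight limitation in the way you suggest; the paper's argument actually yields $T^{-1-7/30+}$ on the worst (far-field) terms and $T^{-20}$ on the near-field term, and $-201/200$ is simply a convenient understatement.
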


\begin{proof}

We will show that
\begin{equation}\label{OSmall1}
\begin{split}
\Vert \mathcal{O}^t_1[f,g,h]\Vert_{L^2_\xi}&\lesssim  T^{-\frac{201}{200}}\Vert f\Vert_{L^2_x}\Vert g\Vert_{Y}\Vert h\Vert_{Y}.\\
\end{split}
\end{equation}
The other inequalities in \eqref{OSmall} follow by symmetry and conjugation.

We first decompose
\begin{equation*}
\begin{split}
g=g_c+g_f,\quad h=h_c+h_f,\quad g_c(x)=\varphi(\frac{x}{D})g(x),\quad h_c(x)=\varphi(\frac{x}{D})h(x),\quad D:=T^{\frac{7}{12}-\frac{\delta}{10}}
 \end{split}
\end{equation*}
Using the remark after \eqref{CEC}, we see that
\begin{equation*}
\Vert \mathcal{O}^t_1[f,g,h]\Vert_{L^2_\xi}\lesssim t^{\frac{\delta}{100}}\Vert f\Vert_{L^2_x}\Vert e^{it\partial_{xx}}g\Vert_{L^\infty_x}\Vert e^{it\partial_{xx}}h\Vert_{L^\infty_x}.
\end{equation*}
In addition,  for $\gamma>1/2$
\begin{equation*}
\Vert e^{it\partial_{xx}}f\Vert_{L^\infty_x}
\lesssim
\langle t\rangle^{-\frac{1}{2}}\|f\|_{L^1_x}
=
\langle t\rangle^{-\frac{1}{2}}\|\langle x\rangle ^\gamma \langle x\rangle ^{-\gamma}   f\|_{L^1_x}
\lesssim
\langle t\rangle^{-\frac{1}{2}}\|\langle x\rangle ^\gamma   f\|_{L^2_x}\,.
\end{equation*}
Hence, if  $f(x)$ is supported in $|x|>R$, 
$$
\Vert e^{it\partial_{xx}}f\Vert_{L^\infty_x}
\lesssim
\langle t\rangle^{-\frac{1}{2}}
R^{-\alpha}\|f\|_{Y},
$$
with $\alpha>\frac{2}{5}$. Therefore we obtain that \eqref{OSmall1} is a consequence of the estimate
\begin{equation*}
\Vert \mathcal{O}^t_1[f,g_c,h_c]\Vert_{L^2_\xi}\lesssim T^{-20}\Vert f\Vert_{L^2_x}\Vert g\Vert_{L^2_x}\Vert h\Vert_{L^2_x}.
\end{equation*}
But this follows by repeated integration by parts in $\kappa$ since on the support of integration, we necessarily have $\vert \eta\vert\gtrsim T^{-\frac{5}{12}}$.
This completes the proof of Lemma~\ref{EstimO}.
\end{proof}
\subsection{The resonant level set}\label{Res Subsec}
We now turn to the contribution of the resonant set in \eqref{Dec2},
\begin{equation*}
\mathcal{F}\Pi^t[F,G,H](\xi,p)=\sum_{(p,q,r,s)\in\Gamma_0}\mathcal{F}_x\mathcal{I}^t[F_q(t),G_r(t),H_s(t)](\xi).
\end{equation*}
This term yields the main contribution in Proposition \ref{StrucNon} and in particular is responsible for the slowest $1/t$ decay. We show that it gives rise to a contribution which grows slowly in $S$, $S^+$ and that it can be well approximated by the resonant system in the $Z$ norm.

In this subsection, we will bound quantities in terms of
\begin{equation*}
\Vert F\Vert_{\tilde{Z}_t}:=\Vert F\Vert_Z+(1+\vert t\vert)^{-\delta}\Vert F\Vert_S,                                                    
\end{equation*}
so that $F(t)$ remains uniformly bounded in $\tilde{Z}_t$ under the assumption of Proposition \ref{StrucNon}.
\begin{lemma}\label{Res}
Let $t\ge 1$. There holds that
\begin{equation}\label{GrowthSRes}
\Vert \Pi^t[F^a,F^b,F^c]\Vert_{S}\lesssim (1+\vert t\vert)^{-1}\sum_{\sigma\in\mathfrak{S}_3}\Vert F^{\sigma(a)}\Vert_{\tilde{Z}_t}\cdot \Vert F^{\sigma(b)}\Vert_{\tilde{Z}_t}\cdot \Vert F^{\sigma(c)}\Vert_{S}
\end{equation}
and
\begin{equation}\label{GrowthSRes2}
\begin{split}
\Vert \Pi^t[F^a,F^b,F^c]\Vert_{S^+}\lesssim& (1+\vert t\vert)^{-1}\sum_{\sigma\in\mathfrak{S}_3}\Vert F^{\sigma(a)}\Vert_{\tilde{Z}_t}\cdot \Vert F^{\sigma(b)}\Vert_{\tilde{Z}_t}\cdot \Vert F^{\sigma(c)}\Vert_{S^+}\\
&\quad+(1+\vert t\vert)^{-1+2\delta}\sum_{\sigma\in\mathfrak{S}_3}\Vert F^{\sigma(a)}\Vert_{\tilde{Z}_t}\cdot \Vert F^{\sigma(b)}\Vert_{S}\cdot \Vert F^{\sigma(c)}\Vert_{S}.
\end{split}
\end{equation}
In addition,
\begin{equation}\label{ApproxRes}
\Vert \Pi^t[F,G,H]-\frac{\pi}{t}\mathcal{R}[F,G,H]\Vert_Z\lesssim (1+\vert t\vert)^{-1-20\delta}\Vert F\Vert_S\Vert G\Vert_S\Vert H\Vert_S.
\end{equation}
and
\begin{equation}\label{ApproxRes2}
\Vert \Pi^t[F,G,H]-\frac{\pi}{t}\mathcal{R}[F,G,H]\Vert_{S}\lesssim (1+\vert t\vert)^{-1-20\delta}\Vert F\Vert_{S^+}\Vert G\Vert_{S^+}\Vert H\Vert_{S^+}.
\end{equation}
\end{lemma}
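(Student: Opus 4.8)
The plan is to analyze the operator $\Pi^t$ by decomposing the stationary-phase integral defining $\mathcal{I}^t$ into its leading term (which is $(\pi/t)\mathcal{R}$ up to error) and a remainder. Recall that
$$
\widehat{\mathcal{I}^t[f,g,h]}(\xi)=\int_{\mathbb{R}^2}e^{2it\eta\kappa}\widehat{f}(\xi-\eta)\overline{\widehat{g}}(\xi-\eta-\kappa)\widehat{h}(\xi-\kappa)\,d\kappa\, d\eta.
$$
First I would split the $(\eta,\kappa)$ region into a ``close'' region $\{|\eta|,|\kappa|\lesssim t^{-1/2+\delta'}\}$ and its complement. On the complement, repeated integration by parts in $\eta$ and $\kappa$ (using the phase $2t\eta\kappa$, whose gradient is $2t(\kappa,\eta)$) gains arbitrarily many powers of $t$, producing a contribution bounded by $(1+t)^{-100}$ times products of $S^+$ (or $S$) norms; this handles the error terms \eqref{ApproxRes}, \eqref{ApproxRes2} away from the diagonal and is routine. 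On the close region, I would Taylor-expand $\widehat{f}(\xi-\eta)=\widehat{f}(\xi)+O(|\eta|\,\|\partial_\xi\widehat f\|)$ etc., using that $\widehat{xf}=i\partial_\xi\widehat f$ so these derivative terms are controlled by the weighted $L^2$ norm inside $S$. The zeroth-order term is $\widehat f(\xi)\overline{\widehat g(\xi)}\widehat h(\xi)\int e^{2it\eta\kappa}\varphi(\cdots)d\eta d\kappa$, and the Gaussian-type integral $\int_{\mathbb{R}^2}e^{2it\eta\kappa}d\eta d\kappa = \pi/t$ (interpreted via the Fresnel/oscillatory integral), giving exactly $(\pi/t)\mathcal{R}$; the cutoff error and the first-order Taylor terms are $O(t^{-1-20\delta})$ after choosing $\delta'$ appropriately, yielding \eqref{ApproxRes}--\eqref{ApproxRes2}. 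Here the $Z$-norm bound \eqref{ApproxRes} is the more delicate one since $Z$ only controls two $x$-derivatives in the weight $\langle\xi\rangle^2$ and one $p$-derivative, so I must be careful to place the lossy factors on the $S$-norm slots.

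For the growth estimates \eqref{GrowthSRes} and \eqref{GrowthSRes2}, the strategy is duality combined with the structure of $\Gamma_0$: by the remark after \eqref{DiscPar}, $(p,q,r,s)\in\Gamma_0$ iff $\{p,q,r,s\}$ form a rectangle, so $q$ and $s$ are determined by $p,r$ up to the rectangle constraint and one has a two-parameter (rather than three-parameter) sum. Applying $Z$ or $\partial_{y_j}$ or $x$ to $\Pi^t$ and using the Leibniz rule \eqref{leib}, it suffices to bound $\|\Pi^t[F^a,F^b,F^c]\|_{L^2_{x,y}}$ and the weighted/high-derivative analogues. For the $L^2_{x,y}$ bound I would test against $K\in L^2_{x,p}$ and write out the resulting four-fold sum over $\Gamma_0$; using the dispersive bound \eqref{basic-bound-corr} to put two of the factors in $L^\infty_x$ (gaining $(1+t)^{-1}$) while keeping $F^a$ and $K$ in $L^2_x$, then summing over the rectangle configurations via a Cauchy--Schwarz of the type \eqref{sum_in_p} adapted to $\Gamma_0$. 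The key point is that on the close region the $L^\infty$ factors can be estimated using only the $Z$-norm (via $\|e^{it\partial_{xx}}f\|_{L^\infty}\lesssim t^{-1/2}\|f\|_{L^1_x}$ and $\|f\|_{L^1_x}\lesssim \|f\|_Z$-type control through $\langle\xi\rangle^2$ decay and $h^1_p$ summability), which is exactly what produces the $\tilde Z_t$ norms in two of the three slots. For \eqref{GrowthSRes2} the extra term with $(1+t)^{-1+2\delta}$ and two $S$-factors comes from the commutator between $(1-\partial_{xx})^4$ (appearing in $S^+$) and the oscillatory multiplier structure, or equivalently from the cases where derivatives in $x$ land unevenly; one accepts the loss $(1+t)^{2\delta}$ there since $S^+$ is allowed to grow.

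The main obstacle will be establishing \eqref{ApproxRes}, i.e.\ the $t^{-1-20\delta}$ gain in the \emph{weak} $Z$ norm for the difference $\Pi^t-\tfrac{\pi}{t}\mathcal{R}$. Since $Z$ sits at essentially the $H^1$-level in the periodic directions and $\langle\xi\rangle^2$ in $x$ with no room to spare, every error term (the $\varphi$-cutoff tail, the Taylor remainders, the integration-by-parts boundary terms) must be shown to gain a genuine power $t^{-20\delta}$ \emph{beyond} $t^{-1}$ while only costing $S$-norms (which are at most $t^\delta$) and $Z$-norms. This forces a careful choice of the cutoff scale in the $(\eta,\kappa)$ decomposition: too large and the diagonal Taylor error loses too much, too small and the integration-by-parts region doesn't gain enough — the exponents have to be balanced so that both sides beat $t^{-20\delta}$, and one must verify that the $h^1_p$ summation over $\Gamma_0$ (a codimension-two set in $\mathbb{Z}^{4d}$) converges with the available $\langle p\rangle$-weights, which is tight for $d$ up to $4$. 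I would carry out the close/far decomposition first, dispatch the far region by integration by parts, and spend the bulk of the effort on optimizing the close-region expansion to extract the sharp $\pi/t$ constant with the claimed error.
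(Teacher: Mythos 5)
Your overall architecture for \eqref{GrowthSRes} (dispersive decay on two factors, $L^2$-duality, then transfer to $S$ via the Leibniz rule) matches the paper's, but you gloss over the step that is actually hard there: performing the sum over $\Gamma_0$ with only $h^1_p$ control on two of the three factors. A ``Cauchy--Schwarz of the type \eqref{sum_in_p} adapted to $\Gamma_0$'' does not work for $d\ge 2$, because \eqref{sum_in_p} needs $\ell^1_p$ on two slots and $h^1_p\not\subset \ell^1_p$. The paper's proof runs the estimate pointwise in $x$ through Lemma~\ref{DiscreteStricTorLem}, i.e.\ the $L^4$ Strichartz estimates on $\T^d$ of Bourgain (and, for $d=4$, the refined bilinear estimates of Bourgain~2013 and Herr--Tataru--Tzvetkov); this is a genuine external input, it is where the restriction $d\le 4$ enters, and it is combined with Lemma~\ref{DispLem}, which extracts the $t^{-1}$ decay from the $Z$-norm via the pointwise asymptotic $e^{it\partial_{xx}}f(x)\approx ct^{-1/2}e^{-ix^2/4t}\widehat f(-x/2t)$ (note that your parenthetical $\Vert f\Vert_{L^1_x}\lesssim\Vert f\Vert_Z$ is false as stated; the $Z$-norm controls $\sup_\xi\langle\xi\rangle^2|\widehat f(\xi)|$, not $\Vert f\Vert_{L^1_x}$).

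For \eqref{ApproxRes}--\eqref{ApproxRes2} you take a genuinely different route (frequency-space close/far splitting in $(\eta,\kappa)$ with Taylor expansion on the diagonal), and here there is a concrete gap: your claim that repeated integration by parts in the far region ``gains arbitrarily many powers of $t$, producing a contribution bounded by $(1+t)^{-100}$'' is not available. Each integration by parts in $\eta$ or $\kappa$ puts a $\xi$-derivative on one of $\widehat f,\widehat g,\widehat h$, i.e.\ costs one power of the weight $x$; the $S$-norm controls only $\Vert xF\Vert_{L^2}$ and $S^+$ essentially $\Vert x^2F\Vert_{L^2}$, so at most one or two integrations by parts are permitted, and in the region where (say) $|\eta|$ is large but $|\kappa|$ is small a single integration by parts in $\kappa$ gains only $(t|\eta|)^{-1}$, which after integrating in $\eta$ falls short of $t^{-1-20\delta}$ with $S$-norms alone. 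The paper avoids this entirely by cutting off in \emph{physical} space at scale $t^{1/4}$ (so the far piece $F_f$ is already $O(t^{-1/4})$ in $L^2$ using just one weight) and then, for the localized pieces, using the exact Kato--Pusateri identity
\begin{equation*}
\int_{\R^2}e^{2is\eta\kappa}\widehat f(\xi-\eta)\overline{\widehat g}(\xi-\eta-\kappa)\widehat h(\xi-\kappa)\,d\eta\,d\kappa
=\frac{\pi}{s}\int_{\R^3}f(y_1)\overline{g}(y_2)h(y_3)e^{-i\xi(y_1-y_2+y_3)}e^{-i\frac{(y_1-y_2)(y_3-y_2)}{2s}}\,dy,
\end{equation*}
so that the error is just $|e^{-i(y_1-y_2)(y_3-y_2)/2s}-1|\lesssim s^{-1/2+}$ on the support, with no integration by parts and hence no further weights consumed. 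If you want to salvage the frequency-side approach you must rebalance the exponents and track exactly how many weights each step spends; as written, the far-region step fails.
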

\begin{remark}\label{ControlR}
 Using Lemma \ref{DiscreteStricTorLem} and Lemma \ref{ControlSS+}, we directly see that \eqref{GrowthSRes} and \eqref{GrowthSRes2}
also holds if $\Pi^t[F^a,F^b,F^c]$ is replaced by $(1+t)^{-1}\mathcal{R}[F^a,F^b,F^c]$.
\end{remark}
\begin{remark}
Note that in Lemma~\ref{Res}, the summation in $p$ is a highly non trivial part of the estimate, as opposed to the previous lemmas which were essentially concerned with functions of a real variable, and the summation in $p$ was treated in a crude way via \eqref{sum_in_p}.
\end{remark}
\begin{proof}[Proof of Lemma~\ref{Res}]
Combining \eqref{DefOfI} with Lemma \ref{DiscreteStricTorLem}, we see that
\begin{equation*}
\begin{split}
\Vert \Pi^t[F^a,F^b,F^c]\Vert_{L^2_{x,y}}&\le\Vert \sum_{(p,q,r,s)\in\Gamma_0}\vert e^{it\partial_{xx}}F^a_q(x)\vert\cdot\vert e^{-it\partial_{xx}}F^b_r(x)\vert\cdot\vert e^{it\partial_{xx}}F^c_s(x)\vert\,\,\Vert_{l^2_p\, L^2_x}\\
&\lesssim \min_{j\in\{a,b,c\}}
\Big\|
\Vert e^{it\partial_{xx}}F^{j}_p(x)\Vert_{l^2_{p}}\prod_{k\ne j}\Big[\sum_{p\in\mathbb{Z}^d}\Big[1+\vert p\vert^2\Big]\vert e^{it\partial_{xx}}F^{k}_p(x)\vert^2 \Big]^\frac{1}{2}
\Big\|_{L^2_x}
\end{split}
\end{equation*}
and therefore
\begin{equation*}
\Vert \Pi^t[F^a,F^b,F^c]\Vert_{L^2_{x,y}}\lesssim \min_{j\in\{a,b,c\}}\Vert F^{j}\Vert_{L^2_{x,y}}\prod_{k\ne j}\Big[\sup_{x\in \R}\sum_{p\in\mathbb{Z}^d}\left[1+\vert p\vert^2\right]\vert e^{it\partial_{xx}}F^{k}_p(x)\vert^2 \Big]^\frac{1}{2}.
\end{equation*}
Using Lemma \ref{DispLem}, we can conclude that
\begin{equation}\label{GrowthSRes111}
\Vert \Pi^t[F^a,F^b,F^c]\Vert_{L^2_{x,y}}\lesssim (1+\vert t\vert)^{-1} \min_{j\in\{a,b,c\}}\Vert F^{j}\Vert_{L^2_{x,y}}\prod_{k\ne j}\Vert F^{k}\Vert_{\tilde{Z}_t}.
\end{equation}
Using Lemma \ref{ControlSS+}, we obtain  \eqref{GrowthSRes}. In order to show \eqref{GrowthSRes2}, we will apply the second part of Lemma \ref{ControlSS+}. For this, it suffices to prove that
\begin{equation}\label{SuffGrowthRes2}
\Vert xF\Vert_Z\lesssim  T^{-\delta}\Vert F\Vert_{S^+}+T^{2\delta}\Vert F\Vert_S.
\end{equation}
Indeed, one first observes that it suffices to prove \eqref{SuffGrowthRes2} for functions independent of $y$. 
Then, we notice that
$$
\sup_\xi [(1+|\xi|^2) |{\mathcal F} (x f)|]
\sim \sup_M (1+M^2)\|{\mathcal F} Q_M (x f)\|_{L^\infty_\xi}\,.
$$
Next, for every $M, R$ we get
$$
\|{\mathcal F} Q_M [x (1-\varphi(x/R)) f]\|_{L^\infty_\xi}\lesssim
\| [x (1-\varphi(x/R)) f]\|_{L^1_x}
\lesssim R^{-\frac 12} \|x^2f \|_{L^2}\leq C R^{-\frac 12} \|f \|_{S^+}
$$
On the other hand, by invoking  \eqref{L11}, we get
\begin{equation}\label{pisna-mi}
\|{\mathcal F} Q_M [(x \varphi(x/R)) f]\|_{L^\infty_\xi}
\lesssim 
\|Q_{M}( x (\varphi(x/R)) f)\|_{L^2_x}^{\frac 12}   \|x Q_M [x(\varphi(x/R)) f]\|_{L^2_x}^{\frac 12} 
\end{equation}
We now estimate each factor at the right hand-side of the last inequality. 
By setting $\tilde{\varphi}(x)=x\varphi(x)$, we may write for $M$ a dyadic integer
$$
\|Q_{M}( x (\varphi(x/R)) f)\|_{L^2_x}=R \|Q_{M}(  (\tilde{\varphi}(x/R)) f)\|_{L^2_x}\lesssim RM^{-N}\|f\|_{H^N_x}.
$$
We next estimate the second factor at the right hand-side of \eqref{pisna-mi} as follows
$$
 \|x Q_M [x(\varphi(x/R)) f]\|_{L^2_x}\lesssim
 \|\langle x\rangle^2 \varphi(x/R)) f\|_{L^2_x}\lesssim R\|f\|_{S}.
$$
We conclude the proof of \eqref{SuffGrowthRes2} by choosing $R= T^{2\delta} (1+M^2)^2$.  

We now turn to the proof of \eqref{ApproxRes} and \eqref{ApproxRes2}. First decompose
$$
F=F_c+F_f,\quad G=G_c+G_f,\quad H=H_c+H_f,
$$
where
\begin{equation*}
F_c(x,y)=\varphi(t^{-\frac{1}{4}}x)F(x,y),\quad G_c(x,y)=\varphi(t^{-\frac{1}{4}}x)G(x,y),\quad H_c(x,y)=\varphi(t^{-\frac{1}{4}}x)H(x,y).
\end{equation*}
We claim that
\begin{equation}\label{GrowthSRes1112}
\begin{split}
&\Vert \Pi^t[F,G,H]-\Pi^t[F_c,G_c,H_c]\Vert_{Z}+\frac{1}{t}\Vert \mathcal{R}[F,G,H]-\mathcal{R}[F_c,G_c,H_c]\Vert_{Z}\\
\lesssim&(1+\vert t\vert)^{-\frac{21}{20}}\Vert F\Vert_S\Vert G\Vert_S\Vert H\Vert_S,\\
\end{split}
\end{equation}
and
\begin{equation}\label{GrowthSRes1113}
\begin{split}
&\Vert \Pi^t[F,G,H]-\Pi^t[F_c,G_c,H_c]\Vert_{S}+\frac{1}{t}\Vert \mathcal{R}[F,G,H]-\mathcal{R}[F_c,G_c,H_c]\Vert_{S}\\
\lesssim& (1+\vert t\vert)^{-\frac{21}{20}}\Vert F\Vert_{S^+}\Vert G\Vert_{S^+}\Vert H\Vert_{S^+}.
\end{split}
\end{equation}

Indeed, with $\tilde G$ denoting either $G_c$ or $G_f$ (and similarly for $\tilde{H}$) and using \eqref{GrowthSRes} and \eqref{GrowthSRes111}, we obtain that
\begin{equation*}
 \begin{split}
\Vert \frac{\pi}{t}\mathcal{R}[F_f,\tilde{G},\tilde{H}]\Vert_S+\Vert \Pi^t[F_f,\tilde{G},\tilde{H}]\Vert_{S}&\lesssim (1+\vert t\vert)^{-1}\Vert F\Vert_{S}\Vert G\Vert_{S}\Vert H\Vert_{S}\\
\Vert \frac{\pi}{t}\mathcal{R}[F_f,\tilde{G},\tilde{H}]\Vert_{L^2_{x,y}}+\Vert \Pi^t[F_f,\tilde{G},\tilde{H}]\Vert_{L^2_{x,y}}&\lesssim (1+\vert t\vert)^{-1}\Vert F_f\Vert_{L^2}\Vert \tilde{G}\Vert_S\Vert \tilde{H}\Vert_S\\
&\lesssim (1+\vert t\vert)^{-5/4}\Vert F\Vert_{S}\Vert G\Vert_{S}\Vert H\Vert_{S}
 \end{split}
\end{equation*}
Using \eqref{ZSNorm} allows to bound the contribution of this term to \eqref{GrowthSRes1112}. The terms involving $G_f$ and $H_f$ can be treated similarly.

Similarly, using \eqref{GrowthSRes}, we see that
\begin{equation*}
 \begin{split}
  \Vert \frac{\pi}{t}\mathcal{R}[F_f,\tilde{G},\tilde{H}]\Vert_S+\Vert \Pi^t[F_f,\tilde{G},\tilde{H}]\Vert_S
&\lesssim (1+\vert t\vert)^{-1}\Vert F_f\Vert_{S}\Vert G\Vert_{S}\Vert H\Vert_{S}\\
&\lesssim (1+\vert t\vert)^{-5/4}\Vert F\Vert_{S^+}\Vert G\Vert_{S^+}\Vert H\Vert_{S^+}.
 \end{split}
\end{equation*}
This bounds the contribution of terms involving $F_f$ to the right hand side of \eqref{GrowthSRes1112}. The contribution of terms involving $H_f$ or $G_f$ follows similarly.

Therefore, to show \eqref{ApproxRes} and \eqref{ApproxRes2}, it suffices to show that
\begin{equation}\label{CS1}
\Vert \Pi^t[F_c,G_c,H_c]-\frac{\pi}{t}\mathcal{R}[F_c,G_c,H_c]\Vert_{Z}\lesssim (1+\vert t\vert)^{-\frac{15}{14}}\Vert F\Vert_S\Vert G\Vert_S\Vert H\Vert_S
\end{equation}
and
\begin{equation}\label{CS2}
\Vert \Pi^t[F_c,G_c,H_c]-\frac{\pi}{t}\mathcal{R}[F_c,G_c,H_c]\Vert_{S}\lesssim (1+\vert t\vert)^{-\frac{15}{14}}\Vert F\Vert_{S^+}\Vert G\Vert_{S^+}\Vert H\Vert_{S^+}.
\end{equation}
The proof of \eqref{CS1} and \eqref{CS2} will follow from the following key statement. 
\begin{lemma}\label{SPL}
Assume that
\begin{equation}\label{fghCompSup}
f(x)=\varphi(s^{-\frac{1}{4}}x)f(x),\qquad g(x)=\varphi(s^{-\frac{1}{4}}x)g(x),\qquad h(x)=\varphi(s^{-\frac{1}{4}}x)h(x)
\end{equation}
and that $s\ge 1$. There holds that
\begin{equation}\label{Claim1}
\begin{split}
&\left\vert \int_{\mathbb{R}^2}e^{i2s\eta\kappa}\widehat{f}(\xi-\eta)\overline{\widehat{g}}(\xi-\eta-\kappa)\widehat{h}(\xi-\kappa) d\eta d\kappa-\frac{\pi}{s}\widehat{f}(\xi)\overline{\widehat{g}}(\xi)\widehat{h}(\xi)\right\vert\\
&\lesssim s^{-\frac{11}{10}}\Vert  f\Vert_{L^2_x}\Vert  g\Vert_{L^2_x}\Vert  h\Vert_{L^2_x}.
\end{split}
\end{equation}
In fact, for $\theta$ an integer,
\begin{equation}\label{Claim2}
\begin{split}
&\vert \xi\vert^{\theta}\left\vert \int_{\mathbb{R}^2}e^{i2s\eta\kappa}\widehat{f^a}(\xi-\eta)\overline{\widehat{f^b}}(\xi-\eta-\kappa)\widehat{f^c}(\xi-\kappa) d\eta d\kappa-\frac{\pi}{s}\widehat{f^a}(\xi)\overline{\widehat{f^b}}(\xi)\widehat{f^c}(\xi)\right\vert\\
&\lesssim s^{-\frac{11}{10}}\min_{\sigma\in\mathfrak{S}_3}\Vert  f^{\sigma(a)}\Vert_{H^\theta_x}\Vert  f^{\sigma(b)}\Vert_{L^2_x}\Vert  f^{\sigma(c)}\Vert_{L^2_x}.
\end{split}
\end{equation}
\end{lemma}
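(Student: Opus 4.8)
The plan is to treat \eqref{Claim1} as a stationary phase estimate in the two variables $(\eta,\kappa)$: the phase $\phi(\eta,\kappa)=2s\eta\kappa$ has a single nondegenerate critical point at $(\eta,\kappa)=(0,0)$ with Hessian $\begin{pmatrix}0&2s\\2s&0\end{pmatrix}$, signature $0$, and determinant $-4s^2$, so the standard stationary phase asymptotic predicts that the integral equals $\frac{(2\pi)}{s}\,\widehat f(\xi)\overline{\widehat g}(\xi)\widehat h(\xi)\cdot\frac{1}{2}+\text{error}$; tracking the constant carefully (the $\tfrac12$ comes from $2\pi/|{\det}|^{1/2}\cdot$ the phase factor $e^{i\pi\,\mathrm{sgn}/4}=1$) yields the $\pi/s$ coefficient. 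The point of the hypothesis \eqref{fghCompSup} is to control the derivatives of the amplitude $a(\eta,\kappa)=\widehat f(\xi-\eta)\overline{\widehat g}(\xi-\eta-\kappa)\widehat h(\xi-\kappa)$: since $f,g,h$ are supported in $\{|x|\lesssim s^{1/4}\}$, each $\eta$ or $\kappa$ derivative of $\widehat f,\widehat g,\widehat h$ costs a factor $O(s^{1/4})$ in $L^2_\xi$, so the error terms in the stationary phase expansion, which come with powers $s^{-k}$ from the phase against $s^{k/4}$ from the amplitude, are genuinely small; one derivative beyond the main term already gives $s^{-1}\cdot s^{1/2}=s^{-1/2}$ relative to the $s^{-1}$ main term, i.e.\ $s^{-3/2}$, and more than enough to beat the target $s^{-11/10}$.

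Concretely I would argue as follows. First change variables to diagonalize the phase, $u=\eta+\kappa$, $v=\eta-\kappa$, so that $2\eta\kappa=\tfrac12(u^2-v^2)$ and the integral becomes a product of two one-dimensional oscillatory integrals of the form $\int e^{\pm i s u^2/2}(\cdots)\,du$ up to the coupling through the amplitude. Then I would write the amplitude as $a(\eta,\kappa)=\widehat f(\xi)\overline{\widehat g}(\xi)\widehat h(\xi) + \big(a(\eta,\kappa)-a(0,0)\big)$; the first, constant-in-$(\eta,\kappa)$ piece produces exactly $\tfrac\pi s\widehat f(\xi)\overline{\widehat g}(\xi)\widehat h(\xi)$ via the Fresnel integral $\int_{\mathbb R}e^{is u^2/2}du=\sqrt{2\pi/s}\,e^{i\pi/4}$ (the two factors combining the $e^{i\pi/4}$ and $e^{-i\pi/4}$ to give a real $\pi/s$), and the remainder is estimated by Taylor expanding $a-a(0,0)$ to first order, writing $a(\eta,\kappa)-a(0,0)=\eta\int_0^1\partial_\eta a(t\eta,t\kappa)dt+\kappa\int_0^1\partial_\kappa a(t\eta,t\kappa)dt$, integrating by parts once in the corresponding oscillatory variable to gain $s^{-1}$, and using the support hypothesis to bound $\|\partial_\eta a\|$, $\|\partial_\kappa a\|$ by $s^{1/4}$ times products of $L^2$ norms (via Plancherel in $\xi$ and Cauchy--Schwarz/Hausdorff--Young to distribute the three factors). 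One must also handle the region $|\eta|\gtrsim 1$ or $|\kappa|\gtrsim 1$ (the tails of the Fresnel integrals) where nonstationary phase / repeated integration by parts gives rapid decay $O(s^{-10})$, exactly as in the proof of Lemma~\ref{EstimO}. For \eqref{Claim2} one simply notes $|\xi|^\theta$ applied to the difference can be absorbed by the same argument after commuting $|\xi|^\theta$ with the convolution structure, at the cost of replacing one $L^2_x$ norm by an $H^\theta_x$ norm (distributing the $\theta$ derivatives onto whichever factor one prefers, hence the $\min_{\sigma}$), since $\langle\xi\rangle^\theta\lesssim \langle\xi-\eta\rangle^\theta+\langle\eta\rangle^\theta\lesssim\cdots$ and the extra $\langle\eta\rangle^\theta,\langle\kappa\rangle^\theta$ factors are harmless against the rapidly decaying tails and cost only finitely many more $s^{1/4}$'s in the stationary region.

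The main obstacle is the bookkeeping of the constant $\pi$ (as opposed to getting merely $O(1/s)$): one has to be careful that, after the linear change of variables $u=\eta+\kappa,v=\eta-\kappa$ (whose Jacobian is $\tfrac12$) and the evaluation of the two Fresnel integrals $\int e^{isu^2/2}du$ and $\int e^{-isv^2/2}dv$, the phases $e^{i\pi/4}$ and $e^{-i\pi/4}$ cancel to leave a \emph{real, positive} coefficient, and that the factors of $2$ and $\tfrac12$ combine to $\pi$ and not $2\pi$ or $\pi/2$; this is exactly the coefficient that must match the $\pi/t$ in \eqref{DefOfR}--\eqref{DecNon0} and in the definition $\tau=\pi\ln t$, so it cannot be fudged. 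The second point requiring care is that the error exponent has to be strictly better than $s^{-1}$ by a fixed margin — here $s^{-11/10}$ — which forces the truncation scale $s^{1/4}$ in \eqref{fghCompSup} to be chosen so that \emph{one} extra $(\eta,\kappa)$-derivative (costing $s^{1/4}\cdot s^{1/4}/s=s^{-1/2}$) already suffices; fortunately $s^{-1}\cdot s^{-1/2}=s^{-3/2}\le s^{-11/10}$, so no second-order Taylor expansion is needed, but one should double-check that the intermediate region $1\gtrsim|\eta|,|\kappa|\gtrsim s^{-1/4}$ is correctly absorbed (it is, again by one integration by parts).
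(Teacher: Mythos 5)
Your overall strategy (stationary phase at the nondegenerate critical point $(\eta,\kappa)=(0,0)$, with the support hypothesis \eqref{fghCompSup} used to make each amplitude derivative cost only $s^{1/4}$) is the right idea and gets the constant $\pi/s$ correctly. But the paper's proof is considerably more economical, and your version of the error estimate has a soft spot worth flagging. The paper (following Kato--Pusateri) writes $\widehat f(\xi-\eta)\overline{\widehat g}(\xi-\eta-\kappa)\widehat h(\xi-\kappa)$ in physical space, completes the square, and evaluates the $(\eta,\kappa)$-Gaussian \emph{exactly}, obtaining
\begin{equation*}
\frac{\pi}{s}\int_{\mathbb{R}^3}f(y_1)\overline{g}(y_2)h(y_3)e^{-i\xi(y_1-y_2+y_3)}e^{-i\frac{(y_1-y_2)(y_3-y_2)}{2s}}\,dy_1dy_2dy_3,
\end{equation*}
so the whole error is $\frac{\pi}{s}\int|f||g||h|\cdot|e^{-i\frac{(y_1-y_2)(y_3-y_2)}{2s}}-1|\lesssim \frac{1}{s}\cdot\frac{(s^{1/4})^2}{s}\cdot\Vert f\Vert_{L^1}\Vert g\Vert_{L^1}\Vert h\Vert_{L^1}$; with $\Vert f\Vert_{L^1}\lesssim s^{1/8}\Vert f\Vert_{L^2}$ on the support this gives $s^{-9/8}\le s^{-11/10}$, with no diagonalization, no Taylor expansion, no integration by parts and no separate tail region. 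Note in particular the $s^{3/8}$ loss from converting $L^1$ to $L^2$ norms, which you omit: the true error exponent is $-9/8$, not $-3/2$, and this is why $-\frac{11}{10}$ (and not something stronger) is what is claimed.

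The gap in your plan is in the step ``integrating by parts once in the corresponding oscillatory variable to gain $s^{-1}$.'' Writing $\eta e^{2is\eta\kappa}=\frac{1}{2is}\partial_\kappa e^{2is\eta\kappa}$ and integrating by parts converts $\int e^{2is\eta\kappa}\eta A\,d\eta d\kappa$ into $-\frac{1}{2is}\int e^{2is\eta\kappa}\partial_\kappa A\,d\eta d\kappa$; the prefactor $s^{-1}$ here merely \emph{replaces} the $s^{-1}$ of the main term, it does not improve on it. If you then bound the remaining integral crudely by $\Vert\partial_\kappa A\Vert_{L^1_{\eta\kappa}}$, the second derivative of the amplitude costs $s^{1/2}$ (plus the $L^1$-conversion losses), and you land at roughly $s^{-1/4}$ --- far worse than $s^{-11/10}$. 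To close the argument you must exploit the oscillation in the \emph{remaining} integral as well, e.g.\ via the bound $|\int e^{2is\eta\kappa}c\,d\eta d\kappa|\lesssim s^{-1}\Vert\mathcal{F}_{\eta\kappa}c\Vert_{L^1}$, which is again the exact Plancherel/Fresnel evaluation --- at which point you have essentially reproduced the paper's computation, and the Taylor expansion and integration by parts were detours. (There is also a minor issue that $\int_0^1\partial_\eta a(t\eta,t\kappa)\,dt$ has a logarithmically divergent $L^1_{\eta\kappa}$ norm of its $\kappa$-derivative near $t=0$, which your parametrization would have to avoid.) Your treatment of \eqref{Claim2} --- splitting $\xi^\theta$ among $\xi-\eta$, $\eta$, $\kappa$ and paying $s^{-3/4}$ per extra power of $\eta$ or $\kappa$ by integration by parts --- matches the paper.
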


\begin{proof}[Proof of Lemma \ref{SPL}] We may rewrite (here we follow the computations in \cite{KaPu})
\begin{equation*}
\begin{split}
&\int_{\mathbb{R}^2}e^{i2s\eta\kappa}\widehat{f}(\xi-\eta)\overline{\widehat{g}}(\xi-\eta-\kappa)\widehat{h}(\xi-\kappa) d\eta d\kappa\\
&=\int_{\mathbb{R}^3}f(y_1)\overline{g}(y_2)h(y_3)\int_{\mathbb{R}^2}e^{i\left[2s\eta\kappa-y_1(\xi-\eta)-y_2(\eta+\kappa-\xi)-y_3(\xi-\kappa)\right]} d\eta d\kappa dy_1dy_2dy_3\\
&=\frac{1}{2s}\int_{\mathbb{R}^3}f(y_1)\overline{g}(y_2)h(y_3)e^{-i\xi(y_1-y_2+y_3)}e^{-i\frac{y_1-y_2}{\sqrt{2s}}\frac{y_3-y_2}{\sqrt{2s}}}\left\{\int_{\mathbb{R}^2}e^{i\left[\eta+\frac{y_3-y_2}{\sqrt{2s}}\right]\cdot \left[\kappa+\frac{y_1-y_2}{\sqrt{2s}}\right]}d\eta d\kappa\right\} dy_1dy_2dy_3\\
&=\frac{\pi}{s}\int_{\mathbb{R}^3}f(y_1)\overline{g}(y_2)h(y_3)e^{-i\xi(y_1-y_2+y_3)}e^{-i\frac{y_1-y_2}{\sqrt{2s}}\frac{y_3-y_2}{\sqrt{2s}}}dy_1dy_2dy_3.
\end{split}
\end{equation*}
Therefore, for $\xi\in\mathbb{R}$,
\begin{equation*}
\begin{split}
&\left\vert \int_{\mathbb{R}^2}e^{i2s\eta\kappa}\widehat{f}(\xi-\eta)\overline{\widehat{g}}(\xi-\eta-\kappa)\widehat{h}(\xi-\kappa) d\eta d\kappa-\frac{\pi}{s}\widehat{f}(\xi)\overline{\widehat{g}}(\xi)\widehat{h}(\xi)\right\vert\\
&\le \left\vert \frac{\pi}{s}\int_{\mathbb{R}^3}f(y_1)\overline{g}(y_2)h(y_3)e^{-i\xi(y_1-y_2+y_3)}\left\{e^{-i\frac{y_1-y_2}{\sqrt{2s}}\frac{y_3-y_2}{\sqrt{2s}}}-1\right\}dy_1dy_2dy_3\right\vert\\
&\lesssim s^{-\frac{11}{10}}\Vert  f\Vert_{L^2_x}\Vert  g\Vert_{L^2_x}\Vert h\Vert_{L^2_x}.
\end{split}
\end{equation*}
This concludes the proof of \eqref{Claim1}.

Now, \eqref{Claim2} follows from \eqref{Claim1} and the fact that
\begin{equation*}
\begin{split}
\left\vert \int_{\mathbb{R}^2}e^{i2s\eta\kappa}\widehat{f^a}(\xi-\eta)\overline{\widehat{f^b}}(\xi-\eta-\kappa)\widehat{f^c}(\xi-\kappa)(\kappa^\alpha\eta^\beta) d\eta d\kappa\right\vert\\
\lesssim s^{-\frac{3}{4}(\alpha+\beta)}\min_{\sigma\in\mathfrak{S}_3}\Vert \widehat{f}^{\sigma(a)}\Vert_{L^1_x}\Vert \widehat{f}^{\sigma(b)}\Vert_{L^2_x}\Vert \widehat{f}^{\sigma(c)}\Vert_{L^2_x}
\end{split}
\end{equation*}
which is readily verified upon integrating by parts in $\eta$ and $\kappa$.
This completes the proof of Lemma~\ref{SPL}.
\end{proof}
The proof of \eqref{CS1}  follows from Lemma~\ref{SPL} and \eqref{sum_in_p}.
Using once again Lemma~\ref{SPL} and \eqref{sum_in_p} one directly estimates the $L^2_{x,y}$ contribution to the $S$ norm in the left hand-side of \eqref{CS2}.
Using in addition a Leibniz rule one estimates the $\|xF\|_{L^2_{x,y}}$ and the $\|\partial_y^N F\|_{L^2_{x,y}}$ 
contributions to the $S$ norm in the left hand-side of \eqref{CS2} by a use of Lemma~\ref{SPL} and \eqref{sum_in_p} . 
Finally, the $\|\partial_x^N F\|_{L^2_{x,y}}$ contribution to the $S$ norm in the left hand-side of \eqref{CS2} can be evaluated as follows. 
Let us first explain how we evaluate the first derivative. To simplify notations, let us set 
$$
T[F_c,G_c,H_c]=\Pi^t[F_c,G_c,H_c]-\frac{\pi}{t}\mathcal{R}[F_c,G_c,H_c].
$$
Then
\begin{multline*}
\partial_x T[F_c,G_c,H_c]=T[(\partial_x F)_c,G_c,H_c]+T[F_c,(\partial_x G)_c,H_c]+T[F_c,G_c,(\partial_x H)_c]
\\
+
t^{-\frac{1}{4}}
\Big(
T[\tilde{F}_c,G_c,H_c]+T[F_c,\tilde{G}_c,H_c]+T[F_c,G_c,\tilde{H}_c]
\Big),
\end{multline*}
where $\tilde{F}_c=\varphi'(t^{-\frac{1}{4}}x)F$ and similarly for $\tilde{G}_c$ and $\tilde{H}_c$. 
We are now in position to apply Lemma~\ref{SPL} and \eqref{sum_in_p} to estimate the first $x$ derivative contribution to the $S$ norm in the left hand-side of
\eqref{CS2}. The estimates for higher order derivatives can be performed inductively. This completes the proof of Lemma~\ref{Res}.
\end{proof}
Finally we can give the proof of Proposition \ref{StrucNon}.
\begin{proof}[Proof of Proposition \ref{StrucNon}]
For $t\in[T/4,T]$, we may decompose
\begin{equation*}
\begin{split}
\mathcal{N}^t=&\sum_{\substack{A,B,C\\\max(A,B,C)\ge T^{\frac{1}{6}}}}\mathcal{N}^t[Q_AF(t),Q_BG(t),Q_CH(t)]\\
&+\widetilde{\mathcal N}^t[Q_{\leq T^{\frac{1}{6}}}F(t),Q_{\leq T^{\frac{1}{6}}}G(t),Q_{\leq T^{\frac{1}{6}}}H(t)]+
\Pi^t[Q_{\leq T^{\frac{1}{6}}}F(t),Q_{\leq T^{\frac{1}{6}}}G(t),Q_{\leq T^{\frac{1}{6}}}H(t)]\,.
\end{split}
\end{equation*}
The first term above contributes to $\mathcal E_1$ by Lemma \ref{BilEf}. The second term contains $\mathcal E_2$ as it can be written by Lemma \ref{FastOsLem} as $\widetilde{\mathcal E}_1+\mathcal E_2$ with $\widetilde{\mathcal E}_1$ giving an acceptable contribution to $\mathcal E_1$. The third term can be written as
\begin{align*}
&\Pi^t[Q_{\leq T^{\frac{1}{6}}}F(t),Q_{\leq T^{\frac{1}{6}}}G(t),Q_{\leq T^{\frac{1}{6}}}H(t)]=\frac{\pi}{t}\mathcal R[F(t),G(t),H(t)]\\
&\qquad+\Big(\Pi^t[Q_{\leq T^{\frac{1}{6}}}F(t),Q_{\leq T^{\frac{1}{6}}}G(t),Q_{\leq T^{\frac{1}{6}}}H(t)]-\frac{\pi}{t}\mathcal{R}[Q_{\leq T^{\frac{1}{6}}}F(t),Q_{\leq T^{\frac{1}{6}}}G(t),Q_{\leq T^{\frac{1}{6}}}H(t)]\Big)\\
&\qquad-\frac{\pi}{t}\sum_{\substack{A,B,C\\\max(A,B,C)\geq T^{\frac{1}{6}}}}\mathcal{R}[Q_AF(t),Q_BG(t),Q_CH(t)]
\end{align*} 
The second term on the right hand-side contributes to $\mathcal{E}_1$ as per Lemma \ref{Res}.
The third term on the right hand-side also contributes to $\mathcal{E}_1$.
Indeed, one needs to invoke Remark~\ref{ControlR} and to observe that, similarly to above, 
the summations over $A$, $B$, $C$ factorizes properly by using the projectors 
$Q_{\leq T^{\frac{1}{6}}}$ (at least one of the factors is localized at $x$ frequencies $\gtrsim T^{\frac{1}{6}}$ and thus the passage from $S$ to $S^{+}$ gains a decay in $T$). 
This finishes the proof of Proposition \ref{StrucNon}.
\end{proof}
\section{The resonant system}\label{SRS}
Here we review some useful facts about the resonant system which will be at the heart of the asymptotic analysis of \eqref{CNLS}. The {\it resonant system} is defined for a vector $a=\{a_p\}_{p\in\mathbb{Z}^d}$
as\footnote{Of course, $R$ is very much related to $\mathcal{R}$ defined in \eqref{RSS} and properties of $R$ will directly imply similar properties for $\mathcal{R}$.}
\begin{equation}\label{RS}
i\partial_ta_p(t)=\sum_{(p,q,r,s)\in\Gamma_0}a_{q}(t)\overline{a_{r}(t)}a_{s}(t)=:R[a(t),a(t),a(t)]_p.
\end{equation}
This is a Hamiltonian system for the symplectic form
\begin{equation*}
\Omega(\{a_p\},\{b_q\})={\rm Im}\Big[\sum_{p\in\mathbb{Z}^d}\overline{a_p}b_p\Big]={\rm Re}\langle -i\{a_p\},\{b_p\}\rangle_{l^2_p\times l^2_p}
\end{equation*}
and Hamiltonian
\begin{equation}\label{DefHam}
\begin{split}
\mathcal{H}(a):=\langle R(a,a,a),a\rangle_{l^2_p\times l^2_p}&=\sum_{(p,q,r,s)\in\Gamma_0}a_p\overline{a_q}a_r\overline{a_s}=
\sum_{\lambda\in\mathbb{Z}}\sum_{\mu\in\mathbb{Z}^d}\Big\vert\sum_{\substack{p-q=\mu,\\\vert p\vert^2-\vert q\vert^2=\lambda}}a_p\overline{a}_q\Big\vert^2\\
&=\Vert e^{is\Delta_{\mathbb{T}^d}}\mathcal{F}^{-1}_y a\Vert_{L^4_{y,s}(\mathbb{T}^d\times [0,2\pi])}^4.\\
\end{split}
\end{equation}
In addition, for any function $g$, we write
\begin{equation*}
\begin{split}
\frac{d}{dt}\sum_{p\in\mathbb{Z}^d}g(p)a_p\overline{a_p}&=2\sum_{p\in\mathbb{Z}^d}g(p)\hbox{Re}\left\{\overline{a_p}\partial_ta_p\right\}\\
&=-i\sum_{(p,q,r,s)\in\Gamma_0}\left\{g(p)\overline{a_{p}}a_{q}\overline{a_{r}}a_{s}-g(p)a_{p}\overline{a_{q}}a_{r}\overline{a_{s}}\right\}
\end{split}
\end{equation*}
and using symmetry, this becomes
\begin{equation*}
\frac{d}{dt}\sum_{p\in\mathbb{Z}^d}g(p)a_p\overline{a_p}=-\frac{i}{2}\sum_{\substack{p+r=q+s\\\vert p\vert^2+\vert r\vert^2=\vert q\vert^2+\vert s\vert^2}}\left[g(p)+g(r)-g(q)-g(s)\right]\overline{a_{p}}a_{q}\overline{a_{r}}a_{s}.
\end{equation*}
Hence, upon taking $g(p)\equiv1$, $g(p)=p$, $g(p)=\vert p\vert^2$, we see that we have conservation of the mass, momentum and energy
\begin{equation}\label{ConsRS}
\begin{split}
\hbox{mass}( a)&=\sum_{p\in\mathbb{Z}^d}\vert a_p\vert^2,\qquad
\hbox{mom}( a)=\sum_{p\in\mathbb{Z}^d}p\vert a_p\vert^2,\qquad\hbox{energy}(a)=\sum_{p\in\mathbb{Z}^d}\vert p\vert^2\vert a_p\vert^2.
\end{split}
\end{equation}
Another way to recover the first and last of these formulas is to see that $R[ a, a,\cdot]$ is a self-adjoint operator on $l^2_p$ and that
\begin{equation}\label{H1NormPres}
\langle iR[ a, a, a], a\rangle_{h^1_p\times h^1_p}=0
\end{equation}
for all $a\in h^1_p$.

\medskip

A first simple remark is that the resonant system is well defined for initial data in $h^1_p$:
\begin{lemma}\label{GlobSol}
Let $1\le d\le 4$. For any $ a(0)\in h^1_p$, there exists a unique global solution $u\in C^1(\mathbb{R}:h^1_p)$ of \eqref{RS}.
In addition, higher regularity is preserved in the sense that if $a(0)\in h^s_p$, then the solution belongs to $C^1(\mathbb{R}:h^s_p)$.
\end{lemma}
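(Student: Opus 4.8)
The plan is to derive \eqref{RS} from a short-time contraction argument and then globalize using the conservation laws \eqref{ConsRS}, the decisive structural input being that, by the cancellation \eqref{H1NormPres}, the $h^1_p$ norm is \emph{exactly} conserved along the flow of \eqref{RS}. Everything reduces to a single multilinear estimate for the nonlinearity $R$, so I would begin there.

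\emph{Step 1: the trilinear estimate.} The key fact is that $R$ is the average over one period of the \emph{conjugated} cubic interaction, i.e. (identifying a sequence with the corresponding function on $\mathbb{T}^d$ through $\mathcal F_y^{-1}$)
\[
R[f,g,h]=\frac{1}{2\pi}\int_0^{2\pi}\mathcal F_y\Big(e^{-is\Delta_{\mathbb T^d}}\big[(e^{is\Delta_{\mathbb T^d}}\mathcal F_y^{-1}f)\cdot\overline{(e^{is\Delta_{\mathbb T^d}}\mathcal F_y^{-1}g)}\cdot(e^{is\Delta_{\mathbb T^d}}\mathcal F_y^{-1}h)\big]\Big)\,ds,
\]
which is exactly the computation behind the last line of \eqref{DefHam}. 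Feeding this identity into Hölder's inequality in $(s,y)\in[0,2\pi]\times\mathbb T^d$ together with the local-in-time Strichartz inequalities on $\mathbb T^d$ furnished by Lemma~\ref{DiscreteStricTorLem} --- this is precisely where the restriction $1\le d\le4$ enters --- yields a \emph{tame} trilinear bound
\[
\Vert R[f,g,h]\Vert_{h^s_p}\ \lesssim\ \Vert f\Vert_{h^s_p}\,\Vert g\Vert_{h^1_p}\,\Vert h\Vert_{h^1_p},\qquad s\ge1,
\]
together with its two analogues in which the top norm $h^s_p$ sits on $g$ or on $h$. For $d\le3$ the problem is energy-subcritical and this holds with room to spare; for $d=4$ it is the energy-critical endpoint, which is naturally established in the atomic/$X^{s,b}$-type spaces adapted to $e^{it\Delta}$ rather than pointwise in time, and which consumes the scale-invariant ($\ell^2$-decoupling) Strichartz estimate on $\mathbb T^4$. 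In particular the case $s=1$ gives boundedness $R\colon(h^1_p)^3\to h^1_p$.

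\emph{Step 2--3: local theory, globalization and persistence.} With the $s=1$ estimate the Duhamel map $a\mapsto a(0)-i\int_0^t R[a,a,a](t')\,dt'$ is a contraction on a ball of $C([0,T];h^1_p)$ with $T$ depending only on $\Vert a(0)\Vert_{h^1_p}$ (for $d=4$ one works instead on a ball of the corresponding critical space on $[0,T]$; small data is immediate and the large-data statement is the energy-critical well-posedness theorem), producing a unique maximal solution $a\in C^1((-T_*,T^*);h^1_p)$ with the blow-up alternative $T^*<\infty\Rightarrow\Vert a(t)\Vert_{h^1_p}\to\infty$, and continuous dependence. Pairing \eqref{RS} with $a$ in $h^1_p$, using that $R[a,a,\cdot]$ is self-adjoint on $\ell^2_p$ and the cancellation \eqref{H1NormPres}, gives $\frac{d}{dt}\Vert a(t)\Vert_{h^1_p}^2=0$, so the $h^1_p$ norm (which is mass plus energy, cf. \eqref{ConsRS}) is constant; since the local time is controlled from below by the $h^1_p$ norm alone, the solution is global, $a\in C^1(\mathbb R;h^1_p)$. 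If moreover $a(0)\in h^s_p$ with $s>1$, the tame estimate of Step~1 gives along this global solution
\[
\frac{d}{dt}\Vert a(t)\Vert_{h^s_p}^2\ \lesssim\ \Vert a(t)\Vert_{h^1_p}^2\,\Vert a(t)\Vert_{h^s_p}^2,
\]
and since $\Vert a(t)\Vert_{h^1_p}$ is bounded, Gronwall's lemma forbids finite-time blow-up of $\Vert a(t)\Vert_{h^s_p}$; a standard persistence-of-regularity argument then upgrades the solution to $C^1(\mathbb R;h^s_p)$ (for $d=4$ this last step is again carried out in the critical spaces).

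\emph{Main obstacle.} For $d\le3$ the whole argument is routine (and for $d=1$ the resonant system is essentially explicit, cf. \eqref{Explicit1d}). The genuine difficulty is concentrated in $d=4$, where \eqref{RS} is energy-critical: the trilinear estimate of Step~1 is then an endpoint statement resting on the sharp Strichartz estimate on $\mathbb T^4$, and globalizing large data cannot be achieved by ``conservation plus subcritical local theory'' alone but requires the energy-critical machinery; the conserved $h^1_p$ norm is what makes that machinery applicable.
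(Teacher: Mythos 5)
Your overall scheme (trilinear estimate for $R$ via the time-averaged representation and torus Strichartz estimates, contraction mapping, globalization by the conservation laws, Gronwall for persistence of regularity) is exactly the paper's: the proof there consists of citing Lemma~\ref{DiscreteStricTorLem} to see that $a\mapsto R[a,a,a]$ is locally Lipschitz on bounded sets of $h^1_p$, running a contraction argument, and invoking \eqref{ConsRS}. Your identification of $R$ as the period average of the conjugated cubic interaction, and of the conservation of the $h^1_p$ norm via \eqref{H1NormPres}, is also correct.

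However, your treatment of $d=4$ rests on a misdiagnosis, and taken literally it leaves that case hanging on machinery that does not apply. The resonant system is \emph{not} an energy-critical PDE; it is an ODE in the Banach space $h^1_p$ with a polynomial nonlinearity. The content of Lemma~\ref{DiscreteStricTorLem} is a \emph{fixed-time} sequence-space bound $\Vert R[a,a,a]\Vert_{h^1_p}\lesssim\Vert a\Vert_{h^1_p}^3$ valid for all $1\le d\le 4$; there is no time variable left over which one could set up an $X^{s,b}$ or atomic-space iteration, and no scaling criticality in the ODE. Consequently the Picard iteration closes on $C([0,T];h^1_p)$ with $T$ depending \emph{only} on $\Vert a(0)\Vert_{h^1_p}$, for large data and for $d=4$ alike, and conservation of mass and energy globalizes the solution with no further input — precisely the ``conservation plus local theory'' route you claim is unavailable at $d=4$. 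There is no ``energy-critical well-posedness theorem'' for \eqref{RS} to appeal to. The genuine difficulty at $d=4$ sits entirely \emph{inside the proof of the trilinear estimate}: there $2s(4)=1$ and the naive dyadic sum \eqref{sum-up} fails to converge, which is why the paper imports the refined $L^q$ estimate of Bourgain \cite{Bourgain2013} and the bilinear improvement \eqref{bili-mieux} of Herr--Tataru--Tzvetkov. Once that estimate is granted (as it is, since you are allowed to use Lemma~\ref{DiscreteStricTorLem}), the $d=4$ case of the lemma is no harder than $d\le3$. With the last paragraph of your proposal deleted and the $d=4$ caveats in Steps~1--3 removed, your argument is correct and coincides with the paper's.
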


Note that this is the reason for our restriction to $d\le 4$ in Theorem \ref{ModScatThm} and Theorem \ref{ExMWO}. When $d\ge 5$, the flow map of \eqref{RS} cannot even be $C^3$ in $h^1_p$ in any neighborhood of $0$.

\begin{proof}
From Lemma \ref{DiscreteStricTorLem}, we see that the mapping $ a\mapsto R[ a, a, a]$ is locally Lipschitz in $h^1_p$, uniformly on bounded subset.
A contraction mapping argument gives local well-posedness in $h^s_p$ for any $s\geq 1$ which is extended to a global statement in $h^1_p$ by \eqref{ConsRS}. The preservation of higher regularity is classical.
\end{proof}

\begin{remark}\label{RemScal}
Small data do not make a difference: using to the symmetry $(a_n(t))\to (\lambda a_n(\lambda^2 t))$ enjoyed by \eqref{RS} we can normalize the initial data to any pre-assigned size $\delta$ in $h^s_p$. In addition, by a complex conjugation one can pass from the ``focusing'' to the ``defocusing'' resonant system.
\end{remark}

\subsection{Estimation of solutions to the resonant system}

\begin{lemma}\label{gAdmi}

$i)$ Assume that\footnote{Here $S^{(+)}$ denotes either $S$ or $S^+$.} $G_0\in S^{(+)}$ and that $G$ evolves according to \eqref{RSS}. Then, there holds that, for $t\ge 1$,
\begin{equation}\label{SolRSBdd}
\begin{split}
\Vert G(\ln t)\Vert_Z&=\Vert G_0\Vert_Z\\
\Vert G(\ln t)\Vert_{S^{(+)}}&\lesssim (1+\vert t\vert)^{\delta^\prime}\Vert G_0\Vert_{S^{(+)}}.
\end{split}
\end{equation}
Besides, we may choose $\delta^\prime\lesssim \Vert G_0\Vert_Z^2$.

$ii)$ In addition, we have the following uniform continuity result:
if $A$ and $B$ solve \eqref{RSS} and satisfy
\begin{equation*}
\sup_{0\le t\le T}\left\{\Vert A(t)\Vert_{Z}+\Vert B(t)\Vert_{Z}\right\}\le \theta
\end{equation*}
and
\begin{equation*}
\Vert A(0)-B(0)\Vert_{S^{(+)}}\le \delta
\end{equation*}
then, there holds that, for $0\le t\le T$,
\begin{equation}\label{StabRSS}
\Vert A(t)-B(t)\Vert_{S^{(+)}}\le \delta e^{C\theta^2 t}. 
\end{equation}
\end{lemma}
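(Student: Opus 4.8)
\emph{Part $i)$, conservation of the $Z$ norm.} The plan is to use that the variable $\xi\in\mathbb{R}$ enters \eqref{RSS} only parametrically: for each fixed $\xi$, the sequence $a(\tau):=\{\widehat G(\xi,p)(\tau)\}_{p\in\mathbb{Z}^d}$ solves the torus resonant system \eqref{RS}, which is globally well posed in $h^1_p$ by Lemma~\ref{GlobSol}. Since \eqref{H1NormPres} encodes conservation of both mass and energy for \eqref{RS}, we get $\tfrac{d}{d\tau}\|a(\tau)\|_{h^1_p}^2=0$, hence $\|\widehat G(\xi,\cdot)(\tau)\|_{h^1_p}=\|\widehat{G_0}(\xi,\cdot)\|_{h^1_p}$ for every $\xi$ and every $\tau\ge 0$; taking the supremum in $\xi$ against the weight $(1+|\xi|^2)^2$ and setting $\tau=\ln t$ yields $\|G(\ln t)\|_Z=\|G_0\|_Z$.

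\emph{Part $i)$, growth of the $S^{(+)}$ norm.} The first step is the trilinear bound
\[ \|\mathcal R[F,G,H]\|_{S^{(+)}}\ \lesssim\ \|F\|_Z\,\|G\|_Z\,\|H\|_{S^{(+)}}\ +\ (\text{permutations of }F,G,H), \]
which is the analogue for $\mathcal R$ of the estimates for $\Pi^t$ recorded in Remark~\ref{ControlR}. It follows from the $L^2$-level bound $\|\mathcal R[F,G,H]\|_{L^2_{x,y}}\lesssim\|F\|_Z\|G\|_Z\|H\|_{L^2_{x,y}}$ (and permutations) --- itself a consequence of the discrete Strichartz estimates on $\mathbb{T}^d$ of Lemma~\ref{DiscreteStricTorLem} (this is where $d\le 4$ is used), together with the fact that the $(1+|\xi|^2)^2$ weight defining $Z$ passes through the trilinear expression --- upgraded to $S$ and $S^+$ by the transfer principle Lemma~\ref{ControlSS+}; here one uses crucially that $\mathcal R$ is diagonal in the $x$-frequency and therefore satisfies \emph{exact} Leibniz rules with respect to $x$, $\partial_x$ and $\partial_{y_j}$, so that, unlike for $\Pi^t$, no cross term of the type \eqref{SuffGrowthRes2} is produced. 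Granting this, differentiating the $S^{(+)}$ norm along \eqref{RSS} and invoking the conservation of $Z$ just proved gives
\[ \tfrac{d}{d\tau}\|G(\tau)\|_{S^{(+)}}\ \le\ \|\mathcal R[G(\tau),G(\tau),G(\tau)]\|_{S^{(+)}}\ \lesssim\ \|G(\tau)\|_Z^2\,\|G(\tau)\|_{S^{(+)}}\ =\ \|G_0\|_Z^2\,\|G(\tau)\|_{S^{(+)}}, \]
so Grönwall gives $\|G(\tau)\|_{S^{(+)}}\le\|G_0\|_{S^{(+)}}e^{C\|G_0\|_Z^2\tau}$; taking $\tau=\ln t$ this is the claim with $\delta'=C\|G_0\|_Z^2\lesssim\|G_0\|_Z^2$. (One may equally establish the $S$ estimate first and then the $S^+$ estimate, the $S$ bound being then available to absorb any lower-order contribution.)

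\emph{Part $ii)$.} Put $W:=A-B$. By trilinearity of $\mathcal R$,
\[ i\partial_\tau W=\mathcal R[A,A,A]-\mathcal R[B,B,B]=\mathcal R[W,A,A]+\mathcal R[B,W,A]+\mathcal R[B,B,W], \]
so that in each term exactly two of the three arguments are $A$ or $B$. Measuring those two in $Z$, where they are bounded by $\theta$, and the remaining one in $S^{(+)}$ by the trilinear bound above, we obtain $\|\partial_\tau W(\tau)\|_{S^{(+)}}\lesssim\theta^2\|W(\tau)\|_{S^{(+)}}$, hence $\tfrac{d}{d\tau}\|W(\tau)\|_{S^{(+)}}\lesssim\theta^2\|W(\tau)\|_{S^{(+)}}$, and Grönwall from $\|W(0)\|_{S^{(+)}}\le\delta$ yields $\|W(t)\|_{S^{(+)}}\le\delta\,e^{C\theta^2 t}$ on $[0,T]$, which is \eqref{StabRSS}.

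\emph{Main obstacle.} Everything rests on the trilinear estimate in which \emph{two} of the three factors are controlled by the weak norm $Z$; it is exactly this that makes the Grönwall coefficient equal to the $Z$ norm, which is \emph{conserved} (resp.\ bounded by $\theta$), and hence produces $\delta'\lesssim\|G_0\|_Z^2$ and \eqref{StabRSS}. Establishing it is where the low-regularity Strichartz theory on $\mathbb{T}^d$ --- and with it the restriction $d\le 4$ --- enters, while the transfer to the strong spaces $S$, $S^+$ rests on the Leibniz/$\xi$-diagonal structure of $\mathcal R$ together with Lemma~\ref{ControlSS+}. The conservation of $Z$ established in Part $i)$ is the second non-perturbative ingredient without which the bootstrap would not close.
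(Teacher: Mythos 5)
Your proof is correct and follows essentially the same route as the paper: reduce to the system \eqref{RS} for each fixed $\xi$, get conservation of the $Z$ norm from \eqref{ConsRS}/\eqref{H1NormPres}, and run a Gr\"onwall argument whose coefficient is the conserved quantity $\Vert G_0\Vert_Z^2$, with the trilinear input coming from Lemma~\ref{DiscreteStricTorLem}; the only packaging difference is that the paper performs the transfer to $S$ and $S^+$ by hand in the Fourier variables $(\xi,p)$ (the $h^\sigma_p$ and $\partial_\xi$, $\partial_\xi^2$ estimates in \eqref{Trivial111} and the display after it) rather than through Lemma~\ref{ControlSS+}. One small inaccuracy: it is not true that ``no cross term is produced'' at the $S^+$ level --- both in the paper's computation and in the conclusion \eqref{ControlSSS+} of Lemma~\ref{ControlSS+} a term of the schematic form $\Vert G\Vert_Z\cdot\Vert xG\Vert_Z\cdot\Vert G\Vert_S$ appears, and if one reads it as a Gr\"onwall \emph{coefficient} $\Vert G\Vert_Z\Vert G\Vert_S$ multiplying $\Vert G\Vert_{S^+}$ the estimate would not close with $\delta'\lesssim\Vert G_0\Vert_Z^2$. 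The correct reading (and the paper's) is that this term is an inhomogeneous \emph{source}: one first closes the estimate at the intermediate level ($\sigma=1$, one $\partial_\xi$ derivative), where the cross term self-absorbs since $\Vert\widehat G\Vert_{h^1_p}\le\Vert G\Vert_Z$, and then feeds the resulting $e^{C\Vert G_0\Vert_Z^2\tau}$ bound into an inhomogeneous Gr\"onwall for the full $S^+$ norm --- exactly the two-step scheme your parenthetical remark gestures at, so the argument is sound once that step is made explicit.
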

\begin{proof}[Proof of Lemma \ref{gAdmi}]
The first equality in \eqref{SolRSBdd} follows from \eqref{ConsRS}.
 For the second, we simply use \eqref{StriEst} and \eqref{StriEst2} to show that, for $\sigma\ge 0$ and fixed $\xi$,
\begin{equation}\label{Trivial111}
 \begin{split}
 \Vert \mathcal{F}\mathcal{R}[G,G,G](\xi)\Vert_{h^\sigma_p}\lesssim \Vert G\Vert_Z^2\Vert \widehat{G}(\xi)\Vert_{h^\sigma_p}\\
 \Vert \partial_\xi\mathcal{F}\mathcal{R}[G,G,G](\xi)\Vert_{l^2_p}\lesssim \Vert G\Vert_Z^2\Vert \partial_\xi \widehat{G}(\xi)\Vert_{l^2_p}.\\
 \end{split}
\end{equation}
An application of Gronwall inequality yields the statement about the $S$ norm in \eqref{SolRSBdd}. For the $S^+$ norm, we use again \eqref{StriEst} and \eqref{StriEst2} to get
\begin{equation*}
 \begin{split}
  \Vert \partial_\xi\mathcal{F}\mathcal{R}[G,G,G](\xi)\Vert_{h^\sigma_p}
&\lesssim \Vert G\Vert_Z^2\Vert \partial_\xi \widehat{G}(\xi)\Vert_{h^\sigma_p}+\Vert G\Vert_Z\Vert\partial_\xi\widehat{G}\Vert_{h^1_p}\Vert \widehat{G}\Vert_{h^\sigma_p},\\
 \Vert \partial_\xi^2\mathcal{F}\mathcal{R}[G,G,G](\xi)\Vert_{l^2_p}
&\lesssim \Vert G\Vert_Z^2\Vert \partial_\xi^2 \widehat{G}(\xi)\Vert_{l^2_p}+\Vert G\Vert_Z\Vert\partial_\xi\widehat{G}\Vert_{h^1_p}\Vert \partial_\xi\widehat{G}\Vert_{h^1_p},\\
\end{split}                                                                                                                                          
\end{equation*}
Bounding first the case $\sigma=1$ and applying inhomogeneous Gronwall estimates, we obtain the bound on the $S^+$ norm in \eqref{SolRSBdd}.

\medskip

The proof of  \eqref{StabRSS} is similar, based on the fact that
\begin{equation*}
\begin{split}
\partial_\tau\left\{\widehat A_p(\xi)-\widehat B_p(\xi)\right\}=&i\left\{{\mathcal R}[\widehat A(\xi),\widehat A(\xi), \widehat A(\xi)]_p
-{\mathcal R}[\widehat B(\xi),\widehat B(\xi),\widehat B(\xi)]_p\right\}\\
=&i{\mathcal R}[\widehat A(\xi)-\widehat B(\xi),\widehat A(\xi),\widehat A(\xi)]_p+i{\mathcal R}[\widehat{B}(\xi),\widehat{A}(\xi)-\widehat{B}(\xi),\widehat{A}(\xi)]_p\\
&+i{\mathcal R}[\widehat{B}(\xi),\widehat{B}(\xi),\widehat{A}(\xi)-\widehat{B}(\xi)]_p.
\end{split}
\end{equation*}
\end{proof}

\subsection{Special dynamics of the resonant system}\label{SpeDyn}

In view of Theorems \ref{ModScatThm} and \ref{ExMWO}, it seems interesting to elaborate on some asymptotic dynamics for \eqref{RS}. From \eqref{DefHam} and \eqref{ConsRS} we have $d+3$ conserved scalar quantities and it is not hard to check that they are in involution. Below we illustrate some simple dynamics related to Remark \ref{NoCascade} and Corollary \ref{CorForwComp}, and finally recall the theorem from \cite{Hani} leading to the infinite cascade in Corollary \ref{Inf cascade cor} .

\begin{remark}\label{rreemm}
To transfer information from a global solution $a(t)$ of \eqref{RS} to a solution of \eqref{RSS}, all one needs to do is take an initial data of the form
\begin{equation*}
G_0(x,y)=\varepsilon_0\check{\varphi}(x)g(y)
\end{equation*}
where $g_p=a_p(0)$. The solution $G(t)$ to \eqref{RSS} with initial data $G_0$ as above is given in Fourier space by 
$$\widehat G_p(t, \xi)=\varphi(\xi)a_p(\varphi(\xi)^2 t).$$
In particular, if $\varphi=1$ on an open interval $I$, then $\widehat G_p(t, \xi)=a_p(t)$ for all $t\in \R$ and $\xi \in I$.
\end{remark}

We start with a simple observation that prevents linear scattering.

\begin{lemma}
Assume that $a$ solves \eqref{RS} and that
\begin{equation*}
\Vert \partial_t a\Vert_{l^2_p}\to0\hbox{ as }t\to+\infty,
\end{equation*}
then $a\equiv0$.
\end{lemma}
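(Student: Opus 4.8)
The plan is to use the conservation of the energy $\mathrm{energy}(a)=\sum_p |p|^2|a_p|^2$ for the resonant system, together with the hypothesis that $\partial_t a \to 0$, to force the nonlinearity $R[a,a,a]$ to vanish along a sequence of times, and then to extract a contradiction unless $a\equiv 0$. The key algebraic fact available to us is the energy identity $\langle iR[a,a,a],a\rangle_{h^1_p\times h^1_p}=0$ for all $a\in h^1_p$ (this is \eqref{H1NormPres}); more concretely, $\frac{d}{dt}\mathcal H(a)=0$ and $\frac{d}{dt}\,\mathrm{mass}(a)=0$, so in particular $\|a(t)\|_{l^2_p}$ is constant in time.

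First I would observe that since $i\partial_t a = R[a,a,a]$, the hypothesis $\|\partial_t a\|_{l^2_p}\to 0$ says exactly that $\|R[a(t),a(t),a(t)]\|_{l^2_p}\to 0$ as $t\to+\infty$. Next, pair the equation against $a(t)$ itself in $l^2_p$: using the Hamiltonian structure, $\langle R[a,a,a],a\rangle_{l^2_p\times l^2_p}=\mathcal H(a)$ is real (indeed it equals $\|e^{is\Delta_{\T^d}}\mathcal F_y^{-1}a\|_{L^4_{y,s}}^4\ge 0$ by \eqref{DefHam}) and is conserved. But by Cauchy--Schwarz, $|\mathcal H(a(t))| = |\langle R[a(t),a(t),a(t)],a(t)\rangle| \le \|R[a(t),a(t),a(t)]\|_{l^2_p}\,\|a(t)\|_{l^2_p}$. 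Since $\|a(t)\|_{l^2_p}$ is constant and $\|R[a(t),a(t),a(t)]\|_{l^2_p}\to 0$, we conclude $\mathcal H(a(t))\to 0$, hence $\mathcal H(a)\equiv 0$ by conservation. By the $L^4$-representation in \eqref{DefHam}, $\mathcal H(a)=0$ forces $e^{is\Delta_{\T^d}}\mathcal F_y^{-1}a = 0$ in $L^4_{y,s}(\T^d\times[0,2\pi])$, and since $e^{is\Delta_{\T^d}}$ is an isometry on $L^2_y$ for each fixed $s$, this gives $\mathcal F_y^{-1}a=0$, i.e.\ $a=0$. Since $\|a(t)\|_{l^2_p}$ is constant in $t$ and vanishes at any one time, $a\equiv 0$ for all $t$, including $t=0$.

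The one point that requires a little care — and which I expect to be the main (mild) obstacle — is justifying that $\langle R[a,a,a],a\rangle_{l^2_p\times l^2_p}$ genuinely equals the manifestly nonnegative quantity $\mathcal H(a)$ and is conserved: this needs the summability afforded by $a(t)\in h^1_p$ (Lemma~\ref{GlobSol}) so that all the rearrangements in \eqref{DefHam} are legitimate, and it needs the discrete Strichartz estimate (Lemma~\ref{DiscreteStricTorLem}) to know $R[a,a,a]\in l^2_p$ in the first place so that the pairing makes sense. Once those are in hand, the argument is just Cauchy--Schwarz plus conservation of mass and of $\mathcal H$. An alternative, essentially equivalent route avoids $\mathcal H$ entirely: pair against $a$ to get $\mathrm{Re}\langle i\partial_t a, a\rangle = \tfrac12\frac{d}{dt}\|a\|_{l^2_p}^2 = 0$ (re-deriving mass conservation), and separately note $\mathrm{Im}\langle i\partial_t a,a\rangle = -\mathrm{Re}\langle \partial_t a, a\rangle \cdot(\text{sign})$, hmm — this is messier, so I would stick with the $\mathcal H$ argument above, which is clean.

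A remark on robustness: this argument is purely a statement about the resonant system and uses nothing about \eqref{CNLS} directly; it is the abstract reason behind ``no nontrivial scattering'' in Corollary~\ref{CorForwComp}(1), which is then transported to \eqref{CNLS} via the modified scattering of Theorem~\ref{ModScatThm} and the relation between the $Z$-norm and the conserved mass/energy. I would present only the resonant-system statement here and defer the transfer to the discussion of Corollary~\ref{CorForwComp}.
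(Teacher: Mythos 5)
Your proof is correct and is essentially identical to the paper's: the paper also writes $\mathcal H(a)=\langle i\partial_t a,a\rangle_{l^2_p\times l^2_p}$, uses conservation of mass and of $\mathcal H$ together with the hypothesis to conclude $\mathcal H(a)\equiv 0$, and then invokes the $L^4$ representation in \eqref{DefHam} to get $a\equiv 0$. No substantive differences.
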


\begin{proof}
This follows from the conservation and coercivity of the mass and Hamiltonian:
\begin{equation*}
\mathcal{H}(a)=\langle i\partial_t a,a\rangle_{l^2_p\times l^2_p},\qquad \Vert a(t)\Vert_{l^2}=\hbox{mass}(a),
\end{equation*}
hence we see that $\mathcal{H}(a)=0$ and \eqref{DefHam} now implies that $a\equiv 0$.
\end{proof}

\subsubsection{The case $d=1$} This case can be integrated explicitely:

\begin{equation*}
i\partial_ta_p=2\sum_{q\in\mathbb{Z}}\vert a_q\vert^2a_p-\vert a_p\vert^2a_p.
\end{equation*}
Thus, we see that
\begin{equation}\label{Explicit1d}
a_p(t)=e^{ib_pt}a_p(0),\qquad b_p=2\hbox{mass}(a)-\vert a_p(0)\vert^2.
\end{equation}
In particular, $\vert a_p(t)\vert^2\equiv\vert a_p(0)\vert^2$ remains constant in time and there can be no cascade.

\subsubsection{Solutions supported on a rectangle}

The simplest genuinely multi-dimensional solution is supported on a rectangle $(p_0,p_1,p_2,p_3)$. We refer to \cite{CKSTTTor,GrPaTh} for related (and more elaborate) computations. Letting
\begin{equation*}
a_j=a_{p_j},\qquad j\in\{0,1,2,3\}=\mathbb{Z}/4\mathbb{Z}
\end{equation*}
we see that \eqref{RS} becomes
\begin{equation*}
i\partial_ta_j=2a_{j+1}\overline{a_{j+2}}a_{j-1}+2(\vert a_{j+1}\vert^2+\vert a_{j+2}\vert^2+\vert a_{j-1}\vert^2)a_{j}+\vert a_j\vert^2a_j.
\end{equation*}
An application of Gronwall's inequality shows that a solution initially supported in a rectangle will remain supported on this rectangle. Besides, we can see that mass, hamiltonian and momentum in two different directions in the span of the rectangle are generically independent and thus the Liouville-Arnold-Jost theorem provides many $4$-torii of solutions.

\medskip

There is a simple subsystem corresponding to the case when
\begin{equation*}
b_0(t):=a_0(t)=a_2(t),\qquad b_1(t):=a_1(t)=a_3(t)
\end{equation*}
which, by an application of Gronwall's inequality can be seen to be invariant by the flow. Besides, \eqref{RS} becomes
\begin{equation*}
\begin{split}
i\partial_t b_j=-|b_j|^2 b_j +4b_j(|b_j|^2+|b_{j+1}|^2)+2b_{j+1}^2 \overline{b_j},\qquad j\in\{0,1\}=\mathbb{Z}/2\mathbb{Z}
\end{split}
\end{equation*}
Without any loss of generality, we can normalize the initial data so that $|b_0|^2+|b_1|^2=1$ (see Remark \ref{RemScal}). We now move to polar coordinates and define
$$
I_j=|b_j|^2\qquad \mbox{and}\qquad \theta_j= \arg b_j-4mt,\qquad m=\hbox{mass}(b)=\vert b_0\vert^2+\vert b_1\vert^2.
$$
A direct calculation, shows that the system satisfied by $(I_j, \theta_j)$ is given by
\begin{equation}\label{rtheta}
\begin{split}
\dot \theta_j=I_j-2I_{j+1}\cos (2(\theta_{j+1}-\theta_{j})),\qquad\dot I_j=& 4I_jI_{j+1} \sin (2(\theta_{j+1}-\theta_{j}))\\
\end{split}
\end{equation}
The conservation of mass and Hamiltonian translate in the above variables into
\begin{equation}\label{massHamZ}
I_0+I_1=1;\qquad \tilde h(I_0, I_1, \theta_0, \theta_1)=\frac{1}{2}(I_0^2+I_1^2)-2I_0I_1\cos (2(\theta_0-\theta_1))=cst
\end{equation}
It is easy to see either by direct verification or by noticing that all the above variable changes are symplectic that the above system \eqref{rtheta} is Hamiltonian. Let $r=I_0$ and define $\varphi=\theta_1-\theta_0$. The system satisfied by $(r, \varphi)$ is the following:
\begin{equation}\label{rphi}
\dot \varphi= (1-2r)(1+2\cos 2\varphi), \qquad \dot r= 4r(1-r) \sin 2\varphi,
\end{equation}
which is also Hamiltonian with energy
\begin{equation*}
h(\varphi,r)=r(1-r)[1+2\cos(2\varphi)].
\end{equation*}
Due to our mass normalization, we have that $r\in [0,1]$ for all time. Notice\footnote{Also notice that the energy curve $h=0$ supports only two types of orbit namely that given by $\cos 2\varphi=-\frac 12$ and $\dot r= \pm 2\sqrt 3 r(1-r)$ which leads to the heteroclinic orbit at the basis of the construction in \cite{CKSTTTor}.} that $(I_0, I_1, \theta_0, \theta_1)$ can all be derived from the knowledge of $(\varphi,r)$ and \eqref{rtheta}.

Looking at the phase diagram inside the rectangle defined by the invariant lines $\{r=0\}, \{r=1\}, \{\varphi=-\frac{\pi}{3}\}$ and $\{\varphi=\frac{\pi}{3}\}$, we notice that $(\varphi=0,r=1/2)$ is the only stationary point and therefore the level sets $\{h(\varphi,r)=a\}$ foliate this rectangle as $a$ ranges between the two extreme values: 0 attained at the boundary and $3/4$ attained at the center (see Figure \ref{figure1}). 

\begin{figure}[h!]
\centering
\includegraphics[width=0.8\linewidth]{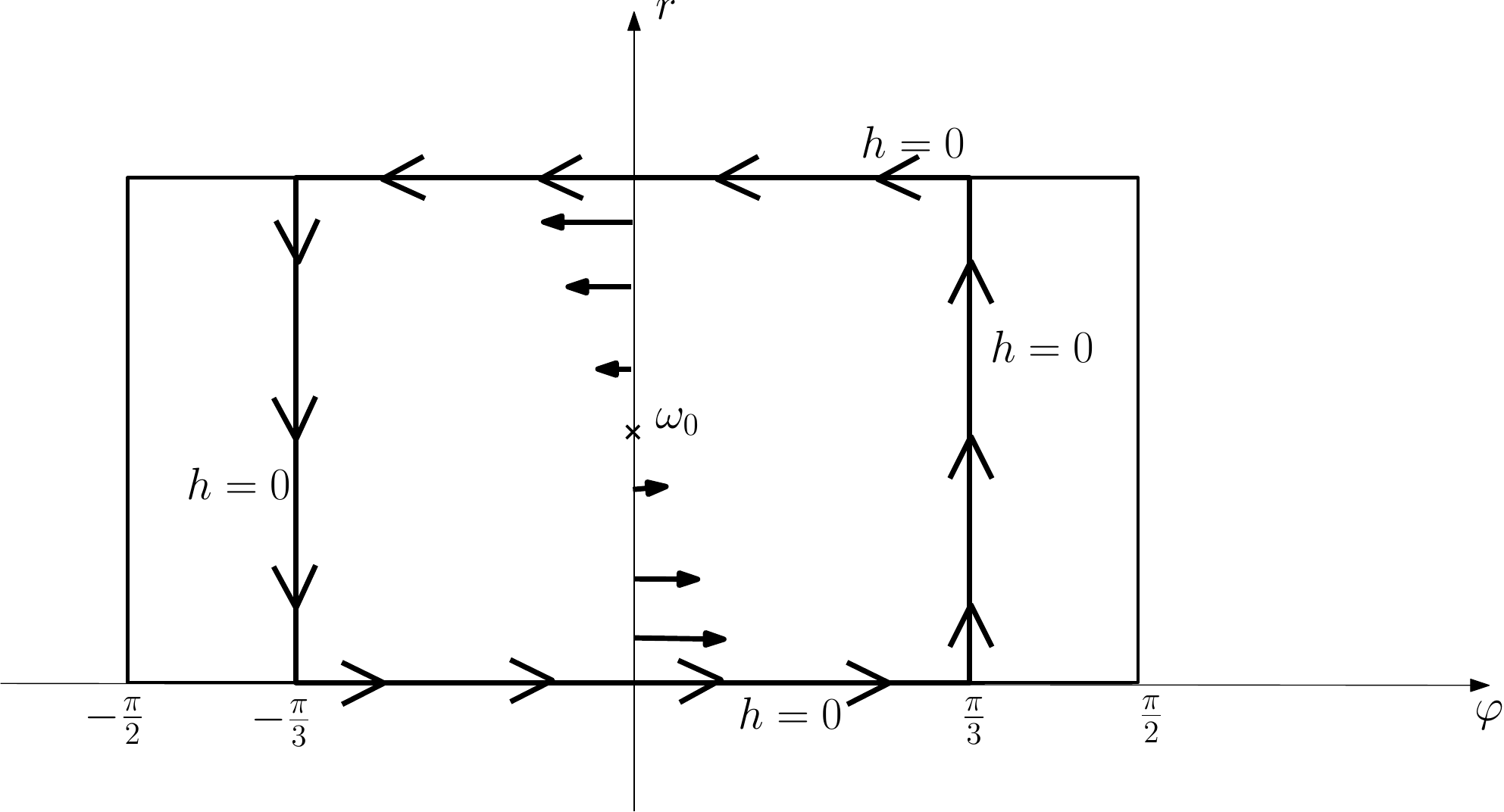}\\
\caption{Phase diagram for $h(\varphi,r)$}\label{figure1}
\end{figure}

An application of the Liouville-Arnold-Jost theorem shows that if we start with initial data $(\varphi=0,r=1-\delta)$, then the solution to \eqref{rphi} will be periodic in time with energy level given by $h(0,1-\delta)=3\delta(1-\delta)$. If $2T$ is the period, then the value of $r(t)$ will oscillate between the two extreme values of $\delta$ and $1-\delta$ attained at respectively even and odd multiples of $T$.

All in all, we have
\begin{proposition}\label{periodic RS}
Let $\Lambda$ be a rectangle with vertices $p_1, p_2, p_3, p_4$ in $\Z^d$. Let $\Lambda_1=\{p_1, p_3\}$ and $\Lambda_2=\{p_2, p_4\}$ denote the diagonally opposite pairs. 
\begin{enumerate}
\item
There exists solutions to \eqref{RS} supported on $R$ that are quasi-periodic with up to 4 periods (4 angle variables).
\item For any $\delta>0$, there exists a \emph{periodic-in-time} solution $(a_n(t))$of \eqref{RS} supported on $R$, with period $2T$, and satisfying the following 
\begin{equation*}
\begin{split}
\hbox{mass}[(a_n(0)), \Lambda_1]=\delta \qquad \mbox{ and }\qquad \hbox{mass}[a_n(0), \Lambda_2]=1-\delta\\
\hbox{mass}[(a_n(T)), \Lambda_1]=1-\delta \qquad \mbox{ and }\qquad \hbox{mass}[a_n(T), \Lambda_2]=\delta
\end{split}
\end{equation*}
where we denoted by $\hbox{mass}[(a_n), \Lambda_j]=\sum_{n\in \Lambda_j} |a_n|^2$ and $T$ is half the period of motion.
\end{enumerate}
\end{proposition}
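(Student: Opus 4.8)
The plan is to follow the chain of reductions already sketched in Subsection~\ref{SpeDyn}, making each invariance claim rigorous. First I would establish that the subspace of sequences supported on the rectangle $\Lambda=\{p_1,p_2,p_3,p_4\}$ is invariant under the flow of \eqref{RS}: since $R[a,a,a]_p$ only involves indices $q,r,s$ with $(p,q,r,s)\in\Gamma_0$, and the only rectangle whose vertices all lie in $\Lambda$ is $\Lambda$ itself (plus degenerate configurations), the right-hand side of \eqref{RS} vanishes for $p\notin\Lambda$ whenever $a$ is supported on $\Lambda$; the invariance then follows from uniqueness in Lemma~\ref{GlobSol}. On $\Lambda$ the system reduces to the explicit four-mode ODE written in the excerpt, with $j\in\mathbb{Z}/4\mathbb{Z}$. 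For part (1), I would observe that on this $4$-(complex-)dimensional phase space the restricted system is Hamiltonian (it inherits the symplectic structure $\Omega$), and that the four conserved quantities — mass, the two components of momentum along the edges of the rectangle spanning its affine hull, and the Hamiltonian $\mathcal H$ — are in involution and generically independent (one checks independence at a generic point by an explicit rank computation of their differentials, exactly as asserted above \eqref{RS}). The Liouville–Arnold(–Jost) theorem then yields invariant Lagrangian tori of dimension up to $4$ foliating an open set, carrying quasi-periodic motion; this gives (1).

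For part (2), I would pass to the further invariant subsystem $a_0=a_2=:b_0$, $a_1=a_3=:b_1$. Invariance of $\{a_0=a_2,\,a_1=a_3\}$ follows again from uniqueness: the difference $a_0-a_2$ (and $a_1-a_3$) satisfies a linear homogeneous ODE with coefficients depending on the solution, so it stays zero if it starts zero; alternatively one checks directly that the vector field is tangent to this subspace. This reduces to the two-mode system for $(b_0,b_1)$ displayed in the text. Normalizing $|b_0|^2+|b_1|^2=1$ by Remark~\ref{RemScal}, I would introduce action–angle-type variables $I_j=|b_j|^2$, $\theta_j=\arg b_j-4mt$, verify the equations \eqref{rtheta}, then reduce to the single pair $(r,\varphi)=(I_0,\theta_1-\theta_0)$ with Hamiltonian $h(\varphi,r)=r(1-r)(1+2\cos 2\varphi)$ and equations \eqref{rphi}. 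All these changes of variables are symplectic (or one simply checks \eqref{rphi} is Hamiltonian by inspection), and the reduced system lives on the compact rectangle $\{0\le r\le1,\ -\pi/3\le\varphi\le\pi/3\}$ whose boundary is invariant.

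The core of (2) is then a planar phase-plane analysis. I would show that inside this rectangle $(\varphi=0,r=1/2)$ is the unique equilibrium of \eqref{rphi}, that $h$ is a Lyapunov-type first integral ranging from $0$ on the boundary to $3/4$ at the center, and hence that every level set $\{h=a\}$ with $0<a<3/4$ is a single closed curve encircling the equilibrium; by the Poincaré–Bendixson theorem (or directly, since the motion is along a compact level curve on which the vector field is nonvanishing) every such orbit is periodic. Starting from $(\varphi,r)=(0,1-\delta)$ the orbit lies on $\{h=3\delta(1-\delta)\}$, which by the symmetry $r\mapsto 1-r$ of $h$ at $\varphi=0$ is a curve crossing the line $\varphi=0$ exactly at $r=\delta$ and $r=1-\delta$; these are the turning points of $r(t)$, attained alternately at times $0,T,2T,\dots$ where $2T$ is the period. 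Translating back through $I_0=r,\ I_1=1-r$ and the definitions of $b_j,a_j$ gives a genuinely periodic solution of \eqref{RS} on $\Lambda$ with $\mathrm{mass}[a(0),\Lambda_1]=2I_0(0)=2\delta$ and $\mathrm{mass}[a(0),\Lambda_2]=2I_1(0)=2(1-\delta)$ (after the obvious rescaling of the normalization to make these sum to $1$, which only changes $\delta$ and $T$), and with the two masses exchanged at $t=T$.

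\medskip

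The main obstacle, and the only genuinely non-routine point, is the global phase-plane statement that \emph{every} non-equilibrium level set of $h$ inside the invariant rectangle is a single closed curve on which the flow is periodic — i.e.\ controlling the topology of the level sets and ruling out that an orbit runs into the boundary $\{r=0\}$ or $\{r=1\}$ in finite or infinite time. This I would handle by checking that $h$ has no critical points other than the center in the interior (the boundary is the zero level), so that the regular level sets are smooth $1$-manifolds; compactness forces each connected component to be a circle; invariance of the boundary together with the fact that the interior orbit has strictly positive energy $3\delta(1-\delta)>0$ keeps it bounded away from the boundary; and nonvanishing of the vector field on these circles makes the motion periodic. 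Everything else is bookkeeping with symplectic changes of variables and the Liouville–Arnold theorem.
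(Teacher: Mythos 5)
Your proposal is correct and follows essentially the same route as the paper: invariance of the rectangle support, reduction to the four-mode ODE, Liouville--Arnold--Jost for the quasi-periodic tori in part (1), passage to the symmetric subsystem $a_0=a_2$, $a_1=a_3$, the action--angle reduction to the planar Hamiltonian system \eqref{rphi} with $h(\varphi,r)=r(1-r)[1+2\cos(2\varphi)]$, and the level-set/phase-plane analysis giving periodic orbits oscillating between $r=\delta$ and $r=1-\delta$. The extra care you take on the level-set topology and on the factor-of-two mass normalization only tightens steps the paper treats implicitly.
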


\medskip

\begin{remark}\label{zremark}
While the above solutions were supported on one rectangle in $\Z^d$, one can actually construct the same solutions on any (possibly infinite) family of rectangles $\{\Lambda_l\}$ as long as the system \eqref{RS} decouples to each rectangle. This can be achieved by making sure that the rectangles $\Lambda_l$ do not form resonant interactions between them. We refer to \cite{Hani} for the precise definitions. In particular, the set $\Lambda_1, \Lambda_2,$ and $\Lambda=\Lambda_1\cup \Lambda_2$ in Proposition \ref{periodic RS} can be made infinite.
\end{remark}

The solutions constructed in the above proposition directly yield time periodic and quasi-periodic solutions of \eqref{RSS} by setting $\widehat G_p(t, \xi)=\frac12\mathbf{1}_{[-1,1]}(\xi) a_p(t)$. However, such solutions are in $H^N(\R\times \T^d)$ but not in $S$ or $S^+$. To fix this caveat, one can use, instead of $\mathbf1_{[-1,1]}(\xi)$, a smooth even function $\psi_\epsilon(\xi)$ satisfying
\begin{equation}\begin{cases}\label{psiepsilon}
\psi_\epsilon(\xi)=\frac{1}{2} \quad& |\xi|\leq 1-\epsilon\\
\psi_\epsilon(\xi)=0\quad&|\xi|\geq 1
\end{cases}
\end{equation}
and a smooth non-negative non-increasing interpolant on the interval $[1, 1+\epsilon]$. One can also arrange so that
$\|\psi_\epsilon\|_{S^+}\leq 3 \epsilon^{-2}$. If the initial data for $G$ is taken to be $\widehat G_p(0, \xi)=\epsilon^3\psi_\epsilon(\xi) a_p(0)$, then the obtained solution $G(t)$ is given by $\widehat G_p(t,\xi)=\epsilon^3\psi_\epsilon(\xi) a_p(\epsilon^3\psi_\epsilon(\xi)^2 t)$ (see Remark \ref{RemScal}). Notice that the $S$ and $S^+$ norms of $G(0)$ are then $O(\epsilon)$.

%

\subsubsection{Infinite cascade}

An important result for us is the existence of infinitely growing solutions to \eqref{RS} as proved in  \cite[Theorem 1.6 with $R=0$]{Hani}. We give a self-contained constructive proof of this result that follows from simple adaptations of the more recent work \cite{GuKa} in order to obtain an explicit global solution with a lower bound on the growth rate of its Sobolev norms. Our main result here is the following.

\begin{proposition}\label{growth on Z^d theo}
Let $d\geq 2$ and $s>1$. There exists global solutions to \eqref{RS} in $C(\R: h^s_p)$ such that
\begin{equation*}
\sup_{t>0}\|a(t)\|_{h^s_p}=\infty.
\end{equation*}
More precisely, for any $\varepsilon>0$, there exists a solution $a(t) \in C(\R: h^s_p)$ such that for some sequence of times $t_k\to \infty$ we have that
\begin{equation}\label{PropNormGrowthSol}
\Vert a(0)\Vert_{h^s_p}\leq \varepsilon,\qquad \Vert a(t_k)\Vert_{h^s_p}\gtrsim \exp(c(\log t_k)^\frac{1}{2})
\end{equation}
for some $c>0$.
\end{proposition}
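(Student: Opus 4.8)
The plan is to reduce to the two-dimensional case and then import, in quantitative form, the finite-time cascade of \cite{CKSTTTor} together with its sharpening in \cite{GuKa}, and finally to iterate it infinitely often. Since \eqref{RS} is the resonant (first Birkhoff normal form) system of the cubic NLS on $\mathbb{T}^d$, and since a resonant quadruple $(p,q,r,s)\in\Gamma_0$ with three of its entries lying in a rational $2$-plane $\Pi\subset\mathbb{Z}^d$ automatically has its fourth entry in $\Pi$, the subspace of sequences supported on $\Pi\cap\mathbb{Z}^d$ is invariant under the flow of \eqref{RS}; thus it suffices to construct the solution with $\Lambda\subset\mathbb{Z}^2$. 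First I would recall the combinatorial construction of \cite{CKSTTTor}: for each large integer $N$ there is a finite set $\Lambda=\Lambda_1\sqcup\cdots\sqcup\Lambda_N\subset\mathbb{Z}^2$, organized into $N$ ``generations'', such that (i) the only rectangles of $\Gamma_0$ touching $\Lambda$ are the ``nuclear families'' linking two vertices of $\Lambda_j$ to two vertices of $\Lambda_{j\pm1}$, so that $\Lambda$ is closed under the resonant flow; (ii) the symmetry ansatz $a_p(t)=b_j(t)$ for all $p\in\Lambda_j$ is preserved by \eqref{RS} and reduces it to the finite ``toy model'' ODE, schematically $-i\dot b_j=-|b_j|^2b_j+2\overline{b_j}\,(b_{j-1}^2+b_{j+1}^2)$ with $b_0=b_{N+1}=0$; and (iii) the frequency magnitudes grow geometrically in $j$ while $\#\Lambda_j$ stays controlled, so that for $s>1$ one has the amplification estimate: a solution whose mass is essentially on generation $3$ at a time $t_0$ and on generation $N-1$ at a time $t_1$ satisfies $\|a(t_1)\|_{h^s_p}/\|a(t_0)\|_{h^s_p}\gtrsim 2^{c_s N}$ with $c_s>0$.

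Next I would recall the dynamics of the toy model and invoke the shadowing result of \cite{GuKa}. The sphere $\sum_j|b_j|^2=1$ is invariant, each ``generation soliton'' $T_j$ (defined by $b_j=1$, $b_k=0$ for $k\ne j$) is a hyperbolic equilibrium, and there is a heteroclinic chain $T_3\to T_4\to\cdots\to T_{N-1}$. The quantitative input is that \cite{GuKa} produce a genuine orbit $b(\cdot)$ of the toy model and times $0=t_0<t_1$ with $b(t_0)$ within $\epsilon_N$ of $T_3$ and $b(t_1)$ within $\epsilon_N$ of $T_{N-1}$ in $l^2_p$, together with an explicit bound on the time of flight; tracking the dependence on $N$ one gets $\log t_1\lesssim N^2$. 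Translating back via $a_p(t):=b_j(t)$ for $p\in\Lambda_j$, and normalizing with the scaling symmetry $a(t)\mapsto\lambda a(\lambda^2 t)$ of Remark \ref{RemScal}, this already yields, for every $N$ and every $\varepsilon>0$, a global solution of \eqref{RS} (global existence being automatic from Lemma \ref{GlobSol}), supported on $\Lambda$, with $\|a(t_0)\|_{h^s_p}\le\varepsilon$ and $\|a(t_1)\|_{h^s_p}\ge 2^{c_s N}\varepsilon$, where $a(t_1)$ is, up to the $l^2_p$ error $\epsilon_N$, concentrated on the top generation of $\Lambda$, which sits at a definite frequency scale $\sim 2^{cN}$.

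To upgrade this to the genuinely infinite cascade \eqref{PropNormGrowthSol} I would iterate, concatenating countably many such segments with rapidly increasing generation counts $N_1<N_2<\cdots$. Having run the above on a planar set $\Lambda^{(1)}$ up to its checkpoint, the solution is $\epsilon_{N_1}$-close to a single-generation soliton living at frequency scale $R_1$; I would then choose a new planar set $\Lambda^{(2)}\subset\mathbb{Z}^2$, disjoint from and non-resonant with $\Lambda^{(1)}$, with its third generation placed at scale $\sim R_1$, and run the $N_2$-generation cascade on it, using the uniform continuity estimate \eqref{StabRSS} of Lemma \ref{gAdmi} to absorb the small mismatch between the true state and the idealized soliton data; and so on. Choosing the $N_k$ growing fast enough (e.g. geometrically) that the elapsed times accumulate only additively while the amplification factors accumulate multiplicatively, one obtains a single global solution and times $t_0<t_1<t_2<\cdots\to\infty$ with $\|a(t_k)\|_{h^s_p}\gtrsim\varepsilon\,2^{c_s\sum_{i\le k}N_i}\ge\varepsilon\,2^{c_sN_k}$ reached at a time $T_k\le\sum_{i\le k}\exp(cN_i^2)\le\exp(c'N_k^2)$; hence $N_k\gtrsim(\log T_k)^{1/2}$ and $\|a(T_k)\|_{h^s_p}\gtrsim\exp\big(c''(\log T_k)^{1/2}\big)$, which is the assertion, after a further rescaling to make $\|a(0)\|_{h^s_p}\le\varepsilon$.

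The main obstacle is twofold. The genuine analytic content — producing a true orbit of the toy model that follows the heteroclinic chain with the stated polynomial-in-$N$ control on $\log t_1$ — is exactly the content of \cite{GuKa} and would be invoked essentially as a black box. The new work, and the more delicate point, is the infinite concatenation: one must choose $\{N_k\}$ and the frequency scales of the successive planar sets $\Lambda^{(k)}$ so that they are pairwise non-resonant, so that the ``restart'' mismatch at each checkpoint is summably small (this is where \eqref{StabRSS} enters), and so that the small but nonzero tail left behind by stages $<k$ does not resonantly disturb stage $k$; and one must verify that the times add while the amplifications multiply — which is precisely what converts a per-segment bound $\log t_1\lesssim N^2$ into the $\exp(c(\log t_k)^{1/2})$ rate rather than something weaker. (A variant would instead use a single infinite-generation set $\Lambda=\bigsqcup_{j\ge1}\Lambda_j$ and chain infinitely many heteroclinic connections directly, as in \cite{Hani}, at the cost of controlling uniformly in time the infinitely many momentarily inactive generations.)
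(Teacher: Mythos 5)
Your first two paragraphs are sound and match the paper's inputs: the reduction to a coordinate plane $\mathbb{Z}^2\subset\mathbb{Z}^d$ (a rectangle with three vertices in a $2$-plane has its fourth there too, so the plane is invariant), and the per-stage finite cascade of \cite{CKSTTTor,GuKa}, quantified exactly as in Theorem~\ref{TdThm}: a set $S_N=\Lambda_1\cup\dots\cup\Lambda_N$ of diameter $\lesssim 10^{6N^2}$ and a solution moving essentially all of its mass from generation $3$ to generation $N-1$ in toy-model time $O(N^2)$, which after amplitude normalization yields a gain $2^{c_sN}$ in $h^s$ at a time $t$ with $\log t\lesssim N^2$. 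The gap is in your third paragraph, i.e.\ in the iteration. If $\Lambda^{(2)}$ is ``disjoint from and non-resonant with'' $\Lambda^{(1)}$, then the two components of the system are \emph{completely decoupled}: for $p\notin\Lambda^{(1)}$, every resonant quadruple contributing to $\partial_t a_p$ would need $q,r,s\in\Lambda^{(1)}$ and hence $p=q-r+s$ connected to $\Lambda^{(1)}$ by a rectangle, which you have excluded. So no mass ever migrates from $\Lambda^{(1)}$ to $\Lambda^{(2)}$, and there is nothing to ``hand off'': you cannot take the state near the top-generation soliton of $\Lambda^{(1)}$ as the effective input for generation $3$ of $\Lambda^{(2)}$, the stability estimate \eqref{StabRSS} has no mismatch to absorb, and the claimed multiplicative accumulation $2^{c_s\sum_{i\le k}N_i}$ of amplification factors is unavailable. (The only way to make mass actually continue cascading is to couple the sets resonantly, which amounts to building one longer chain $S_{N_1+N_2}$ and ultimately to shadowing an infinite heteroclinic chain on a single infinite set --- the genuinely hard route of \cite{Hani} that you relegate to a parenthetical.)

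The fix, which is what the paper does, is to abandon the sequential hand-off entirely and use the decoupling \emph{in your favor}: superpose at time $t=0$ infinitely many translated copies $v_j+S_j$ (Lemma~\ref{vjlemma} arranges the translations so that no nondegenerate rectangle meets two distinct copies), each carrying the \emph{initial} data of its own finite cascade with amplitude $\lambda_j=(\varepsilon/j^{10})2^{-j/2}R_j^{-s}$. Each component then evolves independently as $\lambda_j a^{(j)}(\lambda_j^2t)$, so the $j$-th cascade completes on its own at time $t_j=\lambda_j^{-2}T_j$; the growth at $t_j$ comes from that single component alone (norm $\gtrsim j^{-10}\varepsilon\,2^{j(s-1)/2}$), and the rate $\exp(c(\log t_j)^{1/2})$ comes from $\log t_j\lesssim\log(\lambda_j^{-2})\lesssim j^2$, driven by the diameter bound $R_j\le 10^{6j^2}$ rather than by any accumulation across stages. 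No shadowing beyond the single finite cascade, and no use of \eqref{StabRSS}, is needed.
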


By Remark~\ref{rreemm}, this yields a global solution of \eqref{RSS} in $C(\R: H^s(\R\times \T^d))$ whose $H^s$ norm grows at the rate \eqref{PropNormGrowthSol}. In particular, this solution grows (along a subsequence) faster than any power of $\log t$. For the NLS equation \eqref{CNLS} (by Theorem \ref{ExMWO}), this yields a growth of $\exp(c(\log\log t)^{1/2})$, i.e. faster than any power of $\log\log t$. We have no reason to believe that the rate of growth in \eqref{PropNormGrowthSol}, or the implied rate for \eqref{CNLS} is optimal. In addition, it is tempting to believe that for any $s>1$, there exists a solution in $H^\infty(\mathbb{R}\times\mathbb{T}^2)$ whose $H^s$ norm blows up in infinite time.
\medskip

We now move to the proof of Proposition \ref{growth on Z^d theo}. We start by noticing that it is enough to prove the result on $\Z^2$ as this gives a solution of \eqref{RS} on $\Z^d$ satisfying the same properties. In addition, we note as in \cite{CKSTTTor} that by an easy change of unknown,
\begin{equation}\label{Gauge}
a_p(t)\to a_p(t)e^{iGt},\qquad G=2\Vert a_p\Vert_{l^2_p}^2
\end{equation}
we may reduce \eqref{RS} to the system 
\begin{equation}\label{RSAdj}
i\partial_ta_p=-\vert a_p\vert^2a_p+\sum_{(p,q,r,s)\in\Gamma^\prime_0}a_q\overline{a}_ra_s,
\end{equation}
where $\Gamma^\prime_0$ corresponds to the non-degenerate rectangles $(p,q,r,s)$, i.e. rectangles with positive area. Of course, the transformation \eqref{Gauge} does not change the $h^s_p$-norms and may be easily inverted.

Next, we recall the following result, which is essentially contained in \cite[Theorem 3-bis and Appendix C]{GuKa}:

\begin{theorem}[\cite{CKSTTTor,GuKa}]\label{TdThm}
Fix $\gamma\gg1$. There exists $C,\nu>0$ (independent of $\gamma$) such that for any $N$ sufficiently large, 
there exists a finite set $S_N\subset\mathbb{Z}^2$ and a solution $a^{(N)}(t)=(a^{(N)}_k(t))_{k \in \Z^2}$ of \eqref{RSAdj} such that:
\begin{itemize}
\item ($0_\Lambda$) If $(p_0,q_0,r_0)$ form a right-angled triangle (at $q_0$) in $S_N$, then $r_0+p_0-q_0\in S_N$, i.e. a rectangle has either $4$ or (strictly) less than $3$ of its vertices inside $S_N$,
\item ($I_\Lambda$) $S_N=\Lambda_1\cup\Lambda_2\cup\dots\cup\Lambda_N\subset B(0,10^{6N^2})$,
\item ($II_\Lambda$) $\Lambda_j$ contains $2^{N-1}$ points, $1\le j\le N$,
\item ($III_\Lambda$) If $\Lambda_j\subset B(0,r)$, then $\Lambda_{j+1}\subset B(0,\sqrt{2}r)$,
\item ($IV_\Lambda$) There exists $R>0$ such that $\Lambda_1$ is contained in a disc of radius $R\le 10^{6N^2}$ and $\Lambda_{N-1}$ 
contains at least two points at distance $R2^{(N-10)/2}$ from the origin,
\end{itemize}
and the solution $a^{(N)}(t)=(a^{(N)}_k(t))_{k\in \Z^2}$ satisfies:
\begin{itemize}
\item ($I_a$) for all times, $a^{(N)}(t)$ is supported on $S_N$ and for any $j=1,...,N$, $a^{(N)}(t)$ is constant on $\Lambda_j$, i.e. 
$a^{(N)}_k(t)=b^{(N)}_j(t)$ for $k \in \Lambda_j$,
\item ($II_a$) $a^{(N)}(t)$ cascades energy in the sense that there exists $T_N$ such that:
\begin{equation*}
\begin{split}
1\geq \vert b^{(N)}_3(0)\vert>1-\delta^\nu,&\qquad\vert b^{(N)}_{N-1}(T_N)\vert>1-\delta^\nu,\\
\vert b^{(N)}_j(0)\vert<\delta^\nu \hbox{ for }  j\ne 3&\qquad\vert b^{(N)}_j(T_N)\vert<\delta^\nu \hbox{ for } j\ne N-1
\end{split}
\end{equation*}
where  $\delta=e^{-\gamma N}$,
\item ($III_a$) there holds that
$0<T_N<C\gamma N^2$.
\end{itemize}
\end{theorem}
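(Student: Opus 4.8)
The plan is to follow the Colliander--Keel--Staffilani--Takaoka--Tao construction \cite{CKSTTTor} together with its quantitative sharpening by Guardia--Kaloshin \cite{GuKa}; almost nothing new is needed, since the diagonal term $-|a_p|^2a_p$ in \eqref{RSAdj} is already present in the CKSTT toy model. The argument has a combinatorial half (producing $S_N$ together with $(0_\Lambda)$--$(IV_\Lambda)$) and a dynamical half (producing $a^{(N)}$ together with $(I_a)$--$(III_a)$), and the second half is the real obstacle.

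\emph{The combinatorial skeleton.} First I would build the generations $\Lambda_1,\dots,\Lambda_N$ by the ``nuclear family'' sprouting of \cite{CKSTTTor}: $\Lambda_1$ is a set of $2^{N-1}$ frequencies contained in a disc of radius $R\le 10^{6N^2}$, and $\Lambda_{j+1}$ is obtained from $\Lambda_j$ by replacing, for each ``parent pair'' in $\Lambda_j$, the two opposite vertices completing it to a \emph{nondegenerate} rectangle. The families are chosen so that: (i) the only nondegenerate rectangles of $\Z^2$ with all four vertices in $S_N:=\Lambda_1\cup\dots\cup\Lambda_N$ have two vertices in some $\Lambda_j$ and two in the adjacent $\Lambda_{j+1}$, which gives $(0_\Lambda)$; (ii) $|\Lambda_{j+1}|=|\Lambda_j|=2^{N-1}$, which gives $(II_\Lambda)$; (iii) the frequency scale is multiplied by roughly $\sqrt2$ at each generation, which gives the nesting $(III_\Lambda)$ and the inflation $(IV_\Lambda)$. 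The crude ball bound $S_N\subset B(0,10^{6N^2})$ of $(I_\Lambda)$ then follows by controlling the (at most exponential in $N$) growth of the coordinates through $N$ generations. All of this is carried out in \cite[Appendix C]{GuKa}.

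\emph{Reduction to the toy model.} Next I would restrict \eqref{RSAdj} to sequences supported on $S_N$ and constant on each generation, $a_k(t)=b_j(t)$ for $k\in\Lambda_j$. By $(0_\Lambda)$ only rectangles with all four vertices in $S_N$ contribute to $R[a,a,a]_p$ when $p\in S_N$, and by the self-similar way the $\Lambda_j$ are built the number of such rectangles through a fixed $k\in\Lambda_j$ hitting $\Lambda_{j\pm1}$ is independent of $k$. Hence the ansatz is preserved by the flow, which gives $(I_a)$, and reduces \eqref{RSAdj} to the finite-dimensional toy model
\[
i\dot b_j=-|b_j|^2b_j+2\,\overline{b_j}\bigl(b_{j-1}^2+b_{j+1}^2\bigr),\qquad 1\le j\le N,\quad b_0=b_{N+1}=0,
\]
up to the precise combinatorial normalization of the coupling constant (which may be rescaled away). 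Global existence of $a^{(N)}$ in $C(\R:h^s_p)$ is then immediate from the conservation of $\sum_j|b_j|^2$ and the finiteness of the support.

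\emph{Dynamics of the toy model --- the main obstacle.} On the invariant two-mode sphere $\{|b_j|^2+|b_{j+1}|^2=1,\ b_\ell=0\ \text{for}\ \ell\ne j,j+1\}$ the toy model coincides, after a gauge transformation, with the two-mode system discussed around \eqref{rphi}, whose energy level $h=0$ carries a heteroclinic connection moving essentially all the mass from mode $j$ to mode $j+1$; because the two endpoints are hyperbolic saddles, a trajectory starting at distance $\delta$ from ``all mass on mode $j$'' reaches the neighbourhood of ``all mass on mode $j+1$'' in time $O(\log\frac{1}{\delta})$. Following \cite{GuKa}, the heart of the proof is to produce a genuine solution of the \emph{full} $N$-mode toy model that shadows the concatenation of these $N-4$ transitions, starting $\delta^\nu$-close to ``all mass on $\Lambda_3$'' and ending $\delta^\nu$-close to ``all mass on $\Lambda_{N-1}$'', where $\delta=e^{-\gamma N}$. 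I expect the two delicate points to be: (a) choosing on each two-mode sphere the correct ``slider'' orbit so that the exit configuration of the $j$-th transition is an admissible entry configuration for the $(j+1)$-th, which uses the rotational symmetry of the toy model; and (b) controlling, via the hyperbolicity of the saddles and Gronwall estimates, the error produced by the temporarily inactive modes and showing it does not destroy the transition, uniformly over all $\sim N$ steps. Since each transition costs time $O(\log\frac{1}{\delta})=O(\gamma N)$ and there are $N-4$ of them, the total time satisfies $0<T_N<C\gamma N^2$, which is $(III_a)$; the control of the $b_j$'s at $t=0$ and $t=T_N$ afforded by the shadowing argument is exactly $(II_a)$, and inspection of the transition estimates shows that $C$ and $\nu$ may be taken independent of $\gamma$, as claimed.
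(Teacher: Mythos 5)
The paper does not prove this theorem: it is imported verbatim from \cite{CKSTTTor} and \cite[Theorem 3-bis and Appendix C]{GuKa}, and your sketch is a faithful outline of exactly that argument (nuclear-family combinatorics for $(0_\Lambda)$--$(IV_\Lambda)$, reduction to the CKSTT toy model for $(I_a)$, and the quantitative shadowing of the heteroclinic chain of Guardia--Kaloshin for $(II_a)$--$(III_a)$). So your proposal takes essentially the same route as the paper's (cited) proof.
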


\begin{proof}[Proof of Proposition \ref{growth on Z^d theo}]
The needed solution is constructed using the observation (see \cite{Hani}) that compactly supported solutions of \eqref{RSAdj} of disjoint support can be easily superposed by appropriately positioning them in the lattice $\Z^2$.

Fix $s>1$ and $\gamma>2s/\nu$. We start by running Theorem \ref{TdThm} for every $N=j\in\mathbb{N}$, $j\ge N_0(\gamma)$. This gives a family of sets $S_j=\Lambda_1^j\cup\Lambda^j_2\cup\dots\cup\Lambda^j_j$ satisfying $(I_\Lambda)-(IV_\Lambda)$ and solutions $a^{(j)}(t)$ of \eqref{RSAdj} satisfying $(I_a)-(III_a)$.

In addition, considering ($III_\Lambda-IV_\Lambda$), we see that we may assume that there exists $\frac 12 10^{6j^2}\le R_j\le 10^{6j^2}$ such that
\begin{equation}\label{Sizellambda}
\Lambda^j_p\subset B(0,\sqrt{2}^pR_j)\mbox{ for }1\leq p \leq j; \qquad \Lambda^j_{j-1}\cap B(0,2^{(j-20)/2}R_j)^c\ne\emptyset.
\end{equation}
\medskip

Next, we claim that we can construct by induction a sequence of vectors $\{v_j\}_{j\geq N_0}\subset \Z^2$ such that:
\begin{equation}\label{Sizevj}
v_{N_0}=0, \qquad \vert v_j\vert\le 2^{10j},
\end{equation} and 
for any nondegenerate rectangle $(p_0, q_0, r_0, s_0)$ with three vertices included in
\begin{equation*}
\Xi=\bigcup_{j\ge N_0} (v_j+S_j)
\end{equation*}
then $\{p_0,q_0,r_0,s_0\}\subset\Xi$ and 
we have the following property:
\begin{equation}\label{rectangle}\hbox{ if } \{p_0, q_0, r_0, s_0\}\cap (v_j +S_j)\neq \emptyset \hbox{ and } \{p_0, q_0, r_0, s_0\}\cap (v_k +S_k)\neq \emptyset \hbox{ then }
j=k.\end{equation}

The existence of this sequence of vectors is proved inductively using Lemma \ref{vjlemma} below (at the $n-$th step, take $\Xi_n=\cup_{1\leq j \leq n-1}(S_j+v_j)$ which has $O(n2^n)$ elements). We then easily see that any nondegenerate right-angled triangle in $\Xi$ must belong to exactly one $v_j+S_j$. The fact that the fourth corner of a rectangle necessarily belongs to $\Xi$ follows from the fact that each component $v_j+S_j$ satisfies this property thanks to $(0_\Lambda)
$ above. Choosing any such sequence $\{v_j\}_{j\geq N_0}$, we define the following sequence of initial data $A^{(p)}(0)$ for \eqref{RS} to be given by
\begin{equation*}
\begin{split}
A^{(p)}(0)&= \sum_{N_0\leq j \leq p} \lambda_j a^{(j)}_{k-v_j}(0),
\end{split}
\end{equation*}
where $\lambda_j=(\varepsilon/j^{10})2^{-j/2}R_j^{-s}$ is a normalization factor. Note that for any $v\in \Z^2$, $a^{(j)}_{k-v}(t)$ is also a solution of \eqref{RSAdj}. Using $(I_a-II_a)$, \eqref{Sizellambda}, \eqref{Sizevj}
we therefore see that
\begin{equation*}
\begin{split}
\Vert A^{(p+1)}(0)-A^{(p)}(0)\Vert_{h^s_p}^2\lesssim p^{-20}\varepsilon^2,
\end{split}
\end{equation*}
so that $A^{(p)}(0)$ is a Cauchy sequence of initial data in $h^s_p$, and therefore it converges to some $A(0)\in h^s_p(\Z^2)$. Moreover, $A(0)$ satisfies the first property in \eqref{PropNormGrowthSol}.

What remains to show is that the solution $A(t)$ of \eqref{RS} with initial data $A(0)$ satisfies the second property in \eqref{PropNormGrowthSol}. We start by noticing that by \eqref{rectangle} (recall that $(p_0, q_0, r_0, s_0)\in \Gamma_0^\prime$ if and only if $(p_0, q_0, r_0, s_0)$ are the vertices of a nondegenerate rectangle), the solution $A^p(t)$ with initial data $A^p(0)$ is given by 
 $$
 A^p(t)=\sum_{N_0\leq j \leq p} \lambda_j a^{(j)}_{k-v_j}(\lambda_j^2t).
 $$
As a result, we see that if $m\ge n$ and $k \in v_n+S_n$, then
 \begin{equation}\label{UCte}
 A^{(m)}_k(t)=A^{(n)}_k(t)=\lambda_n a^{(n)}_{k-v_n}(\lambda_n^2t)=\lambda_n\sum_{1\le \ell \le n}b^{(n)}_{\ell}(\lambda_n^2t)\mathbf{1}_{\Lambda^n_\ell}(k-v_n).
 \end{equation}
By continuity of the flow, this also holds for $A^{(m)}(t)$ replaced by $A(t)$. In particular, using $(IV_\Lambda, I_a-II_a)$ and \eqref{Sizevj} we see that
\begin{equation*}
\begin{split}
\Vert A(\lambda_n^{-2}T_n)\Vert_{h^s}^2&\ge \lambda_n^2\sum_{k\in\Lambda^n_{n-1}}\vert b^{(n)}_{n-1}(T_n)\vert^2\cdot\vert k+v_n\vert^{2s}
\gtrsim n^{-20}\varepsilon^22^{n(s-1)}.
\end{split}
\end{equation*}
This finishes the proof using $(III_a)$. 
\end{proof}

We now present the lemma justifying the existence of the sequence $\{v_j\}$ above.
 
\begin{lemma}\label{vjlemma}
Let $\Xi\subset \Z^2$ have cardinality $O(j2^{j})$, and let $S_j$ be the set obtained from Theorem~\ref{TdThm} with $N=j$. 
Then there exists $v \in \Z^2$ with $|v|\leq 2^{10j}$ such that for any nondegenerate right-angled triangle $(p_0, q_0, r_0)$ we have the following property:
\begin{equation}
\begin{split}
\hbox{ if } \vert\{p_0, q_0, r_0\}\cap \Xi\vert \ge 2,&\quad\hbox{ then } \quad\{p_0, q_0, r_0\}\cap (v+S_j)= \emptyset,\\
\hbox{ if } \vert\{p_0, q_0, r_0\}\cap (v+S_j)\vert \ge 2,&\quad\hbox{ then } \quad\{p_0, q_0, r_0\}\cap \Xi= \emptyset.
\end{split}
\end{equation}
\end{lemma}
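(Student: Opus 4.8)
The plan is a pigeonhole count. Keep $\Xi$ and $S_j$ fixed throughout, with $|\Xi|=O(j2^j)$ and $|S_j|=j2^{j-1}$ (the latter from $(II_\Lambda)$ in Theorem~\ref{TdThm}). I will show that the set of \emph{bad} $v\in\Z^2$ with $|v|\le 2^{10j}$ --- those for which $(v+S_j)\cap\Xi\neq\emptyset$, or for which some nondegenerate right-angled triangle has vertices in both $\Xi$ and $v+S_j$ --- has cardinality $O(j^3 2^{13j})$, whereas $\#\{v\in\Z^2:|v|\le 2^{10j}\}\ge\tfrac{\pi}{2}2^{20j}$ once $j$ is large. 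Since $j^3 2^{13j}=o(2^{20j})$, a good $v$ then exists, and it satisfies both conclusions: if the three distinct vertices of a nondegenerate right-angled triangle meet both of the (then disjoint) sets $\Xi$ and $v+S_j$, pigeonhole forces one set to contain two vertices and the other at least one, which is exactly what we will have excluded.

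First I would reduce the problem geometrically. A nondegenerate right-angled triangle has a right-angle vertex $A$ and two leg-endpoints $B$, $C$, with $A\neq B$, $A\neq C$, $B\neq C$ and $(B-A)\cdot(C-A)=0$. Requiring $(v+S_j)\cap\Xi=\emptyset$ excludes only the $\le|\Xi|\,|S_j|$ values $v\in\Xi-S_j$; assume this. Then a mixed triangle has two vertices in $\Xi$ and one in $v+S_j$, or the reverse. Write each vertex lying in $v+S_j$ as $v+s$ with $s\in S_j$, and fix all the data: which elements of $\Xi$ play the roles of $A$, $B$, $C$, which elements $s\in S_j$ are used, and which vertex carries the right angle. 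The relation $(B-A)\cdot(C-A)=0$ then becomes an equation in the single unknown $v\in\R^2$, and a short case check shows it is always either linear or the equation of a circle, the quadratic part being exactly $|v|^2$: it is linear whenever one of the leg-vectors $B-A$, $C-A$ is a constant vector, which happens when $A$ and one leg-endpoint lie in the same one of the two sets, or when both leg-endpoints lie in $v+S_j$; otherwise it is a circle --- the archetype being Thales' configuration, $A=v+s$ the variable right-angle vertex and $B$, $C\in\Xi$, so that $v$ lies on the circle of diameter $\overline{BC}$ translated by $-s$.

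To count, observe that fixing the combinatorial type, the position of the right angle, and all the data costs $O\big(|\Xi|^2|S_j|+|\Xi|\,|S_j|^2\big)=O(j^3 2^{3j})$ choices; for each, $v$ is confined to a line or a circle. The point is that we only need the lattice points of that curve inside the \emph{small} ball $\{|v|\le 2^{10j}\}$: a line meets this ball in at most $3\cdot 2^{10j}$ lattice points, and a circle meets each of the at most $2\cdot 2^{10j}+1$ vertical lines $\{v_1=k\}$, $k\in\Z$, $|k|\le 2^{10j}$, in at most two points, hence in at most $5\cdot 2^{10j}$ lattice points inside the ball. Summing over the $O(1)$ combinatorial types and the $O(j^3 2^{3j})$ choices of data, and adding back the $O(|\Xi|\,|S_j|)$ values excluded for disjointness, the number of bad $v$ is $O(j^3 2^{13j})$, which finishes the proof for $j=N$ sufficiently large (harmless, since Theorem~\ref{TdThm} already requires $N$ large). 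The delicate step is exactly this parametrization: one must keep $v$ as the only free variable, introducing auxiliary unknowns $s\in S_j$ for the vertices that sit in $v+S_j$, so that each elementary bad event is a genuine line or circle. A cruder count that lets a triangle vertex roam over a circle determined by two points of $\Xi$ fails, since such circles can have radius $\sim\operatorname{diam}(\Xi)\sim 10^{6j^2}$ and may carry far more than $2^{20j}$ lattice points; intersecting from the outset with the $O(2^{10j})$ vertical slices of $\{|v|\le 2^{10j}\}$ is what makes the elementary estimate usable.
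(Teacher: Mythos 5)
Your proof is correct, but it takes a genuinely different route from the paper's. You count the bad translates $v$ directly over the full two--dimensional ball $\{|v|\le 2^{10j}\}$: for each of the $O\big(|\Xi|^2|S_j|+|\Xi||S_j|^2\big)=O(j^32^{3j})$ labelled configurations (two vertices in one set, one in the other, right angle assigned), the constraint on $v$ is a genuine line or a genuine circle --- never the whole plane, since the relevant coefficient vector $Q-P$ (resp.\ $s_2-s_1$) is nonzero and the Thales circles have positive radius --- and you correctly bound the lattice points of such a curve inside the ball by $O(2^{10j})$ via slicing with vertical lines, which is indeed the one place where a cruder count would fail. The paper instead reduces to a one--dimensional search: it first picks a direction $v^\prime$ of length $\le 2^{4j}$ avoiding the $\le 2^{3j}$ directions (and their orthogonals) determined by pairs of points within $\Xi$ or within $S_j$, and then observes that along the ray $v=\lambda v^\prime$ each bad triple in $\mathcal{A}\cup\mathcal{B}$ excludes at most two values of $\lambda$ (a line in the direction $v^\prime$ meets the circle of diameter $(p,q)$ in at most two points, and the linear cases are nondegenerate precisely because of the choice of $v^\prime$), so a pigeonhole over $\lambda\in\Z\cap[0,2^{5j}]$ finishes. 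The two arguments are of comparable difficulty: yours dispenses with the preliminary choice of a generic direction at the cost of the lattice--point count on circles, while the paper's one--parameter reduction trades that count for the direction--avoidance step. Both land comfortably within the bound $|v|\le 2^{10j}$.
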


\begin{proof} 

Let $\mathcal{L}$ denote the set of directions of lines joining two points of $\Xi$ or two points of $S_j$, or directions which are orthogonal to such lines. $\mathcal{L}$ has cardinality at most $2^{3j}$ and there exists a vector $v^\prime$ of length at most $2^{4j}$ which is not contained in $\mathcal{L}$.

We now define
$${\mathcal A}=\{(p,q,r),p,q\in  \Xi, r\in S_j\}, \qquad{\mathcal B}=\{(p,q,r),p\in  \Xi, q,r\in S_j\}.$$

We claim that for any $(p,q,r)\in\mathcal{A}$,  the condition ``($C^1_{pqr})$\,:\,$(p,q,r+\lambda v^\prime)$ form a right-angled triangle'' has at most two solutions $\lambda\in\mathbb{R}$ and that similarly, for any $(p,q,r)\in\mathcal{B}$, the condition ``$(C^2_{p,q,r})$\,:\, $(p,q+\lambda v^\prime,r+\lambda v^\prime)$ form a right-angled triangle'' has at most two solutions.

By translation invariance, it suffices to prove the first claim. If the right-angle is at $p$ or $q$, then the proof is direct since $v^\prime$ is not orthogonal nor parallel to $p-q$. If the right-angle happens at $r+\lambda v^\prime$, then $r+\lambda v^\prime$ belongs to the circle of diameter $(p,q)$ and a line directed by $v^\prime$ will intersect this circle in at most two points.

We now observe that $\vert\mathcal{A}\vert+\vert\mathcal{B}\vert\lesssim 2^{4j}$, and therefore we may choose $\lambda\in \mathbb{Z}\cap [0,2^{5j}]$ such that $(C^1_{pqr})$ and $(C^2_{pqr})$ are never satisfied. We now set $v=\lambda v^\prime$.

\end{proof}


\section{Modified wave operators}\label{SMWO}

We start the proof of our main results with the slightly easier task of constructing (modified) wave operators for \eqref{CNLS}. The following implies Theorem \ref{ExMWO}.

\begin{theorem}
There exists $\varepsilon>0$ such that if $U_0\in S^+$ satisfies
\begin{equation}\label{AssMW}
\Vert U_0\Vert_{S^+}\le\varepsilon,
\end{equation}
and if $\widetilde{G}$ is the solution of \eqref{RSS} with initial data $U_0$, then there exists $U$ a solution of \eqref{CNLS} such that
$e^{-it\Delta_{\mathbb{R}\times\mathbb{T}^d}}U(t)\in C((0,\infty):S)$ and
\begin{equation*}
\begin{split}
\Vert e^{-it\Delta_{\mathbb{R}\times\mathbb{T}^d}}U(t)-\widetilde{G}(\pi\ln t)\Vert_S\to0\,\hbox{ as }\,t\to+\infty.
\end{split}
\end{equation*}

\end{theorem}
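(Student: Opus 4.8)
## Proof Proposal

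The plan is to construct $U$ by a standard ``final data'' fixed-point argument run backwards from $t=+\infty$, using the structural decomposition of the nonlinearity in Proposition~\ref{StrucNon} as the engine. First I would fix a large time $T_0$ and seek the profile $F(t)=e^{-it\Delta_{\R\times\T^d}}U(t)$ as a solution of the integral equation
\begin{equation*}
F(t)=\widetilde{G}(\pi\ln t)-i\int_t^{+\infty}\Big(\mathcal{N}^\sigma[F(\sigma),F(\sigma),F(\sigma)]-\frac{\pi}{\sigma}\mathcal{R}[\widetilde{G}(\pi\ln\sigma),\widetilde{G}(\pi\ln\sigma),\widetilde{G}(\pi\ln\sigma)]\Big)d\sigma,
\end{equation*}
on the interval $[T_0,+\infty)$. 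Note that $\widetilde{G}(\pi\ln t)$ itself solves $i\partial_t \widetilde{G}(\pi\ln t)=\frac{\pi}{t}\mathcal{R}[\widetilde{G},\widetilde{G},\widetilde{G}]$ (this is \eqref{RSS} written in the time variable $\tau=\pi\ln t$), so a solution $F$ of this equation does solve \eqref{edno}, hence $U=e^{it\Delta}F$ solves \eqref{CNLS}; and $F(t)-\widetilde{G}(\pi\ln t)$ is given by the tail integral, which we will show tends to $0$ in $S$. By Lemma~\ref{gAdmi}, $\widetilde{G}(\pi\ln t)$ stays bounded in $Z$ (equal to $\|U_0\|_Z$) and grows at most like $t^{\delta'}$ in $S$ and $S^+$ with $\delta'\lesssim\|U_0\|_Z^2\ll\delta$; so for $\varepsilon$ small the map $t\mapsto\widetilde{G}(\pi\ln t)$ lies in a ball of $X^+_{[T_0,\infty)}$ (the natural extension of \eqref{XNorm} to unbounded time intervals), with small $Z$-norm.

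The core of the argument is then a contraction mapping in (a slightly enlarged version of) $X^+$ on $[T_0,\infty)$. Write $F=\widetilde{G}(\pi\ln t)+V$ and set up the map $\Phi(V)=$ (the tail integral above, with $F=\widetilde{G}+V$). To estimate $\Phi(V)$ in $S$ I would plug the decomposition $\mathcal{N}^\sigma=\frac{\pi}{\sigma}\mathcal{R}+\mathcal{E}_1^\sigma+\mathcal{E}_2^\sigma$ of Proposition~\ref{StrucNon} (applied dyadically in $\sigma$, i.e. with $q_T$ a smooth partition of unity adapted to dyadic blocks $[T/4,T]$ covering $[T_0,\infty)$): the $\mathcal{E}_1$ term is integrable in time against the $S$-norm after summing the dyadic gain $T^{-\delta}\|\int q_T\mathcal{E}_1\|_S\lesssim 1$... actually more carefully, the $\int_{\mathbb R}q_T\mathcal{E}_i\,d\sigma$ bounds in \eqref{DecNon}--\eqref{AddDecNon} control the contribution of each dyadic block to the $S$ and $S^+$ norms with a gain (the $T^{2\delta}$ bound in \eqref{AddDecNon} gives the $S$-control, summable in the dyadic blocks since the output decays), while $\mathcal{E}_2=\partial_\sigma\mathcal{E}_3$ is handled by integration by parts: $\int_t^\infty\mathcal{E}_2(\sigma)d\sigma=-\mathcal{E}_3(t)+[\text{boundary at }\infty]$, and $\|\mathcal{E}_3\|_S\lesssim T^{-1/10}$ decays. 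The difference $\frac{\pi}{\sigma}(\mathcal{R}[F,F,F]-\mathcal{R}[\widetilde{G},\widetilde{G},\widetilde{G}])$ is trilinear in $\widetilde{G},V$ with at least one factor $V$; using Remark~\ref{ControlR} (that $\frac{1}{\sigma}\mathcal{R}$ obeys the same bounds as $\Pi^\sigma$) together with Lemma~\ref{Res}/Lemma~\ref{gAdmi}, each such term contributes $\lesssim\sigma^{-1+C\varepsilon}(\|\widetilde{G}\|_Z+\|V\|)^2\|V\|$, which after the $\sigma$-integral is \emph{not} integrable by itself — this is exactly why one must work in the time-weighted space $X^+$ where the $S$-norm is allowed to grow like $t^\delta$: the bootstrap closes because the growth rate $\delta$ beats the logarithmic loss coming from $\int_t^\infty\sigma^{-1+C\varepsilon}\sigma^{\cdots}d\sigma$, and smallness of $\varepsilon$ makes the exponents line up. For the $Z$-norm (which must stay bounded, not just small), I would use \eqref{ApproxRes} to replace $\Pi^\sigma$ by $\frac{\pi}{\sigma}\mathcal{R}$ with an error that is $\sigma^{-1-20\delta}$ in $Z$ hence integrable, and for the genuine $\mathcal{R}$-difference use that $\|\cdot\|_Z$ propagates (the resonant flow preserves $Z$, and the $Z$-difference estimate is Lipschitz with a constant controlled by $\|U_0\|_Z^2$, cf.\ the proof of \eqref{StabRSS}). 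Standard Gronwall/continuity-of-flow arguments then promote the fixed point on $[T_0,\infty)$ to a global solution $U\in C((0,\infty):H^N)$ by the local theory (Lemma~\ref{warm-up}), and since $e^{-it\Delta}U(t)=F(t)$ with $F-\widetilde G$ equal to the tail integral whose $S$-norm we bounded by $o(1)$ as $t\to\infty$, we get $\|e^{-it\Delta}U(t)-\widetilde{G}(\pi\ln t)\|_S\to0$.

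The main obstacle, as the authors themselves flag in the overview, is the absence of the sharp $t^{-1/2}$ decay of $\|U(t)\|_{L^\infty}$ when $d\ge2$: the naive energy estimate does not close, and one is forced to run the contraction in the three-tiered $Z\subset S\subset S^+$ hierarchy simultaneously, carefully tracking how the slowly growing $S,S^+$ norms feed back through the resonant term. Concretely, the delicate point is that the effective nonlinearity $\frac{\pi}{t}\mathcal{R}$ is \emph{not} perturbative in $S$ — its contribution to $\partial_t F$ in $S$ is genuinely of size $t^{-1}\|F\|_S$ times a $Z$-bounded factor, so the $S$-norm of $F$ can (and does) grow; the whole scheme only works because the difference $F-\widetilde G$ is what we control, the resonant evolution of $\widetilde G$ being subtracted off exactly, and because the remaining error $\mathcal{E}=\mathcal{E}_1+\mathcal{E}_2$ decays strictly faster than $t^{-1}$ (integrably, after the $\mathcal{E}_2$ integration by parts), which is precisely the content of Proposition~\ref{StrucNon}. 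A secondary technical nuisance is the need to enlarge $X^+$ slightly (allowing, say, an extra $\log$ or a slightly larger power of $t$) so that the fixed-point map is a genuine contraction; this is bookkeeping, not a conceptual difficulty. Finally, matching the $S^+$-control of $\widetilde G$ (which grows like $t^{\delta'}$, $\delta'$ proportional to $\varepsilon^2$) against the $5\delta$, $7\delta$ weights in the definition of $X^+_T$ forces $\varepsilon$ small depending on $\delta$ and $N$, which is the quantitative origin of the smallness assumption \eqref{AssMW}.
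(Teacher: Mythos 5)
Your overall architecture matches the paper's: write $F=\widetilde G(\pi\ln t)+V$, express $V$ as the tail integral of $\mathcal N^\sigma[F,F,F]-\frac{\pi}{\sigma}\mathcal R[\widetilde G,\widetilde G,\widetilde G]$, feed in Proposition~\ref{StrucNon} dyadically (with the integration by parts for $\mathcal E_2=\partial_t\mathcal E_3$), and control $\widetilde G$ via Lemma~\ref{gAdmi}. But there is a genuine error in your choice of contraction space and in the mechanism you invoke to close the estimates. You propose to run the fixed point in (an enlargement of) $X^+$, i.e.\ in a norm where the $S$-norm of the unknown is \emph{allowed to grow} like $t^\delta$, and you claim the linear-in-$V$ resonant contribution $\int_t^\infty\sigma^{-1}\Vert\widetilde G\Vert_Z^2\Vert V(\sigma)\Vert_S\,d\sigma$ is handled ``because the growth rate $\delta$ beats the logarithmic loss.'' That mechanism belongs to a \emph{forward} bootstrap, where $\int_1^t\sigma^{-1+a}d\sigma\sim t^a/a$ (it is exactly what happens in the global-bounds part of Section~\ref{SMS}); for the wave-operator construction the integral runs to $+\infty$, and $\int_t^\infty\sigma^{-1+a}d\sigma$ \emph{diverges} for every $a>0$. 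With only a bounded or growing norm on $V$, the map $\Phi$ is not even well defined, and no enlargement of the space by logarithms can repair this.

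The fix --- and what the paper actually does --- is to impose \emph{decay} on the difference: the contraction space $\mathfrak A$ carries the norm $\sup_{t>1}\{(1+\vert t\vert)^{\delta}\Vert V\Vert_S+(1+\vert t\vert)^{2\delta}\Vert V\Vert_Z+(1+\vert t\vert)^{1-\delta}\Vert\partial_tV\Vert_S\}$. Then the dangerous term $\frac{\pi}{\sigma}\mathcal R[\widetilde G,\widetilde G,V]$ is $O(\varepsilon^2\varepsilon_1\sigma^{-1-\delta})$ in $S$ by \eqref{GrowthSRes} and Remark~\ref{ControlR}, with $\widetilde G$ measured only in $\tilde Z_\sigma$ (which stays of size $\varepsilon$ even though $\Vert\widetilde G\Vert_S$ grows), and the tail integral converges with the same $t^{-\delta}$ decay; the prefactor $\varepsilon^2$ absorbs the $1/\delta$ constant, so the scheme is genuinely perturbative --- no Gronwall in time, no losses to track. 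Only the background $\widetilde G$ is allowed to grow in $S,S^+$, which is precisely why the $S^+$ hypothesis on $U_0$ is needed (to extract the $S$-decay of $\int_t^\infty\mathcal E^\sigma[\widetilde G,\widetilde G,\widetilde G]\,d\sigma$ from \eqref{AddDecNon}); the difference $V$ itself requires no $S^+$ control at all. Once you replace your space by this decaying one, the rest of your outline goes through essentially as written.
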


\begin{proof}
This follows by a fixed point argument. We let $G(t)=\widetilde{G}(\pi\ln t)$ and define the mapping
\begin{equation*}
\Phi(F)(t)=-i\int_t^\infty\left\{\mathcal{N}^\sigma[F+G,F+G,F+G]-\frac{\pi}{\sigma}\mathcal{R}[G(\sigma),G(\sigma),G(\sigma)]\right\}d\sigma
\end{equation*}
and the space\footnote{Of course continuing a solution $U$ of \eqref{CNLS} on the interval $(0,1)$ is direct.} 
\begin{equation*}
\begin{split}
\mathfrak{A}:=&\{F\in C^1((1,\infty):S)\,:\,\,\Vert F\Vert_\mathfrak{A}\le\varepsilon_1\}\\
\Vert F\Vert_\mathfrak{A}:=&\sup_{t> 1}\left\{(1+\vert t\vert)^\delta\Vert F(t)\Vert_{S}+(1+\vert t\vert)^{2\delta}\Vert F(t)\Vert_{Z}+(1+\vert t\vert)^{1-\delta}\Vert \partial_tF(t)\Vert_S\right\}
\end{split}
\end{equation*}
and we claim that if $\varepsilon$ is sufficiently small, there exists $\varepsilon_1$ such that $\Phi$ defines a contraction on the complete metric space $\mathfrak{A}$ endowed with the metric $\Vert \cdot\Vert_\mathfrak{A}$.

\medskip

We now decompose
\begin{equation}\label{Dec}
\begin{split}
\mathcal{N}^t[F+G,F+G,F+G]-\frac{\pi}{t}\mathcal{R}[G,G,G]&=\mathcal{E}^t[G,G,G]+\mathcal{L}^t[F,G]+\mathcal{Q}^t[F,G]\\
\end{split}
\end{equation}
where $\mathcal{E}^t[G,G,G]$ is defined as in \eqref{DecNon0}
and
\begin{equation*}
\begin{split}
\mathcal{L}^t[F,G]&:=2\mathcal{N}^t[G,G,F]+\mathcal{N}^t[G,F,G],\\
\mathcal{Q}^t[F,G]&:=2\mathcal{N}^t[F,F,G]+\mathcal{N}^t[F,G,F]+\mathcal{N}^t[F,F,F].
\end{split}
\end{equation*}
We will show that, whenever $F,F_1,F_2\in\mathfrak{A}$,
\begin{equation}\label{SuffFPMWO}
\begin{split}
&\Vert \int_t^\infty\mathcal{E}^\sigma[G,G,G]d\sigma\Vert_\mathfrak{A}\lesssim \varepsilon^3,\\
&\Vert \int_t^\infty\mathcal{L}^\sigma[F,G]d\sigma\Vert_\mathfrak{A}\lesssim \varepsilon^2\Vert F\Vert_\mathfrak{A},\\
&\Vert \int_t^\infty\mathcal{Q}^\sigma[F,G]d\sigma\Vert_\mathfrak{A}\lesssim \varepsilon\Vert F\Vert_\mathfrak{A}^2,\\
&\Vert \int_t^\infty\left\{\mathcal{Q}^\sigma[F_1,G]-\mathcal{Q}^\sigma[F_2,G]\right\}d\sigma\Vert_\mathfrak{A}\lesssim \varepsilon\varepsilon_1\Vert F_1-F_2\Vert_\mathfrak{A}.
\end{split}
\end{equation}
Once \eqref{SuffFPMWO} is shown, the proof is complete.

\medskip

Recall that, if $\varepsilon\lesssim \delta^\frac{1}{2}$ and $F\in \mathfrak{A}$, (see Lemma \ref{gAdmi} for the estimates on $G$)
\begin{equation}\label{APEMWO}
\begin{split}
(1+\vert t\vert)^{2\delta}\Vert F(t)\Vert_Z+(1+\vert t\vert)^\delta \Vert F(t)\Vert_S+(1+\vert t\vert)^{1-\delta}\Vert\partial_t F(t)\Vert_S&\lesssim\varepsilon_1,\\
\Vert  G(t)\Vert_{S^+}+(1+\vert t\vert)\Vert\partial_tG(t)\Vert_{S^+}&\lesssim \varepsilon(1+\vert t\vert)^{\delta/100}\\
\Vert G(t)\Vert_Z&\lesssim\varepsilon.
\end{split}
\end{equation}
Using \eqref{CrudeEst}, the two last inequalities of \eqref{SuffFPMWO} follow.

\medskip

We now turn to the first inequality in \eqref{SuffFPMWO}. Using \eqref{CrudeEst} again (see also \eqref{Trivial111}), we easily see that
\begin{equation*}
\Vert \mathcal{E}^t[G,G,G]\Vert_S\le \Vert\mathcal{N}^t[G,G,G]\Vert_S+\frac{1}{t}\Vert \mathcal{R}[G,G,G]\Vert_S\lesssim (1+\vert t\vert)^{-1+\delta}\varepsilon^3.
\end{equation*}
This controls the time derivative in the $\mathfrak{A}$-norm. Independently, using \eqref{APEMWO} with Proposition \ref{StrucNon} we obtain that
\begin{equation*}
\|\int_{t}^{\infty} \mathcal{E}^{\sigma}(G,G,G)d\sigma\|_{S}\lesssim\varepsilon^3(1+|t|)^{-\delta},\qquad \|\int_{t}^{\infty} \mathcal{E}^{\sigma}(G,G,G)d\sigma\|_{Z}\lesssim \varepsilon^3(1+|t|)^{-2\delta}
\end{equation*}
This gives the first inequality in \eqref{SuffFPMWO}.

\medskip

Now we turn to the second inequality in \eqref{SuffFPMWO}. First, using \eqref{CrudeEst} and \eqref{APEMWO}, we see that
\begin{equation*}
\Vert\mathcal{N}^t[G,G,F]\Vert_S+\Vert \mathcal{N}^t[G,F,G]\Vert_S\lesssim \varepsilon^2\varepsilon_1(1+\vert t\vert)^{-1+\delta}
\end{equation*}
which is sufficient for the time-derivative component of the $\mathfrak{A}$-norm. Using \eqref{APEMWO} with Lemma \ref{BilEf} and Lemma \ref{FastOsLem}, it only remains to show that
\begin{equation}\label{EstimUUG1}
\begin{split}
\Vert \mathcal{R}[G,G,F]\Vert_{Z}+\Vert \mathcal{R}[G,F,G]\Vert_{Z}\lesssim (1+\vert t\vert)^{-2\delta}\varepsilon^2\varepsilon_1,\\
\Vert \Pi^t[G,G,F]-\frac{\pi}{t}\mathcal{R}[G,G,F]\Vert_{Z}+\Vert \Pi^t[G,F,G]-\frac{\pi}{t}\mathcal{R}[G,F,G]\Vert_{Z}\lesssim (1+\vert t\vert)^{-1-2\delta}\varepsilon^2\varepsilon_1,\\
\Vert \Pi^t[G,G,F]\Vert_{S}+\Vert \Pi^t[G,F,G]\Vert_{S}\lesssim (1+\vert t\vert)^{-1-\delta}\varepsilon^2\varepsilon_1\\
\end{split}
\end{equation}
Using Lemma \ref{DiscreteStricTorLem}, we see that for any $A, B, C\in Z$,
\begin{equation*}
\Vert \mathcal{R}[A,B,C]\Vert_{Z}\lesssim \Vert A\Vert_Z\Vert B\Vert_{Z}\Vert C\Vert_Z
\end{equation*}
and the first estimate follows from \eqref{APEMWO}. The second estimate follows directly from \eqref{ApproxRes}. For the third estimate, we use \eqref{GrowthSRes} to get
\begin{equation*}
\begin{split}
(1+\vert t\vert)\left\{\Vert \Pi^t[G,G,F]\Vert_{S}+\Vert \Pi^t[G,F,G]\Vert_{S}\right\}
&\lesssim \Vert G\Vert_{\tilde{Z}_t}^2\Vert F\Vert_S+\Vert G\Vert_{\tilde{Z}_t}\Vert F\Vert_{\tilde{Z}_t}\Vert G\Vert_S\\
&\lesssim \varepsilon^2\varepsilon_1(1+\vert t\vert)^{-\delta}.
\end{split}
\end{equation*}
The proof is complete.
\end{proof}
\begin{remark}
Observe that a key point in the proof of the existence of a modified wave operator is the fact that 
 $$
 \int_t^\infty\mathcal{E}^\sigma[G,G,G]d\sigma
$$
behaves better in the $Z$ norm compared to $G$ itself. This allows to get decay in the $S$ norm by assuming the stronger (only in $x$) 
$S^+$ control on the solution of
 \eqref{RSS}.
We also observe that in the modified wave operator proof, the argument is completely perturbative. 
We shall see in the next section that in the modified scattering proof the argument is not completely perturbative and relies on the conservation laws of the resonant 
system.
\end{remark}
\section{Small data scattering}\label{SMS}

The goal of this section is to prove a more precise version of Theorem \ref{ModScatThm} which is the main result of this paper.

\begin{theorem}\label{ModScatThm2}
There exists $\varepsilon>0$ such that if $U_0\in S^+$ satisfies
\begin{equation}\label{USmallID}
\Vert U_0\Vert_{S^+}\le\varepsilon,
\end{equation}
and if $U$ is the solution of \eqref{CNLS} with initial data $U_0$, then $U$ exhibits modified scattering to the resonant dynamics given by \eqref{RSS} in the following sense:
there exists $G_0\in S$ such that, letting $\widetilde{G}$ be the solution of \eqref{RSS} with initial data $\widetilde{G}(0)=G_0$, it holds that
\begin{equation}\label{ModScat}
\Vert F(t)-\widetilde{G}(\pi\ln t)\Vert_{S}\to 0\qquad\hbox{ as }t\to+\infty,
\end{equation}
where $F(t)=e^{-it\Delta_{\mathbb{R}\times\mathbb{T}^d}}U(t)$.
\end{theorem}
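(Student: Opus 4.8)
The plan is to run a bootstrap argument on the norm $\Vert F\Vert_{X_T^+}$ and simultaneously extract the asymptotic resonant profile. First I would set up the local well-posedness: using Lemma~\ref{warm-up} together with the standard Duhamel iteration for \eqref{edno}, a solution $F\in C([0,T]:S^+)$ exists with $\Vert F\Vert_{X_T^+}\lesssim \varepsilon$ on a short interval, and the $Z$, $S$, $S^+$ norms are all finite. The goal is then the \emph{a priori} estimate: if $\Vert F\Vert_{X_T^+}\le 2\varepsilon_0$ for some fixed small $\varepsilon_0$ (with $\varepsilon\ll\varepsilon_0$), then in fact $\Vert F\Vert_{X_T^+}\le\varepsilon_0$, uniformly in $T$. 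By continuity this closes and gives the global bound. Here the decomposition \eqref{DecNon0} from Proposition~\ref{StrucNon} is the engine: writing $i\partial_t F=(\pi/t)\mathcal R[F,F,F]+\mathcal E^t[F,F,F]$, the $\mathcal E$ contribution is controlled by \eqref{DecNon} and \eqref{AddDecNon} after summing the dyadic-in-time pieces $q_T$ over $T=2^k$, while the $\mathcal R$ contribution is handled using the conservation law \eqref{H1NormPres}: for the $Z$ norm one exploits that $\mathcal R$ preserves $\Vert\widehat F(\xi)\Vert_{h^1_p}$ pointwise in $\xi$ (this is the non-perturbative ingredient alluded to after \eqref{H1NormPres}), so $\Vert F(t)\Vert_Z$ stays bounded; for the $S$ and $S^+$ norms, \eqref{GrowthSRes}, \eqref{GrowthSRes2} and Remark~\ref{ControlR} only give $\partial_t\Vert F\Vert_S\lesssim t^{-1}\Vert F\Vert_Z^2\Vert F\Vert_S$, hence Gronwall yields the slow growth $\Vert F(t)\Vert_S\lesssim (1+t)^{C\varepsilon_0^2}\Vert F(0)\Vert_S$, which is consistent with the $(1+t)^\delta$, $(1+t)^{5\delta}$ weights in \eqref{XNorm} once $\varepsilon_0$ is small enough that $C\varepsilon_0^2<\delta$. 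The time-derivative components of $X_T^+$ follow directly from the same trilinear estimates applied to $i\partial_tF=\mathcal N^t[F,F,F]$.

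Having the global bound, the next step is to \emph{construct the limiting profile}. The natural candidate for the asymptotic resonant data would be $G_0=\lim_{t\to\infty}\tilde G(-\pi\ln t)$-shifted object; concretely one compares $F(t)$ with the solution $G(t)$ of $i\partial_tG=(\pi/t)\mathcal R[G,G,G]$ (equivalently $\tilde G(\pi\ln t)$ with $\tilde G$ solving \eqref{RSS}). Writing $W=F-G$, one gets $i\partial_t W=(\pi/t)(\mathcal R[F,F,F]-\mathcal R[G,G,G])+\mathcal E^t[F,F,F]$. The difference of the resonant terms is trilinear and, linearized around $F$ or $G$, is bounded in $S$ by $t^{-1}\Vert F\Vert_Z^2\Vert W\Vert_S$ (again via Lemma~\ref{DiscreteStricTorLem}), so a Gronwall-type estimate controls $\Vert W(t)\Vert_S$ by $(1+t)^{C\varepsilon_0^2}$ times $\int_t^\infty\Vert\mathcal E^\sigma\Vert_S\,d\sigma$ plus the data mismatch. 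To make this converge I would instead fix the \emph{final} data: define $G_0$ so that the solution $\tilde G$ of \eqref{RSS} emanating \emph{from} it matches the asymptotic behavior of $F$; the standard trick is a Cauchy-sequence argument, comparing the profiles obtained by solving \eqref{RSS} backward from $F(t_n)$ at a sequence $t_n\to\infty$, using that the $\mathcal E$-error terms in \eqref{DecNon}–\eqref{AddDecNon} are summably small (they carry negative powers of $T$), and invoking the stability estimate \eqref{StabRSS} from Lemma~\ref{gAdmi} to control how the small mismatches at time $t_n$ propagate. Since \eqref{StabRSS} only costs $e^{C\theta^2 t}$ and $\theta=\Vert F\Vert_Z\lesssim\varepsilon_0$ is tiny while the $\mathcal E$ terms decay polynomially, this closes, producing $G_0\in S$ with $\tilde G(0)=G_0$ and $\Vert F(t)-\tilde G(\pi\ln t)\Vert_S\to 0$. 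The $L^\infty_xH^1_y$ decay $(1+t)^{-1/2}$ in Theorem~\ref{ModScatThm} then follows from $\Vert F(t)\Vert_Z$ being bounded together with Lemma~\ref{DispLem} applied to $U(t)=e^{it\Delta}F(t)$.

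The passage from Theorem~\ref{ModScatThm2} to Theorem~\ref{ModScatThm} is immediate: $\Vert U(t)-e^{it\Delta}G(\pi\ln t)\Vert_{H^N}=\Vert F(t)-\tilde G(\pi\ln t)\Vert_{H^N}\le\Vert F(t)-\tilde G(\pi\ln t)\Vert_S\to 0$ by \eqref{StrongerNorm} and \eqref{ModScat}, and global existence of $U$ in $C((0,\infty):H^N)$ is a byproduct of the uniform $X_T^+$ bound.

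The main obstacle, as the authors emphasize, is that the limiting system \eqref{RSS} is \emph{not} integrable and its solutions can have growing Sobolev norms (Corollary~\ref{Inf cascade cor}), so one cannot simply conjugate by an explicit phase as in the Euclidean scalar case. The delicate point is therefore the \emph{compatibility of the weights}: the $S$-norm of both $F$ and the limiting $\tilde G(\pi\ln t)$ is allowed to grow like $(1+t)^\delta$, yet the \emph{difference} must decay, which forces one to extract decay from the $\mathcal E$ terms (scaling like negative powers of $T$ from Proposition~\ref{StrucNon}) that is strictly faster than the divergence $(1+t)^{C\varepsilon_0^2}$ coming from the linearized resonant flow — this is exactly why the $S^+$ norm (one extra $x$-weight and extra $x$-derivatives) is needed: it upgrades the $\mathcal E$-estimates to carry the genuinely negative power $T^{-5\delta}$, $T^{-2\delta}$ in \eqref{AddDecNon}, which beats $T^{C\varepsilon_0^2}$ once $\varepsilon_0$ is small. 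Getting this hierarchy of norms to close consistently — boundedness of $Z$, slow growth of $S$ controlled by $Z$, and slow growth of $S^+$ yielding decay in $S$ — is the heart of the argument, and it rests crucially on the conservation law \eqref{H1NormPres} for the $Z$ norm and on the local-in-time discrete Strichartz estimate of Lemma~\ref{DiscreteStricTorLem} to handle the $p$-summation in $\mathcal R$.
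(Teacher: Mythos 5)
Your proposal is correct and follows essentially the same route as the paper: a bootstrap on $\Vert F\Vert_{X_T^+}$ in which the resonant contribution to the $Z$ norm is cancelled via \eqref{H1NormPres}, the $S$/$S^+$ growth is absorbed by the $(1+t)^{\delta}$ weights, and the asymptotic profile is built as a Cauchy sequence of solutions of \eqref{RSS} launched backward from $F(T_n)$ at times $T_n\to\infty$, with the mismatch propagated by the stability estimate \eqref{StabRSS} and beaten by the polynomial decay of the $\mathcal{E}$ errors from Proposition \ref{StrucNon}. This is precisely the structure of the paper's argument, down to the identification of \eqref{H1NormPres} and Lemma \ref{gAdmi}~$ii)$ as the decisive ingredients.
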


\subsection{Global bounds}

Before we turn to the asymptotic behavior of solutions, we need to obtain good global bounds. This is the purpose of the following:

\begin{proposition}
There exists $\varepsilon>0$ such that any initial data $u_0\in S^+$ satisfying \eqref{USmallID} generates a global solution of \eqref{CNLS}. Moreover, for any $T>0$, there holds that
\begin{equation}\label{AABA}
\Vert F(t)\Vert_{X^+_T}\le 2\varepsilon,
\end{equation}
where $F$ is defined in Theorem \ref{ModScatThm2}.
\end{proposition}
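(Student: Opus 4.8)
The proof is a standard bootstrap argument for the $X^+_T$ norm, carried out for $t\ge 0$ (the case $t\le 0$ following by time-reversal). First I would note that local well-posedness in $S^+$ follows from Lemma~\ref{warm-up} together with \eqref{CrudeEst}: the trilinear estimate gives $\|\mathcal N^t[F,F,F]\|_{S^+}\lesssim(1+|t|)^{-1}\|F\|_{S^+}^2\|F\|_S$, so a contraction argument on the Duhamel formulation $F(t)=U_0-i\int_0^t\mathcal N^\sigma[F,F,F]\,d\sigma$ produces a unique solution on some interval $[0,T_0]$ with $\|F\|_{X^+_{T_0}}\le 2\varepsilon$; the continuation criterion is precisely that the $X^+_T$ norm stays bounded. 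So it suffices to prove the a priori bound \eqref{AABA}: assuming $\|F\|_{X^+_T}\le 2\varepsilon$ (with $\varepsilon$ small) on some interval, upgrade it to $\|F\|_{X^+_T}\le\varepsilon + C\varepsilon^3 < 2\varepsilon$ (for $\varepsilon$ small), and a standard continuity/bootstrap argument then gives the global bound.

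\textbf{Key steps.} Fix $t\in[0,T]$ and decompose $[0,t]$ dyadically into intervals $[T_k/4,T_k]$ with $T_k\simeq 2^k$; on each such piece apply Proposition~\ref{StrucNon} with the weight $q_{T_k}$. This yields the decomposition $\mathcal N^\sigma[F,F,F]=\frac{\pi}{\sigma}\mathcal R[F,F,F]+\mathcal E_1^\sigma+\partial_\sigma\mathcal E_3^\sigma$ (after integrating by parts the $\mathcal E_2=\partial_\sigma\mathcal E_3$ piece) with the gains recorded in \eqref{DecNon} and \eqref{AddDecNon}.
\begin{itemize}
\item \emph{The $S$ and $S^+$ bounds on $F$.} Integrating $i\partial_\sigma F=\mathcal N^\sigma$, the contribution of $\int q_{T_k}\mathcal E_i$ is $\lesssim T_k^{2\delta}\varepsilon^3$ in $S$ and $\lesssim T_k^{5\delta}\varepsilon^3$ in $S^+$; the boundary terms from integration by parts on $\mathcal E_3$ are controlled by $T_k^{-1/10}\|\mathcal E_3\|_{S^{(+)}}$, again small. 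The main term $\frac{\pi}{\sigma}\mathcal R[F,F,F]$ is handled by Remark~\ref{ControlR} (equivalently \eqref{GrowthSRes}--\eqref{GrowthSRes2}): it is bounded by $(1+\sigma)^{-1}\|F\|_Z^2\|F\|_{S^{(+)}}+\dots$, and since $\|F\|_Z\lesssim\varepsilon$ stays bounded (this is the crucial input), a Gronwall argument in $\sigma$ gives $\|F(t)\|_S\lesssim\varepsilon(1+t)^{C\varepsilon^2}$ and $\|F(t)\|_{S^+}\lesssim\varepsilon(1+t)^{C\varepsilon^2}$. Choosing $\varepsilon$ small enough that $C\varepsilon^2<\delta$ closes the $S$ and $S^+$ parts of $X^+_T$.
\item \emph{The $\partial_t F$ bounds.} These follow directly from $\partial_t F=-i\mathcal N^t[F,F,F]$ and \eqref{CrudeEst}: $\|\partial_t F(t)\|_S\lesssim(1+t)^{-1}\|F(t)\|_S^3\lesssim(1+t)^{-1+3\delta}\varepsilon^3$, and similarly $\|\partial_t F(t)\|_{S^+}\lesssim(1+t)^{-1}\|F(t)\|_{S^+}\|F(t)\|_S^2\lesssim(1+t)^{-1+7\delta}\varepsilon^3$, which beat the weights $(1+t)^{1-3\delta}$ and $(1+t)^{1-7\delta}$ in \eqref{XNorm}.
\item \emph{The $Z$ bound (the heart of the matter).} Here the argument is \emph{not} perturbative. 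Write $i\partial_t\widehat F_p(\xi,t)=\frac{\pi}{t}\mathcal F\mathcal R[F,F,F]_p(\xi)+\mathcal F\mathcal E_1^t+\partial_t\mathcal F\mathcal E_3^t$. The error terms contribute, after dyadic summation, $\sum_k T_k^{-\delta}$ in $Z$ which is summable and $O(\varepsilon^3)$. For the resonant main term, the point is that the $Z$-norm (really the $h^1_p$-mass at each fixed $\xi$, weighted by $(1+|\xi|^2)^2$) is \emph{conserved} by the resonant flow: this is exactly \eqref{H1NormPres}/\eqref{H1Cancellation}, i.e. $\langle iR[a,a,a],a\rangle_{h^1_p}=0$. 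So testing the $F$-equation against $\widehat F_p(\xi)$ in $h^1_p$, the would-be dangerous $\frac{1}{t}$ term cancels (modulo a commutator/error from the $(1+|\xi|^2)^2$ weight that is lower order, controlled by \eqref{Trivial111}), leaving $\frac{d}{dt}\|F(t)\|_Z^2\lesssim \|\text{errors}\|_Z\|F(t)\|_Z$, which integrates to a bounded quantity. This yields $\|F(t)\|_Z\le\varepsilon+C\varepsilon^3$.
\end{itemize}

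\textbf{Main obstacle.} The delicate point is the $Z$-norm estimate, because naively the resonant term $\frac{\pi}{t}\mathcal R[F,F,F]$ is only $L^1_t$-borderline and would force logarithmic growth; the cancellation \eqref{H1NormPres} is what saves it, and one must check that the extra weight $(1+|\xi|^2)^2$ present in the $Z$-norm does not spoil this cancellation (it does not, since $\xi$ enters $\mathcal R$ only parametrically). A secondary technical issue is that Proposition~\ref{StrucNon} is stated under the normalization $\|F\|_{X_T}+\dots\le 1$ on each dyadic block, so one must rescale: apply it to $F/(2\varepsilon)$, using that all the estimates are trilinear, and track the $\varepsilon^3$ factors through the dyadic sum $\sum_k$, making sure the exponents $2\delta, 5\delta$ etc. keep the sums convergent after the final choice $C\varepsilon^2<\delta$. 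Everything else is bookkeeping with the estimates already assembled in Sections~\ref{SStrucN} and~\ref{SRS}.
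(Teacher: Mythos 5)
Your proposal is correct and follows essentially the same route as the paper: local theory plus an a priori bootstrap on $X^+_T$, the crude estimates \eqref{CrudeEst} for the $\partial_t F$ components, the decomposition of Proposition \ref{StrucNon} (summed over dyadic time blocks) together with Remark \ref{ControlR} for the slowly growing $S$ and $S^+$ components, and, crucially, the non-perturbative cancellation \eqref{H1NormPres}/\eqref{H1Cancellation} of the resonant term at each fixed $\xi$ to keep the $Z$-norm bounded. The only cosmetic differences are that the paper closes the $S$/$S^+$ bounds by direct integration of $(1+s)^{-1+\delta}$ under the bootstrap hypothesis rather than by Gronwall, and that no commutator with the $(1+|\xi|^2)^2$ weight actually arises since $\xi$ is purely parametric — a point you correctly note yourself.
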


In case $d\le 3$, global existence can be established in a much more general setting (namely $U_0\in H^1(\mathbb{R}\times\mathbb{T}^d)$ is sufficient, see \cite{IoPa}). However, for $d=4$, due to the super-critical nature of the nonlinearity, even global existence seems to require the decay analysis we perform here. Estimate \eqref{AABA} relies on the key nonperturbative \eqref{H1NormPres}.

\begin{proof}

Let $F(t)$ be as in the statement of the theorem. Local existence theory and the fact that $t\mapsto \Vert F(t)\Vert_{S^+}$ is $C^1$ are classical (see \eqref{CrudeEst}), therefore it suffices to show the a priori estimate
\begin{equation}\label{AP}
\Vert F\Vert_{X^+_T}\le \Vert U_0\Vert_{S^+}+C\Vert F\Vert_{X^+_T}^3
\end{equation}
for all $T>0$ and all $U$ solving \eqref{CNLS} such that $\Vert F\Vert_{X^+_T}\le\sqrt{\varepsilon}$.

We pick $0\le t\le T$. Clearly, when $0\le t\le 1$, by \eqref{CrudeEst}
\begin{equation*}
 \Vert F(t)-F(0)\Vert_{S^+}\lesssim \sup_{[0,t]}\Vert \partial_tF\Vert_{S^+}\lesssim \Vert F\Vert_{X^+_T}^3.
\end{equation*}
Thus in the following, we may replace $t=0$ by $t=1$.

We start by remarking that, thanks to \eqref{CrudeEst}, we have that
\begin{equation*}
\begin{split}
\Vert \partial_tF\Vert_S&=\Vert\mathcal{N}^t[F,F,F]\Vert_S\lesssim (1+\vert t\vert)^{-1}\Vert F(t)\Vert_{S}^3\\
\Vert \partial_tF\Vert_{S^+}&=\Vert\mathcal{N}^t[F,F,F]\Vert_{S^+}\lesssim (1+\vert t\vert)^{-1}\Vert F(t)\Vert_{S}^2\Vert F(t)\Vert_{S^+}\\
\end{split}
\end{equation*}
which gives the needed bound for $\partial_t F$.

\medskip

Recall the decomposition in Proposition~\ref{StrucNon}.
For each fixed $\xi$, multiplying by $\left[1+\vert p\vert^2\right]$ and taking the inner product with $\widehat{F}(\xi)$, we obtain, after using \eqref{H1NormPres} that\footnote{A key cancellation appears here in that the resonant term $\mathcal{R}$ disappears, leaving only terms that decay faster.}
\begin{equation}\label{H1Cancellation}
\frac{d}{ds}\frac{1}{2}\Vert \widehat{F}_p(\xi,s)\Vert_{h^1_p}^2=\langle\widehat{\mathcal{E}}_1(\xi,p,s),\widehat{F}_p(\xi,s)\rangle_{h^1_p\times h^1_p}+\langle\partial_s\widehat{\mathcal{E}}_3(\xi,p,s),\widehat{F}_p(\xi,s)\rangle_{h^1_p\times h^1_p}.
\end{equation}
Using \eqref{StriEst} and \eqref{DecNon} we have that, for any $\xi$,
\begin{equation*}
\begin{split}
[1+|\xi|^2]\cdot\vert \int_0^t\langle\widehat{\mathcal{E}}_1(\xi,p,s),\widehat{F}_p(\xi,s)\rangle_{h^1_p\times h^1_p}ds\vert&\lesssim \Vert F\Vert_{X_T^+}^3\int_0^t(1+\vert s\vert)^{-1-\delta}ds\cdot\sup_{[0,t]}\Vert F(s)\Vert_Z
\end{split}
\end{equation*}
and, using \eqref{CrudeEst} and \eqref{DecNon},
\begin{equation*}
\begin{split}
&[1+|\xi|^2]\left\vert \int_0^t\langle \partial_t\widehat{\mathcal{E}}_3(\xi,p,s),\widehat{F}_p(\xi,s)\rangle_{h^1_p\times h^1_p}ds\right\vert
\le [1+|\xi|^2] \left \vert \langle \widehat{\mathcal{E}}_3(\xi,p,t),\widehat{F}_p(\xi,t)\rangle_{h^1_p\times h^1_p}\right\vert\\
&\quad+[1+|\xi|^2]\left\vert\langle\widehat{ \mathcal{E}}_3(\xi,p,0), \widehat{F}_p(\xi,0)\rangle_{h^1_p\times h^1_p}\right\vert
+[1+|\xi|^2]\left\vert\int_0^t\langle\widehat{\mathcal{E}}_3(\xi,p,s),\partial_t\widehat{F}_p(\xi,s)\rangle_{h^1_p\times h^1_p}\right\vert\\
&\lesssim \Vert F\Vert_{X_T^+}^3\cdot\sup_{t\in [0,T]}\Vert F(t)\Vert_Z+\Vert F\Vert_{X_T^+}^6
\end{split}
\end{equation*}
Combining the above estimates and integrating in time, we arrive at 
\begin{equation*}
\Vert F(t)\Vert_Z\le \Vert F(0)\Vert_Z+C\Vert F\Vert_{X_T^+}^3.
\end{equation*}
Independently, using Remark \ref{ControlR} and Proposition \ref{SStrucN} we also see that, so long as $1\le t\le T$,
\begin{equation*}
\begin{split}
\Vert F(t)-F(1)\Vert_S&\lesssim \Vert \int_1^t \mathcal{R}[F(s),F(s),F(s)]\frac{ds}{s}\Vert_S+\Vert \int_1^t\left[\mathcal{E}_1(s)+\mathcal{E}_2(s)\right]ds\Vert_{S},\\
&\lesssim (1+\vert t\vert)^\delta\Vert F\Vert_{X_T^+}^3,
\end{split}
\end{equation*}
and we may proceed similarly to control the $S^+$ norm. This gives the a priori estimate and finishes the proof.

\end{proof}

\subsection{Asymptotic behavior}

We can now give the proof of  the main theorem.

\begin{proof}[Proof of Theorem \ref{ModScatThm2}]

Define $T_n=e^{n/\pi}$ and $G_n(t)=\widetilde{G}_n(\pi\ln t)$, where $\widetilde{G}_n$ solves \eqref{RSS} with Cauchy data such that $\widetilde{G}_n(n)=G_n(T_n)=F(T_n)$.
We claim that for all $t\ge T_n$,
\begin{equation}\label{BdsGN}
\begin{split}
\Vert G_n(t)\Vert_Z+(1+\vert t\vert)^{-\delta}\Vert G_n(t)\Vert_S+(1+\vert t\vert)^{-5\delta}\Vert G_n(t)\Vert_{S^+}+(1+\vert t\vert)^{1-\delta}\Vert \partial_tG_n(t)\Vert_S\lesssim\varepsilon
\end{split}
\end{equation}
uniformly in $n\ge 0$. Indeed, first, using \eqref{H1NormPres} and \eqref{AABA}, we get that
\begin{equation*}
\Vert G_n(t)\Vert_Z=\Vert\widetilde{G}_n(\pi \ln t)\Vert_Z=\Vert\widetilde{G}_n(n)\Vert_Z=\Vert F(T_n)\Vert_Z\lesssim\varepsilon
\end{equation*}
uniformly in $n$. In addition, using also Lemma \ref{DiscreteStricTorLem} and Lemma \ref{ControlSS+}, we see that, uniformly in $n$,
\begin{equation}\label{AddedNov1}
\Vert \partial_t G_n(s)\Vert_{S}\lesssim s^{-1}\Vert G_n\Vert_Z^2\Vert G_n(s)\Vert_S\lesssim \varepsilon^2s^{-1}\Vert G_n(s)\Vert_S
\end{equation}
and since by \eqref{AABA}, $\Vert G_n(T_n)\Vert_{S}\lesssim \varepsilon T_n^\delta$, an application of Gronwall's lemma gives, for $\varepsilon$ small enough,
\begin{equation*}
\Vert G_n(s)\Vert_{S}\lesssim \varepsilon s^\delta,\quad s\ge T_n
\end{equation*}
which, combined with \eqref{AddedNov1} provides control of the second and last term in \eqref{BdsGN}. We can estimate the $S^+$ norm similarly, using Remark \ref{ControlR} and the above control to get
\begin{equation*}
\Vert \partial_t G_n(s)\Vert_{S^+}\lesssim s^{-1}\varepsilon^2\Vert G_n(s)\Vert_{S^+}+\varepsilon^3s^{-1+4\delta},\qquad \Vert G_n(T_n)\Vert_{S^+}\lesssim\varepsilon T_n^{5\delta}.
\end{equation*}
This concludes the proof of \eqref{BdsGN}.

\medskip

Now we claim that, for $T_n\le t\le T_{n+4}$,
\begin{equation}\label{BdsDiffG}
\begin{split}
\Vert F(t)-G_n(t)\Vert_{S}&\lesssim \varepsilon^3T_n^{-\delta}.\\
\end{split}
\end{equation}
Indeed, using \eqref{DecNon0}, we see that
\begin{equation*}
\begin{split}
F(t)-G_n(t)&=i\int_{T_n}^t\mathcal{E}^\sigma[F,F,F]d\sigma\\
&\quad+i\int_{T_n}^t\left\{\mathcal{R}[F(\sigma),F(\sigma),F(\sigma)]-\mathcal{R}[G_n(\sigma),G_n(\sigma),G_n(\sigma)]\right\}\frac{d\sigma}{\sigma}.
\end{split}
\end{equation*}
On the one hand, using \eqref{AABA} and Proposition \ref{StrucNon}, we obtain that
\begin{equation*}
\Vert \int_{T_n}^t\mathcal{E}^\sigma[F,F,F]d\sigma\Vert_{S}\lesssim \varepsilon^3T_n^{-2\delta}.
\end{equation*}
On the other hand, letting $X(t)=\Vert F(t)-G_n(t)\Vert_{Z}$, we see using \eqref{StriEst2} and Lemma \ref{ControlSS+} that
\begin{equation*}
\begin{split}
&\Vert \int_{T_n}^t\left\{\mathcal{R}[F(\sigma),F(\sigma),F(\sigma)]-\mathcal{R}[G_n(\sigma),G_n(\sigma),G_n(\sigma)]\right\}\frac{d\sigma}{\sigma}\Vert_Z\\
&\lesssim\int_{T_n}^t\left\{\Vert F(\sigma)\Vert_Z^2+\Vert G_n(\sigma)\Vert_Z^2\right\}X(\sigma)\frac{d\sigma}{\sigma}\lesssim\varepsilon^2\int_{T_n}^tX(\sigma)\frac{d\sigma}{\sigma}
\end{split}
\end{equation*}
so that $X(t)$ is continuous and satisfies
\begin{equation*}
X(T_n)=0,\qquad X(t)\lesssim \varepsilon^3T_n^{-2\delta}+\varepsilon^2\int_{T_n}^tX(\sigma)\frac{d\sigma}{\sigma}.
\end{equation*}
An application of Gronwall's lemma gives that $X(t)\lesssim \varepsilon^3T_n^{-2\delta}$ for $T_n\le t\le T_{n+4}$. We now define $Y(t)=\Vert F(t)-G_n(t)\Vert_{S}$. Proceeding as above, we find that $Y(T_n)=0$ and
\begin{equation*}
\begin{split}
Y(t)&\lesssim \varepsilon^3T_n^{-2\delta}+\varepsilon^2\int_{T_n}^tY(\sigma)\frac{d\sigma}{\sigma}+\int_{T_n}^t\left(\Vert F(\sigma)\Vert_Z+\Vert  G_n(\sigma)\Vert_{Z}\right)\left(\Vert F(\sigma)\Vert_{S}+\Vert  G_n(\sigma)\Vert_{S}\right)X(\sigma)\frac{d\sigma}{\sigma}\\
&\lesssim \varepsilon^3T_n^{-\delta}+\varepsilon^2\int_{T_n}^tY(\sigma)\frac{d\sigma}{\sigma}.
\end{split}
\end{equation*}
An application of Gronwall's lemma yields \eqref{BdsDiffG}.

\medskip

We now deduce from this that
\begin{equation}\label{CauchySeq}
\Vert \widetilde{G}_n(0)-\widetilde{G}_{n+1}(0)\Vert_{S}\lesssim \varepsilon^3e^{-n\delta/2}.
\end{equation}
Indeed, from \eqref{BdsDiffG}, we have that
\begin{equation*}
\begin{split}
\Vert \widetilde{G}_n(n+1)-\widetilde{G}_{n+1}(n+1)\Vert_{S}\lesssim \varepsilon^3e^{-n\delta},\qquad\Vert \widetilde{G}_n\Vert_{Z}+\Vert \widetilde{G}_{n+1}\Vert_Z\lesssim \varepsilon.
\end{split}
\end{equation*}
Using Lemma \ref{gAdmi}, $ii)$ we deduce \eqref{CauchySeq} if $\varepsilon$ is small enough. As a consequence, we see that $\{\widetilde{G}_n(0)\}_n$ is a Cauchy sequence in $S$ and therefore converges to an element $G_{0,\infty}\in S$ which satisfies that
\begin{equation*}
\Vert G_{0,\infty}\Vert_Z\lesssim\varepsilon,\qquad \Vert \widetilde{G}_n(0)-G_{0,\infty}\Vert_S\lesssim \varepsilon^3e^{-n\delta/2}.
\end{equation*}
Another application of Lemma \ref{gAdmi} gives
\begin{equation*}
\begin{split}
\sup_{[0,T_{n+2}]}\Vert G_\infty(t)-G_n(t)\Vert_S\lesssim \varepsilon^3 e^{-n\delta/4}\\
\end{split}
\end{equation*}
where $G_\infty(t)=\widetilde{G}_\infty(\pi\ln t)$ with $\widetilde{G}_\infty$ the solution of \eqref{RSS} with initial data $\widetilde{G}_\infty(0)=G_{0,\infty}$.
We deduce from this and \eqref{BdsDiffG} that
\begin{equation*}
\begin{split}
\sup_{T_n\le t\le T_{n+1}}\Vert G_\infty(t)-F(t)\Vert_{S}&\le \sup_{T_n\le t\le T_{n+1}}\Vert G_\infty(t)-G_n(t)\Vert_{S}+\sup_{T_n\le t\le T_{n+1}}\Vert G_n(t)-F(t)\Vert_{S}\\
&\lesssim \varepsilon^3e^{-n\delta/4}.
\end{split}
\end{equation*}
This finishes the proof.
\end{proof}
\section{Additional estimates}\label{SAE}
\begin{lemma}\label{DiscreteStricTorLem}
Let $R$ be defined as in \eqref{RS}. For every sequences $(a^1)_p$, $(a^2)_p$, $(a^3)_p$ indexed by $\mathbb{Z}^d$, $d\le 4$, 
\begin{equation}\label{StriEst2}
\Vert R[a^1,a^2,a^3]\Vert_{l^2_p}\leq C_d\,
\min_{\tau\in\mathfrak{S}_3}\Vert a^{\tau(1)}\Vert_{l^2_p}\Vert a^{\tau(2)}\Vert_{h^1_p}\Vert a^{\tau(3)}\Vert_{h^1_p}.
\end{equation}
and consequently, for any $\sigma\geq 0$,
\begin{equation}\label{StriEst}
\Vert R[a^1,a^2,a^3]\Vert_{h^\sigma_p}\leq C_{\sigma,d}\,
\sum_{\tau\in\mathfrak{S}_3}\Vert a^{\tau(1)}\Vert_{h^\sigma_p}\Vert a^{\tau(2)}\Vert_{h^1_p}\Vert a^{\tau(3)}\Vert_{h^1_p}.
\end{equation}
\end{lemma}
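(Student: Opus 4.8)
The resonant nonlinearity is
\[
R[a^1,a^2,a^3]_p = \sum_{(p,q,r,s)\in\Gamma_0} a^1_q\overline{a^2_r}a^3_s,
\]
and by the remark after \eqref{DiscPar}, the constraint $(p,q,r,s)\in\Gamma_0$ says exactly that $\{p,q,r,s\}$ are the vertices of a (possibly degenerate) rectangle. The plan is to identify $R[a^1,a^2,a^3]$ with a product of Schr\"odinger evolutions restricted to the periodic torus, and then invoke the periodic Strichartz estimate of Bourgain. Concretely, set $A^j(y):=\sum_{p\in\mathbb{Z}^d}a^j_p e^{i\langle p,y\rangle}$ on $\mathbb{T}^d$, and observe, as in \eqref{DefHam}, that
\[
\sum_{p\in\mathbb{Z}^d}\Big(\sum_{(p,q,r,s)\in\Gamma_0}a^1_q\overline{a^2_r}a^3_s\Big)\overline{b_p}
= \int_0^{2\pi}\!\!\int_{\mathbb{T}^d} \big(e^{is\Delta_{\mathbb{T}^d}}A^1\big)\overline{\big(e^{is\Delta_{\mathbb{T}^d}}A^2\big)}\big(e^{is\Delta_{\mathbb{T}^d}}A^3\big)\,\overline{\big(e^{is\Delta_{\mathbb{T}^d}}B\big)}\,dy\,ds,
\]
because the resonance conditions $p-q+r-s=0$ and $|p|^2-|q|^2+|r|^2-|s|^2=0$ are precisely what is picked out by integrating the four linear flows in $s$ over a period. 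Thus by duality
\[
\Vert R[a^1,a^2,a^3]\Vert_{l^2_p} \lesssim \prod_{j=1}^3 \Vert e^{is\Delta_{\mathbb{T}^d}}A^j\Vert_{L^6_{s,y}([0,2\pi]\times\mathbb{T}^d)}
\]
when $d\le 2$ (where $L^6$ is the natural Strichartz exponent), and one must be more careful in the scaling for $d=3,4$.

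**The Strichartz input and the Sobolev count.** The key external ingredient is the periodic Strichartz estimate
\[
\Vert e^{is\Delta_{\mathbb{T}^d}}\phi\Vert_{L^p_{s,y}([0,2\pi]\times\mathbb{T}^d)} \lesssim \Vert \phi\Vert_{H^{\sigma(p,d)}(\mathbb{T}^d)}
\]
for appropriate $p$ and $\sigma$; this is exactly where the restriction $d\le 4$ enters, since one needs to estimate an $L^2$-based trilinear quantity, i.e. effectively $\Vert e^{is\Delta}\phi\Vert_{L^6_{s,y}}$, controlled by $H^1$. For $d=4$ this is the endpoint $L^3_tL^3_x \leftarrow \dot H^{1}$-type statement (the $L^4_{t,x}$-to-$H^{(d-2)/4}$ and its relatives), which holds up to $d=4$ in the $H^1$-subcritical/critical range but fails for $d\ge 5$ — consistent with the remark following Lemma \ref{GlobSol}. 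Tracking which two of the three inputs carry the $h^1_p$ norm and which carries only $l^2_p$ is done by a bilinear refinement: one puts the $L^2$-factor in an $L^2_{s,y}$-type slot via a bilinear Strichartz/orthogonality argument and the two remaining factors in $L^\infty$-leaning slots where $H^1(\mathbb{T}^d)\hookrightarrow$ (or the relevant Strichartz) costs one periodic derivative each. This yields \eqref{StriEst2} with the $\min$ over permutations; the symmetry over $\mathfrak{S}_3$ is automatic because relabelling $(q,r,s)$ permutes the three inputs of $R$.

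**From $\sigma=1$ to general $\sigma$.** Once \eqref{StriEst2} is in hand, \eqref{StriEst} follows by a routine fractional Leibniz (Kato–Ponce) argument on the lattice: writing $\langle p\rangle^\sigma$ and distributing it via $\langle p\rangle^\sigma \lesssim \langle q\rangle^\sigma + \langle r\rangle^\sigma + \langle s\rangle^\sigma$ on the support of $\Gamma_0$ (valid since $p=q-r+s$ forces one of $q,r,s$ to have comparable size to $p$), one lands each term back in the form covered by \eqref{StriEst2}, now with the high weight $\langle\cdot\rangle^\sigma$ on one factor and $h^1_p$ on the other two. Summing the three placements gives the stated $\sum_{\tau\in\mathfrak{S}_3}$ on the right-hand side.

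**Main obstacle.** The genuine difficulty is the periodic Strichartz estimate itself at $d=4$, i.e. controlling the $L^2$-based trilinear expression in $(s,y)\in[0,2\pi]\times\mathbb{T}^4$ by $l^2_p\times h^1_p\times h^1_p$; this is a number-theoretic lattice-point counting statement (counting representations as sums of squares on $\mathbb{Z}^4$) rather than a soft argument, and it is precisely the borderline case. One can either quote Bourgain's periodic Strichartz estimates (or, as the authors note in the remark on spheres, the analogous eigenfunction bounds of Burq–G\'erard–Tzvetkov), or re-derive the needed $L^6$-type bound for $d\le 2$ and the $L^3$/$L^4$-type bounds for $d=3,4$ by divisor-bound arguments. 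Everything else — the duality identification, the bilinear splitting, and the Leibniz step for general $\sigma$ — is straightforward once that estimate is granted.
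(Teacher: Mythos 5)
Your overall strategy is the same as the paper's: dualize, identify the resonant sum with the space--time integral $\int_{\T^d\times\T}u_1\overline{u_2}u_3\overline{u_0}$ of four free torus evolutions, estimate that integral dyadically using periodic ($L^4$-type and bilinear) Strichartz estimates, and recover general $\sigma$ by distributing $\langle p\rangle^\sigma$ over the factors. For $d\le 3$ your argument can be made to work, although the mechanism is not quite the one you describe: the two $h^1$ factors are not placed in ``$L^\infty$-leaning slots'' ($H^1(\T^d)\not\hookrightarrow L^\infty$ for $d\ge 2$, and the $l^2$-only factors cannot be put in $L^6_{s,y}$ without derivative loss). What actually happens, as in the paper, is a Cauchy--Schwarz pairing of the dual factor with one input and of the remaining two inputs with each other, followed by the bilinear refinement $\Vert (P_{N_1}u)(P_{N_2}v)\Vert_{L^2_{y,t}}\lesssim N_2^{2s(d)}\Vert\phi_1\Vert_{L^2}\Vert\phi_2\Vert_{L^2}$ (obtained from Bourgain's $L^4$ estimate plus Galilean invariance); the factor $(N_2N_3)^{-1}$ coming from the two $h^1$ norms then pays for the loss $(N_2N_3)^{2s(d)}$, and the dyadic sum converges precisely because $2s(d)<1$ for $d\le 3$.

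The genuine gap is at $d=4$, which you flag as the main obstacle but do not actually close. There $s(4)=\tfrac12$, so $2s(4)=1$ and the dyadic sum $\sum_{N_3\le N_2\le N_0}(N_2N_3)^{-1}(N_2N_3)^{2s(4)}$ diverges logarithmically in both $N_2$ and $N_3$: the classical 1993 Bourgain $L^4$ estimate and divisor-counting arguments you propose to quote or re-derive give exactly this borderline and no more. Closing the case $d=4$ requires a strictly stronger input, namely Bourgain's 2013 improvement $\Vert P_Ne^{it\Delta_{\T^4}}\phi\Vert_{L^q_{y,t}}\lesssim N^{2-6/q}\Vert\phi\Vert_{L^2}$ for $q>\tfrac72$, interpolated with an $L^\infty$ bound on rectangles to produce a refined bilinear estimate carrying an extra factor $\bigl(\tfrac{N_2}{N_1}+\tfrac{1}{N_2}\bigr)^{\delta}$ (as in Herr--Tataru--Tzvetkov); it is this $\delta$-gain, absent from the classical estimates, that restores summability. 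Your assertion that the needed endpoint ``holds up to $d=4$'' is therefore not justified by the tools you cite, and this is exactly the point the lemma's restriction $d\le 4$ hinges on. The remaining steps (the duality identity, the permutation symmetry, and the Leibniz reduction from $\sigma=1$ to general $\sigma$) are fine.
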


\begin{proof}[Proof of Lemma \ref{DiscreteStricTorLem}]

One can deduce \eqref{StriEst} from \eqref{StriEst2} in a way similar to Lemma \ref{ControlSS+}.

By duality, we need to prove that
\begin{equation}\label{aimt1}
\Big|\sum_{\substack{p_0+p_2=p_1+p_3\\\vert p_0\vert^2+\vert p_2\vert^2=\vert p_1\vert^2+\vert p_3\vert^2}}a^0_{p_0}a^1_{p_1}a^2_{p_2}a^3_{p_3}
\Big|
\lesssim \Vert a^0\Vert_{l^2_p}\min_{\tau\in\mathfrak{S}_3}\Vert a^{\tau(1)}\Vert_{l^2_p}\Vert a^{\tau(2)}\Vert_{h^1_p}\Vert a^{\tau(3)}\Vert_{h^1_p}.
\end{equation}
We will reduce \eqref{aimt1} to a bound on free solutions on the torus $\T^d$.
Indeed, if we set  
\begin{equation*}
\phi_j(y)=\sum_{p\in\Z^d} \widetilde{a}^j_p e^{ip\cdot y}: \T^d \to \C,\quad j=0,1,2,3,
\end{equation*}
with $\widetilde a^j=a^j$ if $j=1,3$ and $\widetilde a_j=\overline{a^j}$ for $j=0,2$, then we have the identity 
\begin{equation*}
\sum_{\substack{p_0+p_2=p_1+p_3\\\vert p_0\vert^2+\vert p_2\vert^2=\vert p_1\vert^2+\vert p_3\vert^2}}a^0_{p_0}a^1_{p_1}a^2_{p_2}a^3_{p_3}=
\int_{\T_y^d\times \T_t}u_1(y,t)\overline{u_{2}(y,t)}u_3(y,t)\overline{u_{0}(y,t)}\,dydt\,,
\end{equation*}
where $u_{j}(y,t)=e^{it\Delta_{\T^d}}(\phi_j(y))$, $j=0,1,2,3$.
Therefore \eqref{aimt1} follows from 
\begin{equation}\label{tuto}
\Big\vert\int_{\T_y^d\times \T_t}
\prod_{j=0}^3 \widetilde{u}_j(y,t)\, dy \,dt
\Big\vert\lesssim\Vert\phi_0\Vert_{L^2_y} \min_{\tau\in\mathfrak{S}_3}\
\|\phi_{\tau(1)}\|_{L^2_y}\|\phi_{\tau(2)}\|_{H^1_y}\|\phi_{\tau(3)}\|_{H^1_y},
\end{equation}
where $L^2_y$ and $H^1_y$ denote the corresponding Sobolev norms on $\T^d$ and $\widetilde{u}_j\in\{u_j,\overline{u_j}\}$.
Estimate \eqref{tuto} follows from the analysis in \cite{Bo2,Bourgain2013,HeTaTz2} as we explain below.
By a slight abuse of notation, inside this proof, we denote again by $P_N$ the Littlewood-Paley projector on dyadic scales for functions on the torus $\T^d$. 
By simple renormalization and symmetry arguments, the estimate \eqref{tuto} can be reduced to
\begin{equation}\label{aimt12}
\sum_{\substack{N_0 \lesssim N_1\\N_3\leq N_2 \leq N_1}}
(N_2N_3)^{-1}\Big|\int_{\T^{d+1}} P_{N_0}\widetilde{u_{0}} P_{N_1}\widetilde{u_{1}}P_{N_2}\widetilde{u_{2}}P_{N_3}\widetilde{u_{3}}\Big|
\lesssim
\prod_{j=0}^{3}\|\phi_j\|_{L^2_y}.
\end{equation}

At this stage, we invoke the classical $L^4$ Strichartz estimates by Bourgain \cite{Bo2},
\begin{equation}\label{classical}
\|P_N e^{it\Delta_{\T^d}}\phi\|_{L^4_{y,t}(\T^{d+1})}\lesssim N^{s(d)}\|\phi\|_{L^2_y},
\end{equation}
where $s(1)=0$, $s(d)=\frac{d-2}{4}+\varepsilon$ for every $\varepsilon>0$ when $d=2,3$ and $s(4)=\frac{d-2}{4}=\frac{1}{2}$, when $d=4$.
Using the Galilean invariance of the Schr\"odinger equation (see e.g. \cite[page 338]{HeTaTz}) one deduces from \eqref{classical} the bound
\begin{equation}\label{Gali}
\|P_C e^{it\Delta_{\T^d}}\phi\|_{L^4_{y,t}(\T^{d+1})}\lesssim N^{s(d)}\|\phi\|_{L^2_y},
\end{equation}
where $C$ is a cube of $\Z^d$ with side length $N\geq 1$ and $P_C$ is the corresponding Fourier projector operator. Using \eqref{Gali} one gets a bilinear refinement of \eqref{classical},
\begin{equation}\label{bili}
\|(P_{N_1} e^{it\Delta_{\T^d}}\phi_1)
(P_{N_2} e^{it\Delta_{\T^d}}\phi_2)
\|_{L^2_{y,t}(\T^{d+1})}\lesssim N_2^{2s(d)}\|\phi_1\|_{L^2_y}\|\phi_2\|_{L^2_y}\,,
\end{equation}
where $N_2\leq N_1$. Indeed to get \eqref{bili}, it suffices to decompose the dyadic ring of size $N_1$ into cubes of size $N_2$, to use an orthogonality
argument in the spatial variable and to invoke \eqref{Gali}. Now, we estimate the left hand-side of \eqref{aimt12}, by using the Cauchy-Schwarz inequality (pairing $P_{N_0}u_{0} P_{N_2}u_{2}$ and  $P_{N_1}u_{1} P_{N_3}u_{3}$) in two ways depending on whether $N_2\leq N_0$ or not
and by invoking \eqref{bili}, as follows
\begin{equation}\label{sum-up}
\sum_{\substack{N_0 \sim N_1\\N_3\leq N_2 \leq N_0}}(N_2N_3)^{-1}(N_2N_3)^{2s(d)}\prod_{j=0}^{3}\|P_{N_j}\phi_j\|_{L^2_y}.\\
\end{equation}
Since for $d=1,2,3$, we have $2s(d)<1$ the expression \eqref{sum-up} sums properly. This ends the proof for $d=1,2,3$.

For $d=4$ the above argument does not suffice to conclude because of a lack of summability in $N_2$ and $N_3$. This causes a significant difficulty which
may be resolved by using the more recent works  \cite{Bourgain2013} and \cite{HeTaTz2} as we now explain.
In \cite{Bourgain2013} the $4$-dimensional estimate \eqref{classical} is improved to
\begin{equation}\label{new-classical}
\|P_N e^{it\Delta_{\T^4}}\phi\|_{L^q_{y,t}(\T^{4+1})}\lesssim N^{2-\frac{6}{q}}\|\phi\|_{L^2_y},\quad q>\frac{7}{2}\,.
\end{equation}
Observe that for $d=4$, the bound \eqref{classical} follows from \eqref{new-classical} via an interpolation with the elementary $L^{\infty}$ bound
\begin{equation}\label{elem}
\|P_N e^{it\Delta_{\T^4}}\phi\|_{L^\infty_{y,t}(\T^{4+1})}\lesssim N^{2}\|\phi\|_{L^2_y}\,.
\end{equation}
With \eqref{new-classical} in hand, we can substitute \eqref{Gali} by the more refined bound  
\begin{equation}\label{Gali-refine}
\|P_C e^{it\Delta_{\T^d}}\phi\|_{L^4_{y,t}(\T^{4+1})}\lesssim N^{\frac{1}{2}}\left(\frac{M}{N}\right)^\delta  \|\phi\|_{L^2_y}\, ,
\end{equation}
for a suitable $\delta>0$, where now $C$ is a ``rectangle" of the form 
$$
C=\{
n\in\Z^4\,:\,
|n-n_0|\leq N,\, 
|a\cdot n-c_0|\leq M
\}
$$
for some $n_0, c_0\in\R^4$ and $a\in\R^4$, $|a|=1$.
The proof of \eqref{Gali-refine} follows by an interpolation between \eqref{new-classical} and an $L^\infty$ bound of type \eqref{elem}
(eventhough elementary, the $L^\infty$ bound is sensitive to the size of $C$ which is crucial for getting the improvement \eqref{Gali-refine}).
Using \eqref{Gali-refine}, we may invoke \cite[Proposition~2.8]{HeTaTz2}, to get the following improvement of \eqref{bili} for $d=4$:
\begin{equation}\label{bili-mieux}
\|(P_{N_1} e^{it\Delta_{\T^4}}\phi_1)(P_{N_2} e^{it\Delta_{\T^4}}\phi_2)\|_{L^2_{y,t}(\T^{d+1})}
\lesssim N_2\Big(\frac{N_2}{N_1}+\frac{1}{N_2}\Big)^\delta\|\phi_1\|_{L^2_y}\|\phi_2\|_{L^2_y}\,,
\end{equation}
for some $\delta>0$, where again $N_2\leq N_1$.
Compared to the proof of \eqref{bili}, the proof of \eqref{bili-mieux} uses an additional almost orthogonality argument in the time variable via an application of
\eqref{Gali-refine} with $M=\max(1,N_1^2/N_2)$ (and $N=N_1$). 
Using \eqref{bili-mieux}, we replace \eqref{sum-up} (for $d=4$) by
\begin{equation*}
\sum_{\substack{N_1 \sim N_0\\N_3\leq N_2 \leq N_0}}\Big(\frac{N_2}{N_0}+\frac{1}{N_2}\Big)^\delta\Big(\frac{N_3}{N_1}+\frac{1}{N_3}\Big)^\delta
\prod_{j=0}^{3}\|P_{N_j}\phi_j\|_{L^2_y}.
\end{equation*}
This expression now sums properly. This completes the proof of Lemma~\ref{DiscreteStricTorLem}.
\end{proof}
\medskip
Next, we recall the one dimensional bilinear Strichartz estimates.
\begin{lemma}\label{1dBE}
Assume that $\lambda\geq 10 \mu\geq 1$ and that $u(t)=e^{it\partial_{xx}}u_0$, $v(t)=e^{it\partial_{xx}}v_0$.
Then, we have the bound
\begin{equation}\label{BS}
\Vert
Q_{\lambda}u\overline{Q_{\mu} v}
\Vert_{L^2_{x,t}(\mathbb{R}\times\mathbb{R})}\lesssim \lambda^{-\frac{1}{2}}\Vert u_0\Vert_{L^2_x(\mathbb{R})}\Vert v_0\Vert_{L^2_x(\mathbb{R})}.
\end{equation}
\end{lemma}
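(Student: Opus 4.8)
The plan is to prove \eqref{BS} by a direct computation of the space-time Fourier transform of the product, exploiting the transversality of the two pieces of paraboloid cut out by the frequency supports. Since the Littlewood--Paley projectors commute with the free evolution, write $Q_\lambda u=e^{it\partial_{xx}}(Q_\lambda u_0)$ and $Q_\mu v=e^{it\partial_{xx}}(Q_\mu v_0)$, and set $f:=\widehat{Q_\lambda u_0}$, $g:=\widehat{Q_\mu v_0}$, so that $f$ is supported in $\lambda/2\le|\xi|\le 2\lambda$ and $g$ in $\mu/2\le|\eta|\le2\mu$. Then
$$
(Q_\lambda u)(x,t)\,\overline{(Q_\mu v)(x,t)}=\int_{\R^2}e^{ix(\xi-\eta)}e^{-it(\xi^2-\eta^2)}f(\xi)\overline{g(\eta)}\,d\xi\,d\eta,
$$
so that the $(x,t)$-Fourier transform of the left-hand side is carried by the set $\{(\zeta,\tau)=(\xi-\eta,\,\xi^2-\eta^2)\}$.

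The next step is the change of variables $(\xi,\eta)\mapsto(\zeta,\tau):=(\xi-\eta,\,\xi^2-\eta^2)$ restricted to the region $\lambda/2\le|\xi|\le2\lambda$, $\mu/2\le|\eta|\le2\mu$. Its Jacobian determinant is $\det\bigl(\begin{smallmatrix}1&-1\\2\xi&-2\eta\end{smallmatrix}\bigr)=2(\xi-\eta)=2\zeta$, which is bounded below in absolute value by $2(|\xi|-|\eta|)\ge 2(\lambda/2-2\mu)\gtrsim\lambda\ge1$ thanks to the separation hypothesis $\lambda\ge10\mu$; in particular the map is injective on this region. By Plancherel in $(x,t)$ and then changing variables back (picking up the factor $2|\zeta|=2|\xi-\eta|$),
$$
\bigl\|Q_\lambda u\,\overline{Q_\mu v}\bigr\|_{L^2_{x,t}}^2\lesssim\int_{\R^2}\frac{|f(\xi(\zeta,\tau))|^2|g(\eta(\zeta,\tau))|^2}{|\zeta|^2}\,d\zeta\,d\tau=\int_{\R^2}\frac{|f(\xi)|^2|g(\eta)|^2}{|\xi-\eta|}\,d\xi\,d\eta.
$$

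Finally, on the support of the integrand one has $|\xi-\eta|\gtrsim\lambda$, so the right-hand side is $\lesssim\lambda^{-1}\|f\|_{L^2}^2\|g\|_{L^2}^2=\lambda^{-1}\|Q_\lambda u_0\|_{L^2}^2\|Q_\mu v_0\|_{L^2}^2\le\lambda^{-1}\|u_0\|_{L^2}^2\|v_0\|_{L^2}^2$, which is \eqref{BS}. There is no genuine obstacle; the only point requiring care is justifying the change of variables, i.e.\ that $(\xi,\eta)\mapsto(\zeta,\tau)$ is a bijection with non-vanishing Jacobian on the relevant frequency support, and this is precisely where the gap condition $\lambda\ge10\mu$ enters. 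This is the classical one-dimensional bilinear Strichartz estimate; see also \cite{CKSTTSIMA}.
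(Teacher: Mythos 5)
Your argument is correct, and it is the standard proof of the one-dimensional bilinear refinement of Strichartz: the paper itself does not prove Lemma \ref{1dBE} but simply cites \cite{CKSTTSIMA}, and your Plancherel-plus-change-of-variables computation is essentially the argument found there (going back to Bourgain's bilinear refinements). The key points are all in place: the space--time Fourier support lies on $\{(\zeta,\tau)=(\xi-\eta,\xi^2-\eta^2)\}$, the Jacobian $2(\xi-\eta)$ is of size $\gtrsim\lambda$ on the support because $|\xi|\ge\lambda/2$ and $|\eta|\le2\mu\le\lambda/5$, and this produces exactly the factor $\lambda^{-1}$ after squaring. Two small remarks. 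First, non-vanishing of the Jacobian only gives local injectivity in general; here you should note that the map is globally invertible on $\{\xi\neq\eta\}$ because $\xi+\eta=\tau/\zeta$, so $\xi=\tfrac12(\zeta+\tau/\zeta)$ and $\eta=\tfrac12(\tau/\zeta-\zeta)$ are recovered explicitly. Second, with the paper's conventions the lowest block $Q_1$ need not be supported in an annulus, so the claim that $g$ lives in $\mu/2\le|\eta|\le2\mu$ can fail at $\mu=1$; this is harmless since your separation estimate only uses the upper bound $|\eta|\le2\mu$ together with $|\xi|\ge\lambda/2$.
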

We refer to \cite{CKSTTSIMA} for the proof of Lemma~\ref{1dBE} (see also \cite{BoIMRN} for the earlier higher dimensional analogue of \eqref{BS} and
\cite{SGuo} for recent closely related estimates).
\begin{lemma}\label{DispLem}
Assume that $N\geq 7$. Then we have the bound
\begin{equation}\label{OptDecay}
\sup_{x\in\mathbb{R}}\sum_{p\in\mathbb{Z}^d}\left[1+\vert p\vert^2\right]\vert e^{it\partial_{xx}}F_p(x)\vert^2\lesssim 
\langle t \rangle^{-1}
\Big(
\|F\|_{Z}^2+\langle t\rangle^{-\frac{1}{4}}\big(\|x F\|_{L^2}^2+\|F\|_{H^N}^2\big)
\Big).
\end{equation}
\end{lemma}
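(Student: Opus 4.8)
\textbf{Proof plan for Lemma \ref{DispLem}.}

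The plan is to reduce the estimate to a one-dimensional dispersive bound applied mode-by-mode in $p$, being careful about how the $\langle p\rangle$ weight interacts with the sum, and then to split the $x$-frequency of each $F_p$ into ``low'' and ``high'' pieces relative to a threshold depending on $t$. First I would recall the sharp one-dimensional pointwise bound for the free Schr\"odinger group, in the sharp weighted form already used in the paper (cf.\ \eqref{basic-bound-corr}): for any $g:\mathbb{R}\to\mathbb{C}$ and $|t|\ge 1$,
\[
\|e^{it\partial_{xx}}g\|_{L^\infty_x}\lesssim |t|^{-\frac12}\|g\|_{L^1_x}\lesssim |t|^{-\frac12}\|g\|_{L^2_x}^{\frac12}\|xg\|_{L^2_x}^{\frac12}.
\]
Applying this naively to $g=F_p$, squaring, multiplying by $\langle p\rangle^2$ and summing over $p$ would produce $\langle t\rangle^{-1}\big(\sum_p\langle p\rangle^2\|F_p\|_{L^2}\|xF_p\|_{L^2}\big)$, which is controlled by $\|F\|_{H^N}\|xF\|_{H^{N}}$-type quantities but is \emph{not} bounded by $\|F\|_Z^2$. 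The point of the $Z$ norm is precisely that $\sup_\xi\langle\xi\rangle^2\|\widehat F_p(\xi)\|_{h^1_p}$ is controlled, so the correct strategy is to get the main term $\langle t\rangle^{-1}\|F\|_Z^2$ from a stationary-phase/$L^\infty$-of-the-Fourier-transform argument rather than from the crude $L^1\to L^\infty$ bound.

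Concretely, I would write, for fixed $x$ and $t$ with $|t|\ge 1$,
\[
e^{it\partial_{xx}}F_p(x)=\frac{1}{2\pi}\int_{\mathbb{R}}e^{ix\xi-it\xi^2}\widehat F_p(\xi)\,d\xi,
\]
and perform the change of variables/completion of the square centered at the stationary point $\xi_\ast=x/(2t)$. The leading contribution is the stationary phase term $c\,t^{-1/2}e^{ix^2/(4t)}\widehat F_p(x/(2t))$, whose modulus squared, after multiplying by $\langle p\rangle^2$ and summing in $p$, is bounded by $t^{-1}\sup_\xi\|\widehat F_p(\xi)\|_{h^1_p}^2\le t^{-1}\|F\|_Z^2$ (indeed one even has the extra $\langle\xi\rangle^4$ from the $Z$ norm to spare). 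The error term in stationary phase — coming from integration by parts away from $\xi_\ast$, or equivalently from the standard $O(t^{-3/4})$ remainder after one more integration by parts paying a derivative of $\widehat F_p$ — contributes a factor $\langle t\rangle^{-1}\cdot\langle t\rangle^{-1/4}$ times $\sum_p\langle p\rangle^2(\|\widehat F_p\|_{L^1_\xi}^2+\|\partial_\xi\widehat F_p\|_{L^1_\xi}^2+\dots)$, and by Cauchy--Schwarz in $\xi$ with an $N\ge 7$ weight, together with $\partial_\xi\widehat F_p=-i\widehat{xF_p}$ and $\langle p\rangle^2\le\langle p\rangle^{2N}$-type bounds absorbed into $H^N$, this is $\lesssim\langle t\rangle^{-5/4}(\|xF\|_{L^2}^2+\|F\|_{H^N}^2)$. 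For $|t|\le 1$ the claimed bound is just the Sobolev embedding $\sum_p\langle p\rangle^2\|F_p\|_{L^\infty_x}^2\lesssim\sum_p\langle p\rangle^2\|F_p\|_{H^1_x}^2\lesssim\|F\|^2_{H^{N}}$, using $N\ge 7>d/2+1$ to sum in $p$ (or more simply absorbing $\langle p\rangle$ weights as in the warm-up lemma), so that term sits inside the $\langle t\rangle^{-1}\langle t\rangle^{-1/4}(\cdots)$ piece since $\langle t\rangle\sim 1$ there.

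The main obstacle — and the only genuinely delicate point — is organizing the stationary phase remainder so that the loss is exactly $\langle t\rangle^{-1/4}$ and not worse, and so that the derivative hitting $\widehat F_p$ lands on quantities controlled by $\|xF\|_{L^2}$ and $\|F\|_{H^N}$ uniformly in $p$ after the $h^1_p$ summation. The cleanest way to handle this, and the way I would actually write it, is to not invoke stationary phase abstractly but instead split $\widehat F_p=\widehat{F_p^{low}}+\widehat{F_p^{high}}$ where $F_p^{low}$ has $x$-frequency $\lesssim\langle t\rangle^{1/8}$: on the low piece one uses the dispersive decay and the fact that $|\widehat F_p|$ is essentially evaluated near $\xi_\ast$ with the $Z$-norm controlling its size, paying only a harmless $\langle t\rangle^{\varepsilon}$; on the high piece, one has $\langle t\rangle^{1/8}$ worth of derivatives to trade, which upon comparison with $\|F\|_{H^N}$, $N\ge 7$, yields the gain $\langle t\rangle^{-1/4}$ with room to spare. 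Summation in $p$ throughout is done exactly as in \eqref{basic-bound-corr}--\eqref{sum_in_p}, i.e.\ by inserting and removing an $\langle p\rangle^{-\alpha}$ factor with $\alpha>d$ (legitimate since $N\ge 7> d+1$ for $d\le 4$). Assembling the low- and high-frequency contributions gives exactly the right-hand side of \eqref{OptDecay}.
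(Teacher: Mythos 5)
Your overall strategy is the right one and matches the paper's in spirit: the leading contribution must come from the stationary phase asymptotic $e^{it\partial_{xx}}F_p(x)\approx c\,t^{-1/2}e^{-ix^2/4t}\widehat{F}_p(-x/2t)$, whose square is summed against $\langle p\rangle^2$ using the $Z$ norm, and the remainder must be beaten down using the extra $\langle t\rangle^{-1/4}$ allowed on the $S$-type quantities. The paper indeed proves exactly the per-mode estimate $\vert e^{it\partial_{xx}}f(x)-c\,t^{-1/2}e^{-ix^2/4t}\widehat{f}(-x/2t)\vert\lesssim t^{-3/4}\Vert xf\Vert_{L^2}$, by a dyadic decomposition around the stationary point that balances a crude bound $2^{3l/2}\Vert\partial_\xi\widehat f\Vert_{L^2}$ against an integration-by-parts bound $t^{-1}2^{-l/2}\Vert\partial_\xi\widehat f\Vert_{L^2}$; note that this gives a remainder involving \emph{only} $\Vert xf\Vert_{L^2}$, not $\Vert\partial_\xi\widehat f\Vert_{L^1_\xi}$ or $\Vert xf\Vert_{H^1}$ as in your first formulation (your ``one more integration by parts'' is singular at the stationary point and the $L^1_\xi$ norm of $\partial_\xi\widehat F_p$ costs an extra $x$-derivative or weight that $S$ does not control).

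The genuine gap is in how you sum the remainder over $p$. You split in the $x$-frequency $\xi$ at $\langle t\rangle^{1/8}$, but the obstruction is the weight $\langle p\rangle^2$ in the \emph{periodic} frequency: the remainder contributes $t\cdot t^{-3/2}\sum_p\langle p\rangle^2\Vert xF_p\Vert_{L^2_x}^2$, and the quantity $\sum_p\langle p\rangle^2\Vert xF_p\Vert_{L^2_x}^2$ is \emph{not} bounded by $\Vert xF\Vert_{L^2}^2+\Vert F\Vert_{H^N}^2$ (take $F_p(x)=c_p\phi(x/L_p)$ concentrated at a single large $|p|=P$ with $L_P\sim P^N$, $c_P^2L_P^3=1$: then $\Vert xF\Vert_{L^2}$ and $\Vert F\Vert_{H^N}$ stay bounded while $\sum_p\langle p\rangle^2\Vert xF_p\Vert^2\sim P^2\to\infty$). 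The ``insert $\langle p\rangle^{-\alpha}$'' trick from \eqref{sum_in_p} does not help here since you would need to put extra $\langle p\rangle$ weights on $xF_p$, which is an $S^+$-type quantity. The paper's fix is to split in $p$ rather than in $\xi$: for $|p|\le t^{1/8}$ one has $\langle p\rangle^2\le t^{1/4}$, so the remainder sum is $\le t^{-1/2}\cdot t^{1/4}\Vert xF\Vert_{L^2}^2=t^{-1/4}\Vert xF\Vert_{L^2}^2$ as required, while for $|p|\ge t^{1/8}$ one trades $N-2$ powers of $|p|$ (not of $\xi$) for decay, giving $t^{1-(N-2)/4}\Vert F\Vert_{H^N}^2\le t^{-1/4}\Vert F\Vert_{H^N}^2$ for $N\ge7$. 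Your high-$\xi$ piece is fine, but without the restriction $|p|\lesssim t^{1/8}$ on the stationary-phase remainder the argument does not close; you should replace (or supplement) your $\xi$-frequency split by this split in $p$.
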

\begin{proof}
It suffices to prove the statement for $t\geq 1$, for $|t|\leq 1$, it simply follows from the Sobolev embedding, and for $t\leq -1$, it follows by symmetry.
We first claim that there exists a constant $c$ such that
\begin{equation}\label{ClaimDec1}
\vert e^{it\partial_{xx}}f(x)-c\frac{e^{-i\frac{x^2}{4t}}}{\sqrt{t}}\widehat{f}(-\frac{x}{2t})\vert\lesssim t^{-\frac{3}{4}}\Vert xf\Vert_{L^2}\,.
\end{equation}
Indeed, one can write
\begin{equation*}
e^{it\partial_{xx}}f(x)
=e^{-i\frac{x^2}{4t}}\int_{\mathbb{R}}e^{it\eta^2}\widehat{f}(\eta-\frac{x}{2t})d\eta
=e^{-i\frac{x^2}{4t}}\Big(\sum_{l=- 1}^{-\infty}I_l(x,t)+ I(x,t)\Big),
\end{equation*}
where
$$
I_l(x,t):=\int_{\mathbb{R}}e^{it\eta^2}\phi(2^{-l}\eta)\widehat{f}(\eta-\frac{x}{2t})d\eta,\quad
I(x,t):=\int_{\mathbb{R}}e^{it\eta^2}\tilde{\phi}(\eta)\widehat{f}(\eta-\frac{x}{2t})d\eta,
$$
for suitable bump functions $\phi$ and $\tilde{\phi}$ such that the support of $\phi$ does not meet zero.
By a crude estimate, we first get that
\begin{equation}\label{SP1E1}
\vert I_{l}(x,t)-\widehat{f}(-\frac{x}{2t})\int_{\mathbb{R}}e^{it\eta^2}\phi(2^{-l}\eta)d\eta\vert\lesssim  2^\frac{3l}{2} \Vert\partial_\xi\widehat{f}\Vert_{L^2}.
\end{equation}
On the other hand, an integration by parts gives that
\begin{equation*}
I_l(x,t)-\widehat{f}(-\frac{x}{2t})
\int_{\mathbb{R}}e^{it\eta^2}
\phi(2^{-l}\eta)d\eta=
\frac{1}{2it}\int_{\mathbb{R}}e^{it\eta^2}\partial_\eta\left[\frac{1}{\eta}\phi(2^{-l}\eta)
\big(\widehat{f}(\eta-\frac{x}{2t})-\widehat{f}(-\frac{x}{2t})\big)\right]d\eta.
\end{equation*}
Therefore
\begin{equation}\label{SPIPP}
\Big|
I_l(x,t)-\widehat{f}(-\frac{x}{2t})
\int_{\mathbb{R}}e^{it\eta^2}
\phi(2^{-l}\eta)d\eta
\Big|
\lesssim t^{-1}2^{-\frac{l}{2}}
\Vert \partial_\xi\widehat{f}\Vert_{L^2}.
\end{equation}
One also gets a similar bound for $I(x,t)$ (with $l=0$).
Since 
$
\int_{\mathbb{R}}e^{it\eta^2}
d\eta=ct^{-1/2},
$
using \eqref{SP1E1} for $l\le -\frac{1}{2}\log_{2} t$ and \eqref{SPIPP} otherwise, summing over $l$, we recover \eqref{ClaimDec1}. 

Now, we deduce that
\begin{equation*}
t\sum_{\substack{p\in\mathbb{Z}^d\\|p|\le  t^{1/8}}}\left[1+\vert p\vert^2\right]\vert e^{it\partial_{xx}}F_p(x)\vert^2
\lesssim\sum_{p\in\mathbb{Z}^d}\left[1+\vert p\vert^2\right]\vert\widehat{F}_p(-\frac{x}{2t})\vert^2
+t^{-\frac{1}{2}}\sum_{\substack{p\in\mathbb{Z}^d\\|p|\le  t^{1/8}}}[1+|p|^2]\Vert x F_p\Vert_{L^2}^2 .
\end{equation*}
On the other hand, we also have that
\begin{equation*}
t\sum_{|p|\geq t^{1/8}}\left[1+\vert p\vert^2\right]\vert e^{it\partial_{xx}}F_p(x)\vert^2\lesssim t^{1-\frac{N-2}{4}}
\sum_{p\in \Z^d}(1+|p|^2)^{N-1}\|F_p\|_{H^1}^2\lesssim t^{-1/4}\|F\|_{H^N}^2
\end{equation*}
provided that $N \geq 7$. This finishes the proof of Lemma~\ref{DispLem}.
\end{proof}

\bigskip
We now turn to our basic lemma allowing to transform suitable $L^2_{x,y}$ bounds to bounds in terms of the $L^2_{x,y}$-based spaces $S$ and $S^+$.
We define an \emph{LP-family} $\widetilde{Q}=\{\widetilde{Q}_A\}_A$ to be a family of operators (indexed by the dyadic integers) of the form
\begin{equation*}
\widehat{\widetilde{Q}_1f}(\xi)=\widetilde{\varphi}(\xi)\widehat{f}(\xi),\qquad \widehat{\widetilde{Q}_Af}(\xi)=\widetilde{\phi}(\frac{\xi}{A})\widehat{f}(\xi), \quad A\ge 2
\end{equation*}
for two smooth functions $\widetilde{\varphi},\widetilde{\phi}\in C^\infty_c(\mathbb{R})$ with $\widetilde{\phi}\equiv0$ in a neighborhood of $0$.

We define the set of \emph{admissible transformations} to be the family of operators $\{T_B\}$ where for any $B$,
\begin{equation*}
T_B=\lambda_B\widetilde{Q}_B,\qquad \vert\lambda_B\vert\le 1
\end{equation*}
for some LP-family $\widetilde{Q}$. Given an trilinear operator 
$\mathfrak{T}$ and a set $\Lambda$ of 4-tuples of dyadic integers, we define an \emph{admissible realization} of $\mathfrak{T}$ at $\Lambda$ to be an operator of the form
\begin{equation*}
\mathfrak{T}_\Lambda[F,G,H]=\sum_{(A,B,C,D)\in\Lambda}T_D\mathfrak{T}[T^\prime_AF,T^{\prime\prime}_BG,T^{\prime\prime\prime}_CH]
\end{equation*}
for admissible transformations $T$, $T^\prime$, $T^{\prime\prime}$, $T^{\prime\prime\prime}$.

A norm $\mathcal{B}$ is called admissible if for any admissible transformation $T=\{T_A\}_A$, there holds that
\begin{equation}\label{OpComp}
\Vert \sum_A T_AF\Vert_{\mathcal{B}}\lesssim \Vert F\Vert_{\mathcal{B}}.
\end{equation}
We note that all norms that we consider are admissible.
\begin{lemma}\label{ControlSS+}
Assume that a trilinear operator $\mathfrak{T}$ satisfies
\begin{equation}\label{LeibnitzRule}
\begin{split}
Z\mathfrak{T}[F,G,H]= \mathfrak{T}[ZF,G,H]+\mathfrak{T}[F,ZG,H]+\mathfrak{T}[F,G,ZH],
\end{split}
\end{equation}
for $Z\in\{x,\partial_x,\partial_{y_1},\dots,\partial_{y_d}\}$ and let $\Lambda$ be a set of $4$-tuples of dyadic integers. With the notation introduced above, assume also that for all admissible realizations of $\mathfrak{T}$ at $\Lambda$,
\begin{equation}\label{L2AssFL}
\Vert \mathfrak{T}_\Lambda[F^a,F^b,F^c]\Vert_{L^2}\le K\min_{\sigma\in\mathfrak{S}_3}\Vert F^{\sigma(a)}\Vert_{L^2}\Vert F^{\sigma(b)}\Vert_{\mathcal{B}}\Vert F^{\sigma(c)}\Vert_{\mathcal{B}}
\end{equation}
for some admissible norm $\mathcal{B}$ such that the Littlewood-Paley projectors $P_{\le M}$ (both in $x$ and in $y$) are uniformly bounded on $\mathcal{B}$. Then, for all admissible realizations of $\mathfrak{T}$ at $\Lambda$,
\begin{equation}\label{ControlSSS}
\begin{split}
\Vert \mathfrak{T}_\Lambda[F^a,F^b,F^c]\Vert_{S}&\lesssim K\max_{\sigma\in\mathfrak{S}_3}\Vert F^{\sigma(a)}\Vert_{S}\Vert F^{\sigma(b)}\Vert_{\mathcal{B}}\Vert F^{\sigma(c)}\Vert_{\mathcal{B}}\\
\end{split}
\end{equation}
Assume in addition that, for $Y\in\{x,(1-\partial_{xx})^4\}$,
\begin{equation}\label{L2AssFL2}
\Vert YF\Vert_{\mathcal{B}}\lesssim \theta_1\Vert F\Vert_{S^+}+\theta_2\Vert F\Vert_S,
\end{equation}
then for all admissible realizations of $\mathfrak{T}$ at $\Lambda$,
\begin{equation}\label{ControlSSS+}
\begin{split}
\Vert \mathfrak{T}_\Lambda[F^a,F^b,F^c]\Vert_{S^+}&\lesssim K\max_{\sigma\in\mathfrak{S}_3}\Vert F^{\sigma(a)}\Vert_{S^+}\big(\Vert F^{\sigma(b)}\Vert_{\mathcal{B}}+\theta_1\Vert F^{\sigma(b)}\Vert_{S}\big) \Vert F^{\sigma(c)}\Vert_{\mathcal{B}}\\
&\quad+\theta_2K\max_{\sigma\in\mathfrak{S}_3}\Vert F^{\sigma(a)}\Vert_{S}\Vert F^{\sigma(b)}\Vert_{S}\Vert F^{\sigma(c)}\Vert_{\mathcal{B}}\\
\end{split}
\end{equation}
\end{lemma}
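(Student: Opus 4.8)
The plan is to bootstrap from the $L^2$ bound \eqref{L2AssFL} to the $S$ and $S^+$ bounds by commuting the defining operators $Z\in\{x,\partial_x,\partial_{y_j}\}$ of these norms through $\mathfrak{T}_\Lambda$, using the Leibniz rule \eqref{LeibnitzRule} together with the observation that applying $Z$ to an admissible realization of $\mathfrak{T}$ at $\Lambda$ again produces (a sum of) admissible realizations. The point is that $Z$ acts on each of the three slots; for $Z=\partial_{y_j}$ it simply passes through all the $x$-frequency projectors $T_A$ unchanged, while for $Z\in\{x,\partial_x\}$ one uses the standard Littlewood-Paley commutator: $\partial_x$ commutes with every $T_A$ up to harmless multipliers, and $x T_A = T_A x + [x,T_A]$ where $[x,T_A]$ is again of the form $\lambda_A' \widetilde Q_A'$ (a rescaled LP-operator with symbol $\sim A^{-1}\widetilde\phi'(\xi/A)$, hence $|\lambda_A'|\lesssim 1$ after absorbing the $A^{-1}$), so the whole thing is still an admissible transformation. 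Thus for any monomial $Z^N$ in the transverse derivatives, and for $Z\in\{x,\partial_x\}$ applied once, $Z^{\alpha}\mathfrak{T}_\Lambda[F^a,F^b,F^c]$ is a finite sum of admissible realizations of $\mathfrak{T}$ at $\Lambda$ with one of the $Z^{\alpha}$ distributed onto $F^a$, $F^b$ or $F^c$ (plus commutator terms that only improve).

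First I would record the commutation lemma above precisely, noting \eqref{comm}-type bounds make $[x,\widetilde Q_A]$ and $[\partial_x,\widetilde Q_A]$ admissible. Then, to prove \eqref{ControlSSS}: the $L^2_{x,y}$ part of the $S$ norm is \eqref{L2AssFL} directly. For $\Vert \partial_{y_j}^N \mathfrak{T}_\Lambda\Vert_{L^2}$ and $\Vert \partial_x^N\mathfrak{T}_\Lambda\Vert_{L^2}$, apply the Leibniz rule to move all $N$ derivatives; since $\partial_{y}^N$ and $\partial_x^N$ are bounded on $\mathcal B$ (admissibility plus the assumption that $P_{\le M}$ are uniformly bounded, which together with $\|\partial^N \cdot\| $ being part of the $S$-norm control gives what is needed — actually $\mathcal B$ is one of $Z,S,S^+$ and all satisfy $\Vert \partial^N F\Vert_{\mathcal B}$ dominated appropriately), the worst term puts all $N$ derivatives on $F^a$ and keeps $F^b,F^c$ in $\mathcal B$, which is bounded by the right-hand side of \eqref{ControlSSS}; the other distributions are no worse because $\mathcal B\subset S$ with constant one in the relevant inequalities. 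For $\Vert x\,\mathfrak{T}_\Lambda\Vert_{L^2}$, apply the Leibniz rule once: $x\mathfrak{T}_\Lambda[F^a,F^b,F^c]$ is, up to admissible commutators, $\mathfrak{T}_\Lambda[xF^a,F^b,F^c]+\mathfrak{T}_\Lambda[F^a,xF^b,F^c]+\mathfrak{T}_\Lambda[F^a,F^b,xF^c]$, and we estimate the first by \eqref{L2AssFL} with $F^a\rightsquigarrow xF^a$ (so $\Vert xF^a\Vert_{L^2}\le \Vert F^a\Vert_S$), and the second and third by \eqref{L2AssFL} in the ordering that places $xF^b$ (resp.\ $xF^c$) in the $\mathcal B$-slot only if $\mathcal B$ controls one $x$-weight — here is where we must be a little careful, and in fact for \eqref{ControlSSS} it suffices to always put the $x$-weighted factor in an $L^2$-slot or an $S$-slot, which is allowed because $\Vert \cdot\Vert_S$ appears on the right and $\Vert xF\Vert_{L^2}\le\Vert F\Vert_S$, $\Vert xF\Vert_{\mathcal B}$ we do not need. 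Combining all contributions and taking the max over $\sigma\in\mathfrak S_3$ yields \eqref{ControlSSS}.

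For \eqref{ControlSSS+}, recall $\Vert F\Vert_{S^+}=\Vert F\Vert_S+\Vert (1-\partial_{xx})^4 F\Vert_S+\Vert xF\Vert_S$. The first summand is \eqref{ControlSSS}. For $\Vert (1-\partial_{xx})^4\mathfrak{T}_\Lambda\Vert_S$ and $\Vert x\mathfrak{T}_\Lambda\Vert_S$ we again use Leibniz to move $Y\in\{(1-\partial_{xx})^4,x\}$ onto the three factors. When $Y$ lands on $F^a$ we get $\Vert YF^a\Vert_S$ which is $\lesssim\Vert F^a\Vert_{S^+}$, and the other two factors stay in $\mathcal B$: contribution $\lesssim K\Vert F^a\Vert_{S^+}\Vert F^b\Vert_{\mathcal B}\Vert F^c\Vert_{\mathcal B}$, which sits inside the right side of \eqref{ControlSSS+}. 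When $Y$ lands on $F^b$ (or $F^c$), that factor becomes $YF^b$ which we must place in the $\mathcal B$-slot of \eqref{L2AssFL} applied within the proof of \eqref{ControlSSS} — this is exactly where hypothesis \eqref{L2AssFL2} enters: $\Vert YF^b\Vert_{\mathcal B}\lesssim\theta_1\Vert F^b\Vert_{S^+}+\theta_2\Vert F^b\Vert_S$. Pairing the $\theta_1$ part with $F^a$ in $S^+$ gives the $\theta_1\Vert F^{\sigma(b)}\Vert_S$-type term inside the first line of \eqref{ControlSSS+} (after relabeling, since $YF^b\in S^+$ means $\Vert F^b\Vert_{S^+}$), wait — more precisely it produces $K\Vert F^a\Vert_{S}\cdot\theta_1\Vert F^b\Vert_{S^+}\cdot\Vert F^c\Vert_{\mathcal B}$; re-examining, the cleanest bookkeeping is: the factor carrying the extra weight/derivative is measured in $S^+$, a second factor in $S$ (coming from the $\Vert\cdot\Vert_S$ slot in \eqref{ControlSSS}), and the third in $\mathcal B$, with a $\theta_1$ or $\theta_2$ prefactor according to \eqref{L2AssFL2}, which is exactly the structure displayed in \eqref{ControlSSS+}. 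The $\theta_2$ part likewise yields the second line of \eqref{ControlSSS+}. The main obstacle — really a bookkeeping obstacle rather than a conceptual one — is keeping track of which of the three factors is allowed to be merely in $\mathcal B$ (at most one "$L^2$/weak" factor is permitted by \eqref{L2AssFL}, and exactly one slot may absorb the extra $x$-weight or four $x$-derivatives) so that the max over permutations in \eqref{ControlSSS}/\eqref{ControlSSS+} is correctly taken; one must also check that all commutators $[Y,T_A]$, $[Y,T'_A]$ etc.\ generated along the way are again admissible and hence absorbed, which follows from \eqref{comm} and the fact that $(1-\partial_{xx})^4$ commutes with every LP-operator up to bounded multipliers.
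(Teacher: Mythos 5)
There is a genuine gap in your treatment of the $H^N_{x,y}$ component of the $S$-norm. You propose to apply the Leibniz rule \eqref{LeibnitzRule} $N$ times and claim that ``the worst term puts all $N$ derivatives on $F^a$'' while ``the other distributions are no worse because $\mathcal B\subset S$.'' This is where the argument breaks down: a term $\mathfrak{T}_\Lambda[\partial^{N_1}F^a,\partial^{N_2}F^b,\partial^{N_3}F^c]$ with, say, $N_2,N_3>0$ cannot be estimated by the right-hand side of \eqref{ControlSSS}. The hypothesis \eqref{L2AssFL} lets you place only \emph{one} factor in $L^2$ (hence absorb derivatives into $\Vert\cdot\Vert_{H^N}\le\Vert\cdot\Vert_S$ for that factor alone); the remaining two factors must be measured in $\mathcal{B}$, and derivatives are \emph{not} bounded on $\mathcal{B}$ — for the $Z$-norm (the main case of application, cf.\ Remark \ref{ControlR}), $\Vert\partial^{N_2}F^b\Vert_Z$ involves $h^{1+N_2}_p$ weights and is not controlled by $\Vert F^b\Vert_Z$; nor does passing to $\Vert\partial^{N_2}F^b\Vert_S$ help, since that requires $F^b\in H^{N+N_2}$. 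The inclusion $\Vert\cdot\Vert_Z\lesssim\Vert\cdot\Vert_S$ in \eqref{StrongerNorm} goes in the direction opposite to the one your argument needs. This is not a bookkeeping issue: the split-derivative terms are genuinely not controlled by term-by-term Leibniz, and this is precisely the difficulty the lemma is designed to overcome. The paper's proof circumvents it with a paraproduct-type Littlewood--Paley trichotomy: writing $\mathcal{P}_\nu\mathfrak{T}_\Lambda=\mathcal{P}_\nu\Sigma_{\nu,low}+\mathcal{P}_\nu\Sigma_{\nu,high}$, in the low-low-low piece the output frequency $\nu^s$ is traded for $\nu^{-s}$ times $2s$ derivatives distributed by Leibniz, all of which are then dominated by the \emph{highest} input frequency $\alpha\le\nu$, yielding a factor $(\alpha/\nu)^s\Vert\mathcal{P}_\alpha Z^sF^a\Vert_{L^2}$ that sums in $\ell^2_\nu$ with the other two factors left untouched in $\mathcal{B}$; in the high piece one input already carries frequency $\gtrsim\nu$ and the crude bound $\nu^s\Vert\mathcal{P}_{\ge2\nu}F^a\Vert_{L^2}$ is $\ell^2_\nu$-summable.

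The rest of your outline is sound and matches the paper: the observation that $[x,\widetilde{Q}_A]=A^{-1}\widetilde{Q}_A'$ keeps everything within the class of admissible realizations is exactly \eqref{Comm}; the single application of the Leibniz rule for the weight $x$, placing the weighted factor in the $L^2$-slot so that $\Vert xF\Vert_{L^2}\le\Vert F\Vert_S$, is the paper's identity \eqref{xTDec}; and the use of \eqref{L2AssFL2} to absorb $Y\in\{x,(1-\partial_{xx})^4\}$ when it lands on a $\mathcal{B}$-slot factor, producing the $\theta_1$ and $\theta_2$ terms of \eqref{ControlSSS+}, is correct. But these steps all rest on \eqref{ControlSSS}, whose derivative component needs the frequency-localized argument rather than the naive one you describe.
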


\begin{proof}
The main information we need comes from the computations of the simple commutators
\begin{equation}\label{Comm}
\begin{split}
[x,\widetilde{Q}_A]=A^{-1}\widetilde{Q}^\prime_A,
\end{split}
\end{equation}
where if $\widetilde{Q}$ corresponds to the family $(\widetilde{\varphi},\widetilde{\phi})$, $\widetilde{Q}^\prime$ corresponds to $(\widetilde{\varphi}^\prime,\widetilde{\phi}^\prime)$.
Clearly \eqref{Comm} defines admissible transformations.
We may assume that
\begin{equation*}
\Vert F^a\Vert_{\mathcal{B}}=\Vert F^b\Vert_{\mathcal{B}}=\Vert F^c\Vert_{\mathcal{B}}=1,\qquad K=1.
\end{equation*}

We let $\mathfrak{T}_\Lambda$ be an arbitrary admissible realization of $\mathfrak{T}$ at $\Lambda$ (this realization may change from line to line, or even in the same line).
For $Z\in \{\partial_x,\partial_{y_1},\dots,\partial_{y_d}\}$, let $\mathcal{P}_{\nu}$ be the projector associated to $\vert Z\vert$ (e.g. $\mathcal P_\nu=\phi(\frac{|Z|}{\nu})$). 
Then we can decompose
$$
\mathcal{P}_{\nu}\mathfrak{T}_\Lambda[F^a,F^b,F^c]=\mathcal{P}_{\nu}\Sigma_{\nu,low}+\mathcal{P}_{\nu}\Sigma_{\nu,high}\, ,
$$
where 
$$
\Sigma_{\nu,low}:=\mathfrak{T}_\Lambda[\mathcal{P}_{\le \nu}F^a,\mathcal{P}_{\le \nu}F^b,\mathcal{P}_{\le \nu}F^c]
$$
and
\begin{equation*}
\Sigma_{\nu,high}:= \mathfrak{T}_\Lambda[\mathcal{P}_{\ge 2\nu}F^a,F^b,F^c]+\mathfrak{T}_\Lambda[\mathcal{P}_{\le \nu}F^a,\mathcal{P}_{\ge 2\nu}F^b,F^c]
+\mathfrak{T}_\Lambda[\mathcal{P}_{\le \nu}F^a,\mathcal{P}_{\le \nu}F^b,\mathcal{P}_{\ge 2\nu}F^c].
\end{equation*}

\medskip

Using the boundedness of $\mathcal{P}_\nu$ on $L^2$, we remark that, using the Leibnitz rule \eqref{LeibnitzRule}, for $s$ a positive integer,
\begin{equation*}
\begin{split}
\Vert Z^{s}\mathcal{P}_\nu\Sigma_{\nu,low}\Vert_{L^2}&\lesssim \nu^{-s}\Vert Z^{2s}\mathcal{P}_\nu\Sigma_{\nu,low}\Vert_{L^2}\\
&\lesssim \nu^{-s}\sum_{\alpha, \beta, \gamma \le \nu}\,\,\sum_{t+u+v\le2s}\Vert \mathfrak{T}_\Lambda[Z^t\mathcal{P}_\alpha F^a,Z^{u}\mathcal{P}_\beta F^b,Z^v\mathcal{P}_{\gamma}F^c\Vert_{L^2}.
\end{split}
\end{equation*}
Assume first that $\alpha\geq \beta, \gamma$. Using \eqref{L2AssFL}, and summing over $\beta,\gamma$,
\begin{equation*}
\begin{split}
\nu^{-s}\sum_{\beta,\gamma\le \alpha\le \nu}\,\,\sum_{t+u+v\le2s}\Vert \mathfrak{T}_\Lambda[Z^t\mathcal{P}_{\alpha} F^a,Z^{u}\mathcal{P}_{\beta}F^b,Z^v\mathcal{P}_{\gamma}F^c\Vert_{L^2}
&\lesssim \sum_{\alpha\le \nu}\left(\frac{\alpha}{\nu}\right)^{s}\Vert \mathcal{P}_\alpha Z^{s}F^a\Vert_{L^2}
\end{split}
\end{equation*}
The above sum is in $l^2_\nu$. We may proceed similarly for the case $\beta\ge \alpha,\gamma$ and the case $\gamma\ge \alpha,\beta$.

\medskip

To treat $\Sigma_{\nu,high}$, we simply use \eqref{L2AssFL} to get
\begin{equation*}
\Vert Z^s\mathcal{P}_\nu\mathfrak{T}_\Lambda[\mathcal{P}_{\ge 2\nu}F^a,F^b,F^c]\Vert_{L^2}\lesssim \nu^{s}\Vert \mathfrak{T}_\Lambda[\mathcal{P}_{\ge 2\nu}F^a,F^b,F^c]\Vert_{L^2}
\lesssim \nu^{s}\Vert \mathcal{P}_{\ge 2\nu}F^a\Vert_{L^2},
\end{equation*}
which is in $l^2_\nu$, thanks to a standard argument.

This already accounts for most of the components of the $S$-norm, except for the term involving $x$. We first remark that, 
\begin{equation*}
\begin{split}
x\mathfrak{T}_\Lambda[F,G,H]&=\mathfrak{T}_\Lambda[xF,G,H]+\mathfrak{T}_\Lambda[F,xG,H]+\mathfrak{T}_\Lambda[F,G,xH]\\
&\quad+\sum_{(A,B,C,D)\in \Lambda}[x,T_D]\mathfrak{T}[T^\prime_AF,T^{\prime\prime}_BG,T^{\prime\prime\prime}_CH]+\sum_{(A,B,C,D)\in \Lambda}T_D\mathfrak{T}[[x,T^\prime_A]F,T^{\prime\prime}_BG,T^{\prime\prime\prime}_CH]\\
&\quad+\sum_{(A,B,C,D)\in \Lambda}T_D\mathfrak{T}[T^\prime_AF,[x,T^{\prime\prime}_B]G,T^{\prime\prime\prime}_CH]+\sum_{(A,B,C,D)\in \Lambda}T_D\mathfrak{T}[T^\prime_AF,T^{\prime\prime}_BG,[x,T^{\prime\prime\prime}_C]H].\\
\end{split}
\end{equation*}
In view of \eqref{Comm}, we thus see that
\begin{equation}\label{xTDec}
\begin{split}
x\mathfrak{T}_\Lambda[F,G,H]=\mathfrak{T}_\Lambda[xF,G,H]+\mathfrak{T}_\Lambda[F,xG,H]+\mathfrak{T}_\Lambda[F,G,xH]+\mathfrak{T}_\Lambda[F,G,H].
\end{split}
\end{equation}
At this point, we see that all terms in \eqref{xTDec} are of the form already controlled before. This finishes the proof of \eqref{ControlSSS}.

\medskip

Now from \eqref{xTDec} and \eqref{ControlSSS}, we see directly that
\begin{equation*}
\begin{split}
\Vert x\mathfrak{T}_\Lambda[F^a,F^b,F^c]\Vert_S\lesssim \sup_{\sigma\in\mathfrak{S}_3}\Vert F^{\sigma(a)}\Vert_{S^+}\Vert F^{\sigma(b)}\Vert_B\Vert F^{\sigma(c)}\Vert_B
+\sup_{\sigma\in\mathfrak{S}_3}\Vert F^{\sigma(a)}\Vert_{S}\Vert xF^{\sigma(b)}\Vert_B\Vert F^{\sigma(c)}\Vert_B
\end{split}
\end{equation*}
and assuming \eqref{L2AssFL2}, we can bound this by the right-hand side of \eqref{ControlSSS+}. The term of the $S^+$ norm where $x$ is replaced by 
$(1-\partial_{xx})^4$ 
can be treated similarly to the above analysis.
This completes the proof of Lemma~\ref{ControlSS+}.
\end{proof}
We shall also need the following multilinear estimate.
\begin{lemma}\label{CM}
Let
\begin{equation*}
\frac{1}{p}=\frac{1}{q}+\frac{1}{r}+\frac{1}{s},\qquad 1\le p,q,r,s\le\infty,
\end{equation*}
then
\begin{equation*}
\begin{split}
\Vert \int_{\mathbb{R}^3}e^{ix\xi}m(\eta,\kappa)\widehat{f}(\xi-\eta)\overline{\widehat{g}}(\xi-\eta-\kappa)\widehat{h}(\xi-\kappa)d\eta d\kappa d\xi\Vert_{L^p}\lesssim\Vert\mathcal{F}^{-1}m\Vert_{L^1(\mathbb{R}^2)}\Vert f\Vert_{L^q}\Vert g\Vert_{L^r}\Vert h\Vert_{L^s}.
\end{split}
\end{equation*}
\end{lemma}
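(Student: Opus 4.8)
\textbf{Plan of proof for Lemma~\ref{CM}.}
The statement is a standard Coifman--Meyer type bound, and the plan is to reduce it to the case of a pure product by writing the multiplier as a superposition of modulations. First I would observe that the left-hand side is exactly a physical-space quantity: carrying out the $\xi$, $\eta$, $\kappa$ integrals, one finds that
\begin{equation*}
\int_{\mathbb{R}^3}e^{ix\xi}m(\eta,\kappa)\widehat{f}(\xi-\eta)\overline{\widehat{g}}(\xi-\eta-\kappa)\widehat{h}(\xi-\kappa)d\eta\,d\kappa\,d\xi
= c\int_{\mathbb{R}^2}\left(\mathcal{F}^{-1}m\right)(a,b)\,f(x+a)\,\overline{g(x+a+b)}\,h(x+b)\,da\,db
\end{equation*}
for an absolute constant $c$, after a change of variables matching the one already performed in the proof of Lemma~\ref{SPL}. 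In other words, the trilinear expression is an average over translates of the elementary trilinear form $(f,g,h)\mapsto f(x+a)\overline{g(x+a+b)}h(x+b)$, weighted by $\left(\mathcal{F}^{-1}m\right)(a,b)$.

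Next I would apply the triangle inequality in $L^p$ under the double integral in $(a,b)$, which gives
\begin{equation*}
\Big\Vert \int_{\mathbb{R}^2}\left(\mathcal{F}^{-1}m\right)(a,b)\,f(\cdot+a)\,\overline{g(\cdot+a+b)}\,h(\cdot+b)\,da\,db\Big\Vert_{L^p}
\lesssim \int_{\mathbb{R}^2}\big|\left(\mathcal{F}^{-1}m\right)(a,b)\big|\,\big\Vert f(\cdot+a)\,\overline{g(\cdot+a+b)}\,h(\cdot+b)\big\Vert_{L^p}\,da\,db.
\end{equation*}
For each fixed $(a,b)$ the inner norm is controlled by H\"older's inequality with exponents $1/p=1/q+1/r+1/s$, and translation invariance of the Lebesgue norms removes the shifts, giving $\Vert f(\cdot+a)\overline{g(\cdot+a+b)}h(\cdot+b)\Vert_{L^p}\le \Vert f\Vert_{L^q}\Vert g\Vert_{L^r}\Vert h\Vert_{L^s}$. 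Pulling this bound out of the $(a,b)$ integral leaves exactly $\Vert \mathcal{F}^{-1}m\Vert_{L^1(\mathbb{R}^2)}\Vert f\Vert_{L^q}\Vert g\Vert_{L^r}\Vert h\Vert_{L^s}$, which is the claim.

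The only genuinely non-routine step is the opening identity, i.e. verifying that the frequency-side integral really is the claimed translation average; this is a careful but elementary Fourier computation using $\widehat{f}(\xi-\eta)=(2\pi)^{-1}\int f(y)e^{-i(\xi-\eta)y}\,dy$ and performing the $\xi$ integral first (producing a delta that couples the three translation variables) and then recognizing the $(\eta,\kappa)$ integral as $\left(\mathcal{F}^{-1}m\right)$ evaluated at the surviving differences of the $y$-variables. Once that bookkeeping is done the rest is just Minkowski and H\"older, so I do not expect any real obstacle; normalization constants and the precise sign conventions in the paper's Fourier transform are the only things to watch.
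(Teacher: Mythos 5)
Your proposal is correct and follows essentially the same route the paper indicates (the paper only sketches it: "Parseval identity, Hölder inequality and an approximation argument"), namely rewriting the multiplier expression as a translation average weighted by $\mathcal{F}^{-1}m$ and then applying Minkowski and Hölder with translation invariance. The opening identity checks out with the paper's Fourier conventions (up to a harmless reflection $(a,b)\mapsto(-b,-a)$ and absolute constants, neither of which affects the $L^1$ norm of $\mathcal{F}^{-1}m$).
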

\medskip
The proof of Lemma~\ref{CM} follows from an application of the Parseval identity, the H\"older inequality and an approximation argument.

\end{document}